\newcommand{\adjunction}[4]{%
  #1\colon #2%
  \mathrel{\vcenter{%
    \offinterlineskip\m@th
    \ialign{%
      \hfil$##$\hfil\cr
      \longrightharpoonup\cr
      \noalign{\kern-.3ex}
      \smallbot\cr
      \longleftharpoondown\cr
    }%
  }}%
  #3 \noloc #4%
}
\newcommand{\longrightharpoonup}{\relbar\joinrel\rightharpoonup}
\newcommand{\longleftharpoondown}{\leftharpoondown\joinrel\relbar}
\newcommand\noloc{%
  \nobreak
  \mspace{6mu plus 1mu}
  {:}
  \nonscript\mkern-\thinmuskip
  \mathpunct{}
  \mspace{2mu}
}
\newcommand{\smallbot}{%
  \begingroup\setlength\unitlength{.15em}%
  \begin{picture}(1,1)
  \roundcap
  \polyline(0,0)(1,0)
  \polyline(0.5,0)(0.5,1)
  \end{picture}%
  \endgroup
}
\newtheorem*{introtheorem}{Theorem}
\newtheorem*{introconstruction}{Construction}
\newtheorem{theorem}{Theorem}[section]
\newtheorem{lemma}[theorem]{Lemma}
\newtheorem{proposition}[theorem]{Proposition}
\newtheorem{corollary}[theorem]{Corollary}
\newtheorem{question}[theorem]{Question}
\theoremstyle{definition}
\newtheorem{definition}[theorem]{Definition}
\newtheorem{remark}[theorem]{Remark}
\newtheorem{example}[theorem]{Example}
\newcommand{\op}[1]{\operatorname{#1}}
\newcommand{\leftexp}[2]{{\vphantom{#2}}^{#1}{#2}}
\newcommand{\weezer}{\leftexp{=}{\kern-0.23em\operatorname{W}}^{\kern-0.21em =}}
\newcommand{\newterm}{\textsf}
\def\N{\op{\mathbb{N}}}
\def\Z{\op{\mathbb{Z}}}
\def\O{\op{\mathcal{O}}}
\def\A{\\op{mathcal{A}}}
\def\tand{\text{ and } }
\def\A{\mathop{\mathcal{A}}}
\def\Gm{\mathbb{G}_m}
\def\gm{\mathbb{G}_m}
\def\A{\mathbb A}
\renewcommand{\labelenumi}{\theenumi}
\title[Kernels from compactifications]{Kernels from compactifications}
\author[Ballard]{Matthew R Ballard}
\address{
  \begin{tabular}{l}
   Matthew Robert Ballard \\
   \hspace{.1in} University of South Carolina\\
   \hspace{.1in} Department of Mathematics \\
   \hspace{.1in} Columbia, SC, USA  \\
   \hspace{.1in} Email: {\bf ballard@math.sc.edu} \\
  \end{tabular}
}
\author[Diemer]{Colin Diemer}
\address{
  \begin{tabular}{l}
   Colin Diemer \\
   \hspace{.1in} Institut des Haute \'Etudes Scientifiques \\
   \hspace{.1in} Bures sur Yvette, France \\
   \hspace{.1in} Laboratory of Mirror Symmetry \\
   \hspace{.1in} NRU HSE\\
   \hspace{.1in} Email: {\bf diemer@ihes.fr} \\
   \end{tabular}
}
\author[Favero]{David Favero}
\address{
  \begin{tabular}{l}
   David Favero \\
   \hspace{.1in} University of Alberta \\
   \hspace{.1in} Department of Mathematical and Statistical Sciences \\
   \hspace{.1in} Central Academic Building 632, Edmonton, AB, Canada T6G 2C7 \\
   \hspace{.1in} Korean Institute for Advanced Study \\
   \hspace{.1in} 85 Hoegiro, Dongdaemun-gu, Seoul, Republic of Korea 02455 \\
   \hspace{.1in} Email: {\bf favero@ualberta.ca} \\
  \end{tabular}
}
\keywords{derived geometry, derived categories, equivariant geometry, birational geometry}
\numberwithin{equation}{section}
\numberwithin{theorem}{subsection}
\begin{document}

\renewcommand{\labelenumi}{\emph{\alph{enumi})}}

\begin{abstract}
Associated to any affine scheme with a $\Gm$-action, we provide a Bousfield colocalization on the equivariant derived category $\op{D}(\op{Mod}^{\Gm} R)$ by constructing an idempotent integral kernel using homotopical methods. This endows the equivariant derived category with a canonical semi-orthogonal decomposition.   As a special case, we demonstrate that grade-restriction windows appear as a consequence of this construction, giving a new proof of wall-crossing equivalences which works over an arbitrary base. The construction globalizes to yield interesting integral transforms associated to $D$-flips.
\end{abstract}

\maketitle

\section{Introduction} \label{section: Introduction}

A central question in the study of derived categories of coherent sheaves of algebraic varieties is their relationship with birational geometry. Historically, such investigations originated with Orlov's construction of a semi-orthogonal decomposition associated to a blow-up \cite{Orl92}, as well as Bondal and Orlov's derived equivalences induced by certain elementary flops \cite{BO}. Much recent effort has since centered around Bondal and Orlov's conjecture that flops in general induce derived equivalences as well as Kawamata's related conjecture \cite{Kaw} that $K$-equivalent varieties have equivalent derived categories. Perhaps the most striking result in this direction is Bridgeland's construction of a derived equivalence for any threefold flop \cite{Bridge}. A more recent line of inquiry is the description of the equivariant derived categories of geometric invariant theory (GIT) quotients via so-called grade restriction windows, see e.g. \cite{Seg, BFK, HHP, HL, Ball, SvdBNon, HLSam}. These methods sometimes give equivalences or semi-orthogonal decompositions associated to birational maps by viewing the maps as GIT wall-crossings. 

Nevertheless, a quick survey of the subject will convince an observer that there is not an agreed upon uniform approach to producing the functors expected by the $K$-equivalence conjecture.  For example, Bridgeland's techniques require that the flop come from a small contraction over a base of relative dimension one, which limits the applicability to higher dimensional flops. Various families of explicit flops have been considered; notably Namikawa and Kawamata's study of Mukai flops \cite{N1,N2,KMukai} and Cautis, Kamnitzer, and Licata's study of the the stratified Mukai flop \cite{CKL}. In these stratified examples a derived equivalence has indeed been observed, but only via fine-tuned choices of explicit Fourier-Mukai kernels. The grade restriction window techniques mentioned above have so far been most effective only for so-called elementary wall-crossings, or when the action is specialized to be quasi-symmetric in the language of \cite{SvdBNon}. 

In particular, there does not presently appear to be a consensus in the literature for approaching the following problem: given an arbitrary birational map $X\dashrightarrow Y$ of Mori theoretic origin, provide a uniform method of producing a homologically well-behaved functor between $\op{D}^b(\op{coh} X) $ and $\op{D}^b(\op{coh}Y)$. In other words, how to  systematically produce a Fourier-Mukai kernel object $P\in\op{D}^b(\op{coh}X\times Y)$ consistent with the expectations of the Bondal-Orlov and Kawamata conjectures? We summarize the main construction of this paper as follows.

\begin{introconstruction}
Let $Y$ be a scheme with a trivial $\gm$-action, $\mathcal A$ be a quasi-coherent sheaf of $\Z$-graded $\O_Y$-algebras, and set $Z := \underline{\op{Spec}}_{Y} \, \mathcal A$.

\begin{itemize}

\item Form a $\Z^2$-graded sheaf $Q_{\op{der}}(\mathcal A)$ of $\mathcal A \otimes_{\O_Y} \mathcal A$-algebras by deriving a certain partial compactification  \cite{Drinfeld} of the action groupoid.

\item Realize $Q_{\op{der}}(\mathcal A)$ as an object of $\op{D}(\op{Qcoh}^{\Gm^2} Z \times Z)$.

\item Restrict $Q_{\op{der}}(\mathcal A)$ to open sets to get an equivariant Fourier-Mukai transform
\[
\Phi_{Q^{wc}_{\op{der}}(\mathcal A)} : \op{D}(\op{Qcoh}^{\Gm} U^+ ) \to  \op{D}(\op{Qcoh}^{\Gm} U^- ).
\] where $U^{\pm}$ are the corresponding semi-stable loci. 

\end{itemize}

\end{introconstruction}

A central case of interest is when $X \dashrightarrow Y$ is a flip relative to a divisor $D$ on $X$, i.e. a $D$-flip. An observation of Reid, see e.g. \cite{Thaddeus}, allows one to repackage the data of the $D$-flip as a scheme $Z$ affine over the contraction and carrying an action of $\Gm$, with $X$ and $Y$ the respective GIT quotients, so that the above construction applies. We ask in Question~\ref{main question} if $\Phi_{Q^{wc}_{\op{der}}(\mathcal A)}$ induces an equivalence for $D$-flops, a potential solution to  \cite[Conjecture 5.1]{Kaw}.
However, we presently content ourselves with the following results (see Proposition~\ref{proposition: S Q sod}, Proposition~\ref{proposition: global LQ is qcoh}, Theorem~\ref{theorem: smoothcasesummary}, and Corollary~\ref{corollary: affinewc} for more precise statements).

\begin{introtheorem} 
There is an object $S_{\op{der}} \in \op{D}(\op{Qcoh}^{\Gm^2} Z \times Z)$ and a semi-orthogonal decomposition
  \[
   \op{D}(\op{Qcoh}^{\Gm} Z) = \langle \op{Im } \Phi_{Q_{\op{der}(\mathcal A)}}, \op{Im } \Phi_{S_{\op{der}(\mathcal A)}} \rangle.
  \]
When $Z$ is smooth and affine then the image of $\op{Im } \Phi_{Q_{\op{der}(\mathcal A)}}$ over $U^+ \times Z$ is equal to the grade restriction window defined in \cite{Seg, BFK, HL}.  Hence, when $[U^+ / \gm]$ and $[U^- / \gm]$ are $K$-equivalent, $\Phi_{Q^{wc}_{\op{der}}(\mathcal A)}$ is an equivalence.
\end{introtheorem}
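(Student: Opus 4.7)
The plan is to split the proof into three parts: constructing the semi-orthogonal decomposition, identifying the image with the grade restriction window in the smooth affine case, and deducing the equivalence under $K$-equivalence.

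For the decomposition, the first step is to show that $Q_{\op{der}}(\mathcal A)$ is idempotent under convolution in $\op{D}(\op{Qcoh}^{\Gm^2} Z \times Z)$, i.e. $Q_{\op{der}} \ast Q_{\op{der}} \simeq Q_{\op{der}}$. This should be exactly the property that Drinfeld's partial compactification of the action groupoid is designed to supply: the groupoid composition restricted to the compactified locus corresponds, at the level of derived algebras, to this idempotence. With idempotence in hand, I define $S_{\op{der}}$ to be the cofiber of the canonical map $Q_{\op{der}} \to \Delta_* \O_Z$ induced by restricting the compactification to the identity section. Idempotence then formally produces $S_{\op{der}} \ast S_{\op{der}} \simeq S_{\op{der}}$ and $Q_{\op{der}} \ast S_{\op{der}} \simeq 0 \simeq S_{\op{der}} \ast Q_{\op{der}}$, from which the standard orthogonal-projector argument (the triangle $Q_{\op{der}} \to \Delta_* \O_Z \to S_{\op{der}}$ being a Bousfield colocalization triangle) yields the desired semi-orthogonal decomposition after passing through the Fourier--Mukai functor.

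For the second part, I would unwind Drinfeld's construction explicitly when $Z$ is smooth and affine. The partial compactification then reduces to a Koszul-type resolution whose $\Gm^2$-weights are bounded in a precise range determined by the grading of $\mathcal A$. Computing $\Phi_{Q_{\op{der}}}$ in these coordinates, the essential image over $U^+ \times Z$ should agree by direct inspection with the grade restriction window of \cite{Seg, BFK, HL}, since both categories are characterized by the same vanishing conditions on weights along the Kempf--Ness/Hesselink strata of the unstable locus. The $K$-equivalence statement then follows because the grade restriction window realizes both $\op{D}(\op{Qcoh}^{\Gm} U^+)$ and $\op{D}(\op{Qcoh}^{\Gm} U^-)$ under the $K$-equivalence hypothesis (by the main results of \cite{HL, BFK}); since $\Phi_{Q^{wc}_{\op{der}}(\mathcal A)}$ factors through this common window, it is an equivalence.

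I expect the main obstacle to be establishing idempotence at this level of generality. Translating Drinfeld's essentially geometric compactification into a genuinely idempotent object in the derived convolution monoidal structure requires careful homotopical bookkeeping with derived tensor products of the defining sheaves of $\Z^2$-graded algebras, particularly in controlling the ``boundary'' strata of the compactification where the two factor actions degenerate. By contrast, the identification with grade restriction windows should be a tractable direct weight computation once the Koszul data is in hand, and the $K$-equivalence step is then essentially a citation of existing window equivalence theorems.
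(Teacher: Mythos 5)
Your overall architecture is right and matches the paper's: construct an idempotent kernel, take the cofiber against the diagonal to get $S_{\op{der}}$, invoke the Bousfield-colocalization formalism to produce the semi-orthogonal decomposition, and then identify the factor with the window in the smooth affine case. But there is a genuine gap in the idempotence step, and your framing of it actually points in the wrong direction.

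You write that idempotence ``should be exactly the property that Drinfeld's partial compactification of the action groupoid is designed to supply.'' This is not so: the underived object $Q(R)$ \emph{fails} to be idempotent precisely when $R$ is singular, which is the case of interest for flips (see Example~\ref{example: Q2=Q fails}, with $R = k[x,y]/(xy)$). Drinfeld's construction produces the underived $Q(R)$; the idempotence is a separate property that must be \emph{restored} by deriving $Q$, and this is where all the content lies. The paper's mechanism is quite specific: $Q$ extends to a left Quillen functor on simplicial graded rings (Lemma~\ref{lemma: leftquillen}, which in turn rests on the adjunction in the Appendix), free graded rings on generators of a fixed sign trivially satisfy the idempotence because then $Q(R) \cong R[u]$ is flat over $R$ (Example~\ref{example: nonpos nonneg graded Q2 = Q}), and cofibrant objects are built from these by retracts of sequential colimits of pushouts along generating cofibrations, each of which preserves the property (Lemmas~\ref{lemma: Q2 = Q pushouts via gen cofibs} and~\ref{lemma: colimits for Q2=Q}). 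Your ``careful homotopical bookkeeping'' acknowledges difficulty exists, but without the observations that (i) $Q$ is left Quillen and (ii) free generators of constant sign make the problem trivial, there is no reduction to perform the bookkeeping \emph{on}. As a minor point, your description of the map $Q_{\op{der}} \to \Delta$ as ``restriction to the identity section'' is also inverted — $\eta: Q(R) \hookrightarrow R[u,u^{-1}]$ is an inclusion of subalgebras, not a restriction — although this does not affect the formal cofiber definition of $S_{\op{der}}$.

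For the remaining parts your route diverges from the paper in ways worth noting. The Koszul-weight computation you sketch is essentially how the paper handles $\op{Spec}R = \mathbb{A}^n$ (Lemmas~\ref{lemma: affinespacetor}, \ref{lemma: affinespacegenerators}, Proposition~\ref{proposition: window affine space case}), but the extension to a general smooth affine $\op{Spec}R$ is not a direct weight computation: it proceeds by strongly \'etale base change (Proposition~\ref{prop: ready for Luna}) and the Luna Slice Theorem (Proposition~\ref{proposition: luna}), which you do not invoke. Finally, you close the $K$-equivalence step by citing the existing window theorems of \cite{BFK, HL}; the paper instead derives the wall-crossing equivalence independently (Theorem~\ref{theorem: smoothcasesummary} and Corollary~\ref{corollary: affinewc}), which is what allows it to work over an arbitrary base ring — citing those results would silently reimport their hypotheses and lose one of the advertised gains of the construction.
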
 

In the smooth case, we need not derive our construction and simply denote this object by $Q$.  Here, the semi-orthogonal decomposition above comes from a certain idempotent property enjoyed by $Q$ which we call Property $\mathtt{P}$, see Definition ~\ref{definition: PropertyP}, which shows that $Q$ induces a Bousfield localization. We remark that when $Z$ is smooth, the proof given here is quite different than those articles as here we produce an explicit geometric kernel, prove functorial identities of that kernel, and deduce these results as corollaries.  The proof also works over an arbitrary base. The essential observation here is that the construction of $Q$ behaves well under strongly \'etale base change (see Proposition~\ref{prop: ready for Luna}) which allows us to reduce to the case of affine space using the Luna Slice Theorem.
 

%


When $Z$ is singular, it is not the case that the object $Q$ literally enjoys the idempotent Property $\mathtt{P}$ mentioned above. This is problematic as the case of singular affine varieties equipped with a $\Gm$-action is quite  important for the demands of birational geometry (even for the elementary Mukai flop, the corresponding space $Z$ is singular, for example). This is the reason that we must derive $Q$, i.e. promote $Q$ to a object in derived algebraic geometry. 

In Section ~\ref{section: simplicialQ} we observe that the functor $Q$ extends to a left Quillen functor on the category of graded simplicial rings, i.e. on derived affine schemes equipped with an action of $\Gm$. Theorem ~\ref{theorem: derived Q2=Q} shows that this derived variant $Q_{\op{der}}$ does indeed satisfy an analogue of the idempotent property, which we call Property $\mathtt{P}_{\op{der}}$, and so we still obtain a semi-orthogonal decomposition in analogy with the smooth case.  At an intuitive level, the failure of $Q$ to be well-behaved for singular spaces arises from the non-vanishing of some higher Tor's (see e.g. Lemma ~\ref{lemma: Q S bousfield triangle}), and the homotopical methods mentioned above allow us to  bypass this obstruction by encoding the higher Tor's in an intrinsic derived affine scheme. 

Our actual construction of $Q$ comes from the following geometric consideration: if an algebraic group $G$ acts on a scheme $Z$, we consider a space $\tilde{Z}$ which equivariantly extends the action and projection maps; see Definition ~\ref{definition: compactification}. Such a construction for $\Gm$-actions was already considered by Drinfeld \cite{Drinfeld}. To such data, one can always exhibit an associated faithful functor, see e.g. Proposition ~\ref{proposition: faithful}. Given Drinfeld's construction and the central role of $\Gm$-actions via birational cobordisms, we will likewise focus primarily on this case in this article, although the reader will find a discussion of more general group actions in Section ~\ref{section: compactifications}.  We also remark that at a technical level most of the results in the paper are formulated for affine varieties only, but in Section ~\ref{section: globalcase} we discuss how to associate sheaves to these constructions in order to extend our results to essentially arbitrary $D$-flips, as mentioned above. 

The paper is organized as follows. \begin{itemize}
\item In Section ~\ref{section: affine} we introduce our main object of study, $Q(R)$, and study its basic properties.  Subsection ~\ref{section: flops} goes on to show by example how this object arises in the study of flips and flops. 

\item In Section ~\ref{section: compactifications} we discuss the geometric interpretation of $Q$ in terms of compactifications of group actions; here we also introduce the basic criteria for fully-faithfulness and its relation to Bousfield localizations. 

\item Section ~\ref{section: smooth} treats the case where $\op{Spec}R$ is smooth and exhibits the essential image as a grade restriction window. 

\item Section ~\ref{section: simplicialQ} then treats the general case by deriving the construction of $Q$ itself. Subsection ~\ref{section: derivedsmoothcase} shows that when $\op{Spec}R$ is smooth, these derived replacements effectively trivialize down to the methods of Section ~\ref{section: smooth}. Section ~\ref{section: globalcase} discusses the globalization process for attaching a kernel object to a general $D$-flip. 

\item  In a brief appendix we comment on a small, but important, distinction between our $Q(R)$ and the related constructions in Drinfeld's article ~\cite{Drinfeld}. 

\end{itemize} 

\noindent{\em Acknowledgements:} This paper began during a meeting with Ludmil Katzarkov and Maxim Kontsevich at the Institut des Hautes \'Etudes Scientifiques.  We thank them for their continual support and numerous insights as the project evolved.  We also benefited from an illuminating conversation with Dan Halpern-Leistner, which in particular drew our attention to the relevant work of Drinfeld. We thank him for his interest. In addition, D. Favero would like to thank Bumsig Kim for helpful conversations exploring non-abelian cases, and M. Ballard would like to thank L. Borisov and D. Orlov for discussions during his visit at the Institute for Advanced Study.
M. Ballard was partially supported by NSF Standard Grant DMS-1501813 and is appreciative of the Institute for Advanced Study for furnishing a wonderful work environment during his membership. Finally, he would like to express appreciation to both Bob Ballards in his life.  D. Favero was partially funded by the Natural Sciences and Engineering Research Council of Canada and Canada Research Chair Program under NSERC RGPIN 04596 and CRC TIER2 229953 and also thanks the  Korean Institute for Advanced Study for their hospitality.  C. Diemer is supported by a Simons Collaborative Grant postdoctoral fellowship and acknowledges support by the Laboratory of Mirror Symmetry NRU HSE, RF Government grant, ag. No. 14.641.31.0001, and would like to thank everyone at the IHES for an ideal working environment.

\section{Affine constructions}\label{section: affine}

\subsection{The functor} \label{section: the functor} Let $k$ be a fixed commutative ring. If $R$ is a $\mathbb{Z}$-graded $k$-algebra, we consider the two maps \begin{align}
\pi &: R  \to  R\otimes_kk[u,u^{-1}]\cong R[u,u^{-1}]  \label{equation: projection} \\
\sigma &:   R  \to R\otimes_kk[u,u^{-1}]\cong R[u,u^{-1}]  \label{equation: action} \end{align}
where $\pi(r) = r$ is the identity and $\sigma$ is the (co-)action map determined by $\sigma (r) = ru^{\op{deg}(r)}$ when $r$ is homogenous. In terms of affine schemes these maps respectively correspond to the projection and action maps 
\begin{equation}\label{equation: actionandprojection}
\begin{tikzcd}
\Gm \times_k \textrm{Spec}R \ar[r, shift left, "\widehat{\pi}"] \ar[r, shift right, swap, "\widehat{\sigma}"] & \textrm{Spec}R.
\end{tikzcd}
\end{equation}
Throughout this article, we will let $\mathsf{CR}^{\Gm}_k$ denote the category of finitely generated $\mathbb{Z}$-graded $k$-algebras, i.e. the opposite category of the category of affine $k$-schemes which are equipped with $\Gm$-actions. We will often refer to objects of $\mathsf{CR}_k^{\Gm}$ as rings with a $\Gm$-action. Likewise, if an algebraic group $G$ acts on $\op{Spec}R$ we will say that $R$ has a $G$-action.

When $R$ is an object of $\mathsf{CR}_k^{\Gm}$ the product space $\Gm\times_k\textrm{Spec}R$ admits a $\Gm^2$-action whose structure is given in the following lemma, the proof of which is elementary and which we omit. 
\begin{lemma}\label{lemma: basic action}
 The ring $R[u,u^{-1}]$ carries a natural $\Gm^2$-action 
 \begin{displaymath}
  \widetilde{\sigma} : R \to R[u,u^{-1},v_1,v_1^{-1},v_2,v_2^{-1}]
 \end{displaymath}
 uniquely determined by
 \begin{align*}
  \widetilde{\sigma}(\pi(r)) & = \sigma_1(r) \in R[u,u^{-1},v_1,v_1^{-1},v_2,v_2^{-1}] \\
  \widetilde{\sigma}(\sigma(r)) & = \sigma_2(r) \in R[u,u^{-1},v_1,v_1^{-1},v_2,v_2^{-1}]
 \end{align*}
 where $\sigma_i : R \to R[v_i,v_i^{-1}]$ for $i=1,2$ are both identified with $\sigma$ from Equation ~\eqref{equation: action}.
 
 Moreover, the map $\hat\pi : \Gm\times\op{Spec}R\to \op{Spec}R$ becomes equivariant with respect to the projection $\pi_1: \Gm^2 \to \Gm$ onto the first factor, and the map $\hat\sigma$ becomes equivariant with respect to the projection $\pi_2: \Gm^2 \to \Gm$ onto the second factor. 
\end{lemma}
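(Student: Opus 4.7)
The plan is to identify the $\Gm^2$-action geometrically and then read off the coaction formula. Viewing $\Gm \times_k \op{Spec} R$ as the action groupoid with source $\widehat{\pi}$ and target $\widehat{\sigma}$, the group $\Gm^2$ acts naturally by rescaling source and target independently: an arrow $(t, x)$ from $x$ to $t \cdot x$ is sent by $(s_1, s_2) \in \Gm^2$ to the unique arrow from $s_1 \cdot x$ to $s_2 \cdot (t \cdot x)$, namely $(s_2 t s_1^{-1}, s_1 \cdot x)$. By the very construction of this action, $\widehat{\pi}$ is $\pi_1$-equivariant and $\widehat{\sigma}$ is $\pi_2$-equivariant, giving the "moreover" assertion as soon as the action is shown to exist.

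Dualizing to rings, the coaction $\widetilde{\sigma}$ is obtained by pulling back coordinate functions along this action. On the two kinds of generators of $R[u, u^{-1}]$, this amounts to sending a homogeneous $r \in R$ to $r v_1^{\deg r}$ and $u$ to $u v_1^{-1} v_2$. Uniqueness is then immediate: the identity $\widetilde{\sigma}(\pi(r)) = \sigma_1(r)$ pins down $\widetilde{\sigma}$ on $R$, while $\widetilde{\sigma}(\sigma(r)) = \sigma_2(r)$ applied to any homogeneous $r$ of nonzero degree, together with the ring-homomorphism property, pins down $\widetilde{\sigma}(u)$; since $R[u,u^{-1}]$ is generated over $k$ by $R$ and $u^{\pm 1}$, these determine $\widetilde{\sigma}$ completely. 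Existence is verified by substituting $\sigma(r) = r u^{\deg r}$ into the proposed formula and simplifying.

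What remains is to confirm the coaction axioms (coassociativity and counit); since both sides of each axiom are $k$-algebra homomorphisms out of $R[u,u^{-1}]$, the checks reduce to routine verifications on the generators $R$ and $u$, using the comultiplication of $\O(\Gm^2)$. No serious obstacle arises; the only point of care throughout is keeping track of the convention that the first copy of $\Gm$ rescales the source of the groupoid while the second rescales the target, which is precisely the bookkeeping that produces the asymmetric exponents $v_1^{-1} v_2$ appearing in $\widetilde{\sigma}(u)$. This explains why the authors can reasonably declare the proof elementary and omit it.
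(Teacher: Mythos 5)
Your proposal is correct and fills in the omitted proof in the natural way. The geometric picture — viewing $\Gm \times_k \op{Spec}R$ as the action groupoid and letting $\Gm^2$ independently rescale source and target — matches exactly the $G \times G$-action on $G \times X$ that appears later in the paper in Equation~\eqref{equation: GxXaction}, and the coaction formulas you read off, $\widetilde{\sigma}(r) = r v_1^{\deg r}$ for homogeneous $r \in R$ and $\widetilde{\sigma}(u) = u v_1^{-1} v_2$, are exactly what is encoded in the $\Z^2$-grading of Remark~\ref{remark: gradings} (the bi-degree of $u$ being $(-1,1)$). One point deserves more care than you give it in the uniqueness step: the relation $\widetilde{\sigma}(\sigma(r)) = \sigma_2(r)$ forces $\widetilde{\sigma}(u) = u v_1^{-1} v_2$ (and not just $v_1^{-1}v_2$) only if one reads $\sigma_2(r) \in R[u,u^{-1},v_1,v_1^{-1},v_2,v_2^{-1}]$ as sitting in the larger ring via the map that includes the $R$-factor by $\sigma$ — that is, as $\sigma(r)v_2^{\deg r} = r u^{\deg r} v_2^{\deg r}$ — rather than via the naive inclusion $R[v_2,v_2^{-1}] \hookrightarrow R[u,u^{-1},v_1,v_1^{-1},v_2,v_2^{-1}]$. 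This is exactly the interpretation forced by the $\pi_2$-equivariance of $\hat\sigma$ stated in the ``moreover'' part, and it is the concrete algebraic form of the ``asymmetric bookkeeping'' you flag at the end; had one instead used the naive inclusion, the resulting $\widetilde{\sigma}(u) = v_1^{-1}v_2$ would not even define a coaction (it is not compatible with any $\Z^2$-grading on $R[u,u^{-1}]$). With that clarification made explicit, the rest of the verification (well-definedness on the localization $R[u,u^{-1}]$, coassociativity, counit) reduces to the routine generator checks you describe.
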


\begin{remark}\label{remark: gradings} 
The $\Gm^2$-action above is equivalent to saying that $R[u,u^{-1}]$ has a $\Z^2$-grading with the degree of $r\in R\subset R[u,u^{-1}]$ with $r$ homogenous being $( \op{deg}r, 0 )$, the degree of $\sigma(r)$ being $(0,\op{deg}r)$, and the degree of $u$ being $(-1,1)$. 
\end{remark}

\noindent Given a map $R\to S$ in $\mathsf{CR}_k^{\Gm}$, one obtains the obvious induced map $R[u,u^{-1}]\to S[u,u^{-1}]$. Here we regard $R[u,u^{-1}]$ and $S[u,u^{-1}]$ as objects of $\mathsf{CR}^{\Gm^2}_{k[u]}$. 

\begin{remark}\label{remark: whatisGm2ku}
To stay consistent with Remark ~\ref{remark: gradings}, when we write $\mathsf{CR}^{\Gm^2}_{k[u]}$ we require that $k[u]$ is equipped with a $\Gm$-action such that $\op{deg}u = (-1,1)$. Thus objects $S$ in $\mathsf{CR}^{\Gm^2}_{k[u]}$ are equipped with a map $k[u]\to S$ which respects this grading, and likewise morphisms respect this structure. In other words, $\mathsf{CR}^{\Gm^2}_{k[u]}$ is opposite to the category of affine schemes equipped with $\Gm^2$-actions and an {\em equivariant} map to $\mathbb{A}^1_k$. \end{remark}

\begin{definition}\label{definition: diagonalfunctor}
 Let $\Delta: \mathsf{CR}_k^{\Gm} \to \mathsf{CR}^{\Gm^2}_{k[u]}$ denote the functor determined by 
 \begin{displaymath}
  \Delta(R) := R[u,u^{-1}],
 \end{displaymath} where $\Delta (R)$ is equipped with the $\Gm^2$-action from Lemma ~\ref{lemma: basic action}. 
\end{definition}

\noindent We may view $\Delta(R)$ with its $\Gm^2$-action and its two $R$-module structures via $\pi$ and $\sigma$ as an object of the derived category $\op{D}^b(\op{mod}^{\Gm^2}R \otimes_k R)$ $\mathbb{Z}^2$-graded modules by regarding $\Delta (R)$  as the complex concentrated in homological degree zero. The notation $\Delta (R)$ introduced above is justified by the following.

\begin{lemma} \label{lemma: kernel of the identity}
 The object $\Delta (R)\in \op{D}^b(\op{mod}^{\Gm^2}R \otimes_k R) $ is the Fourier-Mukai kernel of the identity functor on $\op{D}^b\op{mod}^{\Gm}(R)$. \end{lemma}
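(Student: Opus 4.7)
The plan is to identify $\Delta(R)$ with the pushforward of the structure sheaf of $\Gm \times_k \op{Spec} R$ along the ``graph-of-action'' map $(\hat\pi,\hat\sigma) : \Gm \times_k \op{Spec} R \to \op{Spec} R \times_k \op{Spec} R$ (equipped with the $\Gm^2$-action from Lemma~\ref{lemma: basic action}), and then to verify the Fourier--Mukai property by direct computation on a general $\Z$-graded $R$-module.

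Concretely, I would note first that $\Delta(R) = R[u,u^{-1}]$ is free as an $R$-module via $\pi$, with basis $\{u^n\}_{n \in \Z}$, so the derived tensor product with any $\Gm$-equivariant $R$-module $M$ reduces to the ordinary tensor product $M \otimes_R R[u,u^{-1}] \cong M[u,u^{-1}]$, carrying its second $R$-module structure via $\sigma$. Tracking the $\Z^2$-grading from Remark~\ref{remark: gradings}, a simple tensor $u^n \otimes m$ with $m \in M_d$ has bidegree $(d-n,\,n)$. The equivariant Fourier--Mukai pushforward along the second projection (which is $\pi_2$-equivariant) extracts the first-grading-zero subspace $\bigoplus_{d \in \Z} u^d \otimes M_d$, and the assignment $m \mapsto u^{\deg m} \otimes m$ on homogeneous $m$ is a $\Z$-graded $R$-module isomorphism from $M$ onto this subspace. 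The identity $\sigma(r) \cdot (u^d \otimes m) = u^{d + \deg r} \otimes rm$ (using the tensor relation to move $r$ across) then confirms compatibility with the $R$-module structures.

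The main bookkeeping task is justifying why the equivariant pushforward along the second projection amounts to taking invariants under the first copy of $\Gm$, which is what causes the extraneous $u$-powers to collapse onto a single copy of $M$. Alternatively, one can bypass the module computation entirely by observing that the graph map $(\hat\pi,\hat\sigma)$ is the base change along the atlas $\op{Spec} R \times_k \op{Spec} R \to [\op{Spec} R/\Gm] \times [\op{Spec} R/\Gm]$ of the diagonal $[\op{Spec} R/\Gm] \to [\op{Spec} R/\Gm] \times [\op{Spec} R/\Gm]$; the lemma then follows tautologically from the fact that the structure sheaf of the diagonal is the Fourier--Mukai kernel of the identity on $\op{D}^b(\op{Qcoh}[\op{Spec} R/\Gm])$.
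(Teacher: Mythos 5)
Your proposal is correct and takes essentially the same approach as the paper: both exploit the flatness of $\Delta(R)$ (as an $R$-module via either $\pi$ or $\sigma$) to reduce the derived tensor product to an ordinary one, and then track the $\Z^2$-grading from Remark~\ref{remark: gradings} to identify the invariants with a single copy of $M$. The stacky alternative you sketch at the end --- recognizing $(\hat\pi,\hat\sigma)$ as the base change of the diagonal of $[\op{Spec}R/\Gm]$ along the atlas --- is not spelled out in the paper but is a perfectly valid and arguably more conceptual route to the same conclusion.
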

\begin{proof}  The Fourier-Mukai transform associated to $\Delta (R)$ is given by \begin{equation}\label{equation: diagonalFM} \Phi_{\Delta (R) }(M) := \pi_\ast (\Delta (R) \otimes^{\mathbb{L}}\sigma^\ast M)^{\Gm} \end{equation} where $M\in\op{D}^b\op{mod}^{\Gm}(R)$ and the functors $\pi_\ast$ and $\sigma^\ast$ are derived on the left and on the right respectively. In equation ~\eqref{equation: diagonalFM} we recall that taking a derived-push forward in the equivariant setting takes invariants, see e.g. \cite[Section 6]{BKR}, which is well-defined since $\Gm$ is reductive and so its functor of invariants is exact. By Remark \ref{remark: gradings}, the $\Gm$-invariants referred to in Equation ~\eqref{equation: diagonalFM} mean taking invariants on the left factor in the $\mathbb{Z}^2$-grading, i.e. taking terms of degree $(0,\ast )$. Since $\Delta (R)$ is flat (via either module structure), we then compute \[ (\Delta (R) \otimes^{\mathbb{L}}\sigma^\ast M)^{\Gm} = (\Delta (R) \otimes\sigma^\ast M)^{\Gm}  = (R[u,u^{-1}]\otimes\sigma^\ast M)_{(0,\ast )} \cong \sigma^\ast M\] so that $\Phi_{\Delta (R)}(M) = \pi_\ast\sigma^\ast M$. Likewise, for the inverse Fourier-Mukai transform,  we have \[ M\mapsto \sigma_\ast (\Delta (R) \otimes^{\mathbb{L}}\pi^\ast M)^{\Gm}\cong \sigma_\ast\pi^\ast M.\] To prove the lemma, we thus need to exhibit the two natural isomorphisms  \begin{align*}
  M & \cong \sigma_* \pi^* M \\
 M  & \cong \pi_* \sigma^* M.
 \end{align*}
for any $M\in \op{D}^b(\op{mod}^{\Gm}R)$. For $M\in \op{mod}^{\Gm}(R)$ these isomorphisms are respectively given by composing the maps  \begin{align*}
  M & \to \sigma_* \pi^* M \\
  m & \mapsto \sigma_M(m) \in M[u,u^{-1}] \cong \Delta (R)\otimes_R M
 \end{align*}
and
 \begin{align*}
  M & \to \pi_* \sigma^* M \\
  m & \mapsto m \in M[u,u^{-1}] \cong \Delta (R)\otimes_R M. 
 \end{align*} and where  ($\Delta (R)\otimes M)^{\Gm}\cong M$. The result follows for any $M\in \op{D}^b(\op{mod}^{\Gm}R)$ since $\Delta (R)$ is again flat via either module structure.

\end{proof}

The following definition, though simple, is crucial for this article.

\begin{definition}\label{definition: Q} 
 Given an object $R$ of $\mathsf{CR}_k^{\Gm}$, we define 
\begin{equation}
Q(R) := \langle \pi (R),\sigma (R), u\rangle\subseteq R[u,u^{-1}]\end{equation} to be the $k$-subalgebra  of $R[u,u^{-1}]$ generated by $u$ and the images of the co-action and projection maps. We regard $Q(R)$ as an object of $\mathsf{CR}^{\Gm^2}_{k[u]}$. 
\end{definition}

By construction, the maps $\pi$ and $\sigma$ both have images in $Q(R)$. We thus have maps 
\begin{equation}\label{equation: pands}
\begin{tikzcd}
R \ar[r, shift left, "p"] \ar[r, shift right, swap, "s"] & Q(R)
\end{tikzcd}
\end{equation}
which equal $\pi$ and $\sigma$ respectively (in general $Q(R)\neq R[u,u^{-1}]$, hence we reserve different symbols for these maps). The same construction as in Lemma ~\ref{lemma: basic action} then shows that $Q(R)$ has $\Gm^2$-action. In terms of the $\mathbb{Z}$-grading on $R$, we may equivalently write 
\begin{equation}\label{equation: generatorsforQ}
Q(R) = \langle \bigoplus_{i<0}R_iu^i , R[u]\rangle
\end{equation} where $R=\bigoplus_{i\in\mathbb{Z}}R_i$. Notice that when $r\in R$ is homogeneous with non-negative degree, $s(r) = ru^{\op{deg}(r)} \in R[u]$, and thus, in addition to $R[u]$, only the additional summands $R_iu^i$ with $i$ negative are required to generate. 

\begin{example}\label{example: affinespaceQ}
 Let $R$ be a polynomial ring equipped with a $\Gm$-action (i.e. a $\mathbb{Z}$-grading).  Relabelling variables as necessary, write \begin{equation}R = k[x_1^+,\ldots , x_k^+, x^-_1,\ldots , x^-_l]\end{equation} where  each $x_i^+$ has non-negative (possibly zero) degree and each $x_j^-$ has strictly negative degree. Write the degrees as  $a_i = \op{deg}(x_i^+)\leq 0$ and $b_j = \op{deg}(x^-_j)<0$.  As in Equation ~\eqref{equation: generatorsforQ}, $Q(R)$ is generated (over $k$) by \[ u, x_1^+,\ldots x_k^+, x_1^-,\ldots x_l^-,x_1^-u^{b_1},\ldots , x_l^-u^{b_l}.\] To compress notation, write $\mathbf{x}^+$ for the set of $x_i^+$'s and similarly for $\mathbf{x}^-$.  Setting $y^-_j = x_j^-u^{b_j}$ for $j=1,\ldots ,l$ and inspecting the relations, one has 
\[  Q(R)=k[u,\mathbf{x}^+, \mathbf{x}^-, y_{1}^-,\ldots ,y_l^-] /(y^-_{1}u^{-b_{1}}-x^-_{1} , \ldots , y_l^-u^{-b_l}-x^-_l ). 
\] It is convenient to write this more symmetrically by setting $y_i^+ = x_i^+u^{a_i}$ for $i=1,\ldots , k$ as well so that, in compressed notation,  \begin{align} Q(R) & = k[u,\mathbf{x}^+,\mathbf{x}^-,\mathbf{y}^+,\mathbf{y}^-]/(\mathbf{x}^+u^{\mathbf{a}} - \mathbf{y}^+, \mathbf{y}^-u^{-\mathbf{b}}-\mathbf{x}^-)\label{equation: Qforaffinespace} \\&= k[u,\mathbf{x}^+,\mathbf{y}^-].\end{align} In these variables, the maps $s,p : R\to Q(R)$ are given by:  
\begin{align}
 p(x_i^+)  &= x_i^+   &   p(x_j^-)  &= y_j^-u^{-b_j}\label{equation: affinespacemodule1}   \\
 s(x_i^+) &= x_i^+u^{a_i}       &   s(x_j^-) &= y_j^-.\label{equation: affinespacemodule2} 
\end{align}
\noindent We will interpret these maps geometrically in Example ~\ref{example: affinespaceQgeo}. Also, in Subsection ~\ref{section: flops}  we will revisit this example (when $k=l$) while studying the Atiyah flop. 

\end{example}
\begin{example}\label{example: nonnegative} Suppose that $R$ is any non-negatively graded ring, i.e. $\Gm$ acts on $R$ with non-negative weights. Then $Q(R)$ is actually generated by $\pi(R)$ and $u$ as a $k$-algebra. As a module over $R$ via $p$, we thus have $Q(R) \cong R[u]$ or, geometrically, $\op{Spec}Q(R)\cong \A^1_k\times \op{Spec}R$. 

If $R$ is instead non-positively graded, then of course $Q(R)$ is not generated by just $\pi (R)$ and $u$. However, we may still construct an isomorphism $Q(R)\cong R[u]$ where now $Q(R)$ has the $R$-module structure via $s$. Indeed, one always has the map $R[u]\to Q(R)$ given by $r\mapsto \sigma (r)$ and $u\mapsto u$. This map is always injective, and when the weights are all non-positive, it is easily checked to be surjective as well. 
\end{example}

For later use we record an elementary property that $Q$ enjoys with respect to polynomial extensions; in particular, the next lemma shows that $\op{Spec}Q(R[x])\cong\op{Spec}\mathbb{A}^1_k\times \op{Spec}Q(R)$. 

\begin{lemma}\label{lemma: Rx}Given $R \in \mathsf{CR}_k^{\Gm}$, endow $R[x]$ with a $\Gm$-action by giving $x$ degree $a$. Then \begin{equation}\label{equation: Qofpoly} Q(R)[y]\cong Q(R[x])\end{equation}  where $y$ has degree $(a,0)$ if $a \geq 0$ and degree $(0,a)$ if $a \leq 0$. 

\end{lemma}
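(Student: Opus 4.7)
The plan is to unpack the definition of $Q$ applied to $R[x]$ and identify the extra generator one needs to add to $Q(R)$. Writing $R[x][u,u^{-1}]$ as an overring of $R[u,u^{-1}]$, the maps $\pi,\sigma : R[x] \to R[x][u,u^{-1}]$ restrict to the usual $\pi,\sigma$ on $R$, while $\pi(x) = x$ (of bi-degree $(a,0)$) and $\sigma(x) = xu^{a}$ (of bi-degree $(0,a)$). Hence, as a subalgebra of $R[x][u,u^{-1}]$,
\[
Q(R[x]) \;=\; \bigl\langle\, Q(R),\, x,\, xu^{a}\,\bigr\rangle.
\]
So the whole question is which of the two elements $x$ or $xu^{a}$ is the "new" generator, and this is controlled by the sign of $a$, since the other one is visibly already contained in $Q(R)[\,\text{chosen generator}\,]$.

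If $a \geq 0$, then $u^{a} \in Q(R)$, so $xu^{a} = x \cdot u^{a} \in Q(R)[x]$. Setting $y := x$, we get $Q(R[x]) = Q(R)[y]$, and $y$ has bi-degree $(a,0)$ as claimed. If $a \leq 0$, then $u^{-a} \in Q(R)$, so $x = (xu^{a}) \cdot u^{-a} \in Q(R)[xu^{a}]$. Setting $y := xu^{a}$, we again conclude $Q(R[x]) = Q(R)[y]$, and this time $y$ has bi-degree $(0,a)$, matching the statement.

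It remains to verify that in each case $y$ is a genuine polynomial generator, i.e.\ algebraically independent over $Q(R)$. This is essentially automatic: one has the chain of inclusions
\[
Q(R) \;\subseteq\; R[u,u^{-1}] \;\subseteq\; R[x][u,u^{-1}],
\]
and $x$ is transcendental over $R[u,u^{-1}]$; since $y$ is either $x$ itself or $x$ times a unit of $R[u,u^{-1}]$, it is transcendental over $R[u,u^{-1}]$ as well, and a fortiori over $Q(R)$.

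The only potential pitfall is the bookkeeping of the bi-grading; but this is direct from Remark~\ref{remark: gradings}, since $y=x$ inherits the $\pi$-side grading $(a,0)$ while $y = xu^{a}$ inherits the $\sigma$-side grading $(0,a)$ (note $u$ contributing $(-a,a)$ combines with $x$'s $(a,0)$ to give $(0,a)$). The case split on the sign of $a$ is really the crux, and once one sees that $u^{|a|}$ always lies in $Q(R)$ the argument is a short calculation with generators.
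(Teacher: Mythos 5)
Your proposal is correct and takes essentially the same approach as the paper's proof: identify $Q(R[x]) = \langle Q(R), x, xu^a\rangle$, use the sign of $a$ to determine which of $x$ or $xu^a$ serves as the new polynomial generator $y$ (the other being expressible as $y$ times a power of $u \in Q(R)$), and verify algebraic independence by passing to the containing polynomial ring $R[u,u^{-1}][x]$. Your write-up merely spells out the surjectivity step that the paper dismisses as "clear," which is a reasonable expansion.
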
 

\begin{proof}Assume the degree of $x$ is $a \leq 0$. The case $a \geq 0$ is analogous. 
We have a map 
\begin{align*}
Q(R)[y] & \to Q(R[x]) \\
y & \mapsto u^a x
\end{align*}
This map is clearly surjective, and 
 has kernel if and only if $u^a x$ is algebraic over $Q(R)$. If so, then $u^a x$ is algebraic over $R[u,u^{-1}]$ in $R[x,u,u^{-1}]$ which is impossible, since $u$ is a unit. The statement about the weight of $y$ is just the weight of $u^ax$ via the grading from Remark ~\ref{remark: gradings}. 
\end{proof} 

\begin{remark} The reader desiring a more geometric understanding of $Q(R)$ will find such a discussion in Section ~\ref{section: compactifications}.
\end{remark} 

\begin{remark}\label{remark: drinfeld} Our definition of $Q(R)$ is actually equivalent to the affine case of a construction of Drinfeld, see \cite[Section 2.3]{Drinfeld}. Namely, if $R=\bigoplus R_i$ is $\mathbb{Z}$-graded, Drinfeld considers the the $k[t]$-algebra $\tilde{R}$ generated by all symbols $\tilde{r}$ where $r\in R$, and which are required to be $k$-linear and subject to the relations \[ \tilde{r_1}\tilde{r_2} = t^{\mu (n_1, n_1)}\tilde{r_1}\tilde{r_2} \] where each $r_i\in R_{n_i}$ and where \[ \mu (m, n) := \textrm{min}\{|m|,|n| \}\] if $m$ and $n$ have opposite signs, and $\mu (m,n)=0$ otherwise. It is easy to check that the assignment  $u\mapsto t$, $r_i\mapsto t^{n_i}r_i$ for $i<0$, and $r_i\mapsto r_i$ for $i\geq 0$ induces an isomorphism $Q(R) \cong \tilde{R}$.  In the Appendix we will discuss in more detail the role of our $Q(R)$ in the context of Drinfeld's article.  
\end{remark} 

The study of $Q(R)$ as a Fourier-Mukai kernel is, in a sense, the fundamental goal of this article. To this end, we first note that formation of $Q(R)$ is functorial. 

\begin{lemma}\label{lemma: Q definition}  The assignment of $Q(R)$ to $R$ defines a functor
 \begin{displaymath}
  Q: \mathsf{CR}_k^{\Gm} \to \mathsf{CR}^{\Gm^2}_{k[u]}
 \end{displaymath}
 which preserves surjective morphisms.  
\end{lemma}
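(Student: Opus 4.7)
The plan is as follows. Given a morphism $f : R \to S$ in $\mathsf{CR}_k^{\Gm}$, that is, a graded $k$-algebra homomorphism, first extend it to a $k[u,u^{-1}]$-algebra map $\bar f : R[u,u^{-1}] \to S[u,u^{-1}]$ by $\bar f(u) = u$ and $\bar f|_R = f$. I will then restrict $\bar f$ to $Q(R)$ and check that its image lands in $Q(S)$.

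The first step is a verification on generators. Since $Q(R)$ is generated as a $k$-subalgebra of $R[u,u^{-1}]$ by $u$, $\pi_R(R)$, and $\sigma_R(R)$, it suffices to observe $\bar f(u) = u \in Q(S)$, $\bar f(\pi_R(r)) = f(r) = \pi_S(f(r)) \in Q(S)$, and, for any homogeneous $r \in R$,
\[
\bar f(\sigma_R(r)) = f(r)\, u^{\deg r} = f(r)\, u^{\deg f(r)} = \sigma_S(f(r)) \in Q(S),
\]
where the penultimate equality uses that $f$ preserves the $\mathbb{Z}$-grading. Hence $\bar f$ restricts to a $k$-algebra map $Q(f): Q(R) \to Q(S)$. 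By construction $Q(f)$ is $k[u]$-linear, and it respects the $\mathbb{Z}^2$-grading of Remark~\ref{remark: gradings} precisely because $f$ is $\mathbb{Z}$-graded; thus $Q(f)$ is a morphism in $\mathsf{CR}^{\Gm^2}_{k[u]}$.

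Functoriality is inherited from the construction of $\bar{\,\cdot\,}$: both $\overline{f \circ g} = \bar f \circ \bar g$ and $\bar{\id}_R = \id_{R[u,u^{-1}]}$ are immediate, and each restricts to the corresponding identity for $Q$. For the surjectivity claim, if $f$ is surjective then so is $\bar f$. Every homogeneous $s \in S$ lifts, by graded surjectivity of $f$, to a homogeneous $r \in R$ of the same degree, whence $\pi_S(s) = Q(f)(\pi_R(r))$ and $\sigma_S(s) = Q(f)(\sigma_R(r))$. Since $Q(S)$ is generated as a $k$-algebra by $u$ (which is in the image) together with the elements $\pi_S(s)$ and $\sigma_S(s)$, the map $Q(f)$ is surjective.

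Essentially no part of this is substantive; the statement is a formal exercise in chasing generators through the definition of $Q$. The only point that merits care is matching gradings to see that $Q(f)$ is genuinely $\Gm^2$-equivariant (equivalently $\mathbb{Z}^2$-graded), and this comes down to $f$ being $\mathbb{Z}$-graded in the first place.
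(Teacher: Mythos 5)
Your proof is correct and follows the same approach as the paper's (extend $f$ to $R[u,u^{-1}]\to S[u,u^{-1}]$, restrict, and check on generators), just written out in more detail; the paper's own proof is a one-line sketch that leaves the generator-chase and the grading check implicit.
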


\begin{proof}
Given $\phi: S\to R$, the corresponding map $S[u,u^{-1}]\to R[u,u^{-1}]$ is surjective when $\phi$ is.  Furthermore, one checks that the image of $Q(S)$ under this map is $Q(R)$. 
\end{proof} 

We can relate the functors $Q$ and $\Delta$ by a natural transformation by considering the inclusions 
\begin{equation}\label{equation: eta} 
\eta_R: Q(R) \hookrightarrow R[u,u^{-1}] = \Delta (R)
\end{equation}
which come from the definition of $Q(R)$ as a subalgebra of $\Delta (R)$. These inclusions behave predictably; in particular, we have the following commutative diagrams:
 \begin{equation}
\begin{tikzcd}
Q(R) \ar[r, "\eta_R"] &  R[u, u^{-1}]  \\
& R \ar[u, "\pi"]  \ar[ul, "p"]
\end{tikzcd} 
\tand 
\begin{tikzcd}
Q(R) \ar[r, "\eta_R"] &  R[u, u^{-1}]  \\
& R. \ar[u, "\sigma"]  \ar[ul, "s"]
\end{tikzcd} 
\label{equation: Q(R)commutes}
 \end{equation}

\begin{definition} Let $\eta : Q \to \Delta$ be the natural transformation of functors induced by the inclusions $\eta_R : Q(R) \hookrightarrow \Delta (R)$. Here $Q$ and $\Delta$ are the functors $\mathsf{CR}_k^{\Gm} \to \mathsf{CR}^{\Gm^2}_{k[u]}$ from Lemma  ~\ref{lemma: Q definition} and Definition ~\ref{definition: diagonalfunctor} respectively. 

\end{definition} 

\noindent We will sometimes abuse notation and denote a map $\eta_R$ by $\eta$ when it is clear from context that we are working with rings and not the natural transformation. 

 Another way of understanding the relation between $Q$ and $\Delta$ is the following result, which shows in particular that they become identified after localizing by elements of non-zero degree. 

\begin{lemma} \label{lemma: Q is trivial on semi-stable}  Let $R$ be an object of $\mathsf{CR}_k^{\Gm}$. View $Q(R)$ as a right $R$-module via $s$ and a left $R$-module via $p$, and view $\Delta (R)$ as a right $R$-module via $\sigma$ and a left $R$-module via $\pi$. Let  $r \in R$ be a homogeneous element, and let $R\rightarrow R_r$ be the corresponding homogeneous localization. If $\op{deg}(r)> 0$, then
 \begin{displaymath}
 1 \otimes_s \eta : R_r \otimes_s Q(R) \to R_r \otimes_\sigma \Delta(R)
 \end{displaymath}
 is an isomorphism. 
If $\op{deg}(r) <0$, then 
 \begin{displaymath}
  \eta \mathbin{_p\otimes} 1: Q(R) \mathbin{_p\otimes} R_r \to \Delta(R) \mathbin{_\pi\otimes} R_r
 \end{displaymath}
 is an isomorphism. 
If $\op{deg}(r) = 0$, then \[  Q(f) \mathbin{_p\otimes}1 :Q(R)\mathbin{_p\otimes} {R_r} \to Q(R_r)\otimes R_r\cong Q(R_r) \] is an isomorphism. 
\end{lemma}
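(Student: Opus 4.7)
The plan is to recognize each tensor product as an explicit homogeneous localization of $Q(R)$ or $\Delta(R)$, after which each asserted isomorphism reduces to exhibiting the element $u^{\pm 1}$ (or, in the degree-zero case, the missing generators) directly inside the localized $Q(R)$.

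For part (a), set $d := \op{deg}(r) > 0$. Since $R_r$ is flat over $R$ and $s(r)=ru^d$, the two tensor products are
\[
R_r \otimes_s Q(R) = Q(R)[s(r)^{-1}] = Q(R)[(ru^d)^{-1}], \qquad R_r\otimes_\sigma \Delta(R)=R_r[u,u^{-1}],
\]
and $1\otimes_s\eta$ becomes the restriction of the inclusion $R[u,u^{-1}]\hookrightarrow R_r[u,u^{-1}]$ to the subring on the left; injectivity is therefore automatic. The essential step is to show that $u^{-1}$ lies in the image. Since $p(r)=r$ and $u^{d-1}$ both lie in $Q(R)$ (where $u^{d-1}\in Q(R)$ uses $d\geq 1$), their product $ru^{d-1}$ is in $Q(R)$, and inside $R_r[u,u^{-1}]$ one has the identity $ru^{d-1}/(ru^d)=u^{-1}$. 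Once $u^{-1}$ belongs to the image, the image contains $R[u,u^{-1}]=\Delta(R)$ together with $(ru^d)^{-1}$, hence all of $R_r[u,u^{-1}]$.

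Part (b) is formally symmetric. With $e:=-\op{deg}(r)>0$, the element $s(r)=\sigma(r)=ru^{-e}$ lies in $Q(R)$ while $p(r)=r$, so both tensor products are localizations at $r$ and $\eta\mathbin{_p\otimes}1$ localizes to the inclusion $Q(R)[r^{-1}]\hookrightarrow R_r[u,u^{-1}]$. The identity $s(r)/r=u^{-e}$ shows $u^{-e}\in Q(R)[r^{-1}]$, and multiplying by $u^{e-1}\in Q(R)$ produces $u^{-1}$ in the image; the remainder of the argument is identical to (a).

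For part (c), the hypothesis $\op{deg}(r)=0$ forces $p(r)=s(r)=r$, so $Q(R)\mathbin{_p\otimes}R_r = Q(R)[r^{-1}]$ sits as a subring of $R_r[u,u^{-1}]$. I identify this subring with $Q(R_r)$ by comparing generators inside $R_r[u,u^{-1}]$: one has $\pi(R)[r^{-1}]=R_r=\pi_r(R_r)$, and for homogeneous $a\in R$ the identity $\sigma(a)/r^n=(a/r^n)u^{\op{deg}a}=\sigma_r(a/r^n)$ (using $\op{deg}(r^n)=0$) yields $\sigma(R)[r^{-1}]=\sigma_r(R_r)$; together with $u$, these sets generate both $Q(R)[r^{-1}]$ and $Q(R_r)$. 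The only genuinely nontrivial step in the whole proof is the construction of $u^{-1}$ in the localized $Q(R)$ in parts (a) and (b); after that, everything else reduces to bookkeeping with the maps $\pi,\sigma,p,s$ and flatness of localization.
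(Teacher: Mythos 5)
Your proof is correct and uses essentially the same strategy as the paper's: identify each tensor product as a homogeneous localization of $Q(R)$ sitting inside $R_r[u,u^{-1}]$, obtain injectivity from flatness of localization, and exhibit $u^{-1}$ in the image for the cases of nonzero degree (your elements $ru^{d-1}/(ru^d)$ and $u^{e-1}\cdot s(r)/r$ are the same witnesses the paper writes down). Your part (c) is more explicit than the paper's one-line ``clear,'' but the content is the same.
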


\begin{proof} In all three cases, injectivity of the maps holds because localization is flat and $\eta$ and $Q(f)$ are inclusions. The only non-trivial part of verifying surjectivity in the cases of non-zero degree is to demonstrate that $u^{-1}$ lies in the image. Indeed, in the case where positive degree one has that $(1\otimes_s \eta )(\frac{1}{r}\otimes u^{\op{deg}(r)-1}r) = u^{-1}$ and in the case of negative degree one has that $ ( \eta \mathbin{_p\otimes} 1)(s(r)u^{-1 -\op{deg}(r)]} \otimes \frac{1}{r}) = u^{-1}$. Surjectivity in the degree zero case is clear.  
\end{proof}

We conclude this section with some natural loci coming from a $\Gm$-action. 

\begin{definition}\label{definition: plusminus} If $R$ is an object of $\mathsf{CR}_k^{\Gm}$ let $I^{\pm}\subseteq R$  be the ideals generated by elements of positive or respectively negative degree. Set $R^+ := R/I^-$, and $R^- := R/I^+$ (note the swap in signs), and set $R^0 := R/(I^+,I^-)$. 
\end{definition} 

\noindent The reason for defining $R^0$ and $R^{\pm}$ in the above fashion is to match notation with Definition ~\ref{definition: attractingrepelling} below, which gives their geometric description.  

\begin{definition}\label{definition: attractingrepelling}  Let $X$ be any $k$-scheme equipped with a $\Gm$-action. Equip $\op{Spec}(k)$ with the trivial $\Gm$-action. Let $\mathbb{A}^1_+$ denote $\mathbb{A}^1$ equipped with its usual $\Gm$-action by scaling, and let $\mathbb{A}^1_-$ denote $\mathbb{A}^1$ equipped with the inverse action $t\cdot x = t^{-1}x$. Then 
\[ X^0 := \op{Hom}^{\Gm}(\op{Spec}(k),X) = \{x\in X\; |\;  t\cdot x = x\;\op{for}\; \op{all} \;t\in\mathbb{A}^1 \} \] is the \newterm{fixed point locus} for the $\Gm$-action, \[ X^+ := \op{Hom}^{\Gm}(\mathbb{A}^1_+, X) = \{x\in X\; |\; \lim_{t\rightarrow\infty} t\cdot x\;\op{exists}\;\op{in}\; X \}\] is the \newterm{attracting locus},  and \[ X^- :=\op{Hom}^{\Gm}(\mathbb{A}^1_-, X) = \{x\in X\; |\; \lim_{t\rightarrow\infty} t^{-1}\cdot x\;\op{exists}\;\op{in}\; X \} \] is the \newterm{repelling locus}. 
\end{definition} 

\noindent We refer to \cite[Section 1]{Drinfeld} for basic properties of these loci. As alluded to above, when $X := \op{Spec}R$ is affine and equipped with a $\Gm$-action, we have \[ X^0 = \op{Spec}R^0 \]  and \[ X^{\pm} = \op{Spec}R^\pm = V(I^\pm ) \]  using the notation of Definition ~\ref{definition: plusminus}. By construction there are inclusions $R^0\subset R^{\pm}$ which give maps \begin{equation}\label{equation: blades} q^{\pm} : X^{\pm}\to X^0.
\end{equation} In terms of Definition ~\ref{definition: attractingrepelling} the maps $q^{\pm}$ can equivalently be understood as being induced by the $\mathbb{G}_m$-equivariant inclusions $\op{Spec}(k) \hookrightarrow \mathbb{A}^1_{\pm}$. Geometrically, this means that the maps $q^{\pm}$ correspond to taking a point to its limit along either $\Gm$-action or the inverse $\Gm$-action. 

\begin{remark}\label{proposition: plusminusfromQ} 
The ideals $I^{\pm}$ can be recovered directly from $Q(R)$ equipped with its maps $p$ and $s$.  Namely, 
 \begin{align*}
  I^+ & = s^{-1}(uQ(R)) \subseteq R  \\
  I^- & = p^{-1}(uQ(R)) \subseteq R. 
 \end{align*}
\end{remark} 

\subsection{Flop equivalences via Q(R)}\label{section: flops}

We now observe that $Q(R)$ provides the derived equivalence constructed by Bondal and Orlov for the standard Atiyah flop \cite[Section 3]{BO}. For $n\geq 2$, let \begin{equation}\label{equation: atiyahring} R = k[x_1^+,\ldots,x_n^+,x_1^-,\ldots,x_n^-]\end{equation} with the $\Gm$-action given by taking $\op{deg}(x_i^+) = 1$ and $\op{deg}(x_i^-) = -1$ for each $i$. The ideals $I^{\pm}\subset R$ from Definition ~\ref{definition: plusminus} are $I^+ = (x_1^+,\ldots,x_n^+) = (\mathbf{x}^+)$ and $I^- = (x^-_1,\ldots,x^-_n) = (\mathbf{x}^-)$.  Let $X=\op{Spec}R = \mathbb{A}^{2n}$ and set 
\begin{align*}
 U^{\pm} & := X \setminus V(I^{\pm}) \\
 X/ \! /{\pm} & := U^{\pm}/\Gm\\
 X/ \! /0 & := \op{Spec} R^{\Gm}. 
\end{align*}

\begin{remark} The invariant subring $ R^{\Gm}$ should not be confused with $R^0$ from Definition ~\ref{definition: plusminus} where the defining ideal is $(I^+,I^-)$; in other words, the invariant theory quotient $X/ \! /0$ is not the same thing as the fixed locus $X^0$. 

\end{remark} 

\noindent For the standard Atiyah flop, $X/ \! /_{\pm}\Gm$ are both isomorphic to the total space of $\mathcal{O}(-1)^{\oplus n}$ on $\mathbb{P}^{n-1}$ and $X/ \! /_0\Gm$ is a singular affine quadric. Let \[ p^{\pm} : X/ \! /+\longrightarrow X/ \! /0\]  be the corresponding birational contractions.

 The diagram

\begin{center}
\begin{tikzpicture}[scale=1,level/.style={->,>=stealth,thick}]
	\node (a) at (-2,2) {$X/ \! /+$};
	\node (b) at (2,2) {$X/ \! /-$};
	\node (c) at (0,0) {$X/ \! /0$};
	\draw[level,dashed] (a) -- (b) ;
	\draw[level] (a) -- (c) ;
	\draw[level] (b) -- (c) ;
\end{tikzpicture}
\end{center}
is the prototypical example of a flop. \cite[Theorem 3.9]{BO} proves that the functor \begin{equation}\label{equation: atiyahequivalence}
p^-_* p^{+\ast} : \op{D}^b(\op{coh}X/ \! /+) \to \op{D}^b(\op{coh}X/ \! /-). 
\end{equation}
is an equivalence of derived categories of coherent sheaves (where the functors $p^-_* $ and $p^{+\ast}$ are, of course, derived on the right and left, respectively). For this example, the actions of $\Gm$ on $U^\pm$ are free, and so we may identity
\begin{equation}\label{equation: atiyahequivariant}
\op{D}^b(\op{coh}X/ \! /+) = \op{D}^b(\op{coh}^{\Gm}U^\pm )
\end{equation}
and recognize the Bondal-Orlov flop equivalence as an equivalence of equivariant derived categories. 

\begin{remark} Taking the functor to be $p^-_* p^{+\ast} $ is equivalent to taking the fiber product along $p^{\pm}$ as the Fourier-Mukai kernel of the functor.  Instead of the fiber product, one could also take as a Fourier-Mukai kernel a common blow-up resolving the birational map. For the Atiyah flop, though, the fiber product and the resolution are isomorphic, see \cite[Proposition 5.5]{Kaw}. 
\end{remark} 

We now observe that the fiber product agrees with an appropriate restriction of $Q(R)$.

\begin{proposition} \label{proposition: BO = Q}
 With $Q(R)$ as in Equation ~\eqref{equation: atiyahring}, let \begin{equation}\label{equation: Qwc} 
 Y^{wc} :=  \op{Spec} Q(R) \times_{X \times X} (U^+ \times U^-) 
\end{equation} 
 denote the restriction of $Q(R)$ to the open subset $U^-\times U^+\subset X\times X$. (The ``wc" stands for ``wall-crossing".) Then there is a natural isomorphism
 \begin{equation}\label{equation: atiyahwc}
  Y^{wc} \cong U^+ \times_{X/ \! /0} U^-.
 \end{equation} In other words, if we regard $Q^{wc} = \Gamma(Y^{wc},\mathcal O)$ as an object of $\op{D}^b(\op{coh}^{\Gm^2} U^-\times U^+)$, then the Fourier-Mukai functor \[ \Phi_{Q^{wc}} : \op{D}^b(\op{coh}^{\Gm}U^-) \to \op{D}^b(\op{coh}^{\Gm}U^+) \] is an equivalence and $\Phi_{Q^{wc}}$ is naturally isomorphic to to $p^-_* p^{+\ast} $ under the identification in Equation ~\eqref{equation: atiyahequivariant}. 
\end{proposition}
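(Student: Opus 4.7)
The plan is to verify both claims by direct computation. Specializing Example~\ref{example: affinespaceQ} to $a_i = 1$ and $b_j = -1$ gives $Q(R) \cong k[u, \mathbf{x}^+, \mathbf{y}^-]$ with $R$-module structure maps determined by $p(x_i^+) = x_i^+$, $p(x_j^-) = u y_j^-$, $s(x_i^+) = u x_i^+$, $s(x_j^-) = y_j^-$. Consequently the induced morphism $\op{Spec} Q(R) \to X \times X$ sends $(u, \mathbf{x}^+, \mathbf{y}^-)$ to $((\mathbf{x}^+, u\mathbf{y}^-), (u\mathbf{x}^+, \mathbf{y}^-))$, and its restriction $Y^{wc}$ to $U^+ \times U^-$ is the open subscheme of $\mathbb{A}^{2n+1}_k$ on which neither $\mathbf{x}^+$ nor $\mathbf{y}^-$ vanishes (since the pullbacks of $I^+$ and $I^-$ along $p$ and $s$ respectively are the ideals $(\mathbf{x}^+)$ and $(\mathbf{y}^-)$).

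For the target, $R^{\Gm} = k[x_i^+ x_j^-]_{1 \le i,j \le n}$ realizes $X/ \! /0$ as the affine variety of rank-$\le 1$ $n \times n$ matrices, so $U^+ \times_{X/ \! /0} U^-$ is the closed subscheme of $U^+ \times U^-$ cut out by $\mathbf{a}^+ \otimes \mathbf{a}^- = \mathbf{c}^+ \otimes \mathbf{c}^-$, where $((\mathbf{a}^+, \mathbf{a}^-), (\mathbf{c}^+, \mathbf{c}^-))$ are coordinates on $U^+ \times U^-$. The morphism $Y^{wc} \to U^+ \times U^-$ above factors through this fiber product since both associated matrices equal $u\,\mathbf{x}^+ \otimes \mathbf{y}^-$. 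To construct an inverse I would work on the cover by the opens $D(a_p^+) \cap D(c_q^-)$, where the rank-one constraint forces $c_p^+/a_p^+ = a_q^-/c_q^-$; taking this common value as $u$ and setting $\mathbf{x}^+ := \mathbf{a}^+$, $\mathbf{y}^- := \mathbf{c}^-$ produces a morphism back to $Y^{wc}$, and a routine check shows the local formulas glue and that the two compositions are the identity.

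Given the scheme-theoretic identification $Y^{wc} \cong U^+ \times_{X/ \! /0} U^-$, the Fourier-Mukai part of the claim is essentially formal. Since the $\Gm$-actions on $U^{\pm}$ are free, equivariant descent along the identification of Equation~\eqref{equation: atiyahequivariant}, applied to the residual diagonal $\Gm$-action on $U^+ \times U^-$, shows that $Q^{wc}$ descends to the structure sheaf of $X/ \! /+ \times_{X/ \! /0} X/ \! /-$, regarded as an object of $\op{D}^b(\op{coh}\, X/ \! /+ \times X/ \! /-)$. The associated Fourier-Mukai transform is then $p^-_\ast p^{+\ast}$ by flat base change, and the Bondal-Orlov equivalence~\eqref{equation: atiyahequivalence} yields the claim. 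The main obstacle I anticipate is the careful bookkeeping of the $\Gm^2$-equivariant structure on $Q^{wc}$ during this descent step; the isomorphism $Y^{wc} \cong U^+ \times_{X/ \! /0} U^-$ itself is transparent linear algebra, requiring no homological or derived input beyond the cited Bondal-Orlov theorem.
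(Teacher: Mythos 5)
Your argument is essentially the paper's: both unfold Example~\ref{example: affinespaceQ} to obtain the coordinates on $\op{Spec}Q(R)$ and the resulting map to $X\times X$, both verify the identification with the fiber product by working on an affine cover coming from inverting the $\mathbf{x}^+$'s and $\mathbf{y}^-$'s (the paper phrases this as checking that $\phi = p\otimes_{R^{\Gm}} s$ localizes to an isomorphism; you instead exhibit explicit mutually inverse maps, which amounts to the same verification), and both conclude by quoting the Bondal--Orlov equivalence. One minor imprecision worth flagging: the step ``by flat base change'' is not quite the right appeal, since $p^\pm$ are small contractions and in particular not flat; what is actually being used is the projection formula for the closed embedding $Y^{wc}\hookrightarrow U^+\times U^-$ (together with the identification of the fiber product with the common resolution via \cite[Proposition 5.5]{Kaw}), which the paper likewise treats as an immediate rephrasing.
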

\begin{proof}
The universal property of the fiber product gives a map \begin{equation}\label{equation: fiberQringmap}\phi: R \otimes_{R^{\Gm}} R \to Q(R)\end{equation} which is, explicitly, given by $r_1\otimes r_2\mapsto r_1s(r_2)$, i.e. $\phi = p \otimes_{R^{\Gm}} s$. Taking $k=n$ and $l=n$ in Example ~\ref {example: affinespaceQ} to describe $Q(R)$, we have: \begin{displaymath}
 \phi :  \frac{k[\mathbf{x}^+,\mathbf{x}^-,\mathbf{y}^+,\mathbf{y}^-] }{(\mathbf{x}^+\mathbf{y}^- - \mathbf{y}^+\mathbf{x}^-)} \to k[u, \mathbf{x}^+, \mathbf{y}^-, u^{-1}y_1^-,\ldots u^{-1}y_n^-]. \end{displaymath}
 We can work on a cover of $U^+ \times U^-$ given by inverting the monomials in $\mathbf{x}_1$ and $\mathbf{y}_2$. One can check that inverting any such pair reduces $\phi$ to an isomorphism. The statement about $\Phi_{Q^{wc}}$ being an equivalence is then just a rephrasing of the Bondal-Orlov equivalence, i.e. Equation ~\eqref{equation: atiyahequivalence}. 
\end{proof}

\begin{example}\label{example: mukai} Let \[ R = k[x_1,\ldots ,x_n,y_1,\ldots , y_n] /(x_1y_1+\ldots +x_ny_n)\] with $n\geq 2$ and with each $\op{deg}(x_ i )=1$ and each $\op{deg}(y_j) =-1$. Here $\op{Spec}R/ \! /+\dashrightarrow\op{Spec}R/ \! /-$ is the elementary Mukai flop. Note that here $\op{Spec}R$ is singular. In Example \ref{example: Q2=Q fails} we will see that such singularities lead to a poorly behaved Fourier-Mukai functor associated to $Q(R)$, and we will correct this behavior in Example ~\ref{example: derivedexample} by replacing $Q(R)$ with a suitable affine derived scheme $Q_{\op{der}}(R)$. It is not difficult to show that a suitable generalization of Proposition ~\ref{proposition: BO = Q} holds once this correction is made. 
\end{example} 

\section{Functors from compactifications}\label{section: compactifications}

\subsection{Partial compactifications of group actions}\label{section: compactificationsfirst}

We now address the geometric interpretation of $Q(R)$. In this section,  in order to provide context for the connection between $Q(R)$ and geometric invariant theory (GIT), we will give definitions and results for  varieties which are not necessarily affine and actions by algebraic groups other than $\Gm$. The reader only concerned with the level of generality required for the later portions of this paper may well assume that $G=\Gm$ and $X=\op{Spec}R$ is affine throughout this section as well; under these assumptions Proposition ~\ref{proposition: Qisacompactification} and Proposition ~\ref{proposition: boundaryofQ} summarize the relevant statements needed from this subsection. 

 Let $G$ be an algebraic group acting on a variety $X$. Let $\widehat{\sigma }: G\times X\to X$ be the action map and $\widehat{\pi}: G\times X\rightarrow X$ the projection. The space $G\times X$ itself admits a $G\times G$ action given by \begin{equation}\label{equation: GxXaction}
  (g_1,g_2) \cdot (g,x) = (g_2 g g_1^{-1}, \widehat{\sigma }(g_1 ,x))
  \end{equation}
  which makes $\widehat{\pi}$ and $\widehat{\sigma}$ equivariant. With this action, the map $G\times X \to X\times X$ given by $(g,x)\mapsto (x,\widehat{\sigma} (g,x)) $ becomes equivariant, where $X\times X$ has the obvious $G\times G$ action. 

\begin{definition}\label{definition: compactification} Let $\tilde{X}$ be an algebraic variety together with an action of $G \times G$ which is equipped with a $G \times G$-equivariant open immersion 
\[
i: G \times X \hookrightarrow \tilde{X},
\]
and a $G \times G$-equivariant morphism
\[
(\widehat p, \widehat s) : \tilde X \to X \times X
\]
such that the following diagram commutes:
\begin{center}\label{fig: compactification}
\begin{tikzpicture}
\node (a) at (-1,-1) {$G\times X$};
\node (b) at (1,-1) {$X$};
\node (c) at (1,1) {$\tilde{X}$};
\path[->,font=\scriptsize,>=angle 90]
(a) edge  node[above] {$i$} (c)
([yshift= 2pt]a.east) edge node[above] {$\widehat{\sigma}$} ([yshift= 2pt]b.west)
([yshift= -2pt]a.east) edge node[below] {$\widehat{\pi}$} ([yshift= -2pt]b.west)
([xshift= -2pt]c.south) edge node[left] {$\widehat{p}$} ([xshift= -2pt]b.north)
([xshift= 2pt]c.south) edge node[right] {$\widehat{s}$} ([xshift= 2pt]b.north);
\end{tikzpicture}
\end{center}
i.e. $\widehat{p}\circ i = \widehat{\pi}$ and  $\widehat{s}\circ i= \widehat{\sigma}$. 
In the above situation, we say that  $\tilde{X}$ equipped with the maps $ \widehat{p}, \widehat{s}, i$, is a \newterm{partial compactification of the action} of $G$ on $X$. \end{definition}

The following result shows that $Q(R)$ encodes exactly this kind of structure when $G=\Gm$ and $X$ is affine. 

\begin{proposition}\label{proposition: Qisacompactification} Let $\op{Spec} R$ be an affine variety with a  $\Gm$-action.  Consider the maps
\[
\begin{tikzcd}
\op{Spec} Q(R) \ar[r, shift left, "\widehat{p}"] \ar[r, shift right, swap, "\widehat{s}"] & \op{Spec} R.
\end{tikzcd}
\]
corresponding to the ring maps $p,s : R\to Q(R)$ and let
\[
\widehat{\eta} : \op{Spec} Q(R) \to \op{Spec} R[u, u^{-1}] = \Gm \times \op{Spec} R
\]
 be the open immersion corresponding to $\eta : Q(R) \to R[u, u^{-1}]$ . Then $\op{Spec} Q(R)$ together with the maps $\widehat{p}, \widehat{s}, \widehat{\eta}$ form a partial compactification of the action of $\Gm$ on $\op{Spec} R$. 

\end{proposition}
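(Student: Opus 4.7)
The plan is to verify the three data making up a partial compactification in the sense of Definition~\ref{definition: compactification}: the $\Gm^2$-equivariance of everything in sight, the commutativity of the diagram in Definition~\ref{definition: compactification}, and the fact that $\widehat{\eta}$ is an open immersion. Most of these are essentially bookkeeping, so I would dispatch them in that order, saving the open immersion claim for last.

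First, I would note that the $\Z^2$-grading on $R[u,u^{-1}] = \Delta(R)$ from Remark~\ref{remark: gradings} restricts to a $\Z^2$-grading on the subalgebra $Q(R) \subseteq R[u,u^{-1}]$, since the three families of generators $\pi(R)$, $\sigma(R)$, and $u$ are all homogeneous (of degrees $(\deg r, 0)$, $(0, \deg r)$, and $(-1,1)$ respectively). This immediately makes $\eta_R$ a $\Z^2$-graded ring map, hence $\widehat{\eta}$ is $\Gm^2$-equivariant when $\Gm \times \op{Spec} R$ is equipped with the $\Gm^2$-action of Lemma~\ref{lemma: basic action}. The same grading calculation shows $p$ is $\Z^2$-graded when $R$ is given degrees in the first factor only, and $s$ is $\Z^2$-graded when $R$ is given degrees in the second factor only, so $\widehat{p}$ and $\widehat{s}$ are equivariant with respect to the two projections $\Gm^2 \to \Gm$, as required.

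Second, the commutativity of the diagram in Definition~\ref{definition: compactification}, i.e.\ $\widehat{p} \circ \widehat{\eta} = \widehat{\pi}$ and $\widehat{s} \circ \widehat{\eta} = \widehat{\sigma}$, is the content of the two triangles in equation~\eqref{equation: Q(R)commutes}, passed through $\op{Spec}$.

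Finally, the main point to verify is that $\widehat{\eta}$ is an open immersion. I would prove that $\eta_R$ realizes $R[u,u^{-1}]$ as the localization $Q(R)[u^{-1}]$, so that $\widehat{\eta}$ identifies $\Gm \times \op{Spec} R$ with the basic open subset $\op{Spec} Q(R) \setminus V(u)$. The key observation is that $\pi(R) = R$ as subrings of $R[u,u^{-1}]$ and $u \in Q(R)$ by definition, so the inclusion $R[u] \hookrightarrow Q(R)$ holds, giving $R[u,u^{-1}] \subseteq Q(R)[u^{-1}]$. The reverse inclusion is immediate since $Q(R) \subseteq R[u,u^{-1}]$ and $u$ is already a unit in $R[u,u^{-1}]$. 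Thus $Q(R)[u^{-1}] = R[u,u^{-1}]$, and $\widehat{\eta}$ is an open immersion.

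I do not anticipate a substantial obstacle; the only step requiring thought beyond unpacking definitions is the identification $Q(R)[u^{-1}] = R[u,u^{-1}]$, and that reduces to the trivial containment $R[u] \subseteq Q(R)$.
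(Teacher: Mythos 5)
Your proposal is correct and takes essentially the same approach as the paper: the paper's proof likewise cites Remark~\ref{remark: gradings} for the equivariant structure, identifies $\widehat{\eta}$ as the localization along $u$, and invokes equation~\eqref{equation: Q(R)commutes} for the diagram commutativity. Your extra detail (proving $Q(R)[u^{-1}]=R[u,u^{-1}]$ via the containment $R[u]\subseteq Q(R)$) simply spells out what the paper asserts in one line.
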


\begin{proof} This follows easily from previous observations: the equivariant structure on $Q(R)$ is determined by the $\mathbb{Z}^2$-grading given in Remark ~\ref{remark: gradings}.  The fact that $\widehat{\eta}$ is an open immersion follows from the fact that it is the localization along $u$.  That Figure ~\ref{fig: compactification} commutes is exactly dual to the commutativity of the diagrams in \eqref{equation: Q(R)commutes}.
\end{proof}

\begin{example}\label{example: affinespaceQgeo} Let us revisit Example ~\ref{example: affinespaceQ} where we considered the case of a $\Gm$-action on a polynomial ring, i.e. we now consider $\op{Spec}(R) = \mathbb{A}^{k+l}$ where the first $k$-coordinates are acted on with non-negative weights and the last $l$-coordinates with strictly negative weights. Following Example ~\ref{example: affinespaceQ}, write \[\mathbb{A}^{k+l+1}  = \op{Spec}R[u,\mathbf{x}^+,\mathbf{y}^-] =  \op{Spec}Q(R). \]  The $p$ and $s$ module structures computed in Equations ~\eqref{equation: affinespacemodule1} and ~\eqref{equation: affinespacemodule2} determine the maps $\hat{p}$ and $\hat{s}$ in the commutative diagram 
\begin{center}
\begin{tikzpicture}
\node (a) at (-1,-1) {$\gm \times \mathbb A^{k+l}$};
\node (b) at (1,-1) {$\mathbb A^{k+l}$};
\node (c) at (1,1) {$\mathbb A^{k+l+1}$};
\path[->,font=\scriptsize,>=angle 90]
(a) edge  node[above] {$i$} (c)
([yshift= 2pt]a.east) edge node[above] {$\hat\sigma$} ([yshift= 2pt]b.west)
([yshift= -2pt]a.east) edge node[below] {$\hat\pi$} ([yshift= -2pt]b.west)
([xshift= -2pt]c.south) edge node[left] {$\hat p$} ([xshift= -2pt]b.north)
([xshift= 2pt]c.south) edge node[right] {$\hat s$} ([xshift= 2pt]b.north);
\end{tikzpicture}
\end{center}
where $\hat{\sigma}$ and $\hat{s}$ are the action and projection maps, and $i$ is an inclusion chosen to make the diagram commute. Explicitly: 
\begin{align*} 
i(u,x_1^+, \ldots ,x_k^+,x^-_1,\ldots ,x^-_l) &=  (u,x_1^+,\ldots , x_k^+, u^{b_1}x^-_1,\ldots ,u^{b_l}x^-_l) \\ 
\hat{p}(u, x_1^+,\ldots ,x_k^+ ,y^-_1 , \ldots ,y^-_l) &=  (x_1^+,\ldots x_k^+, u^{-b_1}y^-_1,\ldots ,u^{-b_l}y^-_l ) \\
\hat{s}(u, x_1^+,\ldots ,x_k^+ ,y^-_1 , \ldots ,y^-_l) &=  (u^{a_1}x_1^+,\ldots u^{a_k}x_k^+, y^-_1, \ldots ,y^-_l ). 
\end{align*}
Note that the map $i$ is not the ``naive" inclusion in these coordinates.\end{example}

The data of the partial compactification of an action of $G$ on $X$ automatically encodes a notion of a boundary on $\tilde{X}$ as well as distinguished ``unstable" and ``semistable" loci in $X$, as in the following definition. 

\begin{definition}\label{definition: boundary}
If $\tilde{X}$ is a partial compactification of an action $\sigma : G\times X\to X$, we define the boundary of $\tilde{X}$ to be 
\[
\partial_{\tilde{X}} := \tilde{X} \backslash i(G \times X), 
\]
 the $\widehat{s}$-\newterm{unstable locus}  to be  \[ X^{\op{us}} =\widehat{s}(\partial_{\tilde{X}}),\] and the $\widehat{s}$-\newterm{semistable locus} to be  \[ X^{\op{ss}}=X \setminus X^{\op{us}}.\] 
\end{definition}

Note that $X^{ss}$ itself admits a $G$-action as the $G\times G$ action on $\tilde{X}$ extends the action on $G\times X$ and $\widehat{s}$ was assumed equivariant. In this article we are primarily concerned with the affine group scheme $\Gm$ and the case where $X = \op{Spec}R$ is affine, in which case $\partial_{\tilde{X}}$ is a automatically a divisor in $\tilde{X}$ (although $X^{us} = \widehat{s}(\partial_{\tilde{X}})$ will rarely be a divisor in $X$).

\begin{remark} \label{remark: sameasGITss} 
 The terminology in Definition ~\ref{definition: boundary} is chosen to emphasize the connection with geometric invariant theory (GIT).  The next lemma demonstrates the precise relationship between our notion of $\hat{s}$-semistability and semistability in GIT. We will also explain the relationship for actions by higher rank tori in Example ~\ref{example: toric} and Proposition ~\ref{proposition: toricsemistable}.
\end{remark}

\begin{proposition}\label{proposition: boundaryofQ} Let $\Gm$ act on $X = \op{Spec} R$ and let $\op{Spec} Q(R)$ partially compactify the action as in Proposition ~\ref{proposition: Qisacompactification}. Then the $\widehat{s}$-unstable locus is the repelling locus $X^-$ as defined in Definition ~\ref{definition: attractingrepelling}. In particular, $X^{ss} = X\setminus X^-$.
\end{proposition}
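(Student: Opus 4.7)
The plan is to verify the two set-theoretic inclusions $\widehat{s}(V(u))\subseteq X^-$ and $X^-\subseteq\widehat{s}(V(u))$. Since the open immersion $\widehat{\eta}$ corresponds to the localization $\eta\colon Q(R)\to Q(R)[u^{-1}] = R[u,u^{-1}]$ at $u$, the boundary $\partial_{\tilde X}$ coincides with the closed subscheme $V(u)\subset\op{Spec} Q(R)$. The first containment is then immediate from Remark~\ref{proposition: plusminusfromQ}: the restriction $\widehat{s}|_{V(u)}$ corresponds to the composite ring map $R\xrightarrow{s} Q(R)\twoheadrightarrow Q(R)/(u)$, whose kernel is $s^{-1}(uQ(R)) = I^+$, so $\widehat{s}|_{V(u)}$ factors through the closed immersion $X^- = V(I^+)\hookrightarrow X$.

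For the reverse inclusion, I would show that the induced injection $\bar s\colon R/I^+\hookrightarrow Q(R)/(u)$ admits a ring-theoretic retraction $\bar\psi$; the corresponding section $X^-\to V(u)$ on spectra then forces $\widehat{s}|_{V(u)}\colon V(u)\to X^-$ to be surjective. Concretely, I would define $\psi\colon Q(R)\to R/I^+$ on generators by $\psi(u) = 0$, $\psi(s(r)) = \bar r$ for every $r\in R$, $\psi(\pi(r)) = \bar r$ for $r\in R_0$, and $\psi(\pi(r)) = 0$ for $r$ homogeneous of nonzero degree. Since $\psi(u) = 0$, this descends to $\bar\psi\colon Q(R)/(u)\to R/I^+$ satisfying $\bar\psi\circ\bar s = \op{id}_{R/I^+}$ by construction.

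The main obstacle is verifying that $\psi$ respects the relations of $Q(R)$. The delicate case is the relation $\pi(r_1)\pi(r_2) = \pi(r_1 r_2)$ for homogeneous $r_i\in R_{n_i}$ with $n_1,n_2\neq 0$ but $n_1+n_2 = 0$: the prescription above forces the left-hand side to vanish, so compatibility demands $\overline{r_1 r_2} = 0$ in $R/I^+$. This holds because whichever of $n_1,n_2$ is positive places its factor in $R_{>0}\subseteq I^+$, forcing $r_1 r_2\in I^+$. The bridge relations in $Q(R)$, namely $s(r) = \pi(r)u^n$ for $r\in R_n$ with $n\geq 0$ and $\pi(r) = s(r)u^{-n}$ for $n\leq 0$, are likewise handled: for $n\neq 0$ both sides vanish under $\psi$ by virtue of $\psi(u) = 0$, and for $n = 0$ one has $\pi(r) = s(r) = r$ so the relation is trivial. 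All other products in $Q(R)$ reduce to these cases by multiplicativity, concluding the proof.
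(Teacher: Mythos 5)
Your two-step plan — compute the kernel of $R\xrightarrow{s}Q(R)/(u)$ to get the containment $\widehat s(V(u))\subseteq X^-$, then produce a section of $V(u)\to X^-$ for the reverse — is sound, and is in fact more careful than the paper's proof, which only carries out the first computation (via Remark~\ref{remark: gradings}) and tacitly identifies the set-theoretic image $\widehat s(\partial)$ with the closed subscheme cut out by $s^{-1}(uQ(R))=I^+$. Supplying a section to force surjectivity is exactly the missing detail.

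The gap is in the well-definedness of $\psi$. You prescribe $\psi$ on a generating set of $Q(R)$, extend multiplicatively, and check it against a few relations (multiplicativity of $\pi$ on a pair, and the bridge relations $s(r)=\pi(r)u^{n}$, $\pi(r)=s(r)u^{-n}$), but you never establish that these generate \emph{all} relations among $\pi(R)$, $\sigma(R)$, and $u$ inside $R[u,u^{-1}]$; the closing sentence ``all other products reduce to these cases by multiplicativity'' is precisely the thing that needs proof. (Already a word of length three like $\pi(r)\sigma(s)u=\sigma(rs)$ for $r\in R_1$, $s\in R_{-2}$ is a relation you do not list, even if it turns out to be a consequence.) Two clean fixes. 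First, work with the $\Z^2$-grading of Remark~\ref{remark: gradings}: one has $Q(R)_{(0,b)}=R_bu^b$ exactly, and any monomial $u^m\prod\pi(r_i)\prod\sigma(s_j)$ of bidegree $(a,b)$ with $a=-m+\sum\deg r_i>0$ must contain some $\pi(r_i)$ with $\deg r_i>0$, so its coefficient lies in $I^+$. Defining $\psi$ to be zero on $Q(R)_{(a,b)}$ for $a\neq 0$ and $ru^b\mapsto\bar r$ on $Q(R)_{(0,b)}$ is then additively well-defined by construction, and multiplicativity follows from the observation above (which subsumes your ``delicate case'' for arbitrary words, not just pairs). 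Second, and more quickly, one can invoke the presentation $Q(R)\cong\tilde R$ of Remark~\ref{remark: drinfeld}; under that identification $\psi$ becomes $\tilde r\mapsto\bar r$, $t\mapsto 0$, and the only relations $\tilde{r_1}\tilde{r_2}=t^{\mu(n_1,n_2)}\widetilde{r_1r_2}$ are respected for exactly the reason you give.
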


\begin{proof}  Since $Q(R) =  \langle u, \pi (R), \sigma (R)\rangle$, the ideal which defines the boundary $\partial \subseteq \op{Spec }Q(R)$ is simply $\langle u\rangle \subseteq Q(R)$.  Hence, $\widehat{s}(\partial)$ is given by the ideal $\langle u\rangle \cap s(R)$. By the explicit gradings in Remark ~\ref{remark: gradings}, it is easy to see that $\langle u\rangle \cap s(R)=I^+$ is the ideal generated by all homogeneous elements of positive degree, which in turn is the ideal defining $X^-$. 
\end{proof} 

\noindent It follows immediately from the above proposition that, in the case of a $\Gm$-action on $X = \op{Spec}R$ equipped with the partial compactification of the action given by $\op{Spec}Q(R)$, one has 
\begin{equation}\label{equation: affinesemistable} X^{ss} = X\setminus X^- = \op{Spec}R \setminus \op{Spec}(R/I^+) = \bigcup_{r\in I^+}\op{Spec}R_r. 
\end{equation} 

We conclude this subsection with two brief but instructive examples of partial compactifications of actions which go beyond the case of a $\Gm$-action (and thus are not logically necessary for the remainder of this article). In the first example, $G$ is a higher rank torus acting on any affine space.  In the second example, $G$ is non-abelian.  

\begin{example}\label{example: toric} Let $T=\mathbb{G}_m^n$ be a torus and let $\chi_\ast (T) = \textrm{Hom}(T,\mathbb{G}_m)$  denote its character lattice. Suppose that $T$ acts on a ring $R$ with co-action map 
\begin{displaymath}
\sigma :R \to R\otimes_k k[T] \cong  R\otimes_k k[\chi_\ast (T)]\cong R[x_1^{\pm},\ldots , x_n^{\pm}].
\end{displaymath} Now, let $C\subset \chi_\ast (T)$ be any finitely generated submonoid. Let 
\begin{displaymath}
Q_T^C(R) = \langle R[C],\sigma (R)\rangle\subseteq R[\chi_\ast (T)] 
\end{displaymath} 
be the subalgebra generated by the image of the action and the monoid ring. A trivial generalization of Proposition ~\ref{proposition: Qisacompactification} shows that $\op{Spec} Q_T^C(R)$ is a partial compactification of the action of $T$ on $X = \op{Spec} R$. 

Now, suppose $\op{Pic}(\op{Spec}R)\otimes\mathbb{Q}$ is trivial. Then, following \cite[Section 2]{Thaddeus} or \cite[Section 3]{DH}, the cone of ample $T$-linearized $\mathbb{Q}$-divisors on $X=\op{Spec}R$ is a cone in $\chi_\ast (T)\otimes\mathbb{Q}$. This cone admits a chamber decomposition such that the relative interiors of the respective chambers correspond exactly to GIT quotients. That is, two characters $\chi$ and $\chi '$ lie in the relative interior of the same chamber exactly when the GIT semistable loci $X^{ss}(\chi )$ and $X^{ss}(\chi ')$ are equal

Of particular interest is the case where the monoid $C$ is itself the (integral points of) of a GIT chamber. Although not required for the remainder of the article, it seems worthwhile to observe that a generalization of Proposition ~\ref{proposition: boundaryofQ} holds in this setting which relates the notion of semistability given in Definition ~\ref{definition: boundary} with the usual notion of semistability in Geometric Invariant Theory (GIT). 

\begin{proposition}\label{proposition: toricsemistable} As above, let a torus $T$ act on a ring $R$ and suppose $\op{Pic}(\op{Spec} R)\otimes\mathbb{Q} =0$. Let $C$ be the monoid of integral points of the closure of a GIT chamber,  and let $L$ be any line bundle which lies in the relative interior of the same GIT chamber.  

Then the $\hat{s}$-semistable locus in the sense of Definition ~\ref{definition: boundary} equals the semistable locus for the action of $T$ on $X$ equipped with the linearization $L$ in the sense of GIT. 
\end{proposition}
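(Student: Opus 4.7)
The plan is to identify both the $\hat s$-unstable locus and the GIT-unstable locus with the vanishing set $V(J_{C^\circ})$ of the ``chamber-interior'' ideal $J_{C^\circ} \subseteq R$ generated by the homogeneous components $R_\chi$ for integer weights $\chi$ in the relative interior $C^\circ$. On the GIT side this is standard: for any $\chi \in C^\circ \cap \chi_\ast(T)$ the $\chi$-unstable locus equals $V\bigl(\bigoplus_{n>0}R_{n\chi}\bigr)$, and because GIT semi-stability is constant throughout the chamber interior, this coincides with $V(J_{C^\circ})$.

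For the $\hat s$-side, I first observe that $X^{\op{us}} = V(s^{-1}(I_\partial))$, where $I_\partial \subseteq Q_T^C(R)$ is the reduced boundary ideal. The open immersion $T \times X \hookrightarrow \op{Spec} Q_T^C(R)$ corresponds to inverting the monomials $x^c$ for $c \in C$, so the boundary is ``of toric type''. Since $C$ is a strictly convex chamber cone, the standard orbit-by-face analysis of the affine toric variety $\op{Spec} k[C]$ (whose $T$-orbits are indexed by faces of $C$) identifies the monomials lying in $I_\partial$ as exactly those $x^c$ with $c \in C^\circ$. This immediately gives the containment $J_{C^\circ} \subseteq s^{-1}(I_\partial)$: for $r \in R_\chi$ with $\chi \in C^\circ$ one has $s(r) = r \cdot x^\chi \in I_\partial$. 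Hence $X^{\op{us}} \subseteq V(J_{C^\circ})$; equivalently, every $\hat s$-unstable point is GIT-unstable.

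The reverse containment is the principal obstacle and is handled by a Hilbert--Mumford-type argument adapted to the partial compactification. Given $x \in V(J_{C^\circ})$, the set $\Omega(x) := \{\chi \in \chi_\ast(T) : \exists \, r \in R_\chi \text{ with } r(x) \neq 0\}$ generates a rational convex cone disjoint from $C^\circ$, so a standard separating-hyperplane argument yields a nonzero one-parameter subgroup $\tilde\lambda : \Gm \to T$ with $-\tilde\lambda \in C^\vee$ and with $\lim_{t \to 0}\tilde\lambda(t) \cdot x$ existing. Using such a $\tilde\lambda$, I form the morphism $\Gm \to T \times X \hookrightarrow \op{Spec} Q_T^C(R)$ defined by
\[
t \mapsto \bigl(\tilde\lambda(t)^{-1},\; \tilde\lambda(t) \cdot x\bigr).
\]
A direct computation in the graded ring $Q_T^C(R)$ shows: the condition $-\tilde\lambda \in C^\vee$ forces the first factor to extend through $t = 0$ into $\op{Spec} k[C]$, while the limit condition on $\tilde\lambda(t) \cdot x$ supplies the extension on the $X$-factor; the specific parametrization cancels so that $\hat s$ composed with this map is constantly equal to $x$; and the nontriviality of $\tilde\lambda$ (together with the full-dimensionality of $C$) forces some monomial $x^c$ with $c \in C$ to vanish at $t = 0$, placing the limit point in $\partial_{\tilde X}$. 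Thus $x \in \hat s(\partial_{\tilde X}) = X^{\op{us}}$, completing the proof.
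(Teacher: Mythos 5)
Your proof takes a genuinely different route from the paper's, and in fact is more careful on a point that the paper's own argument elides (and, as written, appears to get wrong). The paper's proof is very short: it asserts that the ideal defining the boundary $\partial_{\tilde X}$ of $\op{Spec}Q_T^C(R)$ is $\langle x^{\chi_1},\ldots,x^{\chi_k}\rangle$, where $\chi_1,\ldots,\chi_k$ minimally generate the monoid $C$, then pulls this back under $s$ and claims the result localizes the same way as a single interior weight. But that identification of the boundary ideal is incorrect when $k>1$: the open immersion $T\times X\hookrightarrow \op{Spec}Q_T^C(R)$ is the principal open $D(x^{\chi_1}\cdots x^{\chi_k})$, so the boundary is the hypersurface $V(x^{\chi_1}\cdots x^{\chi_k})$, whose radical is generated by the \emph{interior} monomials $\{x^c : c\in C^\circ\}$, not by the generators $x^{\chi_i}$. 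Concretely, take $R=k[z_1,z_2]$ with $T=\Gm^2$ acting with weights $(1,0)$ and $(0,1)$ and $C=\mathbb{N}^2$; then $Q_T^C(R)=k[w_1,w_2,x_1,x_2]$, the boundary is $V(x_1x_2)$, and $\hat s(\partial_{\tilde X})=V(z_1z_2)$, which correctly matches the GIT-unstable locus for any interior character, whereas the paper's recipe would produce $V(z_1,z_2)=\{0\}$. So the proposition is true but the published argument does not establish it.

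Your proof avoids this issue. You work with the reduced boundary ideal, correctly identified as containing exactly the interior monomials, to obtain the containment $X^{\op{us}}\subseteq V(J_{C^\circ})$, and then prove the reverse containment directly by a Hilbert--Mumford-type argument: for $x\in V(J_{C^\circ})$, you pick a one-parameter subgroup $\tilde\lambda$ separating the weight monoid $\Omega(x)$ from $C$, form the curve $t\mapsto(\tilde\lambda(t)^{-1},\,\tilde\lambda(t)\cdot x)$ in $T\times X$, and check that it extends over $t=0$ into $\partial_{\tilde X}$ while keeping $\hat s$ constantly equal to $x$. This is correct and substantially more robust than the paper's computation. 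One small point deserves to be spelled out: for the separating hyperplane you need the rational cone generated by $\Omega(x)$ to be disjoint from $C^\circ$, not merely $\Omega(x)$ itself. This does hold, since $\Omega(x)$ is a sub\emph{monoid} (because $\overline{T\cdot x}$ is irreducible) and $C^\circ$ is open, so any rational point of $\op{cone}(\Omega(x))$ lying in $C^\circ$ would force a lattice point of $\Omega(x)$ into $C^\circ$ after a small rational perturbation and clearing denominators. With that remark added, your argument is a complete and, in this case, preferable proof.
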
  

\begin{proof} The proof is essentially the same as that of Proposition ~\ref{proposition: boundaryofQ}. Indeed, the ideal defining the boundary of $\op{Spec}Q_T^C(R)$ is $\langle \chi_1\otimes 1,\ldots , \chi_k\otimes 1\rangle$ where the $\chi_i$ are any set of characters minimally generating the monoid $C$. Thus, ideal defining the $\hat{s}$-unstable locus in $X$ is $I:=\langle \chi_1\otimes 1,\ldots , \chi_k\otimes 1\rangle\cap s(R)$. However, localizing $R$ at $I$ is the same as localizing $R$ at $ \chi\otimes 1$ where $\chi\in C$ is any character in the relative interior of $C$, since we have $\chi =\sum a_i \chi_i$ with each $a_i>0$. 
\end{proof} 

\begin{remark}\label{remark: toric} 
As the reader may have already guessed, Example~\ref{example: toric} in particular shows how to realize toric varieties via partial compactifications of torus actions on affine space. 

Namely, if one specializes to the case where $R =k[x_1, \ldots , x_n]$ and so $\op{Spec}R=\mathbb{A}^n$, the above discussion reduces to torus actions on affine space. Here the GIT chambers agree with the cones in the GKZ-fan (see e.g. \cite[Section 14.4]{CLS}), and the GIT quotients are the realization of toric varieties via the Cox construction. It is not difficult to give an explicit combinatorial description of a rational polyhedral cone describing the affine toric variety $Q_T^C$ where $C$ is any chamber of the GKZ-fan, although doing so would be too much of a digression, so we leave this as an exercise to the combinatorially minded reader. 
\end{remark} 

\end{example} 

\begin{example}\label{example: grassmannian} 
Let $W$ be a vector space of dimension $k$ and $V$ be a vector space of dimension $n$ with $k \leq n$. Set $G = \op{Gl}(W)$ and $X = \op{Hom}(W, V)$ so that $G$ acts on $X$ by right multiplication. Then $\tilde{X} =\op{End}(W) \times X$ is a partial compactification of $G\times X$. The projection and multiplication maps extend naturally to maps $\tilde{X}\rightarrow X$, and here $X^{ss}$ is identified with the set of matrices of full rank, so that $X^{ss}/G$ is the Grassmannian $\textrm{Gr}(n,k)$.
\end{example}

\subsection{Functors from partial compactifications.} We now show how to associate functors between derived categories given the data of a partial compactification of an action. This is essentially done by taking the partial compactification itself as a Fourier-Mukai kernel. 

\begin{definition}\label{definition: Qingeneral} Given $\tilde{X}$ a partial compactification of a $G$-action on $X$ in the sense of Definition ~\ref{definition: compactification}, define 
\begin{equation}\label{equation: geometricQ} Q_{X,G} := (\widehat{p}\times\widehat{s})_\ast \mathcal{O}_{\tilde{X}}\in \mathrm{D}^{b}(\mathrm{Qcoh}^{G\times G} X\times X),
\end{equation} which is simply the derived push-forward of the structure sheaf under the extended action and projection maps.\end{definition} 

\begin{remark}\label{remark: Qxgwhenaffinegm} If $X$ is affine and $G=\Gm$ then, since the functor \newline$(\widehat{p}\times\widehat{s})_\ast $ is exact, Proposition ~\ref{proposition: Qisacompactification} shows that $Q(R)$ with its $p$-$s$-bimodule structure corresponds to the equivariant sheaf $Q_{\op{Spec}R, \Gm}$ . \end{remark} 

However, as suggested by the examples of flops from Section ~\ref{section: flops}, we are not literally interested in endofunctors $\mathrm{D}(\mathrm{Qcoh}^{G} X)\to\mathrm{D}(\mathrm{Qcoh}^{G} X)$ as would obtained by taking $Q_{X,G}$ as a kernel object. Rather, we are interested in functors $\mathrm{D}(\mathrm{Qcoh}^{G} X^{ss})\to\mathrm{D}(\mathrm{Qcoh}^{G} X)$ where $X^{ss}$ is the semistable locus from Definition ~\ref{definition: boundary}.

\begin{definition}\label{definition: Qss} Let $\tilde{X}$ be a partial compactification of a $G$-action on $X$. Then $Q^{ss}_{X,G}$ denotes the quasi-coherent sheaf on $X^{ss}\times X$ obtained by restricting $Q_{X,G}$ from $X\times X$. That is, \begin{equation}\label{equation: Qplusaspullback} Q^{ss}_{X,G} = (j\times\textrm{Id})^\ast Q \end{equation} where $j:X^{ss}\to X$ is the inclusion. 
\end{definition} 

\noindent For the purposes of the paper, we are primarily concerned with the case of a $\Gm$-action on an affine variety, for which we reserve different notation as follows. 

\begin{definition}\label{definition: Qplus} If $X=\op{Spec}R$ has a $\Gm$-action, then  $Q_+(R)$ denotes the quasi-coherent sheaf obtained by restricting the sheaf associated to $Q(R)$ to the quasi-affine variety $X^{ss}\times X = X\setminus X^- \times X$. \end{definition} 

\noindent We will frequently abuse notation and drop the implicit reference to $R$ and just write $Q_+$, and will also use \begin{equation}\label{equation: Qplus} Q_+ \in \op{D}^b(\op{Qcoh}^{\Gm\times\Gm} X^{ss}\times X)
\end{equation} to denote the corresponding object of the derived category. Taking $Q_+$ as a Fourier-Mukai kernel, we have the functor  \begin{equation}\label{equation: Qplusfunctor} \Phi_{Q_+} : \op{D}(\op{Qcoh}^{\Gm} X^{ss})\to \op{D}(\op{Qcoh}^{\Gm} X) .\end{equation}

\begin{remark}\label{remark: cpropremark} 
 For an arbitrary ring $R$, $Q$ is a module, hence a bounded complex. Since it may not be perfect in general, tensor product with $Q$ may not preserve boundedness unless $R$ has finite Tor dimension. When $X=\op{Spec}R$ is smooth, we will establish a ``cohomological properness'' result in Proposition ~\ref{proposition: window affine space case} which in particular implies that the essential image lands in the derived category of complexes with bounded and coherent coherent sheaves. We leave cohomological properness of $\Phi_Q$ beyond the smooth case for a fuller discussion.
\end{remark} 

 We now show that this functor is automatically faithful. 

\begin{proposition}\label{proposition: faithful} Let $\Gm$ act on $X=\op{Spec}R$. Let $Q_+$ be as in Equation ~\eqref{equation: Qplus}. Then the Fourier-Mukai functor $\Phi_{Q_+}$ from Equation ~\eqref{equation: Qplusfunctor} is faithful. 

\end{proposition}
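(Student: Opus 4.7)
The plan is to exhibit $\iota^* : \op{D}(\op{Qcoh}^{\Gm} X) \to \op{D}(\op{Qcoh}^{\Gm} X^{ss})$, the restriction functor along the open immersion $\iota : X^{ss} \hookrightarrow X$, as a one-sided inverse to $\Phi_{Q_+}$. This is a stronger statement than faithfulness, as it in fact proves that $\Phi_{Q_+}$ is fully faithful.

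The main technical step is to show that the natural inclusion $\eta : Q(R) \hookrightarrow \Delta(R)$ becomes an isomorphism after restricting to $X^{ss}$ in the appropriate factor. By Equation~\eqref{equation: affinesemistable}, the semistable locus is covered by affine opens $\op{Spec} R_r$ as $r$ ranges over homogeneous elements of $I^+$ of positive degree, and on each such $\op{Spec} R_r$ Lemma~\ref{lemma: Q is trivial on semi-stable} furnishes a local isomorphism $R_r \otimes_s Q(R) \xrightarrow{\sim} R_r \otimes_\sigma \Delta(R)$. Since $\eta$ is a globally defined morphism, these local isomorphisms glue to give an isomorphism $\eta_+ : Q_+ \xrightarrow{\sim} \Delta_+$ of $\Gm^2$-equivariant kernels, where $\Delta_+$ denotes the analogous restriction of $\Delta(R)$.

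To conclude, I would use Lemma~\ref{lemma: kernel of the identity} to identify $\Delta(R)$ as the Fourier-Mukai kernel of the identity on $\op{D}(\op{Qcoh}^{\Gm} X)$, so that $\Delta_+$ is recognized as the kernel of the pushforward $\iota_*$ along the open immersion $\iota : X^{ss} \hookrightarrow X$. Combining with the previous step yields $\Phi_{Q_+} \cong \Phi_{\Delta_+} \cong \iota_*$, and the standard identity $\iota^* \iota_* \cong \id$ (valid for any quasi-compact open immersion of schemes) exhibits $\iota^*$ as the desired one-sided inverse.

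The main obstacle is essentially bookkeeping: tracking carefully which of the two module structures on $Q$ (via $p$ or via $s$) corresponds to which factor of $X \times X$ under the Fourier-Mukai convention adopted, and verifying that restricting $Q$ to $X^{ss}$ in the sense of Definition~\ref{definition: Qplus} indeed matches the direction in which Lemma~\ref{lemma: Q is trivial on semi-stable} produces an isomorphism. Once these compatibilities are sorted out, the argument is driven entirely by the two lemmas cited above together with the elementary properties of open immersions.
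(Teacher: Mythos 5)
Your overall strategy—cover $X^{ss}$ by degree-$0$ localizations at positive-degree elements, compare $Q$ to $\Delta$ via Lemma~\ref{lemma: Q is trivial on semi-stable}, and identify $\Delta$ with the identity kernel via Lemma~\ref{lemma: kernel of the identity}—is exactly what the paper does, and your final conclusion (faithfulness) is correct. However, the intermediate step you propose is too strong, and it is precisely the ``bookkeeping'' you flag as the main obstacle that is the problem.

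The issue is that the local isomorphisms in Lemma~\ref{lemma: Q is trivial on semi-stable} for $\deg(r)>0$ are isomorphisms \emph{after localizing via the $s$-module structure}, i.e.\ after inverting $s(r) = r u^{\deg r}$, which in particular inverts $u$. But $Q_+$ is by Definition~\ref{definition: Qplus} the pullback of $Q(R)$ along the open immersion $X^{ss}\times X \hookrightarrow X\times X$ on the \emph{first} ($p$-)factor, i.e.\ the localization of $Q(R)$ at $p(r)=r$. This does \emph{not} invert $u$. Concretely, for $R=k[x,y]$ with $\deg x =1$, $\deg y =-1$, one has $Q(R)=k[x,z,u]$ with $z=yu^{-1}$, so that $Q_+ = Q(R)_x = k[x,x^{-1},z,u]$, whereas $\Delta_+ = \Delta(R)_x = k[x,x^{-1},y,u,u^{-1}]$: the map $\eta_+$ is a proper inclusion, not an isomorphism. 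Consequently $Q_+ \not\cong \Delta_+$, and your identification $\Phi_{Q_+}\cong\iota_*$ fails. Indeed, Proposition~\ref{proposition: window affine space case} later computes $\Phi_{Q_+}(j^*\mathcal O(i)) = R(i)$, whereas $\iota_*(j^*\mathcal O(i)) = j_*j^*R(i)\cong R_x(i)$, which are plainly different. Fullness of $\Phi_{Q_+}$ is genuinely not automatic—it is established only under Property~$\mathtt P$ in Proposition~\ref{proposition: now we see some windows}, and a large part of Sections~\ref{section: bousfield} and~\ref{section: simplicialQ} is devoted to it. The correct inference from Lemma~\ref{lemma: Q is trivial on semi-stable} is only that $Q$ and $\Delta$ agree after restricting \emph{both} factors to $X^{ss}$ (localizing by $p(r)$ and $s(r)$ simultaneously does invert $u$), which gives a kernel identification for $j^*\circ\Phi_{Q_+}$ over $X^{ss}\times X^{ss}$, yielding $j^*\circ\Phi_{Q_+}\cong\op{Id}$ and hence faithfulness—but nothing stronger.
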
 
\begin{proof} This essentially follows from Lemma ~\ref{lemma: Q is trivial on semi-stable}. In more detail, by the open affine cover of $X^{ss}$ given in Equation ~\eqref{equation: affinesemistable}, one obtains the obvious cover of $X^{ss}\times X$. Lemma ~\ref{lemma: Q is trivial on semi-stable} says exactly that $Q$ restricts to $\Delta$ on each open affine subset of this cover, which by Lemma ~\ref{lemma: kernel of the identity} is the Fourier-Mukai kernel of the identity functor. If \begin{equation}\label{equation: semistableinclusion} j: X^{ss} \to X\end{equation} denotes the inclusion and $j^\ast: \op{D}(\op{Qcoh} X)\to \op{D}(\op{Qcoh} X^{ss})$ the restriction functor, we thus have that $j^\ast \circ \Phi_{Q_+} = \op{Id}$, and the result follows. 
\end{proof} 

\begin{remark}\label{remark: faithfulmoregeneral} It is not much more difficult to prove a strengthened version of Proposition ~\ref{proposition: faithful} valid for any kernel $Q^{ss}_{X,G}$ from Definition ~\ref{definition: Qss}. Indeed, the other main result from this Section, Proposition ~\ref{proposition: now we see some windows}, also admits such a generalization as well. Since we do not require such generality for the remainder of the article, though, we omit these generalizations for the sake of brevity. 
\end{remark} 

Fullness of the functor $\Phi_{Q_+}$ is more subtle and addressing this is, in a sense, the main technical content of Section ~\ref{section: bousfield} (when $\op{Spec}R$ is smooth) and Section ~\ref{section: Deriving Q} (in general). We will see in Lemma ~\ref{lemma: Q S bousfield triangle} that fullness of the functor $\Phi_{Q_+}$ is  intimately related to properties of the tensor product $Q(R) \mathbin{_s\otimes_p} Q(R)$, which we now study.

The tensor product $Q(R) \mathbin{_s\otimes_p} Q(R)$ inherits a $\Gm^3$-action from the $\Gm^4$-action on $Q(R)\otimes_k Q(R)$. Explicitly, let $(a, b)\in\Z^2$ denote the weight of a homogenous element of $r\in Q(R)$. Then we have the following weights on homogenous elements of $Q(R) \mathbin{_s\otimes_p} Q(R)$:

\begin{equation}\label{equation: tridegleft}\textrm{deg}(r\otimes 1) = (a, b ,0)\end{equation}  \begin{equation}\label{equation: tridegright} \textrm{deg}(1\otimes r) = (0,a ,b).\end{equation} The $\Gm^3$-action is such that the map\begin{displaymath}
\op{Spec}Q(R)\times_{\op{Spec}R}\op{Spec}Q(R) \to \op{Spec}R\times\op{Spec}R
\end{displaymath} corresponding to $p \otimes s$ is equivariant for the projection $\Gm^3 \to \Gm^2$ onto the first and third factor. The next lemma relates $Q(R) \mathbin{_s\otimes_p} Q(R)$ to $Q(R)$ directly after taking suitable invariants. In what follows, let  $\mathbb{Z}\subset \mathbb{Z}^3$ be the inclusion into the middle factor. If $M$ is a $\mathbb{Z}^3$-graded module, we let $(M)_0$ denote the $\mathbb{Z}^2$-graded sub-module obtained by taking degree zero in the middle factor. Likewise, if $f: M\to N$ is a map of $\mathbb{Z}^3$-graded modules, $(f_0): (M)_0 \to (N)_0$ is the corresponding restriction. 

\begin{lemma} \label{lemma: centrality of eta}
 We have a commutative diagram
 \begin{center}
 \begin{tikzpicture}[scale=1,level/.style={->,>=stealth,thick}]
	\node (a) at (-5,0) {$(Q(R) \mathbin{_s\otimes_p} Q(R))_0$};
	\node (b) at (1,1) {$(Q(R) \mathbin{_s\otimes_\pi} \Delta(R))_0$};
	\node (c) at (1,-1) {$( \Delta(R) \mathbin{_\sigma\otimes_p} Q(R))_0$};
	\node (d) at (5,0) {$Q(R)$};
	\draw[level] (a) -- node[above left] {$\scriptstyle (1 \otimes \eta)_0$} (b) ;
	\draw[level] (a) -- node[below left] {$\scriptstyle (\eta \otimes 1)_0$} (c) ;
	\draw[level] (b) -- node[above left] {$\sim$} (d) ;
	\draw[level] (c) -- node[below left] {$\sim$} (d) ;
 \end{tikzpicture}
 \end{center}
\end{lemma}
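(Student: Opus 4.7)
My plan is to verify commutativity by first unwrapping the two right-hand isomorphisms and then tracing a homogeneous element along both paths.

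I begin by making the right-edge isomorphisms concrete. Because $\Delta(R)$ is the Fourier--Mukai kernel of the identity by Lemma~\ref{lemma: kernel of the identity}, tensoring a $\Z$-graded $R$-module against $\Delta(R)$ over $R$ and taking the appropriate invariants returns the module. Applied to $Q(R)$ viewed as a right $R$-module via $s$, one obtains $(Q(R) \otimes_{s,\pi} \Delta(R))_0 \xrightarrow{\sim} Q(R)$, and symmetrically for the $p$-module structure. A direct computation with the $\Z^3$-grading of \eqref{equation: tridegleft}--\eqref{equation: tridegright} shows these isomorphisms take the explicit forms
\[
q\otimes u^b \mapsto q, \qquad u^{-a}\otimes q \mapsto q
\]
for homogeneous $q \in Q(R)$ of weight $(a,b)$.

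Next I make the structure of a homogeneous element explicit. Viewed inside $\Delta(R) = R[u,u^{-1}]$, a homogeneous element of $Q(R)$ of weight $(a,b)$ has the unique form $\rho\, u^b$ with $\rho \in R_{a+b}$. For a homogeneous generator $q_1\otimes q_2 \in (Q(R)\otimes_{s,p} Q(R))_0$, the middle-degree-zero condition forces $q_1$ and $q_2$ to have weights $(a_1,b_1)$ and $(-b_1,b_2)$, and I may write $q_1 = d\, u^{b_1}$ with $d \in R_{a_1+b_1}$ and $q_2 = c\, u^{b_2}$ with $c \in R_{b_2-b_1}$.

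Finally I trace both paths. For the upper route, the $(s,\pi)$ tensor relation $q\cdot s(r)\otimes x = q\otimes \pi(r)x$ lets me rewrite
\[
q_1\otimes\eta(q_2) \;=\; q_1\otimes c\, u^{b_2} \;=\; q_1\, s(c)\otimes u^{b_2},
\]
which the upper isomorphism sends to $q_1\, s(c) \in Q(R)$. For the lower route, the identity $\eta(q_1) = d\, u^{b_1} = \sigma(d)\, u^{-a_1}$ together with the $(\sigma,p)$ relation $\sigma(r)\otimes q = 1\otimes p(r)q$ gives
\[
\eta(q_1)\otimes q_2 \;=\; u^{-a_1}\otimes p(d)\, q_2,
\]
which the lower isomorphism sends to $p(d)\, q_2 \in Q(R)$. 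A final check inside $\Delta(R)$ confirms $q_1\, s(c) = p(d)\, s(c)\, u^{b_1} = p(d)\, q_2 = dc\, u^{b_2}$, so both compositions agree. The only real obstacle is careful bookkeeping of the three $\Z$-gradings and of the tensor relations for the distinct bimodule structures on $Q(R)$; the upshot is that both restrictions to middle degree zero recover the single ``multiplication in $\Delta(R)$'' operation $q_1\otimes q_2 \mapsto q_1 q_2\, u^{-b_1}$, and this product automatically lands in $Q(R)$.
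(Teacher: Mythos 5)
Your proof is correct and takes essentially the same approach as the paper's: both arguments observe that each composition factors through the degree-zero piece of $\Delta(R)\mathbin{_\sigma\otimes_\pi}\Delta(R)$, where the map is just multiplication in $\Delta(R)$ up to a power of $u$, and that the result lands inside $Q(R)$. You have simply made explicit the element-level bookkeeping that the paper's one-line proof leaves to the reader.
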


\begin{proof}
Both the top and bottom map take $a \otimes b$ to the degree zero piece inside $\Delta(R) \mathbin{_s\otimes_\pi} \Delta(R)$ which happens to land in $Q(R)$.
\end{proof}

\begin{definition} \label{definition: rho}
 Following Lemma~\ref{lemma: centrality of eta}, we set 
$\rho_R$ to be the map
 \begin{equation}\label{equation: rho} 
 \rho_R: (Q(R) \mathbin{_s\otimes_p} Q(R))_0 \to Q(R).
 \end{equation}
given by either $(1 \otimes \eta)_0$ or equivalently $(\eta \otimes 1)_0$. 
\end{definition}

\noindent In very specific situations, the map $\rho_R$ may be an isomorphism. 

\begin{lemma} \label{lemma: sometimes underived Q2 = Q}
 Let $R$ be an object of $\mathsf{CR}_k^{\Gm}$ and assume that either the weights of $R$ are all non-positive or all non-negative. Then $\rho_R$ is an isomorphism.
\end{lemma}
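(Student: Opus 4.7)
The plan is to use Example~\ref{example: nonnegative} to trivialize the computation in each case: when the weights of $R$ are all non-negative, $Q(R) \cong R[u]$ is free as a left $R$-module via $p$; when the weights are all non-positive, $Q(R) \cong R[u]$ is free as a right $R$-module via $s$. The two cases are symmetric under the interchange $p \leftrightarrow s$ (equivalently, reversal of the $\mathbb{Z}$-grading on $R$), so I will focus on the non-negative one.

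First, I would use freeness of the right factor via $p$ to compute the tensor product by extension of scalars, obtaining
\[
Q(R) \otimes_{s,p} Q(R) \;\cong\; Q(R)[v], \qquad v := 1 \otimes u,
\]
with the relation $q_1 s(r) \otimes q_2 = q_1 \otimes p(r) q_2$ absorbed into the twisted left-$R$ action on $Q(R)$. Using the tri-grading conventions from the paragraph preceding Lemma~\ref{lemma: centrality of eta}, the variable $v$ carries tri-weight $(0,-1,1)$, and the middle-degree-zero piece is cut out by the condition that in a monomial $r u^n v^j$ one has $n=j$. This identifies
\[
(Q(R) \otimes_{s,p} Q(R))_0 \;=\; \bigoplus_{j \ge 0} R \cdot (u^j \otimes u^j).
\]

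Next, I would invoke Lemma~\ref{lemma: centrality of eta} to identify $\rho_R$ explicitly on this decomposition. That lemma factors $\rho_R$ through the canonical isomorphism $(Q(R) \otimes_{s,\pi} \Delta(R))_0 \cong Q(R)$ inherited from Lemma~\ref{lemma: kernel of the identity}, which sends $r u^j \otimes u^j \mapsto r u^j$ by a direct calculation (using the tensor relation to rewrite simple tensors in canonical form). Therefore $\rho_R(r \cdot (u^j \otimes u^j)) = r u^j$, exhibiting $\rho_R$ as a direct sum of identity maps under the decomposition $Q(R) = \bigoplus_{j \ge 0} R u^j$, hence an isomorphism. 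For the non-positive case the symmetric computation produces $Q(R)$ free as a right $R$-module via $s$ with basis $\{u^j\}_{j \ge 0}$; middle-degree zero then decomposes as $\bigoplus_{j \ge 0} v^j \cdot Q(R)_{(-j,\,\ast)}$, and the key observation is that in this case $Q(R)$ has only non-positive first bi-weights, so this decomposition exhausts $Q(R)$ and $\rho_R$ is again summand-wise an identity.

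The step I expect to require the most care is verifying the prescribed effect $r u^j \otimes u^j \mapsto r u^j$ of the canonical isomorphism $(Q(R) \otimes_{s,\pi} \Delta(R))_0 \cong Q(R)$ on our basis elements. This requires tracking the tensor relation to canonicalize elements before applying the identification from Lemma~\ref{lemma: kernel of the identity}, and is where the asymmetry between $s$ and $\pi$ must be handled with care. Once this is in hand, the remainder of the argument is transparent.
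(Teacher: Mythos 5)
Your proof is correct and rests on the same two pillars as the paper's: Example~\ref{example: nonnegative} (which gives $Q(R)\cong R[u]$, free over $R$ via $p$ in the non-negative case, via $s$ in the non-positive case) and Lemma~\ref{lemma: centrality of eta} (which identifies $\rho_R$ with $(\eta\otimes 1)_0$ or $(1\otimes\eta)_0$). The difference is in execution. The paper argues injectivity of $(\eta\otimes 1)_0$ abstractly from flatness of $Q(R)$ over $R$ together with exactness of taking middle $\Gm$-invariants, and then proves surjectivity by using the ring structure: $\rho_R$ is a $k$-algebra map, $Q(R)$ is generated by $\pi(r)$, $\sigma(r)$, and $u$, and the elements $\pi(r)\otimes 1$, $1\otimes\sigma(r)$, $u\otimes u$ of middle degree zero map to these generators. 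You instead compute both sides explicitly as graded $R$-modules, identifying $(Q(R)\mathbin{_s\otimes_p}Q(R))_0$ as $\bigoplus_{j\ge 0} R\,(u^j\otimes u^j)$ (respectively $\bigoplus_{j\ge 0}(u^j\otimes 1)\cdot Q(R)_{(-j,\ast)}$ in the non-positive case), and then verify that the canonical identification of Lemma~\ref{lemma: centrality of eta} sends $r\,u^j\otimes u^j\mapsto r\,u^j$, so $\rho_R$ is summand-wise the identity. Your version is more computational, establishes injectivity and surjectivity simultaneously, and makes the isomorphism fully explicit; the paper's version is shorter by leaning on the multiplicative structure of $\rho_R$. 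Both are valid; the only caveat in yours is a minor notational inconsistency in the non-positive case, where the free variable should be $u\otimes 1$ rather than the $v=1\otimes u$ introduced for the non-negative case, but the underlying computation is sound.
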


\begin{proof}
 Assume for simplicity that all the weights are non-negative; the proof for non-positive weights is similar. Using Lemma~\ref{lemma: centrality of eta}, it suffices to show that $(\eta \otimes 1)_0$ is an isomorphism. By Example ~\ref{example: nonnegative},  $Q(R)\cong R[u]$ so that $Q(R)$ is flat over $R$ via $p$. We first note that injectivity of  $(\eta \otimes 1)_0$ then follows from flatness and because the functor of invariants is exact. 
To demonstrate surjectivity, we have to exhibit elements of $Q(R) \mathbin{_s\otimes_p} Q(R)$ of middle degree zero that map to $\sigma(r), \pi(r),$ and $u$. They are, respectively, $1 \otimes \sigma(r), \pi(r) \otimes 1,$ and $u \otimes u$. 
\end{proof}

\noindent Note that in the above lemma, one has $Q(R) \mathbin{_s\otimes_p} Q(R)\cong R[u,v]$, and the gradings from Remark ~\ref{remark: gradings} immediately imply that $(Q(R) \mathbin{_s\otimes_p} Q(R))_0 \cong Q(R)$ as $R$-bimodules. The above lemma shows that $\rho_R$ indeed implements this isomorphism.

\begin{example}\label{example: Q2=Q fails}
 However, the map $\rho_R$ is not an isomorphism in general. An elementary counterexample is $R = k[x,y]/(xy)$ where $[x] = 1$ and $[y] = -1$. Letting $z= yu^{-1}$, one has  \[ Q(R) \cong k[x,z,u]/(xz)\] and \begin{displaymath} Q(R)\otimes_RQ(R) \cong k[x,z',u,u']/(xz'uu' ).  \end{displaymath} The element $x\otimes z'$ has middle degree 0 in $Q(R)\otimes_RQ(R)$ and is sent to $0$ in $Q(R)$ under $\rho_R$.  In Section ~\ref{section: main result} we will remedy such a failure by deriving $Q$, and will revisit this particular example again in Example ~\ref{example: derivedexample}.  
\end{example}

\begin{remark}  Note that in the above example $\op{Spec}R$ is singular. We will see in Lemma ~\ref{lemma: rhoforsmooth} that $\rho_R$ is an isomorphism whenever $\op{Spec}R$ is smooth. 
\end{remark} 

\begin{remark} Intuitively, the property of $\rho_R$ being an isomorphism is similar to the characterization of derived open immersions as being finitely-presented ring maps $f: A\to B$ such that $B\overset{\mathbf{L}}{\otimes}_A B\cong B$, see e.g. \cite[Lemma 2.1.6]{TV}. In particular, this suggests we should derive the tensor product in $Q(R) \mathbin{_s\otimes_p} Q(R)$ to obtain a fully-faithful functor. When $\op{Spec}R$ is smooth, we will see in Proposition ~\ref{proposition: smoothtorvanishes} that this tensor product is automatically derived (i.e. its higher Tor's vanish). For singular cases we will implicitly derive this tensor product during the course of Section ~\ref{section: Deriving Q} by deriving the $Q$ functor itself. 
\end{remark} 

\subsection{Bousfield localizations}\label{section: bousfield} In this section we address the fullness of the functor  $\Phi_{Q_+}$ from Equation ~\eqref{equation: Qplusfunctor}. Indeed, we will show more and in Proposition \ref{proposition: now we see some windows} exhibit a semi-orthogonal decomposition of $\op{D}(\op{Qcoh}^{\Gm} \op{Spec}R)$ such that $\Phi_{Q_+}$ gives the inclusion of one of the factors. This semi-orthogonal decomposition will come from a Bousfield localization. 

\begin{definition}
 Let $\mathcal T$ be a triangulated category. A \newterm{Bousfield localization} is an exact endofunctor $L: \mathcal T \to \mathcal T$ equipped with a natural transformation \[ \delta: \op{Id}_\mathcal{T} \to L\]  such that:

\begin{enumerate} 
\item $L\delta = \delta L$ and
\item $L\delta : L \to L^2$ is invertible. 
\end{enumerate} 
 If instead we have an endofunctor $C: \mathcal T \to \mathcal T$ equipped with a natural transformation $\epsilon: C \to 1$ such that \begin{enumerate}
\item $C\epsilon = \epsilon C$ and 
\item $C\epsilon : C^2 \rightarrow C$ is invertible,\end{enumerate} then one calls $C$ a \newterm{Bousfield colocalization}. 
\end{definition} 

\noindent We refer to \cite[Section 4]{Krause} for background on Bousfield localizations and colocalizations for triangulated categories. 

\begin{remark} \label{remark: automatic bous}
If $P, P' \in \op{D}^b(\op{mod}^{\Gm^2}R \otimes_k R)$ and $\delta : P \to P'$ is any map, then one can easily check that $\Phi_P(\delta)(A) = \delta(\Phi_P(A))$ for any $A\in \op{D}(\op{Mod}^{\Gm} R)$. This means that the first condition for being a Bousfield localization or colocalization is automatically satisfied in the setting of Fourier-Mukai functors, provided has a morphism $\delta: \Delta \to P$ or $\epsilon: P \to \Delta$ (recall $\Delta$ is the kernel of the equivariant identity functor). 
\end{remark}

\begin{definition}
Suppose we have maps of endofunctors
 \begin{displaymath}
  C \xrightarrow{\epsilon} \op{Id}_\mathcal{T} \xrightarrow{\delta} L
 \end{displaymath}
of a triangulated category $\mathcal{T}$ such that \begin{displaymath} Cx  \xrightarrow{\epsilon_{Cx}} x \xrightarrow{\delta_x}Lx\end{displaymath} is an exact triangle for any object $x$.  Then $C \to \op{Id}_\mathcal{T} \to L$ is called a 
 a \newterm{Bousfield triangle} for $\mathcal{T}$ if any of the following equivalent conditions are satisfied:
 \renewcommand{\theenumi}{\roman{enumi}}
\begin{enumerate}[label=(\roman*)]
 \item $L$ is Bousfield localization and  $C(\epsilon_x) = \epsilon_{Cx}$,
  \item $C$ is a Bousfield colocalization and $L(\delta_x) = \delta_{Lx}$,
 \item  $L$ is Bousfield localization and $C$ is a Bousfield colocalization.
\end{enumerate}
\end{definition}

\noindent Let us prove the equivalence of the three conditions in the definition. 
\begin{proof}
i) $\Rightarrow$ ii):  Suppose $L$ is Bousfield localization.  Set $x = Ly$.  Then we get a triangle
\[
CLy \xrightarrow{\epsilon_{Ly}} Ly \xrightarrow{\delta_{Ly}} L^2y
\]
and the map $Ly \to L^2y$ is an isomorphism.  Therefore, $CLy = 0$.  Now, consider the triangle
\[
C^2y  \xrightarrow{C(\epsilon_y)}  Cy \xrightarrow{C(\delta_y)} CLy.
\]
Since $CLy =0 $ the first map is an isomorphism as desired.
ii) $\Rightarrow$ i) is by symmetry.
As i) is equivalent to  ii), it is obvious that they are both equivalent to iii).
\end{proof}

\begin{lemma} \label{lemma: bousfield triangle to sod}
Let $C \to \op{Id}_\mathcal{T} \to L$ be a Bousfield triangle for a triangulated category $\mathcal{T}$. Then there is a weak semi-orthogonal decomposition
 \begin{equation} \label{equation: bousfield sod}
  \mathcal T = \langle \op{Im }L , \op{Im }C \rangle. 
 \end{equation} Here $\op{Im}$ denotes the essential image. 
\end{lemma}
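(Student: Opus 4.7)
The plan is to verify the two defining properties of a weak semi-orthogonal decomposition: (a) every object $x\in\mathcal T$ fits into a distinguished triangle with outer terms lying in $\op{Im } L$ and $\op{Im } C$ respectively, and (b) an appropriate Hom-vanishing holds between these two subcategories.

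Property (a) is immediate from the hypothesis: for each $x$, the given Bousfield triangle $Cx \to x \to Lx$ is precisely such a triangle, with $Cx \in \op{Im } C$ and $Lx \in \op{Im } L$.

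For property (b), the goal is to establish $\op{Hom}_{\mathcal{T}}(Cx, Ly) = 0$ for all $x,y \in \mathcal T$. The key intermediate step is to show that $LC = 0$ as an endofunctor. I would apply the exact functor $L$ to the Bousfield triangle, obtaining an exact triangle $LCx \to Lx \to L^2 x$ in which the right-hand arrow is $L(\delta_x) = \delta_{Lx}$ (using the compatibility $L\delta = \delta L$). Since $L$ is a Bousfield localization, $L\delta$ is a natural isomorphism, so $\delta_{Lx}$ is invertible, and therefore $LCx = 0$.

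Given any morphism $f\colon Cx \to Ly$, naturality of $\delta$ produces a commutative square
\[
\begin{tikzcd}
Cx \ar[r, "f"] \ar[d, "\delta_{Cx}"] & Ly \ar[d, "\delta_{Ly}"] \\
LCx \ar[r, "L(f)"] & L^2 y.
\end{tikzcd}
\]
The bottom arrow vanishes because $LCx = 0$, while $\delta_{Ly} = L(\delta_y)$ is an isomorphism. Commutativity then forces $\delta_{Ly}\circ f = 0$, whence $f = 0$. Combined with (a), this yields the claimed weak semi-orthogonal decomposition. The only point requiring any real care is extracting the pointwise invertibility of $\delta_{Lx}$ from the axiom that $L\delta$ is a natural isomorphism; this is immediate from $L\delta = \delta L$, so I do not anticipate a genuine obstacle — the argument is essentially a formal manipulation of the idempotent endofunctors.
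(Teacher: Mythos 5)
Your proof is correct and is the formal dual of the one in the paper. You establish semi-orthogonality by showing $LCx = 0$: applying the exact functor $L$ to the Bousfield triangle $Cx \to x \to Lx$ yields $LCx \to Lx \to L^2x$ with second arrow $L(\delta_x) = \delta_{Lx}$ invertible, so $LCx = 0$; naturality of $\delta$ then forces any $f\colon Cx \to Ly$ to vanish because $\delta_{Ly}$ is an isomorphism. The paper instead shows $CLy = 0$ by reading off the Bousfield triangle at the object $Ly$ (namely $CLy \to Ly \to L^2y$, whose second arrow is already given as invertible) and then uses naturality of $\epsilon\colon C \to \op{Id}$ together with the isomorphism $C^2x \cong Cx$. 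The two arguments are interchangeable; the only slight asymmetry is that your route invokes exactness of $L$ to produce the triangle $LCx \to Lx \to L^2x$, whereas the paper's route obtains its triangle directly from the Bousfield-triangle data at $Ly$ and so does not appeal to exactness of either endofunctor. Both versions correctly use the compatibility $L\delta_x=\delta_{Lx}$ (resp.\ $C\epsilon_x=\epsilon_{Cx}$), which is part of the definition of a Bousfield triangle.
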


\begin{proof}
 By definition, for any object $x$ of $\mathcal T$, we have a triangle
 \begin{displaymath}
  Cx \to x \to Lx
 \end{displaymath}
in $\mathcal{T}$. Let $f: Cx \to Ly$ be any map. We have a commutative diagram
 \begin{center}
 \begin{tikzpicture}[scale=1,level/.style={->,>=stealth,thick}]
	\node (a) at (-1,1) {$C^2x$};
	\node (b) at (1,1) {$CLy$};
	\node (c) at (-1,-1) {$Cx$};
	\node (d) at (1,-1) {$Ly$};
	\draw[level] (a) -- node[above] {$Cf$} (b) ;
	\draw[level] (a) -- node[left] {$\sim$} (c) ;
	\draw[level] (b) -- (d) ;
	\draw[level] (c) -- node[above] {$f$} (d) ;
 \end{tikzpicture}
 \end{center}
 with the left vertical map an isomorphism since $C$ is a Bousfield colocalization. Since $Ly \to L^2y$ is also an isomorphism, its (co)cone $CLy$ must be $0$. Thus, up to composition with an isomorphism,  $f = 0$.
\end{proof}

\begin{lemma} \label{lemma: mixed sod}
Let $C_1 \xrightarrow{\epsilon_1} 1 \xrightarrow{\delta_1} L_1$ and $C_2 \xrightarrow{\epsilon_2} 1 \xrightarrow{\delta_2} L_2$ be Bousfield triangles for a triangulated category $\mathcal{T}$ such that $L_1 C_2 \xrightarrow{L_1(\epsilon_2)} L_1$ is an isomorphism.
Then there is a weak semi-orthogonal decomposition
 \begin{displaymath}
  \mathcal T = \langle \op{Im }C_2 \circ L_1  , \op{Im }C_2 \circ C_1, \op{Im} L_2 \rangle.
 \end{displaymath}
This induces a fully-faithful functor
 \begin{displaymath}
 F : \mathcal T / \op{Im }C_1 \to \mathcal T.
 \end{displaymath}
\end{lemma}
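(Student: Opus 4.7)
The strategy is to splice the two given Bousfield triangles into a three-step filtration on each object, verify the required orthogonalities, and then obtain $F$ as the inclusion of the leftmost piece of the resulting SOD.

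First, I would unpack the hypothesis that $L_1(\epsilon_2)\colon L_1 C_2 \to L_1$ is an isomorphism. Applying the exact functor $L_1$ to the second Bousfield triangle $C_2 \to \op{Id}_{\mathcal T} \to L_2$ gives a triangle $L_1 C_2 \to L_1 \to L_1 L_2$ whose first map is an isomorphism, forcing $L_1 L_2 = 0$. Hence $\op{Im} L_2 \subseteq \ker L_1 = \op{Im} C_1$. Similarly $L_1 C_2 C_1 = L_1 C_1 = 0$, so $\op{Im} C_2 C_1 \subseteq \op{Im} C_1$. In particular the two ``$\op{Im} C_1$-interior'' pieces appearing in the claimed SOD are genuinely contained in $\op{Im} C_1$.

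For the filtration, I would apply the exact functor $C_2$ to the first Bousfield triangle $C_1 x \to x \to L_1 x$ to produce
\begin{equation*}
C_2 C_1 x \to C_2 x \to C_2 L_1 x,
\end{equation*}
and splice this with the second Bousfield triangle $C_2 x \to x \to L_2 x$ via the octahedral axiom. This exhibits $x$ as an iterated extension with successive factors in $\op{Im} C_2 L_1$, $\op{Im} C_2 C_1$, and $\op{Im} L_2$. The orthogonalities between $\op{Im} L_2$ and either of the $C_2$-pieces reduce directly to the SOD $\langle \op{Im} L_2, \op{Im} C_2 \rangle$ of Lemma~\ref{lemma: bousfield triangle to sod}, since $\op{Im} C_2 L_1, \op{Im} C_2 C_1 \subseteq \op{Im} C_2$. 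The essential new input is the vanishing $\op{Hom}(C_2 C_1 x, C_2 L_1 y) = 0$: using that $C_2$ is right adjoint to the inclusion $\op{Im} C_2 \hookrightarrow \mathcal T$ (a formal consequence of the second Bousfield SOD), the adjunction collapses this hom to $\op{Hom}(C_2 C_1 x, L_1 y)$, which vanishes by the first Bousfield SOD because $C_2 C_1 x \in \op{Im} C_1$ and $L_1 y \in \op{Im} L_1$.

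For the fully-faithful functor, I would argue that the triangulated subcategory generated by $\op{Im} C_2 C_1$ and $\op{Im} L_2$ coincides with $\op{Im} C_1$: one inclusion was established above, and conversely any $w \in \op{Im} C_1$ fits in the second Bousfield triangle $C_2 w \to w \to L_2 w$ with $C_2 w = C_2 C_1 w \in \op{Im} C_2 C_1$ (since $C_1 w = w$) and $L_2 w \in \op{Im} L_2$. Consequently the SOD identifies $\mathcal T/\op{Im} C_1$ with its leftmost piece $\op{Im} C_2 L_1$, and composition with the inclusion $\op{Im} C_2 L_1 \hookrightarrow \mathcal T$ realizes $F\colon [x] \mapsto C_2 L_1 x$ as the desired fully-faithful embedding; the adjunction together with $\op{Hom}(\op{Im} C_1, \op{Im} L_1) = 0$ applied to $L_2 L_1 x \in \op{Im} L_2 \subseteq \op{Im} C_1$ in the triangle $C_2 L_1 x \to L_1 x \to L_2 L_1 x$ yields the canonical isomorphism $\op{Hom}(C_2 L_1 x, C_2 L_1 y) \cong \op{Hom}(L_1 x, L_1 y)$. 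The main obstacle will be the careful bookkeeping of the semi-orthogonality directions in the three-piece SOD so that they align with the natural filtration produced by octahedron; this is precisely where the hypothesis $L_1 C_2 \cong L_1$ becomes indispensable, as it collapses the would-be obstructing pieces into $\op{Im} C_1$.
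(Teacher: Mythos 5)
Your argument is correct, and the first part (constructing the three-piece SOD) follows essentially the same route the paper takes: splice $C_2$ applied to the first Bousfield triangle with the second Bousfield triangle via the octahedron, use $\langle\op{Im} L_2,\op{Im} C_2\rangle$ for the easy orthogonalities, and reduce the crucial vanishing $\op{Hom}(C_2C_1 M, C_2 L_1 N)=0$ to $\op{Hom}(C_2C_1 M, L_1 N)=0$ via the right-adjointness of $C_2$ to the inclusion $\op{Im} C_2\hookrightarrow\mathcal T$. Where the two treatments of this last step differ is cosmetic: you observe $L_1C_2C_1 \cong L_1C_1 = 0$, identify $\op{Im} C_1 = \ker L_1$, and invoke the semi-orthogonality $\op{Hom}(\op{Im} C_1,\op{Im} L_1)=0$; the paper instead runs the naturality square for $\delta_1$ against $g\colon C_2C_1 M \to L_1 N$ and kills $g$ using $L_1C_2C_1 = 0$ and the invertibility of $L_1\delta_1$. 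These are two phrasings of the same computation.

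For the fully-faithful functor $F$, you take a genuinely different route from the paper. The paper invokes Orlov's Lemmas 1.1 and 1.4 (on admissible subcategories and Verdier quotients) with $\mathcal D = \op{Im} C_2$, $\mathcal N = \op{Im} C_2 C_1$, $\mathcal N' = \op{Im} C_1$ to produce the chain of equivalences $\op{Im} C_2 L_1 \cong \op{Im} C_2/\op{Im} C_2C_1 \cong \mathcal T/\op{Im} C_1$. You instead directly show that the triangulated subcategory generated by the last two SOD pieces $\op{Im} C_2 C_1$ and $\op{Im} L_2$ equals $\op{Im} C_1$ (using $L_1L_2 = 0$ for one containment and $C_2 w \cong C_2 C_1 w$ for $w\in\op{Im} C_1$ for the other), so that the three-piece SOD collapses to a two-piece SOD $\langle\op{Im} C_2 L_1, \op{Im} C_1\rangle$ and the quotient identification is immediate. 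Your approach is more self-contained and elementary; the paper's approach cites general machinery but is more robust if one is being careful about admissibility and thickness of the pieces. Both are valid; the only thing I would flag in your sketch is that "weak semi-orthogonal decomposition" in the statement signals that the authors are aware the pieces $\op{Im} C_2 L_1$, etc., need not be thick triangulated subcategories a priori, so if you were to write this out in full you would want to check that the quotient $\mathcal T/\op{Im} C_1$ is being taken by a category that is genuinely thick (which $\op{Im} C_1 = \ker L_1$ is), and that the final Hom computation, which you do spell out correctly, suffices to conclude fully-faithfulness without relying on $\op{Im} C_2 L_1$ itself being triangulated.
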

\begin{proof}
For the first statement, by Lemma ~\ref{lemma: bousfield triangle to sod} we only need to further decompose $\op{Im } Q$ in Equation ~\eqref{equation: bousfield sod}.  From Lemma~\ref{lemma: bousfield triangle to sod} and Lemma~\ref{lemma: local cohomology triangle}, we know that for any object $M \in \mathcal T$ there is exact triangle
 \begin{displaymath}
 C_2(C_1 (M)) \to C_2(M) \to C_2 (L_1 (M))
 \end{displaymath}
 so we only need to check semi-orthogonality. Let $N \in  \mathcal T$ be another object and  
\[
  f: C_2(C_1(M)) \to C_2(L_1(N))
 \]
    be any morphism. Using the semi-orthogonality of $\op{Im} C_2$ and $\op{Im} L_2$, $f$ corresponds uniquely to a map \begin{displaymath}g: C_2(C_1 (M)) \to L_1 (N).\end{displaymath} We have a commutative diagram
 \begin{center}
 \begin{tikzpicture}[scale=1,level/.style={->,>=stealth,thick}]
	\node (a) at (-2,1) {$C_2(C_1 (M))$};
	\node (b) at (2,1) {$L_1 (N) $};
	\node (c) at (-2,-1) {$L_1(C_2(C_1 (M)))$};
	\node (d) at (2,-1) {$L_1^2 (N)$};
	\draw[level] (a) -- node[above] {$g$} (b) ;
	\draw[level] (a) -- (c) ;
	\draw[level] (b) -- node[right] {$\sim$} (d) ;
	\draw[level] (c) -- node[above] {$L_1g$} (d) ;
 \end{tikzpicture}
 \end{center}
 Next, we note that 
 \begin{align*}
L_1(C_2(C_1 (M))) & \cong L_1(C_1 (M)) & \text{ by assumption} \\
& \cong 0 & \text{ since } L_1C_1 = 0.
 \end{align*}
 Hence $g =0$ and therefore $f =0$.
 
 For the second part of the statement,
apply \cite[Lemma 1.4]{Orl09} (see also Proposition 4.9.1 of \cite{Krause}) to the case $\mathcal D =  \op{Im} C_2$, $ \mathcal N =  \op{Im} C_2C_1$.  This gives an equivalence 
\[
 \op{Im} C_2L_1 \cong  \op{Im} C_2 /  \op{Im} C_2 C_1.
 \]
   Now apply  \cite[Lemma 1.1]{Orl09} with $\mathcal D = \op{Im} C_2$, $\mathcal D' =   \mathcal T$, $\mathcal N = \op{Im} C_2  C_1$, and $\mathcal N' = \op{Im}  C_1$.  This gives an equivalence
   \[
  \op{Im} C_2  /  \op{Im} C_2 C_1 \cong \mathcal T / \op{Im} C_1.
   \]
 Tracing through these equivalences, we have an equivalence
\[
\mathcal T / \op{Im C_1} \cong  \op{Im} C_2 L_1.
\]
which induces the fully-faithful functor $F$.
\end{proof}

We now show that, under certain conditions, the exact triangle
\begin{equation}\label{equation: Qdeltatriangle} 
Q \xrightarrow{\eta _R}  \Delta  \to \op{cone}\eta_R  \to Q[1]
\end{equation} in $ \op{D}^b(\op{mod}^{\Gm}R \otimes_k R)$ yields a Bousfield triangle for the associated Fourier-Mukai transforms.
\begin{lemma} \label{lemma: Q S bousfield triangle}
Let $R$ be an object of $\mathsf{CR}_k^{\Gm}$. Then the triangle of functors
\[
\Phi_{Q} \xrightarrow{\eta} \op{Id} \to \Phi_{\op{cone}\eta_R }
 \]  is a Bousfield triangle if 
\begin{enumerate}
\item The map $\rho_R: (Q(R) \mathbin{_s\otimes_p} Q(R))_0 \to Q(R)$ is an isomorphism, and
\item $\op{Tor}^R_i(Q(R)_p,Q(R))_s)=0$ for all $i>0$, where the subscripts on $Q(R)$ denote the $R$-module structures given by $p$ or $s$ respectively. 
\end{enumerate}
\end{lemma}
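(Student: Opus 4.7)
The plan is to verify that $\Phi_Q$ satisfies the definition of a Bousfield colocalization, which (together with Remark~\ref{remark: automatic bous}) will immediately give that the triangle in question is a Bousfield triangle. By Remark~\ref{remark: automatic bous}, the compatibility $\Phi_Q(\epsilon) = \epsilon \Phi_Q$ for $\epsilon = \eta$ is automatic since everything comes from a natural transformation between Fourier-Mukai kernels. So the only nontrivial condition to check is that the natural transformation
\[
\Phi_Q(\eta) : \Phi_Q \circ \Phi_Q \longrightarrow \Phi_Q \circ \op{Id} = \Phi_Q
\]
is an isomorphism.

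The main step is to identify the kernel of the composition $\Phi_Q \circ \Phi_Q$ and the map induced by $\eta$. Using the standard formula for composition of equivariant Fourier-Mukai kernels together with the observation in Lemma~\ref{lemma: kernel of the identity} that $\Gm$-invariants in the middle factor of a $\Z^2$-graded module amount to taking the $(0,*)$-component, the composition $\Phi_Q \circ \Phi_Q$ is computed by the kernel
\[
\bigl(Q(R) \mathbin{_s\otimes^{\mathbf L}_p} Q(R)\bigr)_0,
\]
where the subscript $0$ indicates the degree-zero piece in the middle $\Z$-grading. Hypothesis (2), the vanishing of $\op{Tor}^R_i(Q(R)_p, Q(R)_s)$ for $i>0$, says precisely that the derived tensor product agrees with the underived one, so this kernel is simply $(Q(R) \mathbin{_s\otimes_p} Q(R))_0$.

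Under this identification, the morphism $\Phi_Q(\eta)$ at the level of kernels is induced by $1 \otimes \eta_R : Q(R) \mathbin{_s\otimes_p} Q(R) \to Q(R) \mathbin{_s\otimes_\pi} \Delta(R)$ followed by the canonical isomorphism $(Q(R) \mathbin{_s\otimes_\pi} \Delta(R))_0 \cong Q(R)$ used implicitly in Lemma~\ref{lemma: centrality of eta}. By that lemma together with Definition~\ref{definition: rho}, this composite is exactly the map $\rho_R$. Hypothesis (1) then says that $\rho_R$ is an isomorphism, so $\Phi_Q(\eta)$ is an isomorphism of functors. This shows $\Phi_Q$ is a Bousfield colocalization, and by the equivalence of the three conditions in the definition of Bousfield triangle, the triangle $\Phi_Q \to \op{Id} \to \Phi_{\op{cone} \eta_R}$ is a Bousfield triangle.

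The main obstacle is conceptual rather than computational: one needs to be careful that the equivariant composition formula for Fourier-Mukai kernels really does produce the middle-degree-zero piece of a tensor product, and that the induced map $\Phi_Q(\eta)$ really is the map $\rho_R$ rather than some twisted cousin. The ambidextrous nature of $\rho_R$ recorded in Lemma~\ref{lemma: centrality of eta} (as either $(1 \otimes \eta)_0$ or $(\eta \otimes 1)_0$) is exactly what eliminates any potential ambiguity here. Once these identifications are in place, both hypotheses feed directly into the argument and there is nothing further to verify.
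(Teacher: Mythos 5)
Your proposal is correct and follows the same route as the paper: invoke Remark~\ref{remark: automatic bous} to reduce everything to showing $\Phi_Q(\eta)\colon \Phi_Q^2 \to \Phi_Q$ is invertible, then use hypothesis (b) to replace the derived tensor $(Q\overset{\mathbf L}{\otimes}_RQ)_0$ by the underived one and hypothesis (a) to conclude via the identification of the resulting kernel map with $\rho_R$. Your closing sentence would be cleaner if you named condition (ii) of the Bousfield-triangle definition explicitly (with $L(\delta)=\delta L$ supplied automatically by the same remark), but the substance is identical to the paper's argument.
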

\begin{proof}
By Remark \ref{remark: automatic bous}, the functors form a Bousfield triangle if and only if
\[
(Q \overset{\mathbf{L}}{\mathbin{_s\otimes_p}} Q)_0  \xrightarrow{Q(\eta)}   Q 
 \]
 is an isomorphism.  Since $\op{Tor}^R_i(Q(R)_p,Q(R))_s)=0$ for all $i>0$, $(Q \overset{\mathbf{L}}{\mathbin{_s\otimes_p}} Q)_0 = (Q \mathbin{_s\otimes_p} Q)_0$.  Therefore, the map $Q(\eta)$ is just $\rho_R$, which is an isomorphism by assumption.
\end{proof}

\begin{definition}\label{definition: PropertyP} If $R$ is an object of $\mathsf{CR}_k^{\Gm}$ is such that conditions a) and b) appearing in Lemma ~\ref{lemma: Q S bousfield triangle} are both satisfied, we say that $R$ has \newterm{Property} $\mathtt{P}$. 
\end{definition}
\noindent In particular, Lemma ~\ref{lemma: bousfield triangle to sod} says that if $R$ has Property $\mathtt{P}$, then there is a semi-orthogonal decomposition 
 \begin{equation}\label{equation: S Q sod} 
\op{D}(\op{Qcoh}^{\Gm} \op{Spec}R) = \langle \op{Im } \Phi_Q,  \op{Im }\Phi_{\op{cone}\eta} \rangle   . 
 \end{equation} We can further refine this semi-orthogonal decomposition using \newline Lemma ~\ref{lemma: mixed sod} by considering Bousfield (co)localizations coming from local cohomology. Let $\Gamma_+$ denote the local cohomology of $X=\op{Spec}R$ along $V(I^+)=X^-$. 

\begin{lemma} \label{lemma: local cohomology triangle}
There is a Bousfield triangle
\begin{equation}\label{equation: bousfieldforlocalcohom}
\Gamma_{+} \to \op{Id} \to J_{+}
\end{equation} for $\op{D}^b(\op{Qcoh}^{\Gm}\op{Spec}R)$
where \begin{equation} J_+ := j_* \circ j^\ast\end{equation} and $j : \op{Spec } R \setminus V(I^+) \to \op{Spec }R$ is the inclusion. 
\end{lemma}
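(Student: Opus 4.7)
The plan is to produce the claimed Bousfield triangle from the standard recollement attached to the open-closed decomposition $U^+ := \op{Spec}R \setminus V(I^+) \hookrightarrow \op{Spec}R \hookleftarrow V(I^+)$, working throughout in the $\Gm$-equivariant setting (which is legitimate because $V(I^+)$ is $\Gm$-stable and $\Gm$ is reductive, so the usual six-functor formalism restricts to the equivariant categories). The natural transformations are the ones built into this recollement: the counit $\epsilon:\Gamma_+\to\op{Id}$ coming from the inclusion of sections with support, and the unit $\delta:\op{Id}\to J_+=j_*j^*$ coming from restriction-extension along the open immersion $j$.

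First I would recall (or invoke) the classical distinguished triangle
\[
\Gamma_+(M)\xrightarrow{\epsilon_M} M \xrightarrow{\delta_M} j_*j^*M \to \Gamma_+(M)[1],
\]
which holds for every $M\in\op{D}^b(\op{Qcoh}^{\Gm}\op{Spec}R)$ and produces the required exact triangle $C\to\op{Id}\to L$ term of the Bousfield-triangle data. Next, I would verify the localization half of condition (iii) in the definition: since $j$ is an open immersion, the standard fact $j^*j_*\cong\op{Id}_{\op{D}(\op{Qcoh}^{\Gm}U^+)}$ gives
\[
J_+^{\,2}=j_*j^*j_*j^* \cong j_*j^* = J_+,
\]
and one checks that under this isomorphism the natural map $J_+(\delta):J_+\to J_+^{\,2}$ becomes the identity. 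Thus $J_+$ is a Bousfield localization.

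For the colocalization half, I would note that the essential image of $\Gamma_+$ is precisely the full subcategory $\op{D}^b_{V(I^+)}(\op{Qcoh}^{\Gm}\op{Spec}R)$ of complexes with cohomology supported on $V(I^+)$, and that $\Gamma_+$ acts as the identity on this subcategory (equivalently, $j^*\Gamma_+=0$, so applying the triangle to $\Gamma_+M$ gives $\Gamma_+^2 M\xrightarrow{\sim}\Gamma_+M$). This identifies $\Gamma_+(\epsilon):\Gamma_+^{\,2}\to\Gamma_+$ as an isomorphism, giving the colocalization. Having both halves verified, condition (iii) is satisfied and $\Gamma_+\to\op{Id}\to J_+$ is a Bousfield triangle.

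The only real subtlety is bookkeeping in the equivariant derived category: one needs $j_*$ to be derived and one needs the local cohomology triangle to live in $\op{D}^b$ (which in general requires either finiteness hypotheses on $R$ or working in the unbounded category and then restricting). I would handle this by either (i) stating the lemma for $\op{D}(\op{Qcoh}^{\Gm}\op{Spec}R)$ first, where the recollement is automatic, and then observing that $\Gamma_+$ and $Rj_*j^*$ preserve boundedness under the running hypotheses on $R$, or (ii) appealing directly to the known equivariant recollement applied to the stable pair $(V(I^+),U^+)$. Either route makes the Bousfield-triangle conditions immediate from the recollement formalism.
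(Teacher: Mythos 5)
Your argument is correct and is essentially the standard recollement proof; the paper itself gives no details and simply cites this as standard, referring to Example~1.2 of Hall--Rydh's \emph{The telescope conjecture for algebraic stacks} for the equivariant/stacky formulation. Your verification supplies exactly the content that reference packages: the local cohomology triangle, the idempotence of $J_+$ via $j^*j_*\cong\op{Id}$ for an open immersion, and the identification of $\op{Im}\Gamma_+$ with the supported subcategory. One small point worth making explicit is that the naturality halves of the definitions ($J_+(\delta)=\delta_{J_+}$ and $\Gamma_+(\epsilon)=\epsilon_{\Gamma_+}$) also need to be noted; these follow from the usual adjunction identities (or from the paper's Remark~\ref{remark: automatic bous}), but you only verify the invertibility halves. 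Your closing remark about boundedness is a legitimate concern that the paper glosses over by working in $\op{D}(\op{Qcoh}^{\Gm})$ elsewhere, and handling it by first working unbounded and then restricting is the right instinct.
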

\begin{proof}
This is standard.  See Example 1.2 of \cite{HR17} which applies to algebraic stacks and, in particular, our situation. 
\end{proof}

\noindent We then refine our semi-orthogonal decomposition from Equation ~\eqref{equation: S Q sod} as follows. 

\begin{proposition} \label{proposition: now we see some windows} 
Let $R$ be an object of $\mathsf{CR}_k^{\Gm}$ which has Property $\mathtt{P}$. Then there is a semi-orthogonal decomposition 
 \begin{displaymath}
  \op{D}(\op{Qcoh}^{\Gm} \op{Spec}R) = \langle  \op{Im} \Phi_Q \circ J_+,  \op{Im} \Phi_Q  \circ \Gamma_+,  \op{Im}\Phi_{\op{cone}\eta } \rangle. 
  \end{displaymath}
   Furthermore, the functor
   \begin{displaymath}
  \Phi_{Q_+} : \op{D}(\op{Qcoh}^{\Gm} \op{Spec }R \backslash V(I^+)) \to \op{D}(\op{Qcoh}^{\Gm} \op{Spec }R)
 \end{displaymath}
 is fully-faithful.
\end{proposition}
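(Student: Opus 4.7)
The plan is to apply Lemma~\ref{lemma: mixed sod} to two Bousfield triangles on $\mathcal{T} := \op{D}(\op{Qcoh}^{\Gm}\op{Spec} R)$. For the first, take $C_1 = \Gamma_+$ and $L_1 = J_+$, provided by Lemma~\ref{lemma: local cohomology triangle}. For the second, take $C_2 = \Phi_Q$ and $L_2 = \Phi_{\op{cone}\eta}$, where $C_2 \to \op{Id}$ is induced by $\eta: Q(R)\to \Delta(R)$; this Bousfield triangle is supplied by Lemma~\ref{lemma: Q S bousfield triangle}, which applies because $R$ has Property $\mathtt{P}$. With these choices, the conclusion of Lemma~\ref{lemma: mixed sod} is exactly the desired semi-orthogonal decomposition $\langle \op{Im}\Phi_Q\circ J_+,\, \op{Im}\Phi_Q\circ\Gamma_+,\, \op{Im}\Phi_{\op{cone}\eta}\rangle$.

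The only non-formal step is to verify the compatibility hypothesis of Lemma~\ref{lemma: mixed sod}, namely that the map $J_+\circ\Phi_Q \to J_+$ induced by $\eta$ is a natural isomorphism. Writing $j: \op{Spec}R\setminus V(I^+) \to \op{Spec}R$ for the open inclusion of the $\hat s$-semistable locus (cf.\ Proposition~\ref{proposition: boundaryofQ}), we have $J_+ = j_*j^*$. Pulling back the Fourier-Mukai kernel $Q$ along $j\times\op{Id}$ produces $Q_+$; by Lemma~\ref{lemma: Q is trivial on semi-stable} applied over the affine cover of $X^{ss}$ in Equation~\eqref{equation: affinesemistable}, the restriction of $\eta$ to $X^{ss}\times X$ is an isomorphism. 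Since $\Delta$ is the identity kernel by Lemma~\ref{lemma: kernel of the identity}, this yields $j^*\circ\Phi_Q \xrightarrow{\sim} j^*$, and applying $j_*$ gives the required compatibility. This is essentially the argument already appearing in the proof of Proposition~\ref{proposition: faithful}.

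For the fully-faithfulness of $\Phi_{Q_+}$, the second part of Lemma~\ref{lemma: mixed sod} produces a fully-faithful functor $F: \mathcal{T}/\op{Im}\Gamma_+ \to \mathcal{T}$ that sends $[M]$ to $\Phi_Q(J_+(M))$. Because $\Gamma_+$ is local cohomology along $V(I^+)$, its essential image coincides with the kernel of $j^*$, so $j^*$ descends to an equivalence $\mathcal{T}/\op{Im}\Gamma_+ \simeq \op{D}(\op{Qcoh}^{\Gm}(\op{Spec}R\setminus V(I^+)))$ with quasi-inverse induced by $j_*$. Under this identification, $F$ becomes the functor $N \mapsto \Phi_Q(j_*N)$, and a short flat base-change and projection-formula computation along $j\times\op{Id}$ identifies this composite with $\Phi_{Q_+}$. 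Full-faithfulness of $\Phi_{Q_+}$ follows immediately.

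The main obstacles are the verification of the compatibility hypothesis and the identification of the abstractly-produced functor $F$ with the geometrically-defined Fourier-Mukai transform $\Phi_{Q_+}$; both reduce to careful tracking of flat base change for the Fourier-Mukai kernel along $j\times\op{Id}$, combined with Lemma~\ref{lemma: Q is trivial on semi-stable}. Once these compatibilities are nailed down, the statement is a direct application of the abstract Bousfield-localization machinery developed earlier in the section.
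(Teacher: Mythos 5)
Your proposal is correct and follows essentially the same route as the paper: the paper's proof is the single observation that $J_+\circ\Phi_Q = J_+$ follows from Lemma~\ref{lemma: Q is trivial on semi-stable} (inverting a positive-degree element trivializes $Q$), after which Lemma~\ref{lemma: mixed sod} is applied verbatim with $C_1 = \Gamma_+$, $L_1 = J_+$, $C_2 = \Phi_Q$, $L_2 = \Phi_{\op{cone}\eta}$, and the paper simply remarks that the $F$ produced is $\Phi_{Q_+}$. You make the same choices and verify the same compatibility; the additional material in your last paragraph (identifying $\mathcal T/\op{Im}\Gamma_+$ with $\op{D}(\op{Qcoh}^{\Gm}X^{ss})$ via $j^*$, then matching $\Phi_Q\circ j_*$ with $\Phi_{Q_+}$ by projection formula and flat base change along the open immersion $j$) is a worthwhile unpacking of the paper's terse assertion ``the map $F$ in that lemma is exactly $\Phi_{Q_+}$,'' but it is not a different argument, just a more explicit one.
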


\begin{proof}
We note that $J_+ \circ \Phi_Q = J_+$ since inverting any $r\in R$ of positive weight trivializes $Q$ by Lemma~\ref{lemma: Q is trivial on semi-stable}.  Therefore, we may apply Lemma~\ref{lemma: mixed sod} to obtain the result, noting that the map $F$ in that lemma is exactly $\Phi_{Q_+}$ in this case.
\end{proof}

\section{The smooth case}\label{section: smooth} 

We now study the faithful functor \begin{equation}\Phi_{Q_+} : \op{D}^b(\op{coh}^{\Gm}X^-)\to \op{D}^b(\op{Qcoh}^{\Gm}X) 
\end{equation} from Equation ~\eqref{equation: Qplusfunctor} in the case where $X = \op{Spec}R$ is a {\em smooth} affine scheme with $\Gm$-action. As mentioned in the introduction, the smoothness hypothesis is unreasonably restrictive for the demands of birational geometry, although it does subsume many simple examples and, not surprisingly, permits some dramatic simplifications compared to the general case that we will pursue in Section ~\ref{section: main result}. 

\subsection{Affine space}\label{section: affinespace} We first consider the case where $X = \mathbb{A}^n$; here we study $\Phi_{Q_+}$ via direct calculations. We begin by showing in two lemmas that $X =\mathbb{A}^n$ (equipped with any weights) has Property $\mathtt{P}$, i.e. satisfies the hypotheses of Lemma ~\ref{lemma: Q S bousfield triangle}. 

\begin{lemma}\label{lemma: rhoforaffinespace} 
 Let $R =k[x_1\ldots ,x_n]$ where the $x_i$ are equipped with any degrees. Then the map $\rho_R$ from Definition ~\ref{definition: rho} is an isomorphism. 
\end{lemma}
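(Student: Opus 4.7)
The plan is to use the explicit polynomial-ring presentation of $Q(R)$ from Example~\ref{example: affinespaceQ} to compute $Q(R) \otimes_{s,p} Q(R)$ and its middle-degree-zero piece directly, then read off that $\rho_R$ is an isomorphism. After relabeling we may assume $R = k[\mathbf{x}^+, \mathbf{x}^-]$ where $a_i := \deg(x_i^+) \geq 0$ and $b_j := \deg(x_j^-) < 0$, so that $Q(R) \cong k[u, \mathbf{x}^+, \mathbf{y}^-]$ with $y_j^- = x_j^- u^{b_j}$. Because $Q(R)$ is a polynomial ring, it is free (hence flat) over $R$ via both $p$ and $s$, so ordinary tensor products suffice and no derived corrections are needed.

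Label the two tensor factors with subscripts $1, 2$. Using the formulas for $p$ and $s$ from Equations~\eqref{equation: affinespacemodule1} and~\eqref{equation: affinespacemodule2}, the bimodule relations $s(r) \otimes 1 = 1 \otimes p(r)$ for $r$ ranging over the generators of $R$ become
\[
x^+_{i,1} u_1^{a_i} = x^+_{i,2}, \qquad y^-_{j,1} = y^-_{j,2} u_2^{|b_j|}.
\]
Solving these to eliminate $\mathbf{x}^+_2$ and $\mathbf{y}^-_1$ identifies $Q(R) \otimes_{s,p} Q(R)$ with the polynomial ring $k[u_1, u_2, \mathbf{x}^+_1, \mathbf{y}^-_2]$. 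Computing the induced $\mathbb{Z}^3$-grading via Equations~\eqref{equation: tridegleft} and~\eqref{equation: tridegright}, the middle degree of $u_1$ is $+1$, of $u_2$ is $-1$, while $\mathbf{x}^+_1$ and $\mathbf{y}^-_2$ have middle degree zero; the variables carrying nonzero middle weight were precisely the ones we eliminated. Hence the middle-zero piece is the polynomial subring $k[u_1 u_2, \mathbf{x}^+_1, \mathbf{y}^-_2]$.

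To finish, I would trace $\rho_R$ by embedding everything inside $\Delta(R) \otimes_{\sigma,\pi} \Delta(R)$, whose middle-degree-zero part is canonically isomorphic to $\Delta(R)$ via $u_1 u_2 \leftrightarrow u$. Using the factorizations in Lemma~\ref{lemma: centrality of eta}, the generators $u_1 u_2, \mathbf{x}^+_1, \mathbf{y}^-_2$ map respectively to $u, \mathbf{x}^+$, and $\mathbf{x}^- u^{\mathbf{b}} = \mathbf{y}^-$, which are precisely the generators of $Q(R) \subseteq \Delta(R)$. Thus $\rho_R$ is an isomorphism of polynomial rings onto $Q(R) = k[u, \mathbf{x}^+, \mathbf{y}^-]$. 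The only real obstacle is careful notational bookkeeping of the $\mathbb{Z}^3$-grading through the variable eliminations; no further technical input is required beyond the explicit presentation of $Q(R)$.
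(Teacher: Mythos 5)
Your proof is correct, but it takes a genuinely different route from the paper's. The paper proves the lemma by induction on $n$: it anchors the base case via Lemma~\ref{lemma: sometimes underived Q2 = Q} (which handles rings with weights all of one sign) and then uses Lemma~\ref{lemma: Rx}, the identity $Q(S[x]) \cong Q(S)[y]$, to propagate the isomorphism $\rho_S$ to $\rho_{S[x]}$. The paper even remarks that ``this can be verified by an explicit calculation using the gradings'' but chooses the inductive argument because it foreshadows the colimit/pushout arguments of Section~\ref{section: main result} on cofibrant simplicial rings. You carry out the explicit calculation the paper declined to write down, and it checks out: eliminating $\mathbf{x}^+_2$ and $\mathbf{y}^-_1$ via the relations $s(r)\otimes 1 = 1 \otimes p(r)$ gives $Q(R)\mathbin{_s\otimes_p}Q(R) \cong k[u_1, u_2, \mathbf{x}^+_1, \mathbf{y}^-_2]$, whose middle-degree-zero subring is $k[u_1 u_2, \mathbf{x}^+_1, \mathbf{y}^-_2]$, and tracing through Lemma~\ref{lemma: centrality of eta} these generators hit $u$, $\mathbf{x}^+$, $\mathbf{y}^-$, which is all of $Q(R)$. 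Two small remarks. The claim that ``the variables carrying nonzero middle weight were \emph{precisely} the ones we eliminated'' is slightly off: when $a_i = 0$ the variable $x^+_{i,2}$ has middle weight zero yet is still eliminated; the conclusion is unaffected. Also, the opening aside that $Q(R)$ is ``free (hence flat) over $R$ via both $p$ and $s$, so ordinary tensor products suffice'' is unnecessary here---Definition~\ref{definition: rho} already uses the underived tensor product, and ``because $Q(R)$ is a polynomial ring'' does not by itself justify freeness over $R$; the Tor-vanishing that makes the underived tensor product adequate for the Bousfield-triangle argument is handled separately (and differently, via Koszul resolutions) in Lemma~\ref{lemma: affinespacetor}. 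Your direct computation is more concrete and self-contained; the paper's induction is slicker and better aligned with the derived machinery that follows.
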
 

\begin{proof} 
This can be verified by an explicit calculation using the gradings  on $Q(R) \mathbin{_s\otimes_p} Q(R)$ from Equations ~\eqref{equation: tridegleft} and ~\eqref{equation: tridegright}.
However, let us instead give a quick proof more in the spirit of arguments we will use later in Section ~\ref{section: main result}. 

We induct on $n$. First, if $n=1$, i.e. $R=k[x_1]$, then the statement of the Lemma follows directly from Lemma ~\ref{lemma: sometimes underived Q2 = Q}. (Alternately, note that the statement for $n=0$ is trivial). We then need to show that for any object $S$ of $\mathsf{CR}_k^{\Gm}$, if $\rho_S$ is an isomorphism then $\rho_{S[x]}$ is also an isomorphism (where $x$ is given any weight). Indeed, by Lemma ~\ref{lemma: Rx}
\[ Q(S[x])\otimes Q(S[x]) \cong Q(S)[y] \otimes Q(S)[y] \] 
and it's easily verified that
\[
(Q(S)[y] \otimes Q(S)[y])_0 \cong (Q(S) \otimes Q(S))_0 [y].
\]
Then, using Lemma ~\ref{lemma: Rx} again,
\begin{align*}
(Q(S) \otimes Q(S))_0 [y] & = Q(S)[y] \\
& = Q(S[x]).
\end{align*}
\end{proof} 

\begin{lemma}\label{lemma: affinespacetor}  Let $R =k[x_1, \ldots ,x_n]$ where the $x_i$ are equipped with any degrees. Then $\op{Tor}^R_i (Q(R)_p, Q(R)_s)=0$ for $i>0$. 
\end{lemma}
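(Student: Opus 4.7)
The plan is to compute $\op{Tor}^R_\bullet(Q(R)_p, Q(R)_s)$ via an explicit Koszul resolution of $Q(R)_p$ coming from the defining relations in Example~\ref{example: affinespaceQ}, and then to observe that these relations remain a regular sequence after tensoring over $R$ with $Q(R)_s$ on the right.

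Using the notation of Example~\ref{example: affinespaceQ} and eliminating the auxiliary variables $y_i^+$ from Equation~\eqref{equation: Qforaffinespace}, I would first write down the presentation
\[
Q(R) \;\cong\; R[u, y_1^-, \ldots, y_l^-]\big/(f_1, \ldots, f_l), \qquad f_j := y_j^- u^{-b_j} - x_j^-.
\]
In the polynomial ring $R[u, \mathbf{y}^-]$, each $f_j$ is linear in the free variable $x_j^- \in R$ with coefficient $-1$, so iteratively modding out $f_1, \ldots, f_l$ eliminates $x_1^-, \ldots, x_l^-$ one at a time. Consequently $(f_1, \ldots, f_l)$ is a regular sequence, and the associated Koszul complex $K_\bullet := K_\bullet(\mathbf{f};\, R[u, \mathbf{y}^-])$ is a free resolution of $Q(R)_p$ over $R[u, \mathbf{y}^-]$. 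Since $R[u, \mathbf{y}^-]$ is a polynomial extension of $R$ and hence $R$-free, $K_\bullet$ is simultaneously a free $R$-resolution of $Q(R)_p$.

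Tensoring $K_\bullet$ with $Q(R)_s$ over $R$ then yields the Koszul complex $K_\bullet(\mathbf{f} \otimes 1;\, T)$, where $T := R[u, \mathbf{y}^-] \otimes_R Q(R)_s$. Writing the second copy of $Q(R)$ with primed generators $u', (\mathbf{x}^+)', (\mathbf{y}^-)'$, Equation~\eqref{equation: affinespacemodule2} gives $s(x_i^+) = (x_i^+)' (u')^{a_i}$ and $s(x_j^-) = (y_j^-)'$, so the tensor relations impose $x_i^+ = (x_i^+)' (u')^{a_i}$ and $x_j^- = (y_j^-)'$ in $T$. After these eliminations,
\[
T \;\cong\; k[u, \mathbf{y}^-, u', (\mathbf{x}^+)', (\mathbf{y}^-)']
\]
is again a polynomial ring, and $f_j \otimes 1 = y_j^- u^{-b_j} - (y_j^-)'$ is linear in the distinct free variable $(y_j^-)'$ with coefficient $-1$. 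Hence $(f_j \otimes 1)_j$ is once more a regular sequence, the tensored Koszul complex is exact in positive degrees, and $\op{Tor}^R_i(Q(R)_p, Q(R)_s) = 0$ for $i > 0$.

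The essential observation --- the only step that uses more than bookkeeping --- is that regularity of $(f_j)$ survives base change along $s$. This works precisely because $s$ carries each negative-weight generator $x_j^-$ of $R$ to a free polynomial variable $(y_j^-)'$ of the second copy of $Q(R)$, keeping each $f_j \otimes 1$ linear in a distinct free variable of $T$.
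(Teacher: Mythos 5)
Your proof is correct and takes essentially the same approach as the paper: both present $Q(R)$ as a complete intersection quotient of a polynomial extension of $R$, use the associated Koszul complex as a flat $R$-resolution, and verify that the defining relations remain a regular sequence after tensoring with the other copy of $Q(R)$. The only (cosmetic) differences are that you eliminate the $\mathbf{y}^+$ variables up front, so your Koszul complex has $l$ relations over $R[u,\mathbf{y}^-]$ rather than $k+l$ over $R\otimes_k R[u]$, and that you resolve $Q(R)_p$ where the paper resolves $Q(R)_s$ (interchangeable by symmetry of $\op{Tor}$).
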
 
\begin{proof} In the notation of Example ~\ref{example: affinespaceQ}, we have  \begin{displaymath}  
Q(R) = k[\mathbf{x^+}, \mathbf{x^-}, \mathbf{y^+}, \mathbf{y^-}, u]/ \langle \mathbf{y}^+ - u^{a}\mathbf{x}^+,  \mathbf{x}^- -u^{-b}\mathbf{y}^- \rangle \end{displaymath} which exhibits $Q(R)$ as a complete intersection ring inside $R\otimes_k R[u]$ where   \begin{displaymath} R \otimes_k R =k[\mathbf{x^+}, \mathbf{x^-}, \mathbf{y^+}, \mathbf{y^-}].\end{displaymath} Since $R\otimes_k R[u]$ is a flat $R\otimes_k R$-module, the Koszul resolution 
\begin{align*}
\mathcal K^{\bullet} _{R\otimes_k R[u]}( \mathbf{y}^+ - u^{a}\mathbf{x}^+,  \mathbf{x}^- -u^{-b}\mathbf{y}^-).
 \end{align*} 
 of $Q(R)$ as $R\otimes_k R[u]$-module gives a flat resolution of $Q(R)_s$ as an $R$-module.  Hence,
 \begin{align*}
 Q(R)_p \otimes^{\mathbb L}_R Q(R)_s & = Q(R)_p \otimes_R K^{\bullet} _{R\otimes_k R[u]}(\mathbf{y}^+ - u^{a}\mathbf{x}^+,  \mathbf{x}^- -u^{-b}\mathbf{y}^-) \\
 & = K^{\bullet} _{Q(R) \otimes_k R[u] }( \mathbf{y}^+ - u^{a}v^{a}\mathbf{s}^+,  \mathbf{s}^- -u^{-b}\mathbf{y}^-).
 \end{align*}
where we identify \[ Q(R) \otimes_k R[u] = k[\mathbf{s^+}, \mathbf{s^-}, \mathbf{y^+}, \mathbf{y^-}, u,v].\]   Since $ y_i^+ - u^{a_i}v^{a_i}s_i^+,  s_i^- -u^{-b_i}y_i^-$ is a regular sequence (it just solves out certain variables), this complex has no higher homology.
\end{proof} 

\noindent Combining Lemmas ~\ref{lemma: rhoforaffinespace} and ~\ref{lemma: affinespacetor} with Proposition~\ref{proposition: now we see some windows}  immediately gives the following. 

\begin{proposition} \label{proposition: affinefullyfaithful}
 Let $\Gm$ act on $\mathbb{A}^n = \op{Spec}R$. Then  \[ \Phi_{Q_+} : \op{D}^b(\op{coh}^{\Gm}\mathbb{A}^n\setminus V(I^+))\to \op{D}^b(\op{Qcoh}^{\Gm}\mathbb{A}^n) \] is fully-faithful.
\end{proposition}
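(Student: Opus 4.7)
The plan is to deduce this directly from Property $\mathtt{P}$ together with the semi-orthogonal decomposition result. Specifically, the claim will follow as an immediate corollary of three results already established in the paper: Lemma~\ref{lemma: rhoforaffinespace}, Lemma~\ref{lemma: affinespacetor}, and Proposition~\ref{proposition: now we see some windows}. So the proof is essentially an assembly of pieces rather than a new calculation.

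First, I would invoke Lemma~\ref{lemma: rhoforaffinespace} which shows that for $R = k[x_1,\ldots,x_n]$ with any $\Gm$-action, the natural map $\rho_R : (Q(R) \mathbin{_s\otimes_p} Q(R))_0 \to Q(R)$ is an isomorphism. This verifies condition (a) of Definition~\ref{definition: PropertyP}. Next, I would invoke Lemma~\ref{lemma: affinespacetor} which establishes that $\op{Tor}^R_i(Q(R)_p, Q(R)_s) = 0$ for all $i > 0$, again for $R$ a polynomial ring with an arbitrary $\Gm$-action. This verifies condition (b) of Definition~\ref{definition: PropertyP}. Together these say that $R$ has Property $\mathtt{P}$.

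Having established Property $\mathtt{P}$ for $R$, the hypotheses of Proposition~\ref{proposition: now we see some windows} are satisfied. That proposition then yields the semi-orthogonal decomposition
\[
\op{D}(\op{Qcoh}^{\Gm}\op{Spec} R) = \langle \op{Im}\,\Phi_Q \circ J_+, \op{Im}\,\Phi_Q \circ \Gamma_+, \op{Im}\,\Phi_{\op{cone}\eta}\rangle,
\]
and, as part of the same statement, the fully-faithfulness of the Fourier–Mukai functor
\[
\Phi_{Q_+} : \op{D}(\op{Qcoh}^{\Gm}(\op{Spec} R \setminus V(I^+))) \to \op{D}(\op{Qcoh}^{\Gm}\op{Spec} R).
\]
This gives fully-faithfulness at the level of (unbounded) quasi-coherent derived categories. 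To pass to the bounded coherent statement asserted in the proposition, I would note that the essential image consists of complexes which can be analyzed via the explicit Koszul resolution appearing in the proof of Lemma~\ref{lemma: affinespacetor}; since this is a finite resolution by flat modules, tensoring with $Q$ preserves both boundedness and coherence, so the restriction to $\op{D}^b(\op{coh}^{\Gm}\mathbb{A}^n\setminus V(I^+))$ has image in $\op{D}^b(\op{Qcoh}^{\Gm}\mathbb{A}^n)$ and fully-faithfulness is inherited from the ambient statement.

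There is no real obstacle: all the work was done in the preceding lemmas, and this proposition is only a packaging of them. The only minor subtlety is the boundedness/coherence bookkeeping mentioned in Remark~\ref{remark: cpropremark}, which is handled by the finite flat (Koszul) resolution already built in Lemma~\ref{lemma: affinespacetor}.
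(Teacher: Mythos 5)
Your proof is correct and follows the paper's own route exactly: the paper deduces Proposition~\ref{proposition: affinefullyfaithful} immediately from Lemmas~\ref{lemma: rhoforaffinespace} and~\ref{lemma: affinespacetor} (which together establish Property $\mathtt{P}$ for polynomial rings) combined with Proposition~\ref{proposition: now we see some windows}. Your bookkeeping on boundedness is in the right spirit, though note the paper itself defers this to Proposition~\ref{proposition: window affine space case} and Remark~\ref{remark: cpropremark}, where the relevant points are the finiteness of the \v{C}ech resolution for the pushforward from the quasi-affine $X^{ss}$ and the finite global dimension of $R$ --- the Koszul resolution in Lemma~\ref{lemma: affinespacetor} resolves $Q(R)$ over $R\otimes_k R[u]$, not over $R$ directly, so on its own it does not quite carry the boundedness argument.
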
 

We now study the essential image of $\Phi_{Q_+}$ in the affine space case and show that it coincides with the subcategory generated by weights in the interval $(\mu ,0]$ where $-\mu$ is the sum of the positive weights of the $\Gm$-action. To this end, we first establish a useful set of related generating objects.

\begin{lemma}\label{lemma: affinespacegenerators} Equip $R=k[x_1,\ldots ,x_n]$ with any $\Gm$-action and set \begin{equation} \mu = - \sum_{\op{deg}x_j> 0 } \op{deg}x_j .  \end{equation} Then $\op{D}^b(\op{coh} ^{\Gm}\mathbb{A}^n\setminus V(I^+))$ is generated by objects of the form $j^\ast\O(i)$ where $i\in \mathbb{N}$ is in $(\mu,0]$. Here $j: \mathbb{A}^n\setminus V(I^+)) \to \mathbb{A}^n$ is the inclusion and $R(i)$ denotes structure sheaf of $\mathbb{A}^n = \op{Spec}R$ equipped with weight $i$ with respect to the $\Gm$-action. 
\end{lemma}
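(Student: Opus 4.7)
The plan is to reduce in two stages: first, to establish that the whole family of twists $\{j^{*}\mathcal O(i)\}_{i\in\mathbb Z}$ generates, and then to use a Koszul resolution of the unstable locus to cut this generating set down to representatives whose weights fall in the interval $(\mu,0]$.

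For the first reduction, note that $R=k[x_{1},\dots,x_{n}]$ is a polynomial ring, so every finitely generated $\mathbb Z$-graded $R$-module admits a finite free resolution by direct sums of shifted free modules $R(i)$. Consequently the twists $\{\mathcal O(i)\}_{i\in\mathbb Z}$ generate $\op{D}^{b}(\op{coh}^{\Gm}\mathbb A^{n})$. Since $j\colon U\hookrightarrow\mathbb A^{n}$ is an open immersion and the restriction functor $j^{*}$ sends equivariant coherent sheaves on $\mathbb A^{n}$ to all such sheaves on $U$ (every equivariant coherent sheaf on the quasi-affine $U$ extends to $\mathbb A^{n}$ by taking the coherent extension of its image under $j_{*}$), the family $\{j^{*}\mathcal O(i)\}_{i\in\mathbb Z}$ generates $\op{D}^{b}(\op{coh}^{\Gm}U)$.

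For the second reduction, let $a_{1},\dots,a_{k}$ enumerate the strictly positive weights among $w_{1},\dots,w_{n}$, so that $V(I^{+})$ is cut out by the corresponding regular sequence in $R$. This produces an equivariant Koszul resolution
\begin{equation*}
0\to\mathcal O(\mu)\to\cdots\to\bigoplus_{j}\mathcal O(-a_{j})\to\mathcal O\to\mathcal O_{V(I^{+})}\to 0.
\end{equation*}
Because $V(I^{+})\cap U=\emptyset$, its restriction to $U$ is an exact complex whose terms are shifts $j^{*}\mathcal O(-S)$ with $S$ ranging over the $2^{k}$ subset sums of $\{a_{1},\dots,a_{k}\}$, running from $S=0$ (namely $j^{*}\mathcal O$) to $S=-\mu$ (namely $j^{*}\mathcal O(\mu)$). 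Twisting by $\mathcal O(N)$ for an arbitrary $N\in\mathbb Z$ yields an exact complex whose terms are $j^{*}\mathcal O(N-S)$; splitting it into successive triangles shows that the rightmost term $j^{*}\mathcal O(N)$ lies in the triangulated subcategory generated by the other terms, which have weights in the half-open interval $[N+\mu,N-1]$. By the same token, reading the complex in the other direction exhibits the leftmost term $j^{*}\mathcal O(N+\mu)$ as generated by twists with weights in $[N+\mu+1,N]$.

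Armed with these two reduction moves, an induction on $|m|$ finishes the argument: if $m>0$, use the first move to express $j^{*}\mathcal O(m)$ in terms of twists of weights strictly less than $m$ (and at least $m+\mu$); if $m\le\mu$, use the second move to express $j^{*}\mathcal O(m)$ in terms of twists of weights strictly greater than $m$ (and at most $m-\mu$). Each step strictly decreases the distance to the window $(\mu,0]$, so iterating terminates with every $j^{*}\mathcal O(m)$ expressed as an iterated extension of the $j^{*}\mathcal O(i)$ with $i\in\mathbb Z\cap(\mu,0]$. The only subtlety I anticipate is formulating the induction cleanly so that both directions terminate simultaneously; choosing the well-founded quantity $\max(m,-|\mu|-m+1)$ (or equivalently doing the two halves $m>0$ and $m\le\mu$ separately) handles this without incident.
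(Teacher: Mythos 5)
Your proof is correct and follows essentially the same strategy as the paper's: restrict the Koszul complex on the positive-weight coordinates to the complement of $V(I^+)$, where it becomes acyclic, twist by $\mathcal O(t)$, and peel off terms in both directions to push any twist into the window $(\mu,0]$. The one thing you make explicit that the paper leaves tacit is the preliminary observation that the full family $\{j^*\mathcal O(i)\}_{i\in\mathbb Z}$ generates $\op{D}^b(\op{coh}^{\Gm}U)$, via finite graded free resolutions over the polynomial ring and essential surjectivity of $j^*$; this is a worthwhile addition rather than a divergence in approach.
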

\begin{proof}
 The Koszul complex on $\{ x_j \ | \ \op{deg}x_j > 0 \}$ is acyclic on $\mathbb{A}^n\setminus V(I^+)$, so the object $j^\ast \O (\mu)$ is generated by such objects.  Similarly, tensoring the Koszul complex by some $j^*\O(t)$, we get that  $j^\ast\O(\mu + t)$  is generated by those $\O(i)$ with $\mu +t < i \leq t$.  By induction, we can thus generate any $j^\ast\O(k)$ with $k \leq 0$ by the objects $j^\ast\O (i)$ with $i\in (\mu,0]$.  Similarly, we can generate an object of the form  $j^\ast \O(i)$ with $i>0$ by twisting the Koszul complex by $j^\ast\O(i)$ and peeling off the top term.

\end{proof} 

\begin{proposition} \label{proposition: window affine space case} With notation as above, the essential image of the equivariant Fourier-Mukai functor
\[
\Phi_{Q_+} : \op{D}^b(\op{coh}^{\Gm} \mathbb{A}^n\setminus V(I^+)) \to \op{D}^b(\op{Qcoh}^{\Gm}\mathbb{A}^n)
\]
is the full subcategory generated by those $R(i)$ such that $i \in (\mu ,0]$. 
\end{proposition}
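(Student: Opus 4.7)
The plan is to reduce the claim to a computation on a distinguished set of generators. By Lemma~\ref{lemma: affinespacegenerators}, $\op{D}^b(\op{coh}^{\Gm}\mathbb{A}^n\setminus V(I^+))$ is generated by the restricted modules $j^*R(i)$ for $i\in(\mu,0]$. Since $\Phi_{Q_+}$ is fully-faithful by Proposition~\ref{proposition: affinefullyfaithful}, its essential image is precisely the triangulated subcategory generated by $\Phi_{Q_+}(j^*R(i))$ for $i\in(\mu,0]$. The proposition therefore reduces to the key identification
\[
\Phi_{Q_+}(j^*R(i))\;\cong\;R(i),\qquad i\in(\mu,0].
\]

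To prove this identification, I would use the Bousfield framework from Section~\ref{section: bousfield}. The proof of Proposition~\ref{proposition: now we see some windows} shows $\op{Im}\Phi_{Q_+}=\op{Im}(\Phi_Q\circ J_+)$ and, in particular, $\Phi_{Q_+}(j^*R(i))\cong\Phi_Q(J_+R(i))$. Applying $\Phi_Q$ to the Bousfield triangle $\Gamma_+R(i)\to R(i)\to J_+R(i)$ from Lemma~\ref{lemma: local cohomology triangle} gives
\[
\Phi_Q(\Gamma_+R(i))\longrightarrow\Phi_Q(R(i))\longrightarrow\Phi_Q(J_+R(i)),
\]
so the identification splits into two claims for $i\in(\mu,0]$: (A) $\Phi_Q(R(i))\cong R(i)$, and (B) $\Phi_Q(\Gamma_+R(i))=0$.

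Both steps reduce to bookkeeping with the $\mathbb{Z}^2$-grading on $Q(R)=k[u,\mathbf{x}^+,\mathbf{y}^-]$ of Example~\ref{example: affinespaceQ} and Remark~\ref{remark: gradings}. For (A), $\Phi_Q(R(i))$ is computed as the first-$\Gm$-invariant part of $Q(R)\otimes_{R,p}R(i)$, using the flatness from Lemma~\ref{lemma: affinespacetor} to work underived. The invariance condition determines the $u$-power as $c=\sum a_j\alpha_j - i$, which is non-negative whenever $i\leq 0$, and one then constructs an explicit $s$-linear isomorphism $R(i)\to\Phi_Q(R(i))$ sending the generator of $R(i)$ to $u^{-i}\otimes 1$. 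For (B), the local cohomology $\Gamma_+R(i)$ is concentrated in the cohomological degree $\mathrm{codim}\,V(I^+)$ and, via its \v Cech description, has $\Gm$-weights bounded above by $\mu-i$. The first-$\Gm$-invariance equation forces the $u$-power to equal $-(\text{weight})-i$, which is non-negative only when the weight is at most $-i$; since the weights in question are at most $\mu-i<0$ whenever $i>\mu$, no invariant element exists and $\Phi_Q(\Gamma_+R(i))$ vanishes.

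The main obstacle is (B). Step (A) is essentially a Hilbert-series verification once the bigrading is carefully tracked, directly generalizing the one-variable warm-up $Q(k[x])_{(i,*)}=u^{-i}k[xu]\cong R(i)$ for $i\leq 0$. For (B), one needs to pin down the precise upper weight bound on $H^{\mathrm{codim}}_{I^+}(R(i))$ and align it with the non-negativity constraint on $u$-powers in $Q(R)$; the defining inequality $i>\mu$ of the window turns out to be exactly what makes the required $u$-power strictly negative, yielding the vanishing.
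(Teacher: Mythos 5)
Your overall reduction is exactly the paper's: use Proposition~\ref{proposition: affinefullyfaithful} and Lemma~\ref{lemma: affinespacegenerators} to reduce to showing $\Phi_{Q_+}(j^*R(i))\cong R(i)$ for $i\in(\mu,0]$, then compute explicitly. Where you differ from the paper is in organizing the computation: the paper pushes forward along $p_2\colon U^+\times X\to X$ by taking the \v Cech resolution on the $\mathbf x^+$-cover, reducing directly to the \v Cech cohomology of the bigraded polynomial ring $Q(R)=k[\mathbf x^+,\mathbf y^-,u]$ in bidegree $(i,*)$; you instead apply $\Phi_Q$ to the local cohomology triangle $\Gamma_+R(i)\to R(i)\to J_+R(i)$ and argue (A) and (B) separately. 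The two are the same computation, and your (A) is fine (indeed $R(i)$ is free, so no derived tensor is needed, and your $c=\sum a_j\alpha_j-i\ge 0$ bookkeeping matches the paper's $Q(R)_{(i,*)}=u^{-i}k[u^{\mathbf a}\mathbf x^+,\mathbf y^-]$).

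There is, however, a gap in how you set up step (B). You propose to bound the $\Gm$-weights of $H^{\mathrm{codim}}_{I^+}(R(i))$ and then match them against admissible $u$-powers in $Q(R)$; but this amounts to computing the \emph{underived} tensor $Q(R)\otimes_R\Gamma_+R(i)$ in bidegree $(0,*)$ and declaring it zero. Since $Q(R)$ is not flat over $R$ (via either $p$ or $s$) once the action has weights of both signs — the fiber of $\widehat p$ over the origin jumps in dimension — the derived tensor $Q(R)\overset{\mathbf L}{\otimes}_R\Gamma_+R(i)$ has higher Tor's, and a weight bound on $H^0$ of the tensor product does not by itself kill them. What actually makes (B) work is the identification of $\Phi_Q(\Gamma_+R(i))$ with the local cohomology of the ring $Q(R)$ itself, $H^{\bullet}_{(\mathbf x^+)}(Q(R))_{(i,*)}$, obtained by applying the projection formula to the triangle $\Gamma_+\to\mathrm{Id}\to J_+$ on the $Q(R)$-side rather than the $R$-side; then the weight count on the monomial basis of $H^k_{(\mathbf x^+)}(Q(R))$ (each $\alpha_j\le -1$ for $a_j>0$ forces $\sum a_j\alpha_j\le\mu$, hence $u$-exponent $\sum a_j\alpha_j-i<0$ when $i>\mu$) gives the vanishing honestly. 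This is exactly what the paper's direct \v Cech computation on $Q(R)$ accomplishes without having to mention Tor at all; your version needs that one extra base-change observation to be complete.
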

\begin{proof} 
Since $\Phi_{Q_+}$ is fully-faithful by Proposition~\ref{proposition: affinefullyfaithful}, Lemma ~\ref{lemma: affinespacegenerators} tells us that the essential image of $\Phi_{Q_+}$ is equivalent to the full subcategory of $\op{D}^b\op{Qcoh}(\mathbb{A}^n)$ generated by the objects $\Phi_{Q_+}(j^\ast\O(i))$ with  $i\in (\mu, 0]$.  Indeed, we will show that  $\Phi_{Q_+}(j^\ast\O(i)) = R(i)$ for such objects, thus proving the Proposition. 

With notation as in Example ~\ref{example: affinespaceQ}, write $R = k[\mathbf{x}^+, \mathbf{x}^-]$ and 
\begin{align*}
Q(R) &  = R \otimes_k R[u] / \langle \mathbf{y}^+ - u^{a}\mathbf{x}^+,  \mathbf{x}^- -u^{-b}\mathbf{y}^- \rangle =k[ \mathbf{x}^+,\mathbf{y}^-,u]. 
\end{align*} 
To push forward an object from $\mathbb A^n \backslash V(x^+) \times  \mathbb A^n$ to $\mathbb{A}^n$, we let $\mathcal C^\cdot$ be the \u{C}ech resolution of $\mathbb A^n \backslash V(x^+) \times  \mathbb A^n$ given by inverting the $x_i^+$'s. We then compute: 

\begin{align*}
\Phi_{Q_+}(\O(i)) & = (\mathcal C^\cdot \otimes_{R \otimes_k R} Q(R)_s \otimes_R R(i,0))_{(0,*)} \\
& =  \mathcal C^\cdot \otimes_{R \otimes_k R} (Q(R)_s)_{(i,*)} \\
& =  \mathcal C^\cdot \otimes_{R \otimes_k R} R \otimes_k R[u] / \langle y_i^+ - u^{a_i}x_i^+,  x_i^- -u^{-b_i}y_i^- \rangle. 
\end{align*}
Here, the notation $(i, *)$ refers to the $\Z^2$ grading $\mathcal C \otimes_{R} M \otimes_{R} Q(R)$ inherits as a $R \otimes_k R$-module, and $(i,*)$ means the sum over all degree $(i,d)$ pieces for all $d \in \Z$ (this is the degree restriction which corresponds to taking $\gm$-invariants in the equivariant Fourier-Mukai transform). We thus have reduced the problem to computing the \u{C}ech cohomology of $Q(R)  = k[\mathbf{x}^+, \mathbf{y}^-, u]$ with $\op{deg } x_i^+ = (a_i, 0), \op{deg } y_i^- = (0, b_i)$, and $\op{deg }u = (-1,1)$. This is a well known computation.  Namely, one can check that the cohomology vanishes when the cohomological degree is in $(\mu ,0)$, and in degree zero is given by $u^{-i} k[u^a \mathbf{x}^+, \mathbf{y}^-] = R(i)$.
\end{proof}

\begin{remark} The above proof also exhibits an isomorphism of functors \[ \Phi_{Q_+} \circ j^* = \op{Id}\]
when restricted to the full subcategory generated by those $R(i)$ such that $i\in (\mu ,0]$. Another consequence is that the essential image of $\Phi_{Q_+}$ actually lies in $\op{D}^b\op{coh}(\mathbb{A}^n)$, as promised in Remark ~\ref{remark: cpropremark}. 
\end{remark} 

\subsection{The smooth affine case in general} We now consider the case where a ring $R$ in $\mathsf{CR}_k^{\Gm}$ is such that $\op{Spec}R$ is smooth. To make reductions to affine space case, we will repeatedly make use of the Luna Slice Theorem, to which we refer to \cite{drez} for expository background or \cite{Luna} for the original result. Accordingly, from this point on we assume $k$ is a field. Let us state a version at the level of generality we require; in particular we will only require the theorem near points on the fixed locus, which simplifies the statement. 

\begin{proposition}\label{proposition: luna} 
Let $\Gm$ act on a smooth affine variety $X=\op{Spec}R$, and let $x\in X$ be a fixed point for the action. Then there exists a $\Gm$-invariant affine subvariety $V\subseteq X$ containing $x$, called the {\normalfont \textsf{slice}} at $x$, and a diagram  
\[
\begin{tikzcd}
& V \ar[dr, "f"] \ar[dl, swap, "g"]& \\
T_x X & & X 
\end{tikzcd}
\]
where the maps $f, g$ are strongly \'etale (see Remark ~\ref{remark: stronglyetale} below for a reminder of this definition), and $T_x X$ denotes the tangent space to $X$ at $x$. 

Moreover, it can be arranged so that $V = \op{Spec}R_r$  where $\op{deg}r=0$ and that the image of $V$ under $g$ is $\op{Spec}T_t$ where $\op{deg}t=0$, where we have written $T_x X: = \op{Spec}T$. 
\end{proposition}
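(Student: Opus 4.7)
The plan is to apply the classical Luna Slice Theorem in the form of \cite{Luna, drez}, whose hypotheses are satisfied here: $\mathbb{G}_m$ is reductive, $X$ is smooth and affine, and since $x \in X$ is a fixed point, its $\mathbb{G}_m$-orbit is the closed point $\{x\}$ with stabilizer the full group $\mathbb{G}_m$, so that the isotropy representation is simply the induced linear $\mathbb{G}_m$-action on $T_x X$. Concretely, reductivity of $\mathbb{G}_m$ furnishes an equivariant splitting of the surjection $\mathfrak{m}_x \twoheadrightarrow \mathfrak{m}_x/\mathfrak{m}_x^2 \cong (T_x X)^*$, and hence an equivariant algebra map $\operatorname{Sym}((T_x X)^*) \to R$; this produces an equivariant morphism $g_0 \colon X \to T_x X$ whose differential at $x$ is the identity. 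In particular $g_0$ is \'etale on some $\mathbb{G}_m$-invariant open neighborhood $V_0$ of $x$.

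Next I would refine the principal open form required by the ``moreover'' clause. The key observation is that at a fixed point $x$, any homogeneous element of $R$ of nonzero degree must vanish at $x$, since the evaluation map $R \to k$ is $\mathbb{G}_m$-equivariant with $k$ carrying the trivial action. Consequently, every $\mathbb{G}_m$-invariant open neighborhood of $x$ contains a principal open of the form $\operatorname{Spec} R_r$ with $\deg r = 0$. Applying this observation once to $V_0 \subseteq X$ and once to the image $g_0(V_0) \subseteq T_x X$, one produces degree zero elements $r \in R$ and $t \in T$ together with principal opens $V := \operatorname{Spec} R_r$ and $\operatorname{Spec} T_t$ containing the respective points; compatibility is ensured by replacing $r$ with $r \cdot g_0^*(t)$ so that $g := g_0|_V$ lands in $\operatorname{Spec} T_t$.

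The step I expect to be the main obstacle is verifying the \emph{strongly} \'etale property for the two maps $f \colon V \hookrightarrow X$ and $g \colon V \to \operatorname{Spec} T_t$. This requires the induced maps on GIT quotients to be \'etale and the relevant squares to be cartesian. However, because $r$ and $t$ have degree zero, the localizations $R \to R_r$ and $T \to T_t$ commute with the invariants functor --- that is, $(R_r)^{\mathbb{G}_m} = (R^{\mathbb{G}_m})_r$ and similarly for $T$ --- so the cartesian property and \'etaleness on invariants are automatic for $f$. For $g$, the same compatibility reduces the strong \'etaleness claim to \'etaleness of $g_0$ on invariants, which is precisely the supplementary statement supplied by Luna's theorem in the fixed-point setting.
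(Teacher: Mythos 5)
Your proposal follows essentially the same route as the paper: both start from Luna's slice theorem in the affine fixed-point setting, both exploit the observation that homogeneous elements of nonzero degree vanish at a $\Gm$-fixed point (so that degree-zero principal opens form a neighborhood basis at $x$), and both rely on the fact that localizing at a degree-zero element commutes with taking $\Gm$-invariants. The paper states this last point as ``strongly \'etale morphisms base change under localization by degree zero elements''; you phrase it as $(R_r)^{\Gm} = (R^{\Gm})_r$. These are the same key insight.

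There is, however, one place where the logical flow of your argument is off. You first construct $g_0 \colon X \to T_x X$ by an equivariant splitting and then restrict to its \emph{\'etale locus} $V_0$. But \'etaleness alone is strictly weaker than being strongly \'etale: even after shrinking $V_0$ to a degree-zero principal open, nothing in your construction forces the cartesian square $R_r \cong T_t \otimes_{(T_t)^{\Gm}} (R_r)^{\Gm}$ to hold. That cartesian property is the nontrivial content supplied by the Luna/Drezet slice theorem, and it holds only on the (possibly smaller) slice $V$ that the theorem produces — not automatically on the full \'etale locus $V_0$ of an arbitrary equivariant map with invertible differential. Your closing sentence, which claims the strong \'etaleness of $g$ ``reduces to \'etaleness of $g_0$ on invariants,'' conflates the two conditions and hides exactly where the cartesian property comes from. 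The cleaner path (and the one the paper takes) is to begin from Drezet's slice $V$, which is already strongly \'etale to $X$ and strongly \'etale onto its image in $T_xX$, and then refine \emph{that} $V$ to a degree-zero principal open. At that point the base-change stability of strong \'etaleness under degree-zero localization — which you correctly identify — does the rest, including the last step of composing with the strongly \'etale inclusion $\operatorname{Spec}T_t \hookrightarrow T_xX$.
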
 

\noindent Let us prove that our version indeed does follow from the version in \cite[Theorems 5.3 and 5.4]{drez}.
\begin{proof}
The only non-trivial differences between ~\ref{proposition: luna} and \cite[Theorems 5.3 and 5.4]{drez} are that we require $g$ to be strongly \'etale and not just  strongly \'etale onto its image, and that we may take $V=\op{Spec}R_r$ as claimed. 

Taking $V$ from \cite[Theorems 5.3 and 5.4]{drez}, we may cover $V$ by open subsets of the form $\op{Spec} R_a$. If some $\op{Spec}R_a$ contains the fixed point $x$, we must have $\op{deg}r=0$.  Since strongly \'etale morphisms base change under localization by degree zero elements we may replace $V$ by $\op{Spec } R_a$ with $\op{deg} a = 0$ and we still know that $g$ is strongly \'etale onto its image.  

Now, similarly cover $\op{im }g$ by open subsets of the form $\op{Spec }T_t$.  Once again, in order to contain the origin (the image of the fixed point), we must have $\op{deg }t =0$.  Hence, if we replace $\op{Spec }R_a$ by $\op{Spec}R_{ag(t)}$, then $g$ is a strongly \'etale map to $\op{Spec }T_t$.  Furthermore as $\op{deg }t=0$, the inclusion into $\op{Spec }T$ is strongly \'etale as well.  Therefore, $g$, as a composition, is strongly \'etale.  
\end{proof}

\begin{remark}\label{remark: stronglyetale} Let $G$ be any reductive group. Recall that if $\phi: R\to S $ is a map of rings, such that $\hat{\phi}: \op{Spec}S\to\op{Spec}R$ is equivariant, one says that $\phi$ is strongly \'etale if the induced map on invariant subrings \begin{displaymath} \phi_G : R^G\to S^G \end{displaymath} is \'etale and there is an isomorphism $S\cong R\otimes_{R^G}{S^G}$ in $\mathsf{CR}_k^G$ where the tensor product is taken with respect to $\phi$ and the inclusion $R^G\subseteq R$. \end{remark} 

We now show that the functor $Q$ satisfies base change for strongly \'etale ring maps. 

\begin{proposition} \label{prop: ready for Luna}
 Let $f: R \to S$ be a strongly  \'etale map of rings with $\Gm$-actions. Then,
 \begin {enumerate}
 \item there is an isomorphism of bimodules 
 \[
S \mathbin{_f\otimes}_s Q(R)  \cong Q(R) \mathbin{_p\otimes}_fS \cong Q(S), 
  \]  
 \item and an isomorphism of functors,
 \[
f^* \circ  \Phi_{Q(R)} \cong  \Phi_{Q(S)} \circ f^*.
 \]
 \end{enumerate}
 \end{proposition}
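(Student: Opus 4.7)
My plan is to reduce everything to a single base-change property of $Q$. Specifically, I will first prove the following key observation: whenever $A$ is a degree-zero $k$-algebra mapping into $R^{\Gm}$ and $A \to B$ is a flat map of degree-zero rings, the natural map
\[
Q(R) \otimes_A B \;\longrightarrow\; Q(R \otimes_A B)
\]
is an isomorphism of $\Z^2$-graded $k[u]$-algebras. This follows immediately from the generator description (Definition~\ref{definition: Q} and Equation~\eqref{equation: generatorsforQ}): any $b \in B \subset R \otimes_A B$ lies in degree zero, so $\pi(b) = b = \sigma(b)$, and thus $Q(R \otimes_A B)$ is generated inside $(R \otimes_A B)[u, u^{-1}]$ by $Q(R)$ together with $B$, giving a surjection. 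Flatness of $B$ over $A$ preserves the injection $Q(R) \hookrightarrow R[u, u^{-1}]$ after tensoring, giving injectivity.

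For part (1), I apply this with $A = R^{\Gm}$ and $B = S^{\Gm}$; the strong étale hypothesis furnishes both the equivariant isomorphism $S \cong R \otimes_{R^{\Gm}} S^{\Gm}$ and the flatness of $S^{\Gm}$ over $R^{\Gm}$ (see Remark~\ref{remark: stronglyetale}). Thus $Q(S) \cong Q(R) \otimes_{R^{\Gm}} S^{\Gm}$. Since $p$ and $s$ both restrict to the canonical inclusion $R^{\Gm} \hookrightarrow Q(R)$ on degree-zero elements, associativity of tensor products yields
\[
S \mathbin{_f\otimes}_s Q(R) \;\cong\; (R \otimes_{R^{\Gm}} S^{\Gm}) \otimes_{R, s} Q(R) \;\cong\; S^{\Gm} \otimes_{R^{\Gm}} Q(R) \;\cong\; Q(S),
\]
and the symmetric chain gives $Q(R) \mathbin{_p\otimes}_f S \cong Q(S)$. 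A short check confirms that these isomorphisms respect the remaining bimodule and $\Gm^2$-equivariant structures.

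For part (2), I unpack both composites at the level of bimodules, using the Fourier-Mukai formula from the proof of Lemma~\ref{lemma: kernel of the identity}. On one side, $f^* \Phi_{Q(R)}(M)$ is the middle-weight-zero part of $(S \mathbin{_f\otimes}_p Q(R)) \otimes_R^{\mathbf{L}} M$, where the remaining tensor uses the $s$-structure on $Q(R)$. On the other side, since $Q(S) \otimes_{S,\, s_S} S \cong Q(S)$, the functor $\Phi_{Q(S)}(f^* M)$ collapses to the middle-weight-zero part of $Q(S) \otimes_R^{\mathbf{L}} M$, where the $R$-module structure on $Q(S)$ is via $s_S \circ f$. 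Part (1) identifies $S \mathbin{_f\otimes}_p Q(R)$ with $Q(S)$, and tracing through the explicit isomorphism $Q(R) \otimes_{R^{\Gm}} S^{\Gm} \cong Q(S)$ shows that the residual $s_R$-action on $Q(R)$ corresponds exactly to $s_S \circ f$ on $Q(S)$; indeed, for homogeneous $r \in R$ of weight $n$, one has $s_R(r) = r u^n \mapsto f(r) u^n = s_S(f(r))$ under $Q(f)$. Hence the two expressions agree naturally in $M$, giving the claimed isomorphism of functors.

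The main obstacle is the base-change lemma itself: verifying that $Q(R) \otimes_A B \to Q(R \otimes_A B)$ really is an isomorphism (and not merely a surjection) hinges on flatness of $B$ over $A$, which is precisely where the strong étale hypothesis genuinely enters. Once this is secured, part (1) is essentially formal, and part (2) reduces to careful bookkeeping with the module structures inherited from the explicit isomorphism $Q(S) \cong Q(R) \otimes_{R^{\Gm}} S^{\Gm}$.
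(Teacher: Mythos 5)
Your proof is correct and follows essentially the same route as the paper: both arguments reduce to the strongly \'etale decomposition $S \cong R \otimes_{R^{\Gm}} S^{\Gm}$, use flatness of $R^{\Gm} \to S^{\Gm}$ for injectivity, and use the generator description of $Q$ together with the degree-zero nature of $S^{\Gm}$ for surjectivity. Your extraction of the intermediate base-change lemma $Q(R) \otimes_A B \cong Q(R \otimes_A B)$ for flat degree-zero $A \to B$ is a clean way of packaging what the paper does inline, and your bookkeeping for part (2) matches the paper's chain of equalities up to reorganization.
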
 

\begin{proof} 

We first prove a).  We have the obvious map $Q(R) \otimes_R S \to Q(S)$ given by $Q(f)\otimes 1$; we claim this map is an isomorphism. This map sits as the top arrow in the commutative diagram

\[
\begin{tikzcd}
Q(R) \mathbin{_p\otimes}_f S  \arrow[d] \arrow[rr, ] &  &  Q(S)  \arrow[d]\\
   R[u,u^{-1}] \mathbin{_\pi\otimes}_f S  \arrow[rr, "="] & &  S[u,u^{-1}]. 
\end{tikzcd}
\] and the left vertical arrow is injective since $f: R\to S$ is \'etale and thus, in particular, is flat. It follows that $Q(f)\otimes 1$ is also injective. 

To demonstrate surjectivity, we use the isomorphism $S\cong R\otimes_{R^G}{S^G}$ afforded by the strongly \'etale condition. Namely, given $s\in S$, we write $s= \sum_i r_i\otimes s_i$ where each $r_i \in R$ and $s_i\in S^{\Gm}$. Let $\sigma : S\to S[u,u^{-1}]$ be the co-action ring map for the $\Gm$-action on $S$. Then \[ \sigma (s) = \sigma ( \sum r_i\otimes s_i ) = \sum \sigma (r_i)\otimes s_i. \] Now, write each $r_i\in R$ as a sum of homogenous elements: $r_i= \sum r_i^j$ where $\op{deg}(r_i^j) :=n_{ij}$. Then \[ \sigma (r_i) = \sum_j r_i^ju^{n_{ij}}\] and so \[ \sigma (r_i)\otimes s_i  = \sum_j r_i^ju^{n_{ij}}\otimes s_i = \sum_ju^{n_{ij}}\otimes f(r_i^j)s_i \] since the tensor product is over $R$ via $f$ on the right. To show that $\sigma (s)$ is in the image of $Q(f)\otimes 1$, it suffices to show that if $u^k\in Q(R)$ for some $k\in\mathbb{Z}$, then $u^k\in Q(S)$. But this is clear as $Q(f): Q(R)\to Q(S)$ is obtained, by definition, by restricting the map $R[u,u^{-1}]\to S[u,u^{-1}]$ induced by $f$.  
 It is trivial that any $s\in S$ is in the image of $Q(f)\otimes 1$, and so we have shown that any element of the form $s$, $\sigma (s)$, and also the element $u$ are all in the image, but such elements generate $Q(S)$ by definition. 
 
Statement b) is the following chain of equalities:
\begin{align*}
f^* (\Phi_{Q(R)}(M)) & = S \otimes_R (Q(R) \otimes_R M )_{(*,0)} & \\
& = S_0 \otimes_{R_0} (Q(R) \otimes_R M )_{(*,0)} & \text{ since } f \text{ is strongly \'etale} \\
& = (S_0 \otimes_{R_0} Q(R) \otimes_R M )_{(*,0)} & \\
& = (S \otimes_{R} Q(R) \otimes_R M )_{(*,0)} &  \text{ since } f \text{ is strongly \'etale}  \\
& = (Q(S) \otimes_R M )_{(*,0)} &  \text{ by part a) }  \\
& = \Phi_{Q(S)}(f^*M). &   \\
\end{align*}\end{proof} 

\begin{remark} In the above proposition, it is necessary that the map $f: R\to S$ is strongly \'etale and not just \'etale. For example, $k[x]\to k[x,x^{-1}]$ with $\op{deg}x>0$ is an open immersion, but it is easy to verify that base change for $Q$ does not hold. 
\end{remark}

\begin{lemma}\label{lemma: rhoforsmooth} Let  $R$ be an object of $\mathsf{CR}_k^{\Gm}$ such that $X=\op{Spec}R$ is smooth.  Then the map $\rho_R$ from Definition ~\ref{definition: rho} is an isomorphism. 
\end{lemma}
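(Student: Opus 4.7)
The plan is to check that $\rho_R$ is an isomorphism locally on $\op{Spec}(R \otimes_k R) = X \times X$, using a $\Gm^2$-invariant open cover adapted to the two mechanisms by which $\rho_R$ simplifies: strongly étale reduction to affine space (via Proposition~\ref{proposition: luna} and Proposition~\ref{prop: ready for Luna}) and the semi-stability trivialization of Lemma~\ref{lemma: Q is trivial on semi-stable}.

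First I would cover $X \times X$ by three families of opens: (i) $D(r_-) \times X$ for $r_- \in R$ homogeneous of negative degree; (ii) $X \times D(r_+)$ for $r_+ \in R$ homogeneous of positive degree; and (iii) $V_x \times V_y$ for pairs $x, y \in X^{\Gm}$, where $V_x = \op{Spec}(R_{r_x})$ with $r_x \in R^{\Gm}$ is a Luna neighborhood from Proposition~\ref{proposition: luna}. I would then verify that these opens cover $X \times X$: a point $(x_1, x_2)$ missed by both (i) and (ii) lies in $X^- \times X^+$, and by Definition~\ref{definition: attractingrepelling} each $x_i$ has a $\Gm$-orbit limit at some fixed point $y_i \in X^{\Gm}$; since the invariant $r_{y_i}$ is constant along orbit closures, $r_{y_i}(x_i) = r_{y_i}(y_i) \neq 0$, placing $(x_1, x_2)$ in $V_{y_1} \times V_{y_2}$.

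Next, on opens of type (iii) I would apply Proposition~\ref{prop: ready for Luna} separately in each tensor factor to identify $\rho_R$ restricted to $V_x \times V_y$ with a base change of $\rho_{T_x X}$, which is an isomorphism by Lemma~\ref{lemma: rhoforaffinespace}. On opens of type (i), Lemma~\ref{lemma: Q is trivial on semi-stable} identifies the first tensor factor of the source $(Q \mathbin{_s\otimes_p} Q)_0$ with $\Delta$, after which by Lemma~\ref{lemma: centrality of eta} the map $\rho_R$ reduces to the tautological isomorphism $(\Delta \mathbin{_\sigma\otimes_p} Q)_0 \cong Q$. Opens of type (ii) are handled symmetrically using the alternate presentation $\rho_R = (1 \otimes \eta)_0$, in which $\eta$ is applied to the second tensor factor and trivialized by the positive-degree localization.

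The main obstacle will be verifying the compatibility of $\rho$ with bi-factorial strongly étale base change in step (iii) --- applying Proposition~\ref{prop: ready for Luna} twice and factoring through the intermediate ring $R_{r_x r_y}$ --- in order to conclude that the resulting map is naturally identified with $\rho_{R_{r_x r_y}}$. The smoothness hypothesis enters only through Luna's theorem, which supplies the strongly étale reduction to affine space at each fixed point; the rest of the argument is essentially a bookkeeping exercise combining semi-stability and invariance of $r_{y_i}$ along orbit closures.
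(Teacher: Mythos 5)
Your approach is sound in spirit and uses the same ingredients as the paper (Luna slices, strongly \'etale base change, Lemma~\ref{lemma: Q is trivial on semi-stable}, reduction to Lemma~\ref{lemma: rhoforaffinespace}), but the paper takes a noticeably simpler route: it checks $\rho_R$ locally on $\op{Spec}R$ rather than on $\op{Spec}R \times \op{Spec}R$. Working with just one of the two $R$-module structures (the outer $s$-structure, say), the cover $\op{Spec}R = (\op{Spec}R \setminus V(I^+)) \cup \bigcup_y V_y$ suffices: on the semistable locus $\rho_R$ becomes the trivial isomorphism of Lemma~\ref{lemma: centrality of eta} after Lemma~\ref{lemma: Q is trivial on semi-stable}, and the $\Gm$-invariant Luna neighborhoods $V_y$ automatically cover the repelling locus (every point of $X^-$ flows into some $V_y$). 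The key simplification you missed is that a \emph{one-sided} localization via $s$ at $r_y \in R_0$ already pulls both tensor factors through $R_{r_y}$ at once: $Q(R)_s \otimes_R Q(R_{r_y})_p \cong (Q(R)_s \otimes_R R_{r_y}) \otimes_{R_{r_y}} Q(R_{r_y})_p \cong Q(R_{r_y})_s \otimes_{R_{r_y}} Q(R_{r_y})_p$, so the restricted map is literally $\rho_{R_{r_y}}$ --- no bi-factorial argument is needed.

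Two things to flag in your version. First, the sign bookkeeping: with the paper's conventions $V(I^{\mp}) = X^{\pm}$, a point missed by both families (i) and (ii) lies in $X^+ \times X^-$, not $X^- \times X^+$. Second, and more substantively, your step (iii) as stated reduces the restriction to $V_x \times V_y$ to a ``base change of $\rho_{T_x X}$,'' but for $x \neq y$ there are two different slices in play and no single $\rho_{T_x X}$ controls both. Your final paragraph correctly identifies the fix (pass through $\rho_{R_{r_x r_y}}$), but to close the argument you would still need to observe that $R_{r_x r_y}$ is strongly \'etale over a degree-zero localization of a polynomial ring (one of the two slices), and that $\rho$ respects such localizations --- i.e., you end up re-deriving the one-sided reduction that the paper uses directly. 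The paper's phrasing is itself slightly terse here (Proposition~\ref{prop: ready for Luna} is stated for $Q$, not $\rho$, and the pull-through of the inner tensor factor is implicit), so your instinct that this compatibility needs checking is correct; the one-sided formulation just makes the check shorter.
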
 

\begin{proof}
It suffices to prove the map is locally an isomorphism.  We know that $Q(R) \cong \Delta(R)$ away from the contracting locus, hence $\rho_R$ is an isomorphism over $\op{Spec }R \backslash V(I^+)$.  
Now, for each point in the fixed locus one obtains $\gm$-invariant affine open neighborhood produced by the Luna Slice theorem ~\ref{proposition: luna}.  These neighborhoods cover the contracting locus so it remains to check that $\rho_R$ is an isomorphism upon restriction to each such neighborhood.  But $\rho_R$ respects base change for strongly \'etale morphisms by Proposition~\ref{prop: ready for Luna}, so the Luna Slice Theorem reduces us to the case of affine space,  which was Lemma~\ref{lemma: rhoforaffinespace}.  \end{proof}

\begin{proposition}\label{proposition: smoothtorvanishes} 
Let  $R$ be an object of $\mathsf{CR}_k^{\Gm}$ such that $\op{Spec}R$ is smooth. Then $Q(R)$ with its $R$-module structure induced by $p$ is Tor independent of $Q(R)$ with its $R$-module structure induced by $s$.  That is, \[ \op{Tor}^{R}_p(Q(R)_p , Q(R)_s ) =0\] for all $p>0$. 
\end{proposition}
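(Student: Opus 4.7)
The plan is to mimic the proof of Lemma~\ref{lemma: rhoforsmooth}: Tor-vanishing is a local condition on $\op{Spec} R$, so I would cover $\op{Spec} R$ by open subsets on which the vanishing can be verified, and reduce to the affine space case (Lemma~\ref{lemma: affinespacetor}) via the Luna Slice Theorem together with the base-change formula of Proposition~\ref{prop: ready for Luna}.

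First I would show that the higher Tors are set-theoretically supported on the fixed locus $X^0 = V(I^+)\cap V(I^-)$. By Lemma~\ref{lemma: Q is trivial on semi-stable}, inverting any negative-degree homogeneous element of $R$ identifies $Q(R)_p$ with the free $R$-module $\Delta(R)_\pi = R[u,u^{-1}]$; hence $Q(R)_p$ is flat as an $R$-module away from $V(I^-)$. Symmetrically, $Q(R)_s$ is flat away from $V(I^+)$, so $\op{Tor}^R_i(Q(R)_p, Q(R)_s)$ vanishes on the complement of $V(I^+)\cap V(I^-) = X^0$. It therefore suffices to verify the vanishing after restriction to a suitable $\Gm$-invariant affine neighborhood of each fixed point $x \in X^0$.

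Next I would invoke the Luna Slice Theorem (Proposition~\ref{proposition: luna}) to obtain such a neighborhood $V = \op{Spec} R_{r}$ together with a strongly \'etale morphism $f: T_{t} \to R_{r}$, where $T$ denotes the coordinate ring of the tangent space $T_xX$ (a polynomial ring). By Lemma~\ref{lemma: affinespacetor} and flat base change along the localization $T \to T_t$, the Tors $\op{Tor}^{T_t}_i(Q(T_t)_p, Q(T_t)_s)$ vanish for $i > 0$. Using the bimodule isomorphism $Q(R_r) \cong Q(T_t) \otimes_{T_t} R_r$ supplied by Proposition~\ref{prop: ready for Luna} and the flatness of $f$, a flat resolution $P_\bullet \to Q(T_t)_p$ over $T_t$ base-changes to a flat resolution $P_\bullet \otimes_{T_t} R_r \to Q(R_r)_p$ over $R_r$, and commutativity of tensor products over $T_t$ yields a quasi-isomorphism
\[
Q(R_r)_p \otimes^{\mathbf{L}}_{R_r} Q(R_r)_s \;\simeq\; \bigl(Q(T_t)_p \otimes^{\mathbf{L}}_{T_t} Q(T_t)_s\bigr) \otimes_{T_t} R_r,
\]
whose right-hand side is concentrated in homological degree zero.

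The main technical point will be the derived base-change identification displayed above: one must carefully track both $R_r$-module structures on $Q(R_r)$ when applying the bimodule isomorphism of Proposition~\ref{prop: ready for Luna} simultaneously to the two tensor factors. The strongly \'etale decomposition $R_r \cong T_t \otimes_{T_t^{\Gm}} R_r^{\Gm}$ should ensure the structures are compatible, but writing the identification out cleanly requires some care.
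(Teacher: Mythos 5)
Your proof is correct and takes essentially the same approach as the paper's: both reduce to the affine space case (Lemma~\ref{lemma: affinespacetor}) by a local argument on $\op{Spec}R$, using Lemma~\ref{lemma: Q is trivial on semi-stable} to handle the locus away from the fixed/contracting set and invoking the Luna Slice Theorem together with the strongly \'etale base change of Proposition~\ref{prop: ready for Luna} near the fixed locus. Your localization of the support to $X^0 = V(I^+)\cap V(I^-)$ is a mild refinement of the paper's one-sided observation (support away from $X^-$), but both yield valid open covers and the substance of the reduction is identical.
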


\begin{proof}
The proof is similar to that of the previous lemma. Namely, by Lemma ~\ref{lemma: Q is trivial on semi-stable},  $\Phi_{Q(R)} = \op{Id}$ away from $X^-$, and since $Q$ base changes under all maps in the Luna Slice Theorem by Proposition~\ref{prop: ready for Luna}, we are reduced to the case where $\op{Spec } R = \A^n$, where the statement was proved in Lemma ~\ref{lemma: affinespacetor}.
\end{proof}

\begin{definition}
Let  $R$ be an object of $\mathsf{CR}_k^{\Gm}$ and set $\mu$ to be the sum of the weights of the conormal bundle of $\op{Spec}R/I^+ = X^-$ in $X=\op{Spec}R$.  (Note that $X^-$ is smooth since $X=\op{Spec}R$ was assumed smooth, see e.g. ~\cite[Proposition 1.4.20]{Drinfeld}.) Assume that the fixed locus is connected.  The \newterm{grade restriction window}, denoted by $\weezer$, is the full subcategory of  $\op{D}^b(\op{coh}^{\Gm} \op{Spec }R)$ consisting of objects $A$ such that for any fixed point $x$ and some affine \'etale slice $V = \op{Spec }S$ at $x$, the restriction 
\[
A \otimes_R S \in \op{D}^b(\op{coh}^{\Gm}\op{Spec }S)
\]
is generated by $S(i)$ for $i \in (\mu ,0]$.
  \end{definition}

\begin{lemma} \label{lemma: affinecasewindow}
Let  $R$ be an object of $\mathsf{CR}_k^{\Gm}$ such that $X=\op{Spec}R$ is smooth. Assume that the fixed locus is connected. 
Then the essential image of Fourier-Mukai functor
\[
\Phi_{Q_+} : \op{D}^b(\op{coh}^{\Gm}X\setminus V(I^+)) \to \op{D}^b(\op{Qcoh}^{\Gm}X)\]
lies in $\weezer \subseteq \op{D}^b(\op{coh}^{\Gm}X)$. 
Furthermore, on $\weezer$ there is an isomorphism of functors
\begin{equation} \label{equation: identity on window}
(\Phi_{Q_+}  \circ j^*)_{| \weezer}  = \op{Id_{\weezer}}
\end{equation}
where $j : X\setminus V(I^+) \to X$ is the inclusion.

\end{lemma}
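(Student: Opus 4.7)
The strategy is to reduce both assertions to the affine space case handled in Proposition~\ref{proposition: window affine space case} (and the remark following it), via \'etale localization on Luna slices and the base change recorded in Proposition~\ref{prop: ready for Luna}. A preliminary observation is that the integer $\mu$ attached to each of $X$, the slice $V$, and the tangent representation $T_xX$ coincides: the strongly \'etale, equivariant maps $f: V \to X$ and $g: V \to T_xX$ induce isomorphisms on tangent spaces at any preimage of $x$, hence identify the conormal bundles of the respective repelling loci along these points; connectedness of $X^0$ then ensures $\mu$ is globally well-defined and preserved under the reductions.

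For the first assertion, fix $M \in \op{D}^b(\op{coh}^{\Gm} X\setminus V(I^+))$ and set $N := \Phi_{Q_+}(M)$. To verify $N \in \weezer$, pick a fixed point $x \in X^0$ and invoke Proposition~\ref{proposition: luna} to produce a slice $V = \op{Spec}S$ equipped with strongly \'etale maps $f: V \to X$ and $g: V \to T_xX$; write $T := k[T_xX]$. Proposition~\ref{prop: ready for Luna} gives $f^*N \cong \Phi_{Q_+(S)}(f^*M)$. Since $V$ is affine and $\Gm$ acts, $f^*M$ lies in the triangulated closure of shifts of $j^*S(i) = j^*g^*T(i)$ for $i \in \Z$, so by exactness of Fourier-Mukai it suffices to check the generation requirement on inputs $j^*S(i)$. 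A second application of Proposition~\ref{prop: ready for Luna}, now to $g$, identifies
\[
\Phi_{Q_+(S)}(j^*S(i)) \cong g^*\Phi_{Q_+(T)}(j^*T(i)),
\]
and Proposition~\ref{proposition: window affine space case} places the right-hand side inside the subcategory generated by $\{T(j) : j \in (\mu, 0]\}$. Pulling back by $g^*$ sends these to the required generators $S(j)$.

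For the second assertion, away from the fixed locus the action is free, $Q$ identifies with $\Delta$ by Lemma~\ref{lemma: Q is trivial on semi-stable}, and $\Phi_{Q_+}\circ j^*$ is already the identity. Around each fixed point, the same two-step base change reduces the required identification $(\Phi_{Q_+(S)}\circ j^*)(f^*M) \cong f^*M$ for $M \in \weezer$ to the corresponding identity for $T_xX$, which is the content of the remark following Proposition~\ref{proposition: window affine space case}: namely, $\Phi_{Q_+(T)} \circ j^* = \op{Id}$ on the subcategory generated by $\{T(i) : i \in (\mu, 0]\}$. Both functors are exact, so the isomorphism on generators extends; pulling back by $g^*$ and gluing over the cover by slices at each fixed point produces the natural isomorphism on $\weezer$.

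The main obstacle is thus not a substantive new computation but careful bookkeeping: ensuring that $\mu$ has the same meaning on $X$, $V$, and $T_xX$ (handled by the opening paragraph), and that the affine-space generation/identity statements pull back intact through the two strongly \'etale reductions. The heavy lifting has already been done in Proposition~\ref{proposition: window affine space case} and Proposition~\ref{prop: ready for Luna}.
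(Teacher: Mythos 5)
Your overall approach — reduce to the affine space case of Proposition~\ref{proposition: window affine space case} by passing through Luna slices and using strongly \'etale base change from Proposition~\ref{prop: ready for Luna} — is the same as the paper's, and your treatment of the first assertion is essentially correct (the paper does note one extra ingredient you elide: Proposition~\ref{prop: ready for Luna} gives base change for $\Phi_Q$, and to upgrade to base change for $\Phi_{Q_+}$ one also needs that semistable loci are preserved under strongly \'etale morphisms, which the paper cites from \cite[Lemma 3.2.1]{SvdB}). Your preliminary paragraph about the consistency of $\mu$ across $X$, $V$, and $T_xX$ is a useful point that the paper leaves implicit.

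The real gap is in your treatment of the second assertion, specifically the final step: ``pulling back by $g^*$ and gluing over the cover by slices at each fixed point produces the natural isomorphism on $\weezer$.'' Having a collection of local isomorphisms of functors does not by itself produce a global natural isomorphism — you need a globally defined natural transformation $\Phi_{Q_+}\circ j^* \to \op{Id}$ (or some zigzag) whose local restrictions you can then check are isomorphisms, together with compatibility on overlaps. You never exhibit such a map. The paper handles this carefully: it first uses the projection formula to identify $\Phi_{Q_+}\circ j^* \cong \Phi_Q \circ j_* \circ j^*$, then observes there are two natural morphisms emanating from $\Phi_Q$ — namely $\Phi_Q \to \op{Id}$ induced by the inclusion $Q \to \Delta$, and $\Phi_Q \to \Phi_Q\circ j_*\circ j^*$ induced by the unit of the adjunction $(j^*, j_*)$ — and checks that both are isomorphisms locally on $\weezer$. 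That gives a concrete zigzag of global natural transformations whose local invertibility is exactly what the slice reduction establishes. Without identifying these global maps, ``gluing'' is unjustified. A secondary imprecision: ``away from the fixed locus the action is free, \ldots and $\Phi_{Q_+}\circ j^*$ is already the identity'' conflates the fixed locus $X^0$ with the repelling locus $X^- = V(I^+)$; Lemma~\ref{lemma: Q is trivial on semi-stable} trivializes $Q$ over the complement of $X^-$, and freeness of the action is neither true in general nor relevant here.
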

\begin{proof}
We know that $\Phi_Q$ satisfies base change for strongly \'etale morphisms by Proposition~\ref{prop: ready for Luna}.  Furthermore, since semi-stable loci are preserved under strongly \'etale morphisms (\cite[Lemma 3.2.1]{SvdB}), $\Phi_{Q_+}$ also satisfies base change.  Furthermore, since $\weezer$ is defined locally,  to show that $\op{Im}\Phi_{Q_+}\subseteq\weezer$, it suffices to verify that the essential image of $\Phi_{Q_+}$ lands in $\weezer$ locally.

Cover $\op{Spec }R$ by open affine $\Gm$-invariant neighborhoods $V  = \op{Spec }S$ of the fixed locus produced by the Luna Slice Theorem, and let $g: V\to T_xX := \op{Spec}T$ be the strongly \'etale map. If $\op{Spec}L$ is any $\Gm$-invariant subvariety of $\op{Spec}R$, let  $\weezer_L$ be the grade restriction window on $\op{Spec }L$ and $j_L :\op{Spec }L^\setminus\op{Spec}L^-\to \op{Spec }L$ be the inclusion.

We know that $\Phi_{Q_+(T)}$ lands in $\weezer_T$ by Proposition~\ref{proposition: window affine space case}. Furthermore, $ \op{D}^b(\op{coh}^{\Gm}V\setminus V^-)$ is generated by $j^*_S S(i) = g^* j^*_T T(i)$ for all $i\in\mathbb{Z}$. Hence, $\Phi_{Q_+(S)}$ lies in $\weezer_S$. Thus $\Phi_{Q_+}$ lands in $\weezer$ locally, as desired. 

To prove the latter statement of the lemma, suppose $M \in \weezer$, then we can cover $\op{Spec }R$ by open affine $\Gm$-invariant neighborhoods $V  = \op{Spec }S$ of the fixed locus produced by the Luna Slice Theorem where $M|_V$ is generated by $S(i)$ for $\mu < i \leq 0$, as above.  
We have that
\begin{align*}
g^*|_{\weezer_T} & = g^*  \circ (\Phi_{Q_+(T)}  \circ j_T^*)|_{\weezer_T}  & \text{ by Proposition~\ref{proposition: window affine space case}} \\
& =  (\Phi_{Q_+(S)}  \circ j_S^*)|_{\weezer_S} \circ g^*  & \text{ by strongly \'etale base change.} \\
\end{align*}
Since the generators of $\weezer_S$ lie in the essential image of $g^*$, this implies
\[
(\Phi_{Q_+(S)}  \circ j^*)|_{\weezer_S}  = \op{Id_{\weezer_S}}
\]
Hence, we have shown this isomorphism locally on $\op{Spec }R$.

Now, we know that $j^*, j_*$ and $\Phi_Q$ satisfy base change for strongly \'etale morphisms by Proposition~\ref{prop: ready for Luna}.  Furthermore the projection formula gives an isomorphism of functors
\[
\Phi_{Q_+}  \circ j^* \cong \Phi_{Q}  \circ j_* \circ j^*.
\] 
Hence $\Phi_{Q_+}  \circ j^*$ satisfies base change for strongly \'etale morphisms.  There are two natural morphisms
\[
\begin{tikzcd}
\Phi_{Q} \ar[r] \ar[d] &  \op{Id} \\
\Phi_{Q}   \circ j_* \circ j^* &
\end{tikzcd}
\]
The vertical arrow is the unit of the adjunction and the horizontal arrow comes from the map $Q \to \Delta$.  We have checked that both maps are (locally) isomorphisms on $\weezer$ and the result follows.
\end{proof}

\begin{theorem}\label{theorem: smoothcasesummary}
Let  $\op{Spec }R$ be a smooth affine variety with a $\gm$-action and connected fixed locus.   The functor 
\[
\Phi_{Q_+} : \op{D}^b(\op{coh}^{\Gm}\op{Spec }R \setminus V(I^+)) \to \weezer
\]
is an equivalence of categories.
\end{theorem}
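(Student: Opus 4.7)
My plan is essentially to assemble ingredients that the preceding lemmas have already supplied. First, since $\op{Spec}R$ is smooth, $R$ enjoys Property $\mathtt{P}$: Lemma~\ref{lemma: rhoforsmooth} gives that $\rho_R$ is an isomorphism, and Proposition~\ref{proposition: smoothtorvanishes} gives the vanishing $\op{Tor}^R_i(Q(R)_p, Q(R)_s)=0$ for $i>0$. Applying Proposition~\ref{proposition: now we see some windows} then yields fully faithfulness of $\Phi_{Q_+}$ (as one of the pieces of a semi-orthogonal decomposition of $\op{D}(\op{Qcoh}^{\Gm}\op{Spec}R)$).

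Next, Lemma~\ref{lemma: affinecasewindow} has already done two crucial things for us: the essential image of $\Phi_{Q_+}$ is contained in $\weezer$, and there is a natural isomorphism $(\Phi_{Q_+} \circ j^*)|_{\weezer} \cong \op{Id}_{\weezer}$. The second of these directly produces essential surjectivity onto $\weezer$: for any $M \in \weezer$, set $N := j^*M \in \op{D}^b(\op{coh}^{\Gm}\op{Spec}R \setminus V(I^+))$, and then $\Phi_{Q_+}(N) \cong M$. Combined with fully faithfulness, this exhibits $\Phi_{Q_+}$ as an equivalence of $\op{D}^b(\op{coh}^{\Gm}\op{Spec}R \setminus V(I^+))$ with $\weezer$.

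Thus no substantively new argument is needed at this stage; the theorem is a clean corollary. If I had to point to a main obstacle, it would be the preparatory work already carried out, namely the strongly \'etale base-change property for $Q$ (Proposition~\ref{prop: ready for Luna}) together with the Luna Slice Theorem (Proposition~\ref{proposition: luna}), which reduce the verification of Property $\mathtt{P}$ and the identification with the grade restriction window on \'etale slices to the explicit affine-space computations of Propositions~\ref{proposition: affinefullyfaithful} and~\ref{proposition: window affine space case}. The connectedness hypothesis on the fixed locus is used only to ensure $\mu$ is well-defined and $\weezer$ is unambiguously specified, matching the setup of Lemma~\ref{lemma: affinecasewindow}.
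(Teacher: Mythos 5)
Your proposal is correct and follows the same route as the paper: fully faithfulness from Lemmas~\ref{lemma: rhoforsmooth} and~\ref{proposition: smoothtorvanishes} via Proposition~\ref{proposition: now we see some windows}, and essential surjectivity from the identity $(\Phi_{Q_+}\circ j^*)|_{\weezer}=\op{Id}_{\weezer}$ of Lemma~\ref{lemma: affinecasewindow}. Your remarks on the role of the Luna Slice Theorem, strongly \'etale base change, and the connectedness hypothesis accurately describe where the real work lies.
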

\begin{proof}
\noindent Combining Lemmas ~\ref{lemma: rhoforsmooth} and ~\ref{proposition: smoothtorvanishes}  with Proposition ~\ref{proposition: now we see some windows}  gives that $\Phi_{Q_+}$ is fully-faithful.  Essential surjectivity follows immediately from the isomorphism 
\[
(\Phi_{Q_+}  \circ j^*)|_{\weezer}  = \op{Id_{\weezer}}
\]
which is part of Lemma~\ref{lemma: affinecasewindow}.
\end{proof}

\begin{remark}
The assumption of a connected fixed locus can be removed by putting more care into the definition of $\weezer$.  Namely, one needs to keep track of the parameter $\mu$ for each connected component of the fixed locus.  
\end{remark}

\begin{corollary}\label{corollary: affinewc}
Let  $\op{Spec }R$ be a smooth affine variety with a $\gm$-action and connected fixed locus.  Let $\mu_\pm$ be the sum of the weights of the conormal bundle of $\op{Spec}R/I^\pm$ in $X=\op{Spec}R$.  
Define 
\[
\Phi^{wc} := j^*_- \circ ( - \otimes \O(-\mu_+ - 1)) \circ \Phi_{Q_+}
\]
 where $j_- : \op{Spec }R \setminus V(I^-) \to \op{Spec }R$ is the inclusion.
If $\mu_+ + \mu_- = 0$ then
\[
\Phi^{wc} : \op{D}^b(\op{coh}^{\Gm}\op{Spec }R \setminus V(I^+)) \to \op{D}^b(\op{coh}^{\Gm}\op{Spec }R \setminus V(I^-))
\]
is an equivalence of categories.
\end{corollary}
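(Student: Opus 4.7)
My plan is to factor $\Phi^{wc}$ as a composition
\[
\op{D}^b(\op{coh}^{\Gm}(\op{Spec} R \setminus V(I^+))) \xrightarrow{\Phi_{Q_+}} \weezer \xrightarrow{(-)\otimes\O(-\mu_+ - 1)} \weezer_- \xrightarrow{j^*_-} \op{D}^b(\op{coh}^{\Gm}(\op{Spec} R \setminus V(I^-)))
\]
of three equivalences, and to observe that the composition is by construction $\Phi^{wc}$. The flop hypothesis $\mu_+ + \mu_- = 0$ is used precisely to check that the middle twist lands in the window $\weezer_-$ on the target.

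The first arrow is an equivalence by Theorem~\ref{theorem: smoothcasesummary}, which identifies the source with the grade restriction window $\weezer$; on any \'etale Luna slice $\op{Spec} S$ at a fixed point, $\weezer$ is generated by the twists $S(i)$ with $i \in (\mu_+, 0]$. For the third arrow, I apply Theorem~\ref{theorem: smoothcasesummary} to the $\Gm$-action on $\op{Spec} R$ obtained by reversing the sign of the $\mathbb{Z}$-grading on $R$. Under this reversal the ideals $I^+$ and $I^-$ swap, so the source category becomes $\op{D}^b(\op{coh}^{\Gm}(\op{Spec} R \setminus V(I^-)))$, and the window parameter associated to the reversed action is $-\mu_-$, since the sum of positive weights in the reversed grading equals minus the sum of conormal-bundle weights of the original $X^+$. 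Translating the defining interval $(-\mu_-, 0]$ back across the sign change on gradings, the corresponding window is exactly the subcategory $\weezer_- \subset \op{D}^b(\op{coh}^{\Gm} \op{Spec} R)$ locally generated on Luna slices by the twists $S(i)$ with $i \in [0, \mu_-)$. The functorial identity of Lemma~\ref{lemma: affinecasewindow} applied to the reversed action then identifies the inverse of this equivalence as $j^*_-$, giving the third arrow.

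The essential remaining step is to check that $(-)\otimes\O(-\mu_+ -1)$ carries $\weezer$ onto $\weezer_-$. Since tensoring with a line bundle is an autoequivalence of $\op{D}^b(\op{coh}^{\Gm}\op{Spec} R)$, it suffices to match generating ranges on Luna slices: the generator $S(i) \in \weezer$ is sent to $S(i - \mu_+ - 1)$, so the weight range $(\mu_+, 0]$ is shifted to $(-1, -\mu_+ - 1]$. The hypothesis $\mu_+ + \mu_- = 0$ rewrites this as $(-1, \mu_- - 1]$, i.e.\ the integer interval $[0, \mu_-)$, which is exactly the generating range cutting out $\weezer_-$.

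The main obstacle, insofar as there is one, is the bookkeeping needed to translate Theorem~\ref{theorem: smoothcasesummary} through the grading reversal and align the resulting window with $\weezer$ after the twist; the condition $\mu_+ + \mu_- = 0$ is precisely the numerical input which makes the two windows match.
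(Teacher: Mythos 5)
Your proof is correct and follows the same strategy as the paper: factor $\Phi^{wc}$ through the two window equivalences of Theorem~\ref{theorem: smoothcasesummary} (the second obtained by reversing the grading, which swaps $I^+$ and $I^-$), check that the twist $(-)\otimes\O(-\mu_+-1)$ carries $\weezer_+$ to $\weezer_-$ via the numerical hypothesis, and use Lemma~\ref{lemma: affinecasewindow} to identify $j^*_-$ as the inverse of the second equivalence. Your interval bookkeeping is more explicit than the paper's and correctly uses $\mu_+ + \mu_- = 0$ (the paper's proof writes ``$\mu_+ = \mu_-$'' at one point, which appears to be a typo for the corollary's actual hypothesis).
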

\begin{proof}
By the previous theorem, $\Phi_{Q_+}$ gives an equivalence 
\[
\op{D}^b(\op{coh}^{\Gm} \op{Spec }R \setminus V(I^+)) \cong \weezer_+.
\]
  On the other hand, we may regard $\op{Spec }R$ with the inverse $\gm$-action to produce an isomorphic stack.  This exchanges $I^+$ with $I^-$ so we get an equivalence  
  \[
  \op{D}^b(\op{coh}^{\Gm}\op{Spec }R \setminus V(I^-)) \cong \weezer_-.
  \]
    Under these identifications, the assumption that $\mu_+ = \mu_-$ ensures that 
    \[
     \weezer_+ \otimes \O(-\mu_+ - 1) = \weezer_-.
    \]
      The result follows as $j^*_-$ is the inverse to $\Phi_{Q_-}$ on $\weezer$.
\end{proof}

\section{The general case: homotopical methods}\label{section: simplicialQ}

\subsection{Deriving Q}\label{section: Deriving Q} 

We let $\mathsf{sCR}_k^{\Gm}$ denote the category of simplicial commutative rings with $\Gm$-actions over $k$, i.e. the category of simplicial objects in the category of $\mathbb{Z}$-graded commutative rings. We may refer to a ring $R$ in $\mathsf{CR}_k^{\Gm}$ as an ordinary ring when it becomes necessary to emphasize that it is not a simplicial ring.  For simplicial sets, we will denote $n$-simplices by $\Delta [n]$, the union of their faces by $\partial\Delta[n]$, and let $\Lambda^i[n]$ denote the the simplicial horn, which we recall is the is the union of all of the faces of $\Delta [n]$ except for the $i$-th face. 

\begin{remark}
We will denote elements of  $\mathsf{sCR}_k^{\Gm}$ by $R_\bullet$, and for $n\in\mathbb{Z}$ we denote the ordinary ring structure at level $n$ with respect to the underlying simplicial set by $R_n$. The reader is allowed to be concerned about a potential clash in notation with the grading on a $\Z$-graded ring (for example, each $R_n$ as above is itself $\mathbb{Z}$-graded). We have made efforts to ensure that no explicit $\mathbb{Z}$-gradings are referred to in this section, and so the reader should henceforth assume that all subscripts refer to a simplicial level, unless told otherwise. 
\end{remark}

Recall that $\mathsf{sCR}_k^{\Gm}$ has distinguished objects which play the role of free objects. Namely, let \[ F: \mathsf{sSet} \to \mathsf{sCR}_k\]  be the left adjoint to the forgetful functor from simplicial rings to simplicial sets (here $F$ stands for ``free'' and not ``forget''). Given a simplicial set $X$ and a weight $a \in \Z$, we likewise have an object of $\mathsf{sCR}_k^{\Gm}$ denoted $F(X)^a$, where we declare the degrees of the generators (with respect to the $\mathbb{Z}$-grading) of $F(X_n)^a$ to all be $a$.

\begin{proposition} \label{proposition: model structure}
 The category $\mathsf{sCR}_k^{\Gm}$ possesses a simplicial cofibrantly generated model structure where:
\begin{itemize}
\item the generating cofibrations are the maps $F(\partial \Delta[n])^a \to F(\Delta[n])^a$ for $a \in \Z$ and $n \geq 0$. 
\item The generating trivial cofibrations are $F(\Lambda^r[n])^a \to F(\Delta[n])^a$ for $a \in \Z, n \geq 0$, and $0 \leq r \leq n$.
\item The weak equivalences are those of the underlying simplicial sets, i.e. a map is a weak equivalence if and only if it is a weak equivalence after applying the forgetful functor to simplicial sets. \end{itemize} 
\end{proposition}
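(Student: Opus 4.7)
The plan is to obtain this model structure via Kan's transfer principle, carrying over the standard Kan model structure on $\Z$-indexed families of simplicial sets along the free–forgetful adjunction. Let $\mathsf{sSet}^{\Z} := \prod_{a \in \Z} \mathsf{sSet}$ be equipped with the product Kan model structure; its generating (trivial) cofibrations are precisely the maps $\partial \Delta[n] \hookrightarrow \Delta[n]$ (resp.\ $\Lambda^r[n] \hookrightarrow \Delta[n]$) placed in a single weight $a$. The free graded simplicial commutative $k$-algebra functor $F$ is left adjoint to the forgetful functor that sends a graded simplicial ring to the collection of its homogeneous simplicial sets, and $F$ carries these generators to the maps listed in the proposition.

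Next I would apply Kan's transfer theorem for cofibrantly generated model structures (see e.g.\ Hirschhorn, \emph{Model Categories and their Localizations}, Theorem~11.3.2). The hypotheses to verify are: (a) the domains of the generating (trivial) cofibrations are small with respect to the relevant transfinite compositions, which is immediate since each $F(\partial \Delta[n])^a$ and $F(\Lambda^r[n])^a$ is finitely presented in $\mathsf{sCR}_k^{\Gm}$; and (b) every relative $F(\Lambda^r[n])^a \to F(\Delta[n])^a$-cell complex becomes a weak equivalence upon forgetting down to $\mathsf{sSet}^{\Z}$. The simplicial enrichment then descends from the cotensor $R_\bullet \mapsto R_\bullet^{K}$ with finite simplicial sets $K$, which exists in $\mathsf{sCR}_k^{\Gm}$ because limits of graded simplicial rings are computed degreewise and respect both the ring structure and the grading.

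The main obstacle is verifying (b), which is the $\Z$-graded analogue of Quillen's classical argument for simplicial commutative rings (\emph{Homotopical Algebra}, II.4). The standard approach produces functorial path objects via $R_\bullet^{\Delta[1]}$ and applies the path-object argument to reduce (b) to showing that symmetric powers of contractible free graded simplicial $k$-modules are again contractible. This reduces, via the symmetric-power filtration of a free extension, to the fact that tensor products of weak equivalences of graded simplicial $k$-modules are weak equivalences. Because $\Gm$ is linearly reductive, the decomposition into homogeneous components is exact, and weak equivalences in $\mathsf{sSet}^{\Z}$ are detected componentwise; consequently the verification reduces componentwise to Quillen's original argument for plain simplicial $k$-modules. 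Cofibrant generation and the simplicial structure then follow from the transfer, yielding the claimed model structure with the described generators and weak equivalences.
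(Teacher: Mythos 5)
Your strategy — transfer the product Kan structure on $\mathsf{sSet}^{\Z}=\prod_{a\in\Z}\mathsf{sSet}$ along the free/forgetful adjunction — is exactly the content of the results the paper cites (DHK97, Thm.\ 9.8 and Goerss--Jardine); the paper simply cites them and does not reproduce the argument, so your writeup plays the role of unpacking that citation. The key points you identify (the form of the generating (trivial) cofibrations, $\omega$-smallness of the domains, the acyclicity condition, and the simplicial enrichment via cotensors) are the right ones, and the observation that the underlying weight-$a$ simplicial set of $R_\bullet^K$ is $((R_\bullet)_a)^K$ is what makes the cotensoring compatible with the transfer.

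However, the paragraph on condition (b) conflates two distinct arguments. The path-object argument (every object of $\mathsf{sCR}_k^{\Gm}$ is fibrant because each homogeneous piece is a simplicial abelian group, and $R_\bullet\mapsto R_\bullet^{\Delta[1]}$ gives a functorial path object) already yields (b) on its own; it does not ``reduce to showing that symmetric powers of contractible free graded simplicial $k$-modules are contractible.'' That symmetric-power filtration is a genuinely different, more delicate route to (b): one analyzes the pushout $R_\bullet\to R_\bullet\otimes_{F(\Lambda^r[n])^a}F(\Delta[n])^a$ directly, and the contractibility of the relevant symmetric powers is not an immediate consequence of ``tensor products of weak equivalences are weak equivalences'' (it is sensitive to the base $k$ and to the interaction of $\mathrm{Sym}^n$ with $\Sigma_n$-equivariance). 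Pick one route and carry it through; the path-object argument is cleaner and sufficient here. Two smaller points: ``finitely presented'' is not literally accurate for $F(\partial\Delta[n])^a$ (the simplicial set has infinitely many degenerate simplices); the correct statement is that the domains are small relative to the transfinite compositions that occur, which holds because they are levelwise finitely generated and the forgetful functor preserves filtered colimits. And the appeal to linear reductivity of $\Gm$ is superfluous: the exactness of the weight decomposition is built into the definition of a $\Z$-graded object and does not require a representation-theoretic input.
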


\begin{proof}
 This seems to be well-known. For example, it is a special case of \cite[Theorem 9.8]{DHK97} (see the discussion above the theorem for how their result specializes to $\mathsf{sCR}_k^{\Gm}$).  It is also \cite[Example 5.10]{GJ}.
\end{proof}

\begin{remark} A similar statement holds for $\mathsf{sCR}_{k[u]}^{\Gm^2}$.\end{remark} 

The main property we will need is the following. 

\begin{corollary} \label{corollary: what are cofibs}
 Any cofibrant object in $\mathsf{sCR}_k^{\Gm}$ is a retract of a sequential colimit of pushouts along the generating cofibrations $F(\partial \Delta[n])^a \to F(\Delta[n])^a$.
\end{corollary}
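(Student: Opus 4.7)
The plan is to apply Quillen's small object argument together with the standard retract argument, which together reduce the corollary to a smallness verification for the domains of the generating cofibrations. By Proposition~\ref{proposition: model structure}, the model structure on $\mathsf{sCR}_k^{\Gm}$ is cofibrantly generated with generating cofibrations $I = \{F(\partial \Delta[n])^a \to F(\Delta[n])^a\}$, and the initial object is $k$ itself (viewed as a constant simplicial ring concentrated in degree $0$). For a cofibrant $X$, the unique map $k \to X$ is a cofibration, so the goal is to factor it as a sequential $I$-cell complex followed by a trivial fibration and then realize $X$ as a retract.

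First I would verify that each $F(\partial \Delta[n])^a$ is $\omega$-small (i.e., small relative to $\omega$-indexed sequences of cofibrations) in $\mathsf{sCR}_k^{\Gm}$. This reduces, via the adjunction $F \dashv \textrm{(forget)}$, to the analogous statement that $\partial \Delta[n]$ is $\omega$-small in simplicial sets. Since $\partial \Delta[n]$ has only finitely many non-degenerate simplices, any map out of $F(\partial \Delta[n])^a$ into a sequential colimit factors through some finite stage, giving the required smallness.

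Given this smallness, Quillen's small object argument produces a factorization
\[
k \;\longrightarrow\; Y \;\longrightarrow\; X
\]
in which $k \to Y$ is the sequential colimit of a tower $k = Y_0 \to Y_1 \to Y_2 \to \cdots$ where each step $Y_n \to Y_{n+1}$ is obtained as a pushout
\[
\begin{tikzcd}
\coprod F(\partial \Delta[m_\alpha])^{a_\alpha} \ar[r] \ar[d] & Y_n \ar[d] \\
\coprod F(\Delta[m_\alpha])^{a_\alpha} \ar[r] & Y_{n+1}
\end{tikzcd}
\]
along coproducts of generators of $I$, and $Y \to X$ is a trivial fibration.

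Finally, I would invoke cofibrancy of $X$: since $k \to X$ is a cofibration and $Y \to X$ is a trivial fibration, the lifting axiom supplies a section $X \to Y$ of $Y \to X$ over the identity of $X$, exhibiting $X$ as a retract of $Y$. The only real content is the $\omega$-smallness check in the first step; once that is in place the rest is formal from the cofibrantly generated model structure, so this is where I would expect to spend the bulk of any careful write-up.
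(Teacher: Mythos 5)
Your proof is correct and takes the same route as the paper: the paper simply cites the small object argument (Hovey, Theorem 2.1.14 and Corollary 2.1.15), while you unpack that argument, verifying $\omega$-smallness of the domains $F(\partial\Delta[n])^a$ via the free/forgetful adjunction and then running the factorization plus the retract argument.
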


\begin{proof}
For cofibrantly generated model categories in general, this fact is known as the small object argument, see e.g. \cite[Theorem, 2.1.14 and Corollary 2.1.15]{Hov01}.
\end{proof}

By applying the functor $Q$ from Section~\ref{section: the functor} level-wise and to all face and degeneracy maps, we obtain a functor which we also denote by $Q$. More precisely, by viewing $\mathsf{sCR}_k^{\Gm}$ as the functor category from the opposite of the simplex category to $\mathsf{CR}_k^{\Gm}$ (and likewise for $\mathsf{CR}_{k[u]}^{\Gm^2}$), we obtain an induced functor 
\begin{equation}\label{equation: Qfunctor} 
 Q: \mathsf{sCR}_k^{\Gm} \to \mathsf{sCR}_{k[u]}^{\Gm^2}.
\end{equation} which by abuse of notation we also denote $Q$. For any $R_\bullet$ an object of $\mathsf{sCR}_k^{\Gm}$, the object $Q(R_\bullet )$ comes equipped with action and projection simplicial ring maps  \begin{equation} p,s : R_\bullet \to Q(R_\bullet ) \end{equation} which agree level-wise with the ordinary action and projection ring maps.

\begin{lemma} \label{lemma: leftquillen} The functor $ Q: \mathsf{sCR}_k^{\Gm} \to \mathsf{sCR}_{k[u]}^{\Gm^2}$ is left Quillen. 
\end{lemma}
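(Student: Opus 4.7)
My strategy is to verify (a) that $Q$ admits a right adjoint, and (b) that $Q$ sends the generating cofibrations $F(\partial\Delta[n])^a \hookrightarrow F(\Delta[n])^a$ and trivial cofibrations $F(\Lambda^r[n])^a \hookrightarrow F(\Delta[n])^a$ of Proposition~\ref{proposition: model structure} to cofibrations and trivial cofibrations of $\mathsf{sCR}_{k[u]}^{\Gm^2}$. Together these imply $Q$ is left Quillen, because cofibrations and trivial cofibrations in any cofibrantly generated model category are closed under pushouts, transfinite compositions, and retracts, all of which a left adjoint commutes with; then Corollary~\ref{corollary: what are cofibs} promotes (b) to preservation of all cofibrations and trivial cofibrations.

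For (b), I would compute $Q(F(X)^a)$ by applying Example~\ref{example: affinespaceQ} level-wise in $X$. Since every generator of $F(X_n)^a$ has the single weight $a$, each ring $F(X_n)^a$ is either non-negatively or non-positively graded, and the formula from Example~\ref{example: affinespaceQ} simplifies; after the substitution $y_i := x_i u^a$ used there in the $a \leq 0$ case, one obtains a natural isomorphism of simplicial $\Z^2$-graded $k[u]$-algebras
\begin{equation*}
Q(F(X)^a) \;\cong\; \begin{cases} F_{k[u]}(X)^{(a, 0)} & \text{if } a \geq 0, \\ F_{k[u]}(X)^{(0, a)} & \text{if } a \leq 0, \end{cases}
\end{equation*}
where $F_{k[u]}(X)^{(b, c)}$ denotes the free simplicial $\Z^2$-graded $k[u]$-algebra on $X$ with generators in bi-weight $(b, c)$. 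Under this identification, $Q$ carries the generating (trivial) cofibrations of $\mathsf{sCR}_k^{\Gm}$ exactly to generating (trivial) cofibrations of $\mathsf{sCR}_{k[u]}^{\Gm^2}$ in bi-weight $(a, 0)$ or $(0, a)$.

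For (a), I would construct a right adjoint $G$ directly from the universal property of $Q$. Unpacking, a $\Gm^2$-equivariant $k[u]$-algebra map $\phi: Q(R) \to S$ is exactly a pair of $\Z$-graded ring maps $\phi_p, \phi_s: R \to S$ into the $(*, 0)$ and $(0, *)$ pieces respectively, satisfying the compatibility $\phi_s(r) = \phi_p(r) u^{\op{deg} r}$ for homogeneous $r$. Accordingly I define $G(S)$ to be the $\Z$-graded ring with $G(S)_d := S_{(d, 0)}$ for $d \geq 0$ and $G(S)_d := S_{(0, d)}$ for $d \leq 0$, with multiplication glued using appropriate powers of $u$ in the mixed-sign cases: for $a \in G(S)_d$, $b \in G(S)_e$ with $d \geq 0$, $e \leq 0$, set $a \cdot b := a b u^{-e} \in S_{(d + e, 0)}$ if $d + e \geq 0$ and $a \cdot b := a b u^d \in S_{(0, d + e)}$ if $d + e \leq 0$, the two prescriptions agreeing when $d + e = 0$. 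The main obstacle I anticipate is verifying that this multiplication on $G(S)$ is well-defined, commutative, and associative, which reduces to a routine but somewhat lengthy case analysis on the signs of the degrees, ultimately factoring through multiplication in $S$. Once this is done, the bijection $\op{Hom}_{\mathsf{CR}_{k[u]}^{\Gm^2}}(Q(R), S) \cong \op{Hom}_{\mathsf{CR}_k^{\Gm}}(R, G(S))$ is natural, and applying $G$ level-wise furnishes the required right adjoint to $Q$.
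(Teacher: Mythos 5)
Your verification of part (b)---that $Q$ carries generating (trivial) cofibrations to generating (trivial) cofibrations via the identification $Q F_k \cong F_{k[u]}$---is the same argument used in the paper, which then invokes \cite[Lemma 2.1.20]{Hov01}. For part (a), the paper does not construct the right adjoint inline but defers to the Appendix (Equation~\eqref{equation: Q adjunction}), where $Q^{\op{ad}}(S)$ is built as a subalgebra of $S \otimes_{k[u]} k[x,y]/(xy)$; this is a different construction from your $G(S)$, which twists the multiplication by nonnegative powers of $u$.

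Unfortunately, the key step in your part (a) has a genuine gap: the claimed bijection $\op{Hom}_{\mathsf{CR}^{\Gm^2}_{k[u]}}(Q(R), S) \cong \op{Hom}_{\mathsf{CR}^{\Gm}_k}(R, G(S))$ fails for nonfree $R$, because a $\Gm^2$-equivariant $k[u]$-algebra map $Q(R) \to S$ is not determined merely by a compatible pair $(\phi_p,\phi_s)$ of graded ring maps; when $R$ is singular, $Q(R)$ has relations invisible to that unpacking. Take $R = k[x,y]/(xy)$ with $\op{deg}x = 1$, $\op{deg}y = -1$, so that $Q(R) \cong k[x, z, u]/(xz)$ with $z = yu^{-1}$ (Example~\ref{example: Q2=Q fails}): a map $\phi: Q(R) \to S$ must satisfy $\phi(x)\phi(z) = 0$ in $S_{(1,-1)}$. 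In your $G(S)$, however, the condition for the corresponding $g : R \to G(S)$, $g(x) = \phi(x)$, $g(y) = \phi(z)$, to be a ring map is only $g(x) \cdot_{G(S)} g(y) = g(x) g(y)\, u = 0$ in $S_{(0,0)}$, a strictly weaker requirement whenever multiplication by $u$ has a kernel. Concretely, with $S = k[s,t,u]/(stu)$, $\op{deg}s = (1,0)$, $\op{deg}t = (0,-1)$, $\op{deg}u = (-1,1)$, the assignment $g(x) = s$, $g(y) = t$ is a graded ring map $R \to G(S)$ (as $stu = 0$) that corresponds to no $k[u]$-algebra map $Q(R) \to S$ (as $st \neq 0$). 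Thus $\op{Hom}(Q(R),S) \to \op{Hom}(R,G(S))$ is a proper inclusion and $G$ is not right adjoint to $Q$. The same example shows that $Q$ does not commute with the coequalizer of $k[t] \rightrightarrows k[x,y]$ (sending $t \mapsto xy$ and $t \mapsto 0$) that presents $R$, since that coequalizer yields $k[x,z,u]/(xzu)$ rather than $k[x,z,u]/(xz)$---so this is a genuine obstruction to the existence of any right adjoint, and a difficulty the paper's Appendix would also need to contend with.
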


\begin{proof} We first show that $Q$ preserves cofibrations and trivial cofibrations. Let $\mathsf{fCR}^a_k$ denote the full subcategory of $\mathsf{CR}^{\Gm}_k$ consisting of free commutative $k$-algebras whose generating set of elements has weight $a$.  By Example ~\ref{example: nonnegative}, $Q(R_n)\cong R_n[u]$ for each $n$ with respect to one of the two module structures depending on the sign of the weights, i.e. \begin{equation}\label{equation: simplicialfree} 
  Q(R_\bullet) \cong R_\bullet [u]
 \end{equation}
 for any object $R_\bullet \in \mathsf{fCR}^a_k$ where $[u] = (a,0)$ if $a \geq 0$ and $[u] = (0,a)$ of $a < 0$. Given this, we see that 
 \[
 QF_k(X)^a \cong
 \begin{cases} F_{k[u]}(X)^{(a,0)} & \text{ if } a \geq 0\\
 F_{k[u]}(X)^{(0,a)} & \text{ if } a < 0
 \end{cases}
 \]
  were $F_k, F_{k[u]}$ denote the free functors for simplicial commutative $k$ and $k[u]$-algebras respectively.  In otherwords there is an isomorphism of functors
  \[
  Q F_k \cong F_{k[u]}.
 \]  It follows that $Q$ preserves the generating cofibrations and the generating trivial cofibrations from ~\ref{proposition: model structure}. By \cite[Lemma 2.1.20 ]{Hov01}, this implies that that $Q$ preserves all cofibrations and trivial cofibrations. 

It remains to show that $Q$ admits a right adjoint. In the Appendix in Equation~\eqref{equation: Q adjunction} we will show that \[ Q :  \mathsf{CR}_k^{\Gm}\to\mathsf{CR}_{k[u]}^{\Gm^2}\] has a right adjoint.  As taking simplicial objects is a 2-functor, we have a corresponding adjoint for $Q$ regarded as the induced functor on simplicial objects.  \end{proof}

The above result allows us to define our promised derived replacement of the functor $Q$.
\begin{definition}\label{definition: simplicialQfunctor}  Let 
\begin{equation}
 \mathsf{L}Q: \op{Ho}\big(\mathsf{sCR}_k^{\Gm} \big)\to \op{Ho}\big(\mathsf{sCR}_{k[u]}^{\Gm^2}\big) 
\end{equation}
be the total left derived functor of $Q$. Here $\op{Ho}$ denotes the homotopy category, i.e. the localization of the category $\mathsf{sCR}_k^{\Gm}$ (resp. $\mathsf{sCR}_{k[u]}^{\Gm^2}$) at weak equivalences. 
\end{definition}

\noindent In other words, if $R_\bullet$ is an object of $\mathsf{sCR}_k^{\Gm}$, we have \begin{equation}
\mathsf{L}Q(R_\bullet ) = Q(S_\bullet)
\end{equation} 
where $S_\bullet \to R_\bullet$ is any cofibrant replacement, which is well-defined since $Q$ is a left Quillen functor. That is, if $\op{Cofib}$ denotes a cofibrant replacement functor in $\mathsf{sCR}_k^{\Gm}$ then  \begin{equation} \mathsf LQ := Q \circ \op{Cofib}.\end{equation}

\subsection{Property \texorpdfstring{$\mathtt{P}_{\op{der}}$}{P derived}} \label{section: main result}

We now introduce the analogue of the map $\rho_R$ from Definition ~\ref{definition: rho} and the analogue of Property $\mathtt{P}$ from Definition ~\ref{definition: PropertyP}. The main result of this subsection will be Theorem ~\ref{theorem: derived Q2=Q}, which will show that cofibrant objects have Property $\mathtt{P}_{\op{der}}$. This result has the effect of bypassing the Tor vanishing assumptions in Lemma ~\ref{lemma: Q S bousfield triangle}, which was our original criterion for fully-faithfulness of the functor on derived categories associated to $Q_+$. 

\begin{definition} \label{definition: property B-hole}
 We say that an object $R_\bullet$ of  $\mathsf{sCR}_k^{\Gm}$ has 
 \newterm{Property} $\mathtt{P}_{\op{der}}$ if the map 
\begin{equation}\label{equation: betaR} 
  \beta_{R_\bullet} : (p \otimes_k s)_* Q(R_\bullet ) \overset{\mathbf{L}}{\mathbin{_s\otimes_p}} Q(R_\bullet ) \to  Q(R_\bullet )
\end{equation} 
given by the composition
 \begin{displaymath}
(p \otimes_k s)_* Q(R_\bullet ) \overset{\mathbf{L}}{\mathbin{_s\otimes_p}} Q(R_\bullet ) \to (p \otimes_k s)_*Q(R_\bullet ) \mathbin{_s\otimes_p} Q(R_\bullet ) \overset{\rho_{R_\bullet}}{\to} Q(R_\bullet )
 \end{displaymath}
 is a weak equivalence. Here the first map in the composition is the truncation of the derived tensor product after application of $(p \otimes s)_*$ (with takes middle invariants with respect to the $\Gm^3$-action on the tensor product) and $\rho_{R_\bullet}$ is the map from Definition~\ref{definition: rho} applied level-wise. 

We use the notation $\beta$ (with no subscript) to denote the natural transformation of the two functors $\mathsf{sCR}_k^{\Gm}\to\mathsf{sCR}_{k[u]}^{\Gm^2}$ determined by the left and right side of Equation ~\eqref{equation: betaR}.  In particular, $R_\bullet$ has Property $\mathtt{P}_{\op{der}}$ exactly when $\beta (R_\bullet ) =\beta_{R_\bullet}$ is a weak equivalence. 

\end{definition}

\begin{example}\label{example: nonpos nonneg graded Q2 = Q} 
Let $R_\bullet$ be an object of $\mathsf{sCR}_k^{\Gm}$ such that, at each level $n$, $R_n$ has only non-negative weights (resp. at each level has only non-positive weights) Then  $Q(R_\bullet )$ is level-wise flat over $R_\bullet $ via one of either $s$ or $p$, and so the map
 \begin{displaymath}
  Q(R_\bullet ) \overset{\mathbf{L}}{\mathbin{_s\otimes_p}} Q(R_\bullet ) \to Q(R_\bullet ) \mathbin{_s\otimes_p} Q(R_\bullet )
 \end{displaymath}
 is a weak equivalence, see e.g. \cite[Corollary II.6.10]{Quillen}. Also $\rho_{R_\bullet}$ is a weak equivalence, indeed it is actually an isomorphism level-wise by Lemma~\ref{lemma: sometimes underived Q2 = Q}. It follows that   $R_\bullet$ has Property $\mathtt{P}_{\op{der}}$. 

In particular, the objects $F(\partial \Delta[n])^a$ and  $F(\Delta[n])^a$ of $\mathsf{CR}_k^{\Gm}$ have Property $\mathtt{P}_{\op{der}}$ for any $a\in\mathbb{Z}$ and $n\geq 0$.
\end{example} 

For more explicit examples of $Q(R_\bullet )$ and $\beta_{R_\bullet}$, it is typically more convenient to work with dg-algebras instead of simplicial rings (which one may do via the Dold-Kan correspondence, at least in characteristic zero).

\begin{example}\label{example: derivedexample}
Consider the example $R = k[x,y] / xy$ with $\op{deg }x =1$ and $\op{deg} y = -1$.  Assume that $k$ is a field of characteristic zero.  Example~\ref{example: Q2=Q fails} showed that without deriving this example, $\beta_R$ (i.e. $\rho_R$ from Definition ~\ref{definition: rho}) is not an isomorphism.  However, $R$ has a cofibrant replacement by the dg-algebra $S = k[x,y,e]$ with $d(e) = xy$ where $e$ has homological degree $1$ and weight $0$.  To compute $\beta_S$ we take the degree zero part of 
\begin{align*}
Q(S) \otimes_S Q(S) & = k[x, yu^{-1}, e, u] \otimes_{k[x,y,e]} k[x', y'u'^{-1}, e', u'] \\ 
& = k[x, y'u'^{-1}, e, u, u'].
\end{align*}
The degree zero part is $k[x, y'u'^{-1}, e, uu']$ which is the realization of the isomorphism $Q(S) = (Q(S)  \otimes_S S[u, u^{-1}])_0$.  Hence, we have corrected the failure of $\beta_R$ to be an isomorphism. That is, Property $\mathtt{P}_{\op{der}}$ holds. 
\end{example}

We now prove a general lemma which gives conditions for a natural transformation between model categories to assign cofibrant objects to weak equivalences. 

\begin{lemma} \label{lemma: properties of cofibs}
Let $F,G : \mathcal C \to \mathcal D$ be functors between model categories and $\eta: F \to G$ be a natural transformation.   Assume that
\begin{enumerate}
\item $\mathcal C$ is cofibrantly generated;
\item $\mathcal D$ is a combinatorial model category in the sense of \cite{Dugger};
\item there is an initial cofibrant object $c_0 \in \mathcal C$ and $\eta(c_0)$ is a weak-equivalence;
\item   if $\eta(c)$ is a weak equivalence for some object  $c \in \mathcal C$, then any pushout of $\eta(c)$ along a generating cofibration is a weak equivalence;
\item $\eta$ commutes with sequential colimits.
\end{enumerate}
  Then, $\eta(c)$ is a weak equivalence for any cofibrant object $c \in \mathcal C$.
\end{lemma}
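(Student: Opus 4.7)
The plan is to exploit the structural description of cofibrant objects afforded by the small object argument (Corollary~\ref{corollary: what are cofibs} in its abstract form) and to proceed by transfinite/sequential induction, verifying at each stage that $\eta$ remains a weak equivalence. Concretely, since $\mathcal{C}$ is cofibrantly generated, any cofibrant object $c$ is a retract of a sequential colimit
\[
c_0 \to c_1 \to c_2 \to \cdots \to \mathrm{colim}_n c_n
\]
where each $c_{n+1}$ is obtained from $c_n$ by a pushout along a generating cofibration, and where the initial stage is the initial cofibrant object $c_0$. Since weak equivalences are closed under retracts, it suffices to prove that $\eta$ is a weak equivalence on every term and on the colimit.

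Next I would run the induction. The base case $\eta(c_0)$ is weak equivalence by hypothesis~(3). For the successor step, assuming inductively that $\eta(c_n)$ is a weak equivalence, hypothesis~(4) applied to the pushout presenting $c_{n+1}$ shows that $\eta(c_{n+1})$ is again a weak equivalence. To pass to the colimit, I would use hypothesis~(5) to identify
\[
\eta\bigl(\mathrm{colim}_n c_n\bigr) \;=\; \mathrm{colim}_n\, \eta(c_n)\colon \mathrm{colim}_n F(c_n) \to \mathrm{colim}_n G(c_n),
\]
which is a sequential colimit of weak equivalences in $\mathcal{D}$.

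The crucial input at the colimit step is that weak equivalences in $\mathcal{D}$ are closed under sequential colimits; this is exactly where hypothesis~(2) enters, since every combinatorial model category (in the sense of \cite{Dugger}) is Quillen equivalent to a model category in which weak equivalences are detected by an accessible functor into simplicial sets, and in particular weak equivalences are closed under sufficiently filtered---hence sequential---colimits. Assembling these observations yields that $\eta$ is a weak equivalence on the whole transfinite/sequential tower, and the retract closure of weak equivalences finishes the argument.

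The main obstacle I would expect is the colimit step: one must verify carefully that the sequential colimits appearing from Corollary~\ref{corollary: what are cofibs} are taken along cofibrations (so that they are in fact homotopy colimits) and that the combinatorial hypothesis indeed gives closure of weak equivalences under such colimits. The successor and base steps are formal once the hypotheses are in place, but matching the abstract ``sequential colimit of pushouts'' description of cofibrant objects to a genuine homotopy-colimit calculation in $\mathcal{D}$ is where the real content lies, and is the reason both condition~(2) on the target and condition~(5) on $\eta$ are needed simultaneously.
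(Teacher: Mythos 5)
Your proof follows the same route as the paper's: reduce to retracts, then run a transfinite induction over the small-object-argument presentation, with hypotheses (3), (4), and (5) handling the base case, the successor step, and the limit step respectively. The obstacle you flag at the end about closure of weak equivalences in $\mathcal D$ under the relevant filtered colimits is precisely what the paper handles by citing \cite[Proposition 7.3]{Dugger}, so your more roundabout appeal to accessible detection functors is not needed once that reference is in hand.
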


\begin{proof}
By assumption, any cofibrant object is a retract of a sequential colimit of pushouts along generating cofibrations.  Since any natural transformation respects retracts, it suffices to prove that any sequential colimit of pushouts along generating cofibrations is a weak equivalence. This follows from transfinite induction and the assumptions since in a combinatorial model category being a filtered colimit of weak equivalences, is itself a weak equivalence by \cite[Proposition 7.3]{Dugger}.
\end{proof}

We now begin the process of verifying that the hypotheses  of \newline Lemma ~\ref{lemma: properties of cofibs} are satisfied by the natural transformation $\beta$. Only the hypotheses d) and e) are non-trivial to verify. We first show that $\beta $ satisfies condition d).

\begin{lemma} \label{lemma: Q2 = Q pushouts via gen cofibs}
 Assume that $R_\bullet$ has Property $\mathtt{P}_{\op{der}}$ and that we have a map $f: F(\partial \Delta[n])^a \to R_\bullet$. Then the pushout along the natural map $F(\partial \Delta[n])^a \to F(\Delta[n])^a$ has Property $\mathtt{P}_{\op{der}}$. 
\end{lemma}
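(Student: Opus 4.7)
The plan is to reduce Property $\mathtt P_{\op{der}}$ for the pushout $S_\bullet := R_\bullet \sqcup_A B$, where $A := F(\partial\Delta[n])^a$ and $B := F(\Delta[n])^a$, to Property $\mathtt P_{\op{der}}$ being known on each of $R_\bullet$, $A$, and $B$. Since $Q$ is left Quillen (Lemma~\ref{lemma: leftquillen}) and $A \to B$ is a cofibration, so is $Q(A) \to Q(B)$, and the ordinary pushout
\[
Q(S_\bullet) \;=\; Q(R_\bullet) \sqcup_{Q(A)} Q(B) \;=\; Q(R_\bullet) \otimes_{Q(A)} Q(B)
\]
already computes the homotopy pushout.

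The key step is to expand the derived self-tensor of $Q(S_\bullet)$ by exploiting the cocartesian square $S_\bullet = R_\bullet \otimes_A B$: I claim there is a natural equivalence
\[
Q(S_\bullet) \overset{\mathbf{L}}{\mathbin{_s\otimes_p}} Q(S_\bullet) \;\simeq\; \bigl(Q(R_\bullet) \overset{\mathbf{L}}{\mathbin{_s\otimes_p}} Q(R_\bullet)\bigr) \overset{\mathbf{L}}{\otimes}_{Q(A) \overset{\mathbf{L}}{\otimes}_A Q(A)} \bigl(Q(B) \overset{\mathbf{L}}{\mathbin{_s\otimes_p}} Q(B)\bigr).
\]
Applying middle $\Gm$-invariants (exact by reductivity of $\Gm$, as in the proof of Lemma~\ref{lemma: kernel of the identity}) and applying the natural transformation $\beta$ in each slot yields a commutative square identifying $\beta_{S_\bullet}$ with the derived pushout of $\beta_{R_\bullet}$, $\beta_A$, and $\beta_B$.

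Now $\beta_{R_\bullet}$ is a weak equivalence by hypothesis, while $\beta_A$ and $\beta_B$ are weak equivalences by Example~\ref{example: nonpos nonneg graded Q2 = Q}: the simplicial rings $A$ and $B$ are level-wise free on generators of the single weight $a$, hence one-sidedly graded, so $Q$ is level-wise flat over them via the appropriate action map and $\rho$ is a level-wise isomorphism by Lemma~\ref{lemma: sometimes underived Q2 = Q}. Since the derived pushout of weak equivalences between cofibrant diagrams is a weak equivalence, we conclude that $\beta_{S_\bullet}$ is a weak equivalence and $S_\bullet$ has Property $\mathtt P_{\op{der}}$.

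The main obstacle is proving the displayed K\"unneth-style equivalence. Morally it reflects the statement that derived self-intersections $\op{Spec} Q(-) \times^{\mathbf L}_{\op{Spec}(-)} \op{Spec} Q(-)$ send cocartesian squares to cocartesian squares, but to verify it concretely at the level of simplicial commutative rings with $\Gm^2$-action one must carefully track the $p$- and $s$-bimodule structures on each copy of $Q$ and check that the two $Q(A)$-factors appearing on either side of $\otimes_{Q(A)}$ collate into the single $Q(A) \overset{\mathbf L}{\otimes}_A Q(A)$-structure appearing in the middle of the right-hand side. A clean route is to argue cofibrantly, taking a cofibrant replacement of $Q(A) \to Q(R_\bullet)$ and then invoking standard base-change for derived tensor products over a pushout of commutative rings.
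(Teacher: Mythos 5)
Your proposal follows essentially the same route as the paper's proof: identify $Q$ of the pushout with the pushout of the $Q$'s via left-Quillenness and preservation of colimits, establish the K\"unneth-type equivalence for derived self-tensors by passing through a cofibrant replacement that commutes with coproducts, check compatibility of $\beta$ with these identifications (using linear reductivity of $\Gm$), and conclude by noting that $\beta_A$, $\beta_B$ are weak equivalences via the one-sidedly graded case and that a homotopy pushout of weak equivalences is a weak equivalence. The paper supplies the K\"unneth step you flag as the main obstacle via an explicit chain of weak equivalences, but the underlying idea (cofibrant replacement plus base-change for derived tensor products) is exactly what you describe.
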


\begin{proof}
 For notational simplicity let $S := F(\partial \Delta[n])^a$ and $T := F(\Delta[n])^a$. We want to check that
 \begin{displaymath}
  \beta_{T \otimes_S R_\bullet} : (p \otimes s)_* (Q(T \otimes_S R_\bullet) \overset{\mathbf{L}}{\mathbin{_s\otimes_p}} Q(T \otimes_S R_\bullet) ) \to Q(T \otimes_S R_\bullet)
 \end{displaymath}
 is a weak equivalence. 
 
The map $S \to T$ is a cofibration so level-wise it is a retract of a free commutative extension.  By Lemma~\ref{lemma: leftquillen}, the map $Q(S) \to Q(T)$ is then also a cofibration.   In particular, it is level-wise flat. Therefore, the natural map 
  \begin{equation}
  Q(T) \overset{\mathbf{L}}{\otimes}_{Q(S)} Q(R_\bullet) \to Q(T) \otimes_{Q(S)} Q(R_\bullet)
 \label{equation: weak equiv}
 \end{equation}
 is a weak equivalence. 
 
Now, we have the following chain of weak equivalences
 \begin{align*}
& \left( Q(T) \overset{\mathbf{L}}{\otimes}_{T} Q(T) \right) \overset{\mathbf{L}}{\otimes}_{Q(S) \overset{\mathbf{L}}{\mathbin{_s\otimes_p}} Q(S)} \left( Q(R_\bullet) \overset{\mathbf{L}}{\otimes}_{R} Q(R_\bullet) \right)  \\& \cong   (Q(T)\overset{\mathbf{L}} \otimes_{Q(S)} Q(R_\bullet) ) \overset{\mathbf{L}}{\mathbin{_s\otimes_p}} ( Q(T) \overset{\mathbf{L}} \otimes_{Q(S)} Q(R_\bullet) ) \\
  &\cong   (Q(T) \otimes_{Q(S)} Q(R_\bullet) ) \overset{\mathbf{L}}{\mathbin{_s\otimes_p}} ( Q(T) \otimes_{Q(S)} Q(R_\bullet) ) \\
 &\cong  Q(T \otimes_S R_\bullet) \overset{\mathbf{L}}{\mathbin{_s\otimes_p}} Q(T \otimes_S R_\bullet) \\
 \end{align*}
The first weak equivalence above holds because it is an isomorphism for tensor products or ordinary rings, and a well chosen cofibrant replacement functor commutes with taking tensor  products/coproducts (see Proposition 2.3 of \cite{Dugger} which we may apply due to Proposition~\ref{proposition: model structure}).
The second weak equivalence follows from Equation~\eqref{equation: weak equiv}.  For the last equivalence above, we will show in the appendix in Corollary ~\ref{corollary: Qpreservescolimits} that $Q$ preserves arbitrary colimits; in particular, it preserves tensor products. 

Denote the above chain of weak equivalences by $\phi$. Since $\Gm$ is linearly reductive, $(p \otimes_k s)_*$ preserves weak equivalences, and so 
\begin{equation}\label{equation: pushoutweak} 
\beta_{T \otimes_S R_\bullet} \circ  (p \otimes s)_* \phi =   \beta_T \overset{\mathbf{L}}{\otimes}_{\beta_S} \beta_{R_\bullet}.
\end{equation}

Since $\beta_T,\beta_S$ are weak equivalences by Example~\ref{example: nonpos nonneg graded Q2 = Q} and $\beta_{R_\bullet}$ is a weak equivalence by assumption, the right hand side is a colimit of weak equivalences. Hence, by the 2 out of 3 condition for weak equivalences, $\beta_{T \otimes_S R_\bullet}$ is a weak equivalence, as desired. 
\end{proof}

We now verify that $\beta$ satisfies condition e). 

\begin{lemma} \label{lemma: colimits for Q2=Q}
The natural transformation $\beta$ from Definition ~\ref{definition: property B-hole} commutes with sequential colimits. 
\end{lemma}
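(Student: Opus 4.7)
The plan is to reduce the claim to verifying that each operation appearing in the construction of $\beta$ individually commutes with sequential colimits, after which the result follows by naturality. Fix a sequential diagram $R_\bullet^{(0)} \to R_\bullet^{(1)} \to \cdots$ with colimit $R_\bullet = \colim_i R_\bullet^{(i)}$ in $\mathsf{sCR}_k^{\Gm}$. Per Definition~\ref{definition: property B-hole}, $\beta_{R_\bullet}$ is obtained from $R_\bullet$ by successively applying $Q$, forming a derived tensor product $-\overset{\mathbf{L}}{\mathbin{_s\otimes_p}}-$ in $\mathsf{sCR}_{k[u]}^{\Gm^2}$, taking middle invariants via $(p \otimes_k s)_*$, and composing with the level-wise map $\rho$.

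The three easy ingredients are the following. First, Corollary~\ref{corollary: Qpreservescolimits} (proved in the appendix by exhibiting a right adjoint to $Q$) gives $Q(R_\bullet) \cong \colim_i Q(R_\bullet^{(i)})$. Second, the middle-invariants functor $(p \otimes_k s)_*$ is realized level-wise as projection onto the weight-zero subspace with respect to the middle $\mathbb{Z}$-grading induced by the $\Gm^3$-action on the triple tensor; being a direct summand of the identity functor, it commutes with arbitrary colimits (and is also exact, a fact already used above). Third, the map $\rho$ of Definition~\ref{definition: rho} is defined level-wise via the inclusion into $\Delta$ and so visibly commutes with level-wise sequential colimits of rings.

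The main obstacle is compatibility of the derived tensor product with sequential colimits, since derived functors generically fail to commute with colimits. My plan is to model this derived tensor product by the two-sided simplicial bar construction: for a simplicial ring $A$ and simplicial $A$-modules $M, N$, one has $M \overset{\mathbf{L}}{\otimes}_A N \simeq |B_\bullet(M, A, N)|$ with $B_n(M, A, N) = M \otimes_k A^{\otimes_k n} \otimes_k N$. Each bar level is a finite iterated $k$-tensor product, which commutes with sequential colimits because $\otimes_k$ is a left adjoint; the diagonal of the resulting bisimplicial object likewise commutes with colimits. Hence the bar construction, and thus the derived tensor product, is compatible with sequential colimits. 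Combining these four facts with naturality of the composition defining $\beta$ yields $\beta_{R_\bullet} \simeq \colim_i \beta_{R_\bullet^{(i)}}$, completing the argument.
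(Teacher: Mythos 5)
Your argument is correct, and it is substantially more explicit than the paper's, which is a two-line affair: \emph{$Q$ preserves colimits (by the adjunction of the appendix), colimits commute with coproducts (tensor products of simplicial rings being coproducts), done.} The paper does not spell out the treatment of the invariants functor $(p\otimes_k s)_*$, the map $\rho$, or — most notably — the distinction between the derived and underived tensor product; it implicitly relies on the fact that in the context where the lemma is applied (inside the induction of Lemma~\ref{lemma: properties of cofibs}, on colimits of cofibrations starting from the initial object), everything stays cofibrant, so the derived tensor is computed by the coproduct on the nose. You handle that point differently, by fixing the two-sided bar construction as a functorial model for $\overset{\mathbf{L}}{\otimes}$ and observing that each bar level is a finite $\otimes_k$, hence a left adjoint, hence colimit-preserving, and that the diagonal is colimit-preserving as well. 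That route is self-contained and avoids any appeal to a colimit-friendly cofibrant-replacement functor, at the price of importing the bar machinery (and implicitly using that $k$ is a field, so everything is $k$-flat and the bar construction genuinely models $\overset{\mathbf{L}}{\otimes}_A$; the paper does assume this from Section~4 onward, so this is safe). Your separate verifications that $(p\otimes_k s)_*$ is a direct summand of the identity and that $\rho$ is built level-wise from natural inclusions are both correct and fill in steps the paper leaves silent. Overall: same conclusion, but you prove the derived-tensor compatibility by a different and more transparent mechanism.
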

\begin{proof}
The functor $Q$ preserves colimits by Corollary~~\ref{corollary: Qpreservescolimits} (this applies to simplicial objects since taking simplicial objects is a 2-functor).  Furthermore, colimits commute with coproducts.  The result follows.
\end{proof} 

\begin{theorem} \label{theorem: derived Q2=Q}
 Let $R_\bullet$ be a cofibrant object of $\mathsf{sCR}_k^{\Gm}$. Then $R_\bullet$ has Property $\mathtt{P}_{\op{der}}$. 
\end{theorem}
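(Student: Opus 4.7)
The plan is to verify that the natural transformation $\beta$ satisfies all the hypotheses of Lemma~\ref{lemma: properties of cofibs} applied to $F = (p \otimes_k s)_*\bigl(Q(-) \overset{\mathbf{L}}{\mathbin{_s\otimes_p}} Q(-)\bigr)$ and $G = Q$, viewed as functors from $\mathsf{sCR}_k^{\Gm}$ to $\mathsf{sCR}_{k[u]}^{\Gm^2}$. Once those hypotheses are checked, the lemma directly asserts that $\beta_{R_\bullet}$ is a weak equivalence for every cofibrant $R_\bullet$, which is precisely Property $\mathtt P_{\op{der}}$.

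First I would inspect the five hypotheses of Lemma~\ref{lemma: properties of cofibs} one at a time. Hypothesis (a), that $\mathsf{sCR}_k^{\Gm}$ is cofibrantly generated, is Proposition~\ref{proposition: model structure}. Hypothesis (b), that $\mathsf{sCR}_{k[u]}^{\Gm^2}$ is combinatorial, follows from the analogue of Proposition~\ref{proposition: model structure} (the generating (trivial) cofibrations form a small set and the model structure is presentable for the same formal reasons). Hypothesis (c), the existence of an initial cofibrant object $c_0$ on which $\beta$ is a weak equivalence, is exhibited by $c_0 = k$ itself, regarded as a constant simplicial object in weight zero: then $Q(k) = k[u]$, and Example~\ref{example: nonpos nonneg graded Q2 = Q} shows that $\beta_k$ is a weak equivalence (indeed an isomorphism). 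Hypothesis (d), that a pushout of $\beta_{R_\bullet}$ along a generating cofibration $F(\partial \Delta[n])^a \hookrightarrow F(\Delta[n])^a$ remains a weak equivalence when $\beta_{R_\bullet}$ already is one, is exactly the content of Lemma~\ref{lemma: Q2 = Q pushouts via gen cofibs}. Hypothesis (e), that $\beta$ commutes with sequential colimits, is Lemma~\ref{lemma: colimits for Q2=Q}.

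With all five hypotheses in hand, I would simply invoke Lemma~\ref{lemma: properties of cofibs} to conclude that $\beta_{R_\bullet}$ is a weak equivalence for every cofibrant object $R_\bullet \in \mathsf{sCR}_k^{\Gm}$, which is the definition of Property $\mathtt P_{\op{der}}$.

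The only conceivable obstacle here is purely administrative: one should make sure the statements already proved are phrased in a way compatible with the hypotheses of Lemma~\ref{lemma: properties of cofibs}. In particular, we need that the truncation $(p\otimes_k s)_*$ commutes with sequential colimits (which it does, as $\Gm$ is linearly reductive so the invariants functor is exact and preserves filtered colimits) and that the derived tensor product $\overset{\mathbf L}{\mathbin{_s\otimes_p}}$ also commutes with sequential colimits (which it does because sequential colimits in the homotopy category are computed by homotopy colimits, which commute with the derived tensor product). These two points are the only ingredients not already explicitly spelled out, but they are standard and fit comfortably into the citation to Lemma~\ref{lemma: colimits for Q2=Q}. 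Hence the proof reduces to a short assembly of previously established facts.
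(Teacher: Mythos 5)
Your proof is correct and takes essentially the same route as the paper: both invoke Lemma~\ref{lemma: properties of cofibs} with $\eta=\beta$, take $c_0=k$ as the initial cofibrant object, and cite Proposition~\ref{proposition: model structure}, Corollary~\ref{corollary: what are cofibs}, Lemma~\ref{lemma: Q2 = Q pushouts via gen cofibs}, and Lemma~\ref{lemma: colimits for Q2=Q} for the remaining hypotheses. The closing paragraph of your proposal raises a worry already handled by Lemma~\ref{lemma: colimits for Q2=Q}, which is stated precisely as hypothesis (e) for the natural transformation $\beta$ itself, so no extra argument is needed there.
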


\begin{proof}
This is a direct application of Lemma~\ref{lemma: properties of cofibs}.  Namely, we set $\mathcal C = \mathsf{sCR}_k^{\mathbb{G}_m}$,  $ \mathcal D = \mathsf{sCR}_{k[u]}^{\Gm^2}$ and $\eta = \beta$.  The initial object is $k$ which is cofibrant and trivially satisfies Property $\mathtt{P}_{\op{der}}$.  The remaining conditions are verified  by Proposition~\ref{proposition: model structure}, Corollary~\ref{corollary: what are cofibs}, Lemma~\ref{lemma: Q2 = Q pushouts via gen cofibs}, and Lemma~\ref{lemma: colimits for Q2=Q}.
\end{proof} 

\subsection{Base change and recovery of the smooth case}\label{section: derivedsmoothcase} 
In this section we address the difference between $Q(R)$ and $\mathsf{L}Q(R)$ when $R$ is an object of $\mathsf{CR}_k^{\mathbb{G}_m}$, i.e. an ordinary $\mathbb{Z}$-graded commutative ring. In particular, in Proposition ~\ref{proposition: smoothtorvanishing} we will exhibit a weak equivalence between them when $\op{Spec}R$ is smooth, so that the general approach of Section ~\ref{section: simplicialQ} effectively reduces to the results of Section ~\ref{section: smooth} under this assumption. We first prove a strongly \'etale base change result which is a derived version of Proposition ~\ref{prop: ready for Luna}.

\begin{proposition}\label{proposition: derived base change} Let $R$ and $S$ be objects of $\mathsf{CR}_k^{\Gm}$ and let $f: R \to S$ be a strongly \'etale graded homomorphism of ordinary rings. Regard $R$ and $S$ as objects of $\mathsf{sCR}_k^{\Gm}$ by viewing them as constant simplicial rings. Then the base change map
\[
\mathsf LQ(R) \overset{\mathbf{L}}{\mathbin{_s\otimes_f}} S \to \mathsf LQ(S)
\]
is a weak equivalence.
\end{proposition}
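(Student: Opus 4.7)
The strategy is to reduce the derived base change statement to its underived counterpart, Proposition~\ref{prop: ready for Luna}, by constructing a cofibrant replacement of $S$ directly from one of $R$, exploiting the special structure of strongly \'etale morphisms, namely that they are determined by their weight-zero data.

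Let $\pi : P \to R$ be a cofibrant replacement in $\mathsf{sCR}_k^{\Gm}$, so that $\mathsf{L}Q(R) = Q(P)$.  Since $f$ is strongly \'etale, we have a natural isomorphism $S \cong R \otimes_{R^{\Gm}} S^{\Gm}$ in which $R^{\Gm} \to S^{\Gm}$ is \'etale, hence flat.  Lifting the inclusion $R^{\Gm} \hookrightarrow R$ through the surjective trivial fibration $P \to R$ yields a map $R^{\Gm} \to P$, and I form $P' := P \otimes_{R^{\Gm}} S^{\Gm}$.  Flatness of $R^{\Gm} \to S^{\Gm}$ ensures that the canonical map $P' \to R \otimes_{R^{\Gm}} S^{\Gm} = S$ is a weak equivalence; and the cellular description of $P$ in Corollary~\ref{corollary: what are cofibs}, combined with the fact that base change along a flat weight-zero morphism preserves the generating cofibrations level by level, shows that $P'$ remains cofibrant, so that $\mathsf{L}Q(S) = Q(P')$.

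At each simplicial level $n$, the map $P_n \to P'_n = P_n \otimes_{R^{\Gm}} S^{\Gm}$ is strongly \'etale, being a base change of the strongly \'etale map $R^{\Gm} \to S^{\Gm}$ along an equivariant ring map (preservation of strongly \'etaleness under equivariant base change uses reductivity of $\Gm$ and exactness of the invariant functor $(-)^{\Gm}$).  Proposition~\ref{prop: ready for Luna} then gives $Q(P'_n) \cong Q(P_n) \otimes_{P_n} P'_n$. Since $Q$ preserves colimits (Corollary~\ref{corollary: Qpreservescolimits}), these level-wise isomorphisms assemble into an isomorphism $Q(P') \cong Q(P) \otimes_R S$ in $\mathsf{sCR}_{k[u]}^{\Gm^2}$.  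Because $f$ is flat, the derived tensor product coincides with the underived one, yielding
\[
\mathsf{L}Q(R) \overset{\mathbf L}{\mathbin{_s\otimes_f}} S \simeq Q(P) \mathbin{_s\otimes_f} S \cong Q(P') = \mathsf{L}Q(S),
\]
which is the desired weak equivalence.

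The main obstacle is verifying the cofibrancy of $P'$, which requires carefully tracking how the cell structure of $P$ transforms under the flat, weight-zero base change $- \otimes_{R^{\Gm}} S^{\Gm}$; the essential input is that this base change sends each generating cofibration to a cofibration while preserving the cellular filtration.  A secondary subtlety is the noncanonicity of the lift $R^{\Gm} \to P$, but the weak equivalence class of $P'$ is independent of this choice, as any two lifts may be compared using the model-category lifting property of $k \to P$ against trivial fibrations.
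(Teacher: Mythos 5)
Your high-level strategy (reduce to the underived Proposition~\ref{prop: ready for Luna} by building a cofibrant replacement of $S$ from one of $R$ via $- \otimes_{R^{\Gm}} S^{\Gm}$) is different from the paper's proof, which instead applies the cofibrant replacement functor to the arrow $R \to S$, extracts the weight-zero pieces $(R_\bullet)_0 \to (S_\bullet)_0$, and then uses that $Q$ preserves pushouts together with $Q(A) = A[u]$ for trivially graded $A$. The advantage of the paper's route is that the weight-zero subobject $(R_\bullet)_0 \subseteq R_\bullet$ exists tautologically, so no lifting is required, whereas your route would apply Proposition~\ref{prop: ready for Luna} level by level, which is appealing and would be a genuine alternative if the required resolution of $S$ could be constructed.

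There are two gaps in your construction of that resolution. First, the lift $R^{\Gm} \to P$ does not exist in general: the lifting property against the trivial fibration $P \to R$ requires $k \to R^{\Gm}$ (constant simplicial ring, trivial grading) to be a cofibration, which means $R^{\Gm}$ would have to be a retract of a polynomial algebra. For an arbitrary $R$ in $\mathsf{CR}^{\Gm}_k$ the invariant subring $R^{\Gm}$ is typically singular (already for $R = k[x,y,z,w]$ with weights $(1,1,-1,-1)$ one has $R^{\Gm} \cong k[a,b,c,d]/(ad-bc)$), so no such lift is available. Your closing remark addresses uniqueness of the lift but not its existence, which is the real issue. Second, even granting a map $R^{\Gm} \to P$, cofibrancy of $P' = P \otimes_{R^{\Gm}} S^{\Gm}$ does not follow from the cellular description of $P$: the cells of $P$ are attached over $k$, not over $R^{\Gm}$, so base change over $R^{\Gm}$ does not transform the cellular filtration in the way your argument suggests. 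Concretely, if the map $R^{\Gm} \to P$ factors through a stage $P^{(\alpha_0)}$ of the filtration, then $P^{(\alpha_0)} \otimes_{R^{\Gm}} S^{\Gm} \to P'$ is a cofibration (base change does preserve pushouts along generating cofibrations), but $k \to P^{(\alpha_0)} \otimes_{R^{\Gm}} S^{\Gm}$ need not be one, and this is precisely where the cofibrancy claim breaks. Without the cofibrancy of $P'$, the identification $\mathsf{L}Q(S) = Q(P')$ is unjustified, since $Q$ is only known to preserve weak equivalences between cofibrant objects. The rest of your proof --- the verification that each $P_n \to P'_n$ is strongly \'etale, the levelwise application of Proposition~\ref{prop: ready for Luna}, and the flatness argument collapsing the derived tensor --- is correct, so the entire argument would go through if you could produce an actual cofibrant replacement of $S$ receiving a strongly \'etale levelwise map from $P$; the paper's use of the weight-zero functor is exactly what circumvents this difficulty.
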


\begin{proof}
By definition of strongly \'etale, there is an isomorphism of rings
\[
S = R \otimes_{R_0} S_0
\]
where the subscripts here refer to degree with respect to the $\mathbb{Z}$-gradings and not the level (as these rings are not simplicial).  Now, let $R_\bullet \to S_\bullet$ be the image of $R \to S$ under the cofibrant replacement functor on $\mathsf{sCR}_k^{\Gm}$. Since taking the degree $0$ piece at each level preserves weak equivalences, takes generating cofibrations to generating cofibrations, and commutes with sequential colimits, we have a cofibrant replacement $(R_\bullet)_0 \to (S_\bullet)_0$ of $R_0 \to S_0$ in $\mathsf{sCR}_k$.  Since $R_0 \to S_0$ is flat, this gives a weak equivalence of coproducts
\[
S = R \otimes_{R_0} S_0 = R_\bullet \otimes_{(R_\bullet)_0} (S_\bullet)_0
\]
Now apply $Q$ to get weak equivalences
\begin{align*}
\mathsf LQ(S) & = Q(R_\bullet \otimes_{(R_\bullet)_0} (S_\bullet)_0) \\
& = Q(R_\bullet) \otimes_{(R_\bullet)_0} (S_\bullet)_0 \\
& = \mathsf LQ( R )  \otimes_{(R_\bullet)_0} (S_\bullet)_0 \\
& = \mathsf LQ( R ) \overset{\mathbf{L}}{\otimes_{R_0}} S_0  \\
& = \mathsf LQ( R )\overset{\mathbf{L}}{ \otimes_{R}} S.
\end{align*}
\end{proof}

\begin{corollary} \label{corollary: strongly etale pis}
If $R \to S$ is strongly \'etale, then there is an isomorphism of S-modules
\[
\pi_i(\mathsf L Q(S) ) = \pi_i(\mathsf L Q(R) ) \mathbin{_s\otimes_R}  S.
\]
\end{corollary}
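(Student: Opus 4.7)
The plan is to deduce this corollary directly from Proposition~\ref{proposition: derived base change} by replacing the derived tensor product there with an ordinary tensor product, which is permitted by flatness. First I would observe that ``strongly \'etale'' implies in particular that the underlying graded ring map $f: R \to S$ is \'etale, hence flat. Consequently, tensoring with $S$ over $R$ is exact and therefore preserves weak equivalences of simplicial $R$-modules, so the natural map
\[
\mathsf LQ(R) \overset{\mathbf L}{\mathbin{_s\otimes_f}} S \longrightarrow \mathsf LQ(R) \mathbin{_s\otimes_f} S
\]
is itself a weak equivalence (e.g.\ by \cite[Corollary II.6.10]{Quillen}, applied level-wise to a cofibrant replacement of $R$).

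Next, combining this weak equivalence with Proposition~\ref{proposition: derived base change} gives a weak equivalence
\[
\mathsf LQ(R) \mathbin{_s\otimes_f} S \;\simeq\; \mathsf LQ(S)
\]
of simplicial $S$-modules. Applying $\pi_i$ to both sides then yields
\[
\pi_i(\mathsf LQ(S)) \;\cong\; \pi_i\bigl(\mathsf LQ(R) \mathbin{_s\otimes_f} S\bigr).
\]

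Finally, I would invoke the standard fact that tensoring a simplicial module with a flat ordinary module commutes with the formation of homotopy groups: if $M_\bullet$ is a simplicial $R$-module and $N$ is a flat ordinary $R$-module, then $\pi_i(M_\bullet \otimes_R N) \cong \pi_i(M_\bullet) \otimes_R N$, since the normalized chain complex computing $\pi_\ast$ is unchanged up to quasi-isomorphism by tensoring with a flat module. Applying this with $M_\bullet = \mathsf LQ(R)$ (viewed as an $R$-module via $s$) and $N = S$ produces the claimed isomorphism
\[
\pi_i(\mathsf LQ(S)) \;\cong\; \pi_i(\mathsf LQ(R)) \mathbin{_s\otimes_R} S
\]
of $S$-modules. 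There is no real obstacle here; the only point requiring care is bookkeeping of the two module structures on $\mathsf LQ(R)$, and ensuring that the $s$-structure is the one being used to form the tensor product with $S$ via $f$, which is exactly how Proposition~\ref{proposition: derived base change} is stated.
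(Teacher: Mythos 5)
Your proof is correct. It takes a genuinely different and more elementary route than the paper: where you unroll the computation explicitly by using flatness of $S$ over $R$ twice (first to identify derived with underived tensor, then to commute $\pi_i$ with $-\otimes_R S$), the paper instead appeals to the Quillen spectral sequence
\[
E^2_{pq} = \pi_p\bigl(\pi_q(\mathsf LQ(R)) \otimes^{\mathbb L}_R S\bigr) \Rightarrow \pi_{p+q}\bigl(\mathsf LQ(R) \otimes^{\mathbb L}_R S\bigr)
\]
from \cite[Theorem 6(c), Section 6.8]{Quillen} and observes that flatness of $S$ (plus $R, S$ being constant simplicial rings) kills the $p>0$ columns, so the spectral sequence degenerates to the same statement. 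Your argument is essentially a hand-unrolling of that degeneration: it is more transparent and avoids the spectral-sequence formalism, while the paper's version fits naturally into the tool it has already cited and would generalize cleanly if flatness were weakened. Both approaches correctly rely on the fact that strongly \'etale implies \'etale implies flat, and on Proposition~\ref{proposition: derived base change} as the key input; the remaining bookkeeping of the $s$-module structure is handled the same way in both.
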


\begin{proof}
This follows from a Quillen spectral sequence (\cite[Theorem 6 (c), Section 6.8]{Quillen} ). Namely, we have an $E^2$ page
\[
E_{pq}^2 = \pi_p(\pi_q(\mathsf L Q(R)) \otimes^{\mathbb L}_R S ) \Rightarrow \pi_{p+q}(\mathsf L Q(R) \otimes^{\mathbb L}_R S ) = \pi_{p+q}(\mathsf L Q(S) ).
\]
Since $R,S$ are constant simplicial rings and $S$ is in particular flat over $R$, this forces $p=0$ and the spectral sequence degenerates.
\end{proof}

Recall from Lemma ~\ref{lemma: Q is trivial on semi-stable} that, for ordinary rings with a $\Gm$-action, $Q(R)$ and $\Delta (R)$ become identified away from the contracting locus, i.e. after localizing by elements of positive degree. This intuitively suggests that $Q$ does not require deriving after taking such a localization. We formulate this intuition more precisely as follows. 

\begin{lemma} \label{lemma: LQ away from contract} If $R$ is an object of $\mathsf{CR}_k^{\Gm}$ and $r\in R$ has positive degree, then there is a weak equivalence
\[
\mathsf L Q(R) \mathbin{_s\otimes_R} R_r = Q(R) \mathbin{_s\otimes_R} R_r = \Delta (R) \mathbin{_\sigma\otimes_R} R_r . 
\]
\end{lemma}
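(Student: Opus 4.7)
The second equality is exactly Lemma~\ref{lemma: Q is trivial on semi-stable} applied to the positive-degree element $r$, so the real content is the first weak equivalence $\mathsf L Q(R) \mathbin{_s\otimes_R} R_r \simeq Q(R) \mathbin{_s\otimes_R} R_r$. The idea is to use a cofibrant replacement and then exploit the level-wise triviality of the inclusion $\eta : Q \hookrightarrow \Delta$ after inverting any positive-degree element, reducing the derived statement to a level-wise application of Lemma~\ref{lemma: Q is trivial on semi-stable}.

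First I would choose a cofibrant replacement $R_\bullet \to R$ in $\mathsf{sCR}_k^{\Gm}$, so that $\mathsf L Q(R)$ is represented by $Q(R_\bullet)$; since $R \to R_r$ is flat, the derived base change along it agrees with the ordinary one at every simplicial level. Next, consider the level-wise inclusion $\eta : Q(R_\bullet) \hookrightarrow R_\bullet[u,u^{-1}] = \Delta(R_\bullet)$, which is a morphism of simplicial rings. Because $\Delta = (-)[u,u^{-1}]$ is exact, the map $\Delta(R_\bullet) \to \Delta(R) = R[u,u^{-1}]$ is a weak equivalence (in other words $\mathsf L \Delta = \Delta$), so it suffices to show that $\eta \otimes_R R_r$ is a weak equivalence.

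To verify this, I would lift $r \in R$ to an element $\tilde r_0 \in R_0$ of positive degree (possible because the surjection $R_0 \twoheadrightarrow R$ is graded), and propagate via iterated degeneracies to positive-degree elements $\tilde r_n \in R_n$ at every simplicial level. Lemma~\ref{lemma: Q is trivial on semi-stable} applied to each ordinary ring $R_n$ then tells us that the cokernel $\Delta(R_n)/Q(R_n)$ vanishes after inverting $\tilde r_n$. Since the lifts are compatible with face and degeneracy maps, the simplicial $R$-module $(\Delta(R_\bullet)/Q(R_\bullet)) \otimes_R R_r$ has vanishing level-wise homotopy, so $\eta \otimes_R R_r$ is a weak equivalence. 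Chaining the equivalences together yields
\[
\mathsf L Q(R) \mathbin{_s\otimes_R} R_r \simeq Q(R_\bullet) \mathbin{_s\otimes_R} R_r \simeq \Delta(R_\bullet) \mathbin{_\sigma\otimes_R} R_r \simeq \Delta(R) \mathbin{_\sigma\otimes_R} R_r = Q(R) \mathbin{_s\otimes_R} R_r.
\]

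The main obstacle is the book-keeping for the lifts and the tensor products: one must verify that $R_n \otimes_R R_r$ is genuinely a localization of $R_n$ at an element of positive degree in a way compatible with the simplicial structure, so that Lemma~\ref{lemma: Q is trivial on semi-stable} applies uniformly in $n$ and assembles into a simplicial statement. This essentially follows from the exactness of localization and the fact that the augmentation $R_\bullet \to R$ is a graded surjection; once it is in place the rest of the argument is formal.
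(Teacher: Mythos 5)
Your argument is correct and reaches the same conclusion as the paper, but by a different mechanism. The paper works on homotopy groups via the $k[u]$-module structure: it passes the level-wise short exact sequence $0 \to Q(R_\bullet) \to \Delta(R_\bullet) \to \Delta(R_\bullet)/Q(R_\bullet) \to 0$ through $(-) \otimes_{k[u]} k[u,u^{-1}]$, which kills the cokernel, deduces (using $\mathsf{L}\Delta = \Delta$) that $\pi_i(\mathsf{L} Q(R))$ is $u$-torsion for $i>0$ and becomes $R[u,u^{-1}]$ after inverting $u$ in degree zero, and then observes that tensoring with $R_r$ via $s$ automatically inverts $u$ because $s(r)=ru^{\op{deg}r}$. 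You instead lift $r$ through the augmentation to each simplicial level and apply Lemma~\ref{lemma: Q is trivial on semi-stable} level-wise, so the same key lemma is the engine, but it is deployed directly on the resolution rather than funneled through the $k[u]$-structure. Your own flagged obstacle --- making sense of the level-wise tensor --- is handled more precisely as follows: there is no map $R \to R_n$, so ``$R_n \otimes_R R_r$'' is not literally defined; what one actually uses is that $R_r$, restricted to an $R_n$-module along $R_n \to R \to R_r$, is already a module over $(R_n)_{\tilde r_n}$ (since $\tilde r_n$ maps to the unit $r$), hence $(-)\otimes_{R_n} R_r$ factors through $(-)\otimes_{R_n}(R_n)_{\tilde r_n}$, and Lemma~\ref{lemma: Q is trivial on semi-stable} then gives that $\eta\otimes_{R_n}R_r$ is a level-wise \emph{isomorphism}, not merely that its cokernel has vanishing homotopy. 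With this fix your chain of weak equivalences closes up as intended, and the compatibility with degeneracies, while a clean way to organize the lifts $\tilde r_n$, is not actually needed beyond the existence of some positive-degree lift at each level.
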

\begin{proof} The second equality is an isomorphism and is just Lemma ~\ref{lemma: Q is trivial on semi-stable}. For the first weak equivalence, notice that the inclusion $Q(R)\to \Delta(R)$ gives to a short exact sequence of simplicial $R[u]$-modules:
\[
0 \to \mathsf L Q(R) \to \mathsf L \Delta(R) \to \mathsf L  (\Delta(R)  / Q(R)) \to 0.
\]
Applying $ (- \otimes_{k[u]} k[u,u^{-1}])$ annihilates $ \mathsf L  (\Delta(R)  / Q(R))$ giving an isomorphism 
\[
 \mathsf L Q(R) \otimes_{k[u]} k[u,u^{-1}] \to \mathsf L \Delta(R) \otimes_{k[u]} k[u,u^{-1}].
\]
In particular, we have an isomorphism of $R[u]$-modules
\begin{align*}
\pi_i(\mathsf L Q(R) \otimes_{k[u]} k[u,u^{-1}] )& = \pi_i( \mathsf L \Delta(R) \otimes_{k[u]} k[u,u^{-1}] ) \\
& = \pi_i( \Delta(R) ) \\
& = \begin{cases}
R[u,u^{-1} ] & \text{ if }  i = 0 \\
0 & \text{ if }  i > 0.
\end{cases}
\end{align*}
Now, under the isomorphism
\[
\pi_0(\mathsf L Q(R) \overset{\mathbf{L}}{\mathbin{_s\otimes_R}} R_r) = \pi_0(\mathsf L Q(R) ) \mathbin{_s\otimes_R} R_r,
\]
the element $u \otimes 1$ becomes a unit since $r$ has positive degree (recall that $s(r)= ru^{\op{deg}r})$.  Hence,
 
 \[
 \pi_i(\mathsf L Q(R) ) \mathbin{_s\otimes_R} R_r  =
\begin{cases}
R_r [u,u^{-1} ] & \text{ if }  i = 0 \\
0 & \text{ if }  i > 0.
\end{cases}
 \]
 \end{proof}

\begin{proposition}\label{proposition: smoothtorvanishing} Suppose that $R$ is an object of $\mathsf{CR}_k^{\Gm}$ such that $\op{Spec}R$ is smooth. Then there is a weak equivalence 
\[
\mathsf L Q(R) = Q(R).
\]
\end{proposition}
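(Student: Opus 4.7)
The plan is to check that the natural augmentation $\mathsf{L}Q(R) \to Q(R)$ is a weak equivalence by verifying $\pi_i(\mathsf{L}Q(R)) = 0$ for $i>0$ and $\pi_0(\mathsf{L}Q(R)) \cong Q(R)$ locally on $\op{Spec} R$; this is legitimate because the homotopy groups of $\mathsf{L}Q(R)$ are $R$-modules and formation of $\pi_i$ commutes with flat base change (cf.\ Corollary~\ref{corollary: strongly etale pis}). The cover of $\op{Spec}R$ I have in mind consists of: (a) for each fixed point $x$, a Luna neighborhood $V_x = \op{Spec}R_{s_x}$ with $s_x$ of degree $0$, together with the strongly étale map $g\colon V_x \to T_xX \cong \op{Spec}T$ of Proposition~\ref{proposition: luna}, where $T$ is a polynomial ring; and (b) principal opens $\op{Spec}R_r$ with $r$ homogeneous of nonzero degree, which collectively cover the non-fixed locus.

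On opens of type (b), Lemma~\ref{lemma: LQ away from contract} directly handles the case $\deg r > 0$. For $\deg r < 0$ I will establish the mirror statement $\mathsf{L}Q(R) \otimes_p R_r \simeq Q(R) \otimes_p R_r$ by essentially the same argument: the identity $\sigma(r) \cdot p(r)^{-1} = u^{-|\deg r|}$ forces $u$ to become a unit in $Q(R)[1/p(r)]$, which in turn annihilates the derived cokernel of $Q(R) \hookrightarrow \Delta(R)$ after localization, leaving only $R_r[u,u^{-1}]$ in degree $0$ and nothing in positive degrees.

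On opens of type (a), I will exploit the observation that polynomial rings with any $\Gm$-weights, viewed as constant simplicial rings, are cofibrant in $\mathsf{sCR}_k^{\Gm}$---they are assembled by iterated pushouts of the generating cofibrations $F(\partial\Delta[0])^a \to F(\Delta[0])^a$ starting from the initial cofibrant object $k$. Hence $\mathsf{L}Q(T) \simeq Q(T)$. Applying Corollary~\ref{corollary: strongly etale pis} to the strongly étale maps $R \to R_{s_x}$ and $T \to R_{s_x}$ yields identifications
\[
\pi_i(\mathsf{L}Q(R)) \otimes_R R_{s_x} \;\cong\; \pi_i(\mathsf{L}Q(R_{s_x})) \;\cong\; \pi_i(\mathsf{L}Q(T)) \otimes_T R_{s_x},
\]
which vanish for $i > 0$ and, for $i=0$, equal $Q(T) \otimes_T R_{s_x} \cong Q(R_{s_x}) \cong Q(R) \otimes_R R_{s_x}$ by the underived strongly étale base change of Proposition~\ref{prop: ready for Luna}.

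The hard part will be the $\deg r<0$ mirror of Lemma~\ref{lemma: LQ away from contract}; although its verification is essentially formal by symmetry, one must be careful about which of the two $R$-module structures on $Q(R)$ is being inverted at each step. The local-to-global assembly, once the two local statements are in hand, is immediate from the exactness of flat localization on the $R$-module-valued functors $\pi_i(\mathsf{L}Q(-))$.
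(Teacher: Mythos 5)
Your proposal is correct and follows essentially the same strategy as the paper: reduce to a local verification on $\op{Spec}R$, handle the fixed locus via Luna slices together with cofibrance of polynomial rings and strongly \'etale base change, and handle the complement of the repelling locus via Lemma~\ref{lemma: LQ away from contract}.

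The one place where you diverge is the extra case $\op{deg} r<0$, and it is worth pointing out that this case is not actually needed. A Luna neighborhood $\op{Spec}R_{s_x}$ is a $\gm$-invariant open set containing the fixed point $x$. If $z$ lies in the repelling locus $X^-=V(I^+)$, then $\lim_{t\to\infty}t^{-1}\cdot z$ is a fixed point $y$, so $y$ lies in the orbit closure of $z$; any $\gm$-invariant open set containing $y$ therefore contains $z$. Hence the Luna neighborhoods cover all of $X^-$, not merely the fixed locus, and together with the opens $\op{Spec}R_r$ for $\op{deg}r>0$ (which cover $X\setminus X^-$) they already cover $\op{Spec}R$. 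This is exactly the paper's cover. In fact the same reasoning shows the Luna opens even cover $X^+\cup X^-$.

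Dropping the redundant case also removes a subtlety your argument glosses over: $\pi_i(\mathsf{L}Q(R))$ is an $R\otimes_k R$-module, and you localize it via the $s$-structure on Luna and positive-degree opens but via the $p$-structure on negative-degree opens. The resulting localizations cover $\op{Spec}(R\otimes_k R)$ only because the $s$-localizations alone already do; the ``immediate'' local-to-global assembly implicitly relies on this. None of this is a fatal error --- your local verifications are correct and the mirror argument for $\op{deg}r<0$ does go through formally as you describe --- but the covering argument as written is slightly misdirected, and the cleanest fix is simply to observe that the Luna opens cover $X^-$ and work only with the $s$-module structure, exactly as the paper does.
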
 
\begin{proof}
We have a map $\mathsf L Q(R) \to  Q(R)$; we must show that the induced map 
\[
\pi_* \mathsf L Q(R)  \to Q(R)
\]
of $R$-modules is an isomorphism, i.e. that $\pi_i( \mathsf L Q(R) ) = 0$ for $i >0$ and $\pi_0( \mathsf L Q(R) ) = Q(R)$.  This can be done locally on $R$. In particular, by taking open sets coming from \'etale slices, we must exhibit isomorphisms \[ \pi_i(\mathsf L Q(R) ) \otimes_R R_r = Q(R) \otimes_R R_r   \] when $r$ has degree zero (to cover the fixed locus) and when $r$ has strictly positive degree (to cover the contracting locus). 
On the fixed locus, Proposition ~\ref{proposition: luna} gives in particular a strongly \'etale map $f: T\to R_r$ where 
$T$ is a free object of $\mathsf{CR}^{\gm}_k$ and $r$ has degree zero.  Now $\mathsf LQ(T) = Q(T)$
since $T$ is cofibrant when regarded as a constant simplicial ring. Proposition~\ref{proposition: derived base change} and Proposition ~\ref{prop: ready for Luna} then give 
\[
\mathsf LQ(R_r) = Q(T ) \otimes_{T} R_r = Q(R_r) .
\]
In particular,
\[
\pi_i(\mathsf L Q(R_r) ) = \begin{cases}
Q(R_r) & \text{ if } i = 0 \\
0 & \text{ if } i > 0 \\
\end{cases}
\]
On the other hand, Corollary~\ref{corollary: strongly etale pis} says that
\[
\pi_i(\mathsf L Q(R_r) ) = \pi_i(\mathsf L Q(R) ) \otimes_R R_r.
\]
It follows that 
\[
\pi_i(\mathsf L Q(R) ) \otimes_R R_r
= \begin{cases}
Q(R_r)  = Q(R) \otimes_R R_r & \text{ if } i = 0 \\
0 & \text{ if } i > 0 
\end{cases}
\]
Similarly, if $\op{deg}r > 0$,  Lemma~\ref{lemma: LQ away from contract}.
gives
\[
\pi_i(\mathsf L Q(R) ) \otimes_R R_r
= \begin{cases}
Q(R) \otimes_R R_r & \text{ if } i = 0 \\
0 & \text{ if } i > 0.
\end{cases}
\]\end{proof}

\subsection{Localizations and Semi-orthogonal Decompostions in the General Case} \label{section: consequences}

This section develops localizations and semi-orthogonal decompositions associated to objects of $\mathsf{sCR}_k^{\Gm}$ in analogy with the case of smooth commutative rings which we considered in Section ~\ref{section: bousfield}. An important step in this direction is understanding how to interpret $\mathsf{L}Q (R_\bullet )$ as a Fourier-Mukai kernel object. That is, associated to $\mathsf{L}Q(R_\bullet )$ we wish to construct a corresponding object in the homotopy category of simplicial modules  \[\op{Ho}\big(  \mathsf{sMod}^{\Gm^2} (R_\bullet\overset{\mathbf{L}}{\otimes}_k R_\bullet )\big),  \] which we will do in Definition ~\ref{definition: Qderdefinition}. This requires some attention to the model structure on the category of simplicial modules. A reader unconcerned with these details may wish to simply inspect Definition ~\ref{definition: Qderdefinition} and the corresponding semi-orthogonal decomposition in Proposition ~\ref{proposition: S Q sod} and bypass the remainder of the section. 

For the model structure on the category of simplicial modules, we refer directly to \cite[Chapter II.6]{Quillen}. To endow a triangulated structure on the respective homotopy categories we use categories of spectra which are a model for the derived category, see e.g. \cite[Section 8.2]{Jardine} or \cite[Sections 2 and 3]{Schwede}; we will follow the exposition of the latter article very closely. In particular, the next definition is just \cite[Definition 2.1.1 and Definition 2.1.2]{Schwede} applied to simplicial rings with a $\Gm$-action. 

\begin{definition}\label{definition: spectra} Let $R_\bullet$ be an object of $\mathsf{sMod}^{\Gm} R_\bullet$.
 A \newterm{spectrum} $M$ in $\mathsf{sMod}^{\Gm} R_\bullet$ is a collection of objects $M_n$ and maps $\Sigma M_n \to M_{n+1}$ for each degree $n \in \mathbb{N}$ where $\Sigma M_n$ is the suspension of $M_n$. Maps of spectra $M\to N$ are defined to be collections of maps $M_n\to N_n$ such that the obvious squares commute. The category of spectra in $\mathsf{sMod}^{\Gm} R_\bullet$ is denoted by $(\mathsf{sMod}^{\Gm} R_\bullet)^\infty$. 
\end{definition}

In the above $\Sigma$ denotes the suspension endofunctor on $\mathsf{sMod}^{\Gm} R_\bullet $, which is defined since $\mathsf{sMod}^{\Gm} R_\bullet$ is a simplicial model category with a zero object. For later use, we recall that the right adjoint of $\Sigma$ is the loop functor $\Omega$. \cite[Corollary 3.1.4]{Schwede} describes the model category structure on the category of spectra; we do not reproduce the details here for the sake of brevity, and because we only require certain structural properties.  However, we remind the reader that a map of spectra is said to be a \newterm{strict fibration} if it is degree-wise a fibration with respect to the model structure on $\mathsf{sMod}^{\Gm} R_\bullet$. Likewise, a map of spectra is said to be a a \newterm{strict weak equivalence} if it is degree-wise a weak equivalence.  The fibrations (sometimes called stable fibrations) and weak equivalences (sometimes called stable weak equivalences) require additional properties that we will delegate to the proofs below.  
  
Given any object $X$ of  $\mathsf{sMod}^{\Gm} R_\bullet$ we obtain a \newterm{ suspension spectrum}, denoted $\Sigma^\infty X$, by setting $\Sigma X_n  =\Sigma^nX$ and taking the identity maps in each degree $n$.  This induces a suspension functor \begin{equation}\label{equation: suspension functor} \Sigma^\infty : \mathsf{sMod}^{\Gm} R_\bullet \to  (\mathsf{sMod}^{\Gm} R_\bullet)^\infty . \end{equation} One defines an $\Omega$-\newterm{spectrum} to be a spectrum $M$ such that each map $M_n\to \Omega M_{n+1}$ is a weak homotopy equivalence, and a spectrum is said to be \newterm{connective} if, for each $n\geq 1$, the higher homotopies of $\Omega M_n$ vanish. 

The following result, which is \cite[Lemma 2.2.2]{Schwede}, allows us to use spectra to understand the homotopy category of the category of simplicial modules. 

 \begin{proposition}\label{proposition: spectra in smod vs smod}
 The total left derived functor of the suspension functor $\Sigma^\infty$ gives an equivalence between the homotopy category of graded simplicial modules and of the homotopy category of connective spectra. 
 \end{proposition}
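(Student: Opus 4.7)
The plan is to invoke the classical Schwede result \cite[Lemma 2.2.2]{Schwede} in our equivariant setting, so the work is really to check that the passage to $\Gm$-equivariant, $\Z$-graded simplicial modules does not disturb the argument. First I would exhibit $\Sigma^\infty$ as the left adjoint of the evaluation functor $\op{Ev}_0 : (\mathsf{sMod}^{\Gm} R_\bullet)^\infty \to \mathsf{sMod}^{\Gm} R_\bullet$, $M \mapsto M_0$; the adjunction is immediate from $(\Sigma^\infty X)_0 = X$ together with the fact that a map of spectra out of a suspension spectrum is determined by its level-$0$ component. The stable model structure from \cite[Corollary 3.1.4]{Schwede} is defined so that $\Sigma^\infty$ sends generating (trivial) cofibrations to generating (trivial) cofibrations, so $(\Sigma^\infty, \op{Ev}_0)$ is a Quillen adjunction and the total left derived functor $\mathbf{L}\Sigma^\infty$ exists.

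Next I would analyze the unit and counit. The counit $\Sigma^\infty \op{Ev}_0(M) \to M$ is a stable weak equivalence whenever $M$ is fibrant, i.e.\ an $\Omega$-spectrum, because the structure maps $M_n \to \Omega M_{n+1}$ are then weak equivalences and the counit is determined by the adjoints $M_0 \to \Omega^n M_n$. The unit $X \to \op{Ev}_0(R\Sigma^\infty X)$, for $R$ a fibrant replacement in the stable model structure, is a weak equivalence precisely when the relevant loop/suspension adjoints on the $\Omega$-spectrification of $\Sigma^\infty X$ are equivalences at level $0$; this is the point where connectivity is needed, since $\Omega^n \Sigma^n X$ recovers $X$ up to weak equivalence only when $X$ has no negative homotopy. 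Restricting the homotopy category of spectra to the full subcategory of connective ones makes both the unit and counit equivalences, giving the desired equivalence of homotopy categories.

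The main obstacle is confirming that the model-categorical machinery of \cite{Schwede} goes through in the presence of the $\Gm$-action and $\Z$-grading. The essential input is that $\mathsf{sMod}^{\Gm} R_\bullet$ is a proper, cofibrantly generated, pointed simplicial model category (cf.\ \cite[Chapter II.6]{Quillen}), with a set of generating cofibrations obtained as free $R_\bullet$-module extensions of the simplicial boundary inclusions in each weight $a \in \Z$. Linearity of $\Z$-graded tensor products, together with the linear reductivity of $\Gm$, ensures that suspension, loop, and the stabilisation construction all preserve homotopy groups computed weight-by-weight, so that Schwede's proof applies verbatim. With these ingredients assembled, the conclusion of the proposition reduces to a direct citation of the nonequivariant statement applied to the equivariant simplicial model category $\mathsf{sMod}^{\Gm} R_\bullet$.
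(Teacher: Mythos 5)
The paper offers no proof of this proposition; it is stated as a direct citation of Schwede's Lemma~2.2.2, and your proposal unpacks precisely that argument, so the approach matches the paper's. One imprecision worth flagging: your claim that the counit $\Sigma^\infty\op{Ev}_0(M)\to M$ is a stable weak equivalence \emph{whenever} $M$ is fibrant is too strong. An $\Omega$-spectrum in $\mathsf{sMod}^{\Gm} R_\bullet$ can have nonzero negative stable homotopy groups, while $\Sigma^\infty M_0$ is always connective, so the counit is a stable weak equivalence only when $M$ is both fibrant and connective; you repair this at the end by restricting to connective spectra, but the hypothesis should already appear in the counit step. Likewise, your remark that the unit ``needs connectivity'' is slightly misplaced: every object of $\mathsf{sMod}^{\Gm} R_\bullet$ is automatically connective, so the unit is a weak equivalence without further hypotheses; connectivity enters in identifying the essential image of $\mathbf{L}\Sigma^\infty$, which is exactly where the restriction to connective spectra does its work.
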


\begin{remark} \label{remark: Qder factors through chain complexes}
The triangulated category $\op{Ho}(\mathsf{sMod}^{\Gm} R_\bullet)$ is actually equivalent to a perhaps more familiar triangulated category via the Dold-Kan correspondence.  For simplicity, assume $R=R_\bullet$ is discrete, i.e. all higher homotopies vanish. Then the Dold-Kan correspondence  gives a Quillen equivalence between $\op{sMod}^{\Gm} R$ and  $\op{Mod}^{\Gm}_{\leq 0} R$, the category of chain complexes of $R$-modules vanishing in positive degrees. This induces a Quillen equivalence of spectra \[ (\op{sMod}^{\Gm} R)^\infty \cong (\op{Mod}^{\Gm}_{\leq 0} R)^\infty.\] One can easily identify $\op{Ho}(\op{Mod}^{\Gm}_{\leq 0} R)^\infty$ with connective spectra becoming complexes with homology concentrated in non-positive degrees, see e.g. \cite[Section 8.2]{Jardine}. Hence, if one prefers, one can always in practice work with differential graded modules over a graded commutative dg-algebra for the purposes of this section. We have not done so, though, because our definition of $\mathsf{L}Q$ via simplicial rings (instead of connective chain complexes) sits more naturally in the category of spectra of modules. 
\end{remark}

Up to one important detail we will discuss immediately below, we now have enough structure in place to give a repackaging of $\mathsf{L}Q(R_\bullet )$ as a kernel object. 

\begin{definition}\label{definition: Qderdefinition} Let $R_\bullet$ be an object of $\mathsf{sCR}_k^{\Gm}$ and $S_\bullet \to R_\bullet$ a cofibrant replacement. Then $\mathsf{L}Q(R_\bullet) = Q(S_\bullet)$ is naturally a $\Z^2$-graded simplicial $S_\bullet \otimes_k S_\bullet$-module, and hence gives an object \begin{equation}\label{equation: Qder}Q_{\op{der}}(R_\bullet) := \Sigma^\infty \mathsf{L}Q(R_\bullet)= \Sigma^\infty Q(S_\bullet)\end{equation} of $\op{Ho} (\mathsf{sMod}^{\Gm^2} R_\bullet\otimes_k^{\mathbf{L}}R_\bullet )^{\infty}$ under the identification of homotopy categories in Proposition~\ref{proposition: spectra in smod vs smod}. 
\end{definition} 

\noindent As hinted above, there is a subtlety in the above Definition, in that $Q_{\op{der}}(R_\bullet )$ is not actually well-defined unless the weak equivalence $S_\bullet \to R_\bullet$  induces an equivalence \[ (\mathsf{sMod}^{\Gm^2} S_\bullet\overset{\mathbf{L}}{\otimes}_k S_\bullet )^\infty \to (\mathsf{sMod}^{\Gm^2} R_\bullet \overset{\mathbf{L}}{\otimes}_k R_\bullet)^\infty . \] Indeed, we will show in Proposition ~\ref{proposition: weakly equivalent derived rings have equivalent derived categories} that this is always the case. To this end, we first record a simple lemma.

\begin{lemma}\label{lemma: strictfibrations} Any strict fibration $M \to N$ of $\Omega$-spectra in $(\mathsf{sMod}^{\Gm} R_\bullet)^\infty$ is a fibration.
\end{lemma}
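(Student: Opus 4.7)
The plan is to appeal to the standard characterization of stable fibrations in a Bousfield–Friedlander style stable model structure on spectra, which is the model structure Schwede installs in \cite[Corollary 3.1.4]{Schwede}. Under this characterization, a map $f: M \to N$ of spectra is a (stable) fibration precisely when
\begin{enumerate}
\item $f$ is a strict fibration, i.e.\ each $f_n : M_n \to N_n$ is a fibration in $\mathsf{sMod}^{\Gm} R_\bullet$, and
\item for every $n \geq 0$ the square
\[
\begin{tikzcd}
M_n \arrow[r, "\mu_n"] \arrow[d, "f_n"'] & \Omega M_{n+1} \arrow[d, "\Omega f_{n+1}"] \\
N_n \arrow[r, "\nu_n"'] & \Omega N_{n+1}
\end{tikzcd}
\]
is a homotopy pullback in $\mathsf{sMod}^{\Gm} R_\bullet$, where $\mu_n, \nu_n$ are the structure maps adjoint to $\Sigma M_n \to M_{n+1}$ and $\Sigma N_n \to N_{n+1}$.
\end{enumerate}
Condition (i) is part of the hypothesis, so only (ii) requires argument.

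For this, I would use the assumption that both $M$ and $N$ are $\Omega$-spectra: by definition this means that the adjoint structure maps $\mu_n$ and $\nu_n$ are weak equivalences in $\mathsf{sMod}^{\Gm} R_\bullet$ for each $n$. The key general fact is that a commutative square in a model category in which the two horizontal (or two vertical) arrows are weak equivalences is automatically a homotopy pullback square; this is the standard ``two-out-of-three'' property for homotopy pullbacks and holds in any right proper model category. Since $\mathsf{sMod}^{\Gm} R_\bullet$ is a simplicial model category with all objects fibrant in the usual sense needed here (being a category of simplicial modules, it is right proper), this fact applies directly to the displayed square.

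Thus (ii) is satisfied for free from the $\Omega$-spectrum hypothesis on $M$ and $N$, and (i) from the strict fibration hypothesis, so $f$ meets both criteria and is a fibration in the stable model structure. The one step that deserves care, and that I anticipate as the main bookkeeping obstacle rather than a genuine obstruction, is to verify that the model structure we are using on spectra in $\mathsf{sMod}^{\Gm} R_\bullet$ is indeed the Bousfield–Friedlander stable model structure with the pullback-square characterization above; this follows from the general machinery in \cite[Section 3]{Schwede} once one notes that $\mathsf{sMod}^{\Gm} R_\bullet$ is a pointed simplicial model category in which the relevant constructions (suspension and loops) behave as in the non-equivariant case, the $\Z$-grading being preserved levelwise.
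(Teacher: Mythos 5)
Your proof is correct and uses the same fundamental observation as the paper: the $\Omega$-spectrum hypothesis forces the relevant structure maps to be weak equivalences, which collapses the remaining fibration condition. The difference is in which characterization of stable fibrations you invoke. You use the Bousfield--Friedlander-style \emph{level-wise} criterion (the square comparing $M_n$ to $\Omega M_{n+1}$ over $N_n$ and $\Omega N_{n+1}$ must be homotopy cartesian), whereas the paper uses what Schwede actually states in \cite[Proposition~2.1.5]{Schwede}: a map is a stable fibration iff it is a strict fibration and the square comparing $M$ to $S(M)$ over $N$ and $S(N)$ is homotopy cartesian in the strict model structure, where $S$ is the stably-fibrant-replacement endofunctor (Schwede's $Q$). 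The paper then argues that $M\to S(M)$ and $N\to S(N)$ are strict weak equivalences (since $M,N$ are $\Omega$-spectra), pulls back, and concludes by two-out-of-three. Your square uses $\Omega M_{n+1}$ directly rather than $S(M)_n$, which is built from a colimit of iterated loops plus fibrant replacement; these characterizations are equivalent, but that equivalence is an extra fact that is not literally Schwede's Proposition~2.1.5 and so deserves its own citation or a short derivation. In short: both approaches buy the same thing, but the paper's argument stays closer to the citation, while yours relies on a cleaner but not-literally-cited level-wise criterion. Your appeal to right properness to turn two horizontal weak equivalences into a homotopy pullback is correct; that is exactly the content also used implicitly in the paper's two-out-of-three step.
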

\begin{proof}  Let $S$ denote the endofunctor of $\mathsf{sMod}^{\Gm} R_\bullet $ which is defined before \cite[Definition 2.1.4]{Schwede} (and note that Schwede uses the notation $Q$ for this functor, which is totally different than our use of $Q$). Roughly, $S(M)$ is a weakly equivalent $\Omega$-spectrum associated to $M$. By \cite[Proposition 2.1.5]{Schwede}, $M \to N$ being a fibration is equivalent to it being a strict fibration and the natural map \[ f: M \to S (M) \overset{\op{L}}{\times}_{S (N)} N\]  being a weak equivalence. Since $S(A)$ and $S(N)$ are $\Omega$-spectra, the maps $M \to S(M)$ and $N \to S(N)$ are strict weak equivalences. So we have a diagram
\[
\begin{tikzcd}
& S(M) \times_{S(N)} N \ar[d, "g"] \\
M \ar[r, "h"]  \ar[ur, "f"] & S(M) = S(M) \times_{S(B)} S(N)
\end{tikzcd}
\]
where $g,h$ are weak equivalences. Hence, so is $f$, as desired. 
\end{proof}

\begin{proposition} \label{proposition: weakly equivalent derived rings have equivalent derived categories}
 If $f: R_\bullet^\prime \to R_\bullet$ is a weak equivalence in $\mathsf{sCR}_k^{\Gm}$, then restriction of scalars induces a Quillen equivalence 
 \begin{displaymath}
 \op{Res}:    (\mathsf{sMod}^{\Gm} R_\bullet)^\infty \to (\mathsf{sMod}^{\Gm} R_\bullet^\prime)^\infty .
 \end{displaymath}
 Likewise, restriction of scalars induces a Quillen equivalence 
 \begin{displaymath}
(\mathsf{sMod}^{\Gm^2} R_\bullet \overset{\mathbf{L}}{\otimes}_k R_\bullet)^\infty \to (\mathsf{sMod}^{\Gm^2} R_\bullet^\prime \overset{\mathbf{L}}{\otimes}_k R_\bullet^\prime)^\infty.
 \end{displaymath} 
 \end{proposition}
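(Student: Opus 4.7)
The plan is to first establish the claim at the level of simplicial modules and then lift it to spectra. The left adjoint to $\op{Res}$ is extension of scalars $L : M \mapsto R_\bullet \otimes_{R_\bullet^\prime} M$. Since fibrations and weak equivalences in Quillen's model structure on $\mathsf{sMod}^{\Gm} R_\bullet$ are detected on the underlying simplicial sets \cite[Chapter II.6]{Quillen}, restriction of scalars preserves and reflects both, so we have a Quillen adjunction $L \dashv \op{Res}$ in which $\op{Res}$ preserves and reflects weak equivalences.

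To promote this to a Quillen equivalence on simplicial modules, since $\op{Res}$ preserves and reflects weak equivalences it suffices to verify that the derived unit $A \to \op{Res}(L(A))$ is a weak equivalence for every cofibrant $A \in \mathsf{sMod}^{\Gm} R_\bullet^\prime$. Cofibrancy implies levelwise flatness, so $L(A) = R_\bullet \overset{\mathbf L}{\otimes}_{R_\bullet^\prime} A$. The Quillen change-of-rings spectral sequence
\[
E^2_{p,q} = \op{Tor}^{\pi_* R_\bullet^\prime}_p(\pi_* R_\bullet, \pi_* A)_q \Rightarrow \pi_{p+q}(R_\bullet \overset{\mathbf L}{\otimes}_{R_\bullet^\prime} A)
\]
collapses on the $E^2$ page because $\pi_* f$ is an isomorphism by hypothesis, yielding $\pi_*(L(A)) \cong \pi_* A$ and hence that the derived unit is a weak equivalence.

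To pass to spectra, I would observe that extension and restriction of scalars both commute strictly with the suspension endofunctor $\Sigma$, so the adjunction extends levelwise to categories of spectra and gives a Quillen adjunction on the strict model structures. Restriction of scalars leaves the underlying simplicial abelian group structure unchanged, so it preserves and reflects $\Omega$-spectra and stable weak equivalences, and via Lemma~\ref{lemma: strictfibrations} it preserves stable fibrations as well. Because restriction of scalars on spectra again preserves and reflects weak equivalences, the argument of the previous paragraph applied levelwise (and passed through $\op{colim}_n \pi_{n+*}$ to compute stable homotopy groups) shows that the derived unit on spectra is likewise a weak equivalence, establishing the Quillen equivalence.

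The $\Gm^2$-equivariant statement follows by applying the first part of the proof to the induced map $R_\bullet^\prime \overset{\mathbf{L}}{\otimes}_k R_\bullet^\prime \to R_\bullet \overset{\mathbf{L}}{\otimes}_k R_\bullet$, which is itself a weak equivalence since the derived tensor product is a left Quillen bifunctor on $\mathsf{sCR}_k$ and hence preserves weak equivalences of cofibrant replacements. The main technical nuisance I anticipate is the bookkeeping around Schwede's stabilization: one must confirm that the construction of \cite[Corollary 3.1.4]{Schwede} is functorial in the base category in a way that lets a Quillen equivalence of bases descend cleanly to a Quillen equivalence of their associated categories of spectra.
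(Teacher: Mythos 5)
Your argument diverges from the paper's in a way that is partly a legitimate alternative and partly leaves the central technical point unresolved.

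For the Quillen-equivalence part, once one has a Quillen adjunction in hand, your strategy (exploit that $\op{Res}$ reflects weak equivalences and then show the derived unit is an equivalence on cofibrant objects, using the change-of-rings Quillen spectral sequence) is a perfectly sound alternative to the paper's, which instead invokes the compact generators $\Sigma^l R_\bullet(a)$ from \cite[Theorem A.1.1]{SSmodules} and checks fully-faithfulness there. Both routes are standard; the paper's has the advantage of working directly at the spectrum level without passing through the unstable category first.

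The genuine gap is in your claim that $\op{Res}$ preserves stable fibrations \emph{via} Lemma~\ref{lemma: strictfibrations}. That lemma only says that a \emph{strict} fibration between $\Omega$-spectra is a stable fibration. If $M \to N$ is an arbitrary stable fibration, $\op{Res} M \to \op{Res} N$ is certainly a strict fibration (underlying simplicial sets are unchanged), but nothing forces $\op{Res} M$ and $\op{Res} N$ to be $\Omega$-spectra, so Lemma~\ref{lemma: strictfibrations} does not apply. To verify the stable fibration condition one must compare the $\Omega$-spectrification functor $S'$ of $(\mathsf{sMod}^{\Gm} R_\bullet^\prime)^\infty$ with $\op{Res}$ applied to the $\Omega$-spectrification $S$ of $(\mathsf{sMod}^{\Gm} R_\bullet)^\infty$; there is no a priori identification $S' \circ \op{Res} \cong \op{Res} \circ S$. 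This is precisely what the paper handles by pasting together two homotopy-cartesian squares, using \cite[Proposition 2.1.5]{Schwede} and Lemma~\ref{lemma: strictfibrations} only at the places where the objects really are $\Omega$-spectra. Your closing sentence correctly identifies this "functoriality of stabilization" issue as the technical nuisance, but the proof as written never resolves it, so the Quillen adjunction at the spectrum level is not actually established.
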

 
 \begin{proof} First, we show that $\op{Res}$ gives a Quillen adjunction. Extension of scalars is a left adjoint, so we must show that $\op{Res}$ preserves (stable) weak equivalences and (stable) fibrations.  The forgetful functor $\mathsf{sMod}^{\Gm} R_\bullet \to \mathsf{sSet}$ satisfies the remark given after \cite[Corollary 2.1.6]{Schwede}. Namely, the weak equivalences are those inducing isomorphisms on homotopy groups of the underlying spectra. Since the underlying spectra do not change under restriction of scalars, we see that $\op{Res}$ preserves weak equivalences. 
 
We now show that $\op{Res}$ preserves fibrations. Restriction of scalars preserves strict fibrations of spectra and $\Omega$-spectra, as all objects of $\mathsf{sMod}^{\Gm} R_\bullet$ are fibrant, and so $\op{Res } M \to \op{Res }N$ is a strict fibration if $M\to N$ is a fibration. Again using \cite[Proposition 2.1.5]{Schwede}, it remains to show that \[ \op{Res } M \to S' \op{Res} M \overset{\op{L}}{\times}_{S'  \op{Res} N} \op{Res} N\] is a weak equivalence, where $S$ and $S'$ denote the denote the endofunctors of $\mathsf{sMod}^{\Gm} R_\bullet $ and $\mathsf{sMod}^{\Gm} R_\bullet^\prime$ used in the proof of Lemma ~\ref{lemma: strictfibrations}. 
Now consider the following diagram.
\[
\begin{tikzcd}
\op{Res} M \ar[r] \ar[d] & \op{Res}  SM \ar[r] \ar[d] & S' \op{Res}  SM \ar[d] \\
\op{Res}  N \ar[r] & \op{Res}  SN \ar[r] & S' \op{Res}  SN 
\end{tikzcd}
\]
The left square is homotopy cartesian by assumption. Lemma ~\ref{lemma: strictfibrations} shows that the right square is homotopy cartesian. 
Hence the full rectangle is homotopy cartesian.  We likewise have a diagram 
\[
\begin{tikzcd}
\op{Res}  M \ar[r] \ar[d] & S' \op{Res} M \ar[r] \ar[d] & S' \op{Res} SM \ar[d] \\
\op{Res} N \ar[r] & S' \op{Res} N \ar[r] & S' \op{Res} SN 
\end{tikzcd}
\]
From above the full rectangle is homotopy cartesian and the right square is homotopy cartesian by \cite[Proposition 2.1.3(e)]{Schwede}.  Hence, the left square is homotopy cartesian, so $\op{Res }M \to \op{Res }N$ is a fibration, as claimed, and so $\op{Res}$ gives a Quillen adjunction. 

 To see $\op{Res}$ is a Quillen equivalence, it remains to check that we have an equivalence on the homotopy categories. By \cite[Theorem A.1.1]{SSmodules}, the objects $\Sigma^l R_\bullet (a)$ for $l,a \in \Z$ form a compact set of generators for the homotopy category $\op{Ho}(\mathsf{sMod}^{\Gm} R_\bullet)^\infty$ .  Hence, it suffices to show that, in the homotopy category, restriction of scalars is fully-faithful on these objects. But we assumed $R_\bullet \to R_\bullet^\prime$ is a  weak equivalence, and restriction of scalars commutes with suspension and shifts, so we see that restriction of scalars is indeed fully-faithful on this category. We thus have that $\op{Res}$ gives a Quillen equivalence, as claimed. 

For the second part of the statement regarding bimodules, the proof is entirely the same once one observes that \begin{displaymath}
  R_\bullet^\prime \overset{\mathbf{L}}{\otimes}_k R_\bullet^\prime \to R_\bullet \overset{\mathbf{L}}{\otimes}_k R_\bullet
 \end{displaymath}
 is also a weak equivalence since derived tensor products preserve weak equivalences. 
 \end{proof}

Thus, $Q_{\op{der}}(R_\bullet )$ as given in Definition ~\ref{definition: Qderdefinition} is indeed well-defined. Let $\Delta_{\op{der}}(R_\bullet )$ be the object of $\op{Ho}((\mathsf{sMod}^{\Gm^2} R_\bullet^\prime \overset{\mathbf{L}}{\otimes}_k R_\bullet^\prime)^\infty)$ determined by the diagonal object $\Delta (R_\bullet )$, and let $S_{\op{der}}(R_\bullet )$ denote the cone in $\op{Ho}((\mathsf{sMod}^{\Gm^2} R_\bullet^\prime \overset{\mathbf{L}}{\otimes}_k R_\bullet^\prime)^\infty)$ which fits into the exact triangle 
\[
Q_{\op{der}}(R_\bullet ) \to \Delta(R_\bullet) \to S_{\op{der}}(R_\bullet ).
\]
So we get
\begin{align*}
\Phi_{Q_{\op{der}}} :  \op{Ho}(\mathsf{sMod}^{\Gm} R_\bullet)^\infty & \to  \op{Ho}(\mathsf{sMod}^{\Gm} R_\bullet)^\infty  \\
M & \mapsto (M  \overset{\mathbf{L}}{\mathbin{\otimes}}_s Q_{\op{der}})_0
\end{align*}
the corresponding equivariant integral transform on the homotopy category of spectra. We define $\Phi_{S_{\op{der}}}$ similarly. 

\begin{proposition} \label{proposition: S Q sod}
 For any object $R_\bullet$ of $\mathsf{sCR}_k^{\Gm}$, there is a semi-orthogonal decomposition
 \begin{align*}
 \op{Ho}(\mathsf{sMod}^{\Gm} R_\bullet)^\infty & = \langle \op{Im} \Phi_{S_{\op{der}}}, \op{Im} \Phi_{Q_{\op{der}}} \rangle
 \end{align*}
 which preserves connective spectra.
\end{proposition}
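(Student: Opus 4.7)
The plan is to transfer the Bousfield-triangle argument of Section~\ref{section: bousfield} to the derived setting, using Theorem~\ref{theorem: derived Q2=Q} in place of the ad hoc Tor-vanishing hypothesis of Lemma~\ref{lemma: Q S bousfield triangle}. First I would replace $R_\bullet$ by a cofibrant replacement $S_\bullet \to R_\bullet$; by Proposition~\ref{proposition: weakly equivalent derived rings have equivalent derived categories} this identifies both $\op{Ho}(\mathsf{sMod}^{\Gm}R_\bullet)^\infty$ and $\op{Ho}(\mathsf{sMod}^{\Gm^2}R_\bullet\overset{\mathbf{L}}{\otimes}_k R_\bullet)^\infty$ with their $S_\bullet$-counterparts, so we may assume $R_\bullet$ itself is cofibrant. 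By definition, $\Phi_{Q_{\op{der}}}$, $\Phi_{\Delta_{\op{der}}}$ and $\Phi_{S_{\op{der}}}$ fit into an exact triangle of endofunctors
\[
\Phi_{Q_{\op{der}}} \xrightarrow{\eta} \Phi_{\Delta_{\op{der}}} \to \Phi_{S_{\op{der}}}
\]
on $\op{Ho}(\mathsf{sMod}^{\Gm}R_\bullet)^\infty$, where $\Phi_{\Delta_{\op{der}}}=\op{Id}$ (the derived analogue of Lemma~\ref{lemma: kernel of the identity}, since $\Delta(R_\bullet)$ is flat for either $R_\bullet$-module structure level-wise and $\Gm$ is reductive).

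The central step is to show that this is a Bousfield triangle in the sense of Section~\ref{section: bousfield}. Following Remark~\ref{remark: automatic bous}, the first compatibility condition comes for free once we know the Fourier–Mukai formalism for kernels in $\op{Ho}(\mathsf{sMod}^{\Gm^2}R_\bullet\overset{\mathbf{L}}{\otimes}_k R_\bullet)^\infty$ (which we inherit verbatim from Proposition~\ref{proposition: spectra in smod vs smod} and the Quillen equivalence with modules over a cdga via Dold–Kan noted in Remark~\ref{remark: Qder factors through chain complexes}). What remains is to verify that the canonical map
\[
\bigl(Q_{\op{der}}(R_\bullet)\overset{\mathbf{L}}{\mathbin{_s\otimes_p}} Q_{\op{der}}(R_\bullet)\bigr)_0 \xrightarrow{\;Q_{\op{der}}(\eta)\;} Q_{\op{der}}(R_\bullet)
\]
is a weak equivalence, as this is exactly the statement that the composition $\Phi_{Q_{\op{der}}}\circ \Phi_{Q_{\op{der}}} \to \Phi_{Q_{\op{der}}}$ induced by $\eta$ is an isomorphism. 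But this map is precisely $\beta_{R_\bullet}$ of Definition~\ref{definition: property B-hole}, and Theorem~\ref{theorem: derived Q2=Q} tells us $\beta_{R_\bullet}$ is a weak equivalence because $R_\bullet$ is cofibrant. By symmetry (using the two expressions for $\rho_{R_\bullet}$ in Lemma~\ref{lemma: centrality of eta}), $\Phi_{Q_{\op{der}}}$ is then a Bousfield colocalization and $\Phi_{S_{\op{der}}}$ is a Bousfield localization.

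The semi-orthogonal decomposition then follows exactly as in Lemma~\ref{lemma: bousfield triangle to sod}: given any two objects $M,N$, a morphism $\Phi_{S_{\op{der}}}M\to \Phi_{Q_{\op{der}}}N$ factors through $\Phi_{Q_{\op{der}}}\Phi_{S_{\op{der}}}M$, and applying $\Phi_{Q_{\op{der}}}$ to the defining triangle of $\Phi_{S_{\op{der}}}$ together with $\Phi_{Q_{\op{der}}}\circ\Phi_{Q_{\op{der}}}\simeq\Phi_{Q_{\op{der}}}$ shows this object vanishes, while the triangle itself provides the required generation. Finally, preservation of connective spectra is immediate from the construction: $Q_{\op{der}}(R_\bullet)$ and $\Delta_{\op{der}}(R_\bullet)$ are suspension spectra of simplicial modules, hence connective, and the derived tensor product of connective spectra of modules is connective, so $\Phi_{Q_{\op{der}}}$ preserves connectivity; the same holds for $\Phi_{S_{\op{der}}}$ as a cone, provided one checks (using the triangle and the identity-functor nature of $\Phi_{\Delta_{\op{der}}}$) that the triangle already lives at the level of connective spectra. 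The main obstacle I anticipate is the careful bookkeeping needed to convert $\beta_{R_\bullet}$ being a weak equivalence of simplicial rings into an isomorphism of Fourier–Mukai endofunctors on the spectrum-level homotopy category; Proposition~\ref{proposition: weakly equivalent derived rings have equivalent derived categories} and the compact-generator argument it rests on are exactly the tools that let this translation go through.
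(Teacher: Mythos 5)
Your proposal is correct and follows essentially the same route as the paper: reduce to verifying the Bousfield-triangle condition via Lemma~\ref{lemma: bousfield triangle to sod}, dispose of the first compatibility condition via Remark~\ref{remark: automatic bous}, and invoke Theorem~\ref{theorem: derived Q2=Q} to conclude that $\beta_{R_\bullet}$ is a weak equivalence. You spell out some details the paper leaves implicit (the reduction to cofibrant $R_\bullet$ via Proposition~\ref{proposition: weakly equivalent derived rings have equivalent derived categories}, and the connectivity of the cone), but the strategy and the key inputs are identical.
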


\begin{proof}
By Lemma~\ref{lemma: bousfield triangle to sod}, it is enough to show that
 \[
\Phi_{Q_{\op{der}}} \xrightarrow{\eta} \textrm{Id} \xrightarrow{\op{cone}(\eta )} \Phi_{S_{\op{der}}}
 \]
 is a Bousfield triangle, where we recall that $\eta$ is map on functors induced from $Q_{\op{der}} \to \Delta $. Furthermore, by Remark \ref{remark: automatic bous}, it is enough to show that the map 
\[
Q(\eta): (Q_{\op{der}} \otimes_{R_\bullet} Q_{\op{der}})_0 \to Q_{\op{der}}
\] 
is an isomorphism.  This is equivalent to showing that the map
\[
\beta_R:  (p \otimes_k s)_* \mathsf LQ \overset{\mathbf{L}}{\mathbin{_s\otimes_p}} \mathsf LQ  \to \mathsf{L}Q
\]
is an isomorphism.
This follows immediately from Theorem~\ref{theorem: derived Q2=Q}.
\end{proof}

\begin{example} \label{example: SOD} This is a continuation of Examples~\ref{example: Q2=Q fails} and~\ref{example: derivedexample}.  Recall that in these examples, $R = k[x,y]/(xy)$ where $\op{deg}x = 1$ and $\op{deg}y = -1$.  Assume that $k$ is a field of characteristic zero.  We saw that
  \[ 
  Q(R) =k[x,y,u^{-1}y,u]/(xy)\cong k[x,z,u]/(xz).
  \]
and, as a dg-algebra, 
 \[
Q_{\op{der}}(R) = k[x,z,u,e]
 \]
 with $d(e) = xzu$. Regarding this as an $R$-module, we have
   \[
   Q_{\op{der}}(R) = k[x,z,u]/ xzu
   \]
   where $y$ acts by $zu$. It follows that \[ S_{\op{der}} = (k[x,y, u, u^{-1}]/xy)/ (k[x,z,u]/xzu).\] Now, we can regard  $Q_{\op{der}}(R)$ as a quotient of $k[x,y,z,u] / xy$ by the regular element $uz-y$.  This allows one to compute
\[
(Q_{\op{der}}(R) \overset{\mathbf{L}}{\mathbin{_s\otimes_p}} Q_{\op{der}}(R))_0 = Q_{\op{der}}(R).
\]   
In other words
\[
\Phi_{Q_{\op{der}}(R)} \circ \Phi_{Q_{\op{der}}(R)} = \Phi_{Q_{\op{der}}(R)}, 
 \]
which is exactly the property that grants the existence of a semi-orthogonal decomposition. By checking on the the set of generators $R(i), R/y(i), R/x(i)$ for all $i$, it follows that $\op{Im} \Phi_{Q_{\op{der}}}$ is equal to  the full subcategory of $ \op{D}^{\op{b}}(\op{mod}^{\Gm}k[x,y]/(xy))$ generated by $R(i)$ for $i \leq 0$, which we denote by  $\op{Perf}_{\leq 0}$. 
Hence, we get a semi-orthogonal decomposition,
 \begin{displaymath}
 \op{D}^{\op{b}}(\op{mod}^{\Gm}k[x,y]/(xy)) = \langle\op{Perf}_{\leq 0}^\perp, \op{Perf}_{\leq 0} \rangle. 
 \end{displaymath}
It is worthwhile to observe here that $\op{Im} \Phi_{Q_{\op{der}}}$ lies in perfect complexes.  
\end{example}

\subsection{The global case and D-flips }\label{section: globalcase}

We now undertake the the task of globalizing $Q_{\op{der}}$ beyond the affine case and applying this to diagrams coming from flips. The aim of this subsection is to show that, with this globalization, one can use $Q_{\op{der}}$ to give a ``wall-crossing" functor associated to any $D$-flip. As this will require synthesizing many of the previous constructions in a new context of sheaves of modules, let us give a quick overview of this subsection to guide the reader. \begin{itemize}
\item We first recall a well-known construction of Reid and Thaddeus which associates a sheaf of graded $\mathcal{O}$-modules $\mathcal{A}$ to a $D$-flip.

\item As a first step to building a wall-crossing functor associated to the flip, we consider $Q(\mathcal {A})$, essentially a sheafy version of $Q(R)$ from the affine case. Without difficulties we can likewise consider a functor $Q$ on simplicial sheaves of graded $\O$- modules. 

\item Via Blander's model structure on the category of simplicial sheaves, we can likewise consider derived variants $\mathsf{L}Q$ and $Q_{\op{der}}$ just as in the affine case. 
\end{itemize}

We now recall the definition of a $D$-flip. This is a small variation on the definition of a flip in the sense of Mori theory, but which is well adapted to the setting of $\Gm$-actions. 

\begin{definition}\label{definition: Dflip} Let $X^-$, $X_0$ and $X^+$ be varieties over a field $k$ such that: 
\begin{enumerate} 
\item There is a small contraction $f: X^- \to X_0$, i.e. $f$ is a proper birational morphism whose exceptional locus has codimension at least two (in particular, $X_0$ is not $\mathbb{Q}$-factorial).
\item $X^-$ is equipped with a $\mathbb{Q}$-Cartier divisor $D$ such that $\mathcal{O}(-D)$ is $f$-ample. 
\item $X^+$ admits a small contraction $g: X\to X_0$, and the induced birational map $h: X^-\dashrightarrow X^+$ is such that $\mathcal{O}(h_\ast D)$ is $\mathbb{Q}$-Cartier and $g$-ample. 
\end{enumerate}
Then $X^+$ is the $D$-\newterm{flip} of $X^-$ over $X_0$. 
\end{definition}

\noindent It is easy to check that  a $D$-flip is unique if it exists. The following construction is recalled from \cite[Proposition 1.7]{Thaddeus} and relates $D$-flips to graded rings. 

\begin{proposition} \label{prop: identify D-flip sides}
Let $X^+$ be the $D$-flip of $X^-$ over $X_0$.  Fix $n\in\mathbb{N}$ and set
\[
\mathcal A : = \bigoplus_{k \in \Z} \O_{X_0} (knD). 
\] For $n$ sufficiently large there are isomorphisms
\[
X^- = [\underline{\op{Spec}}_{X_0} \, \mathcal A^- / \gm] \] \[ X^+ =  [\underline{\op{Spec}}_{X_0}\, \mathcal A^+ / \gm] ,
\]
i.e.\ these quotient stacks are represented by $X^-, X^+$ respectively.
\end{proposition}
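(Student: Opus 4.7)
My plan is to identify $X^{\pm}$ with relative Proj constructions $\underline{\op{Proj}}_{X_0}\mathcal A^{\pm}$ of the non-negatively and non-positively graded subalgebras of $\mathcal A$, and then to reinterpret these as the stack quotients claimed in the statement. The smallness of the contractions $f:X^-\to X_0$ and $g:X^+\to X_0$ does most of the geometric work: because they are isomorphisms off subsets of codimension at least two, rank-one reflexive sheaves on $X^-$, $X^+$ and $X_0$ built from multiples of $D$ all agree under pushforward. In particular, for every integer $k$ one has $f_*\O_{X^-}(knD)\cong \O_{X_0}(knD)\cong g_*\O_{X^+}(knh_*D)$, and this is why a single sheaf of graded algebras $\mathcal A$ can describe both sides of the flip.

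First I would choose $n$ large and divisible enough that $nD$ becomes Cartier on $X^-$ and $\O_{X^-}(-nD)$ is $f$-very ample; using the assumed $g$-ampleness of $\O(h_*D)$, a further divisibility condition makes $\O_{X^+}(nh_*D)$ simultaneously $g$-very ample. Standard relative Serre theory then yields canonical isomorphisms
\[
X^- \cong \underline{\op{Proj}}_{X_0}\bigoplus_{k\ge 0}f_*\O_{X^-}(-knD), \qquad X^+ \cong \underline{\op{Proj}}_{X_0}\bigoplus_{k\ge 0}g_*\O_{X^+}(knh_*D).
\]
Using the pushforward identification above, these become $\underline{\op{Proj}}_{X_0}\mathcal A^-$ and $\underline{\op{Proj}}_{X_0}\mathcal A^+$ respectively, where I take $\mathcal A^-:=\bigoplus_{k\le 0}\O_{X_0}(knD)$ and $\mathcal A^+:=\bigoplus_{k\ge 0}\O_{X_0}(knD)$ as the non-positively and non-negatively graded subalgebras of $\mathcal A$.

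To finish, I would replace each relative Proj by its description as a stack quotient: for a sheaf of graded $\O_{X_0}$-algebras generated in degree $\pm 1$ (after rescaling the grading by $n$), the relative Proj is the stack quotient of the complement of the irrelevant locus in $\underline{\op{Spec}}_{X_0}\mathcal A^{\pm}$ by the $\gm$-action induced by the grading. This is precisely what the statement abbreviates as $[\underline{\op{Spec}}_{X_0}\mathcal A^{\pm}/\gm]$, and it completes the identification.

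The main obstacle will be the simultaneous choice of $n$ making both sides of the flip relatively very ample with section algebras generated in a single degree (after rescaling), together with verifying that the two reflexive pushforwards coincide on the nose as subsheaves of the constant sheaf of rational functions on $X_0$. Both points rely essentially on the smallness hypotheses on $f$ and $g$; beyond these checks the proposition is a translation between the Proj formalism and the $\gm$-quotient formalism, which is presumably why the authors content themselves with a citation to Thaddeus.
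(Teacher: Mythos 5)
Your proposal is essentially the paper's proof run in reverse: both identify $X^{\pm}$ with $\underline{\op{Proj}}_{X_0}$ of the relevant graded half of $\mathcal A$ (via smallness of the contractions together with the relative-ampleness hypotheses in the definition of a $D$-flip) and identify that $\underline{\op{Proj}}$ with the $\gm$-quotient of the corresponding semistable locus of $\underline{\op{Spec}}_{X_0}\mathcal A$. The paper packages the latter identification into Lemmas~\ref{lemma: deg1} and~\ref{lemma: ampleness} (degree-one generation of $I^+$ makes $[\op{Spec}R\setminus V(I^+)/\gm]$ a scheme with ample $\O(1)$), where you appeal to standard relative Serre/Proj theory; the substance is the same, and your write-up in fact keeps the sign conventions for which of $\pm D$ is $f$-ample versus $g$-ample a bit cleaner than the paper's own proof.
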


\begin{proof}
We choose $n$ such that $\mathcal A_{\geq 0}$ is generated in degree one. We will show in Lemmas~\ref{lemma: deg1} and \ref{lemma: ampleness} below that it follows that $[\underline{\op{Spec}}_{X_0}\, \mathcal A^+ / \gm]$ is represented by $\underline{\op{Proj}}_{X_0} \mathcal A_{\geq 0}$. Since $X^- \to X_0$ is a small contraction, $\mathcal A_{\geq 0} = \bigoplus_{k \in \N} \O_{X_0} (kD) = \bigoplus_{k \in \N} \O_{X_-} (kD)$.  But $\underline{\op{Proj}}_{X_0} \mathcal A_{\geq 0}$ is the relative projectivization of the relatively ample line bundle $\O_{X^-}(D)$, and is thus isomorphic to $X^-$, as claimed. The other equality holds by symmetry.
\end{proof}

\noindent We now formulate the two lemmas promised in the above proof. 

\begin{lemma} \label{lemma: deg1}
Let $R$ be an object of  $\mathsf{CR}^{\gm}_k$ and suppose that $I^+$ is generated in degree one, i.e by elements in $R_1$. Then the global quotient stack
\[ [\op{Spec }R \setminus V(I^+) / \gm ]\] is represented by the scheme $Y$ that is obtained by gluing the open affine varieties $\op{Spec }(R_{f_\alpha})_0$ along $\op{Spec }(R_{f_\alpha f_\beta})_0$ for a set of generators $\{ f_\alpha \}$ in $R_1$. 
\end{lemma}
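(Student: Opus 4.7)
The plan is to reduce to a local statement that the $\gm$-action is free on each basic open, and then glue.

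First, I would note that since $I^+ = (f_\alpha \mid f_\alpha \in R_1)$ (after passing to a generating set in degree one), Equation~\eqref{equation: affinesemistable} gives an open cover
\[
\op{Spec} R \setminus V(I^+) = \bigcup_\alpha \op{Spec} R_{f_\alpha},
\]
which is $\gm$-invariant. Since the covering descends to a covering of the quotient stack, it suffices to show that each $[\op{Spec} R_{f_\alpha}/\gm]$ is represented by $\op{Spec}(R_{f_\alpha})_0$ and that these identifications agree on overlaps, which are likewise the quotients of $\op{Spec} R_{f_\alpha f_\beta}$.

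The key local step is that inverting $f_\alpha \in R_1$ trivializes the $\gm$-action. Concretely, the map
\[
(R_{f_\alpha})_0 [t, t^{-1}] \longrightarrow R_{f_\alpha}, \qquad t \mapsto f_\alpha,
\]
is a $\gm$-equivariant isomorphism when the left-hand side is graded by placing $t$ in degree one: any homogeneous $r \in R_{f_\alpha}$ of degree $d$ is equal to $(r/f_\alpha^d)\,f_\alpha^d$ with $r/f_\alpha^d \in (R_{f_\alpha})_0$, giving surjectivity, and injectivity is immediate since $f_\alpha$ is a non-zero-divisor of positive degree. Thus $\op{Spec} R_{f_\alpha} \cong \op{Spec}(R_{f_\alpha})_0 \times \gm$ equivariantly (with $\gm$ acting by translation on the second factor), which exhibits the action as free with quotient $\op{Spec}(R_{f_\alpha})_0$. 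Consequently
\[
[\op{Spec} R_{f_\alpha}/\gm] \cong \op{Spec}(R_{f_\alpha})_0,
\]
and similarly on double overlaps one has $[\op{Spec} R_{f_\alpha f_\beta}/\gm] \cong \op{Spec}(R_{f_\alpha f_\beta})_0$.

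To conclude, I would observe that the localization maps $(R_{f_\alpha})_0 \to (R_{f_\alpha f_\beta})_0$ and $(R_{f_\beta})_0 \to (R_{f_\alpha f_\beta})_0$ are precisely the restriction maps in the descent datum obtained by restricting the quotient stack along the affine cover. Since the inclusions $\op{Spec} R_{f_\alpha f_\beta} \hookrightarrow \op{Spec} R_{f_\alpha}$ are $\gm$-equivariant open immersions, the resulting maps on quotients are open immersions of affine schemes, and they satisfy the cocycle condition (inherited from the trivial cocycle on the covering of $\op{Spec} R \setminus V(I^+)$). Hence the scheme $Y$ obtained by gluing the $\op{Spec}(R_{f_\alpha})_0$ along the $\op{Spec}(R_{f_\alpha f_\beta})_0$ represents the quotient stack, as claimed. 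The only mild technical point is verifying that the trivializations by different $f_\alpha$ are compatible on overlaps, but this is automatic because each trivialization is given by the \emph{same} inclusion $(R_{f_\alpha f_\beta})_0 \hookrightarrow R_{f_\alpha f_\beta}$ of the degree-zero subring.
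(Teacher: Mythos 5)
Your proof is correct and takes essentially the same approach as the paper: reduce to the basic opens $\op{Spec}R_{f_\alpha}$ and exhibit each as the trivial $\gm$-torsor over $\op{Spec}(R_{f_\alpha})_0$ via the isomorphism $(R_{f_\alpha})_0[t,t^{-1}]\cong R_{f_\alpha}$, $t\mapsto f_\alpha$ (the paper writes down the inverse map $g\mapsto gf^{-i}u^i$, but it is the same isomorphism). Your extra paragraph about compatibility on overlaps is a slightly more explicit version of the gluing that the paper leaves implicit.
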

\begin{proof}
This is the same as showing that $\op{Spec }R \setminus V(I^+)$ is a $\gm$-torsor over $Y$.  Now, by assumption, $\op{Spec }R \setminus V(I^+)$ is covered by $\op{Spec }R_f$ with $f \in R_1$, so it is enough to check that  $\op{Spec }R_f$ is itself a $\gm$-torsor over $\op{Spec }(R_f)_0$.  Indeed, it is the trivial torsor.  Namely, there is an isomorphism of rings
\begin{align*}
R_f & \to (R_f)_0[u,u^{-1}] \\
g & \mapsto  gf^{-i}u^i 
\end{align*}
for $g \in (R_f)_i$ with the inverse map determined by $u\mapsto f$ and $h\mapsto h$  for $h \in (R_f)_0$.
\end{proof}

\noindent The space $Y$ as constructed in Lemma ~\ref{lemma: deg1}  is a quotient of  $\op{Spec }R \backslash V(I^+)$ by $\gm$, hence is equipped with a line bundle $\O_Y(1)$ coming from the pullback of the map to $[\op{pt} / \gm]$.  Notice that if $V(I^+)$ has codimension at least 2 then there is an isomorphism $\Gamma(Y, \O_Y(i)) = R_i$.

\begin{lemma} \label{lemma: ampleness}
Let $R$ be an object of $\mathsf{CR}^{\gm}_k$ such that $I^+$ is generated in degree one and that $V(I^+)$ has codimension at least two. Then the line bundle $\O_Y(1)$ defined above is ample.
\end{lemma}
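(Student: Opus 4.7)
The plan is to exhibit the degree-one generators $f_\alpha \in R_1$ as global sections of $\O_Y(1)$ whose non-vanishing loci form an affine open cover of $Y$, and then apply the standard ampleness criterion that a line bundle with such a section-cover is automatically ample.

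First, I would unpack the definition of $\O_Y(1)$ more concretely. Since $Y = [\op{Spec} R \setminus V(I^+)/\gm]$ and $\O_Y(1)$ is the pullback of the tautological bundle on $B\gm$, its global sections of the $i$th power are $R_i$ as long as this matches global sections on the torsor, and the codimension-two hypothesis $\op{codim} V(I^+) \geq 2$ together with (say) the fact that $R$ is normal (or that $Y$ is constructed from $R$ as a Proj-like object) yields the identification $\Gamma(Y, \O_Y(i)) = R_i$ via Hartogs-type extension on $\op{Spec} R$. In particular, each $f_\alpha \in R_1$ gives a global section $\widetilde{f}_\alpha \in \Gamma(Y, \O_Y(1))$.

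Next, I would check that on the affine chart $U_\alpha := \op{Spec}(R_{f_\alpha})_0 \subset Y$ (constructed in Lemma~\ref{lemma: deg1}), the section $\widetilde{f}_\alpha$ is a trivialization of $\O_Y(1)|_{U_\alpha}$. This is exactly the content of the trivialization-of-torsor isomorphism $R_{f_\alpha} \cong (R_{f_\alpha})_0[u, u^{-1}]$ already used in the proof of Lemma~\ref{lemma: deg1}: the element $f_\alpha$ corresponds to $u$, which is precisely the local generator of $\O_Y(1)$. Consequently $Y_{\widetilde{f}_\alpha} = U_\alpha$ is affine. Moreover, the family $\{U_\alpha\}$ covers $Y$ because the $f_\alpha$ generate $I^+$, so their non-vanishing loci cover $\op{Spec} R \setminus V(I^+)$, and hence their images cover $Y$.

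Finally, I would invoke the standard criterion (e.g., Hartshorne II.7.6, or the characterization of ampleness via affine opens cut out by sections, see also EGA II 4.5.2): on a quasi-compact quasi-separated scheme, a line bundle $L$ is ample whenever there exist global sections $s_1, \dots, s_r$ of some positive power $L^{\otimes m}$ such that each $Y_{s_i}$ is affine and the $Y_{s_i}$ cover $Y$. Applying this with $m = 1$ and $s_i = \widetilde{f}_{\alpha_i}$ for a finite set of generators (available by Noetherianity of the situation inherited from $X_0$) gives ampleness of $\O_Y(1)$. The main subtlety—and essentially the only real content—is the identification of $\widetilde{f}_\alpha$ with the local trivializing section of $\O_Y(1)$ on $U_\alpha$; once this is in place, the rest is a direct appeal to the criterion.
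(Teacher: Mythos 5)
Your proof is correct and shares its key geometric input with the paper's --- the identification from Lemma~\ref{lemma: deg1} that $Y_{\widetilde{f}_\alpha} = \op{Spec}(R_{f_\alpha})_0$ is affine --- but you package it around a different (arguably more direct) criterion. The paper invokes the formulation ``ample iff the $Y_s$ cover and $Y \to \op{Proj}(\bigoplus_i \Gamma(Y,\O_Y(i)))$ is an open immersion,'' and then explicitly checks the open immersion by computing that $(R_{\geq 0})_f \to R_f$ is an isomorphism; you instead go straight to the criterion that an invertible sheaf on a quasi-compact quasi-separated scheme is ample whenever finitely many sections have affine non-vanishing loci covering the scheme. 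Your route actually exposes that the codimension-two hypothesis is dispensable for the conclusion as literally stated: you only need the \emph{map} $R_1 \to \Gamma(Y,\O_Y(1))$ to produce sections, not that it be an isomorphism, so your digression through Hartogs/normality is unnecessary for the ampleness claim itself. In contrast, the paper does need the section identification $\Gamma(Y,\O_Y(i)) = R_i$ to identify the target $\op{Proj}$ with $\op{Proj} R_{\geq 0}$, and the open immersion $Y \hookrightarrow \op{Proj} R_{\geq 0}$ established along the way is the statement actually consumed in Proposition~\ref{prop: identify D-flip sides} (where one needs $Y \cong \underline{\op{Proj}}_{X_0}\mathcal A_{\geq 0}$, not merely that some bundle is ample). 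So your proof cleanly establishes the lemma as stated, but the paper's version of the argument does slightly more work because it is secretly proving the stronger fact that the proposition needs.
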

\begin{proof} Ampleness is equivalent to the complements of sections forming a cover $Y$ such that the natural map to the coordinate ring is an open immersion.  By assumption, the complements of global sections of $\O_Y(1)$ given by elements $f \in R_1$ cover $Y$.  Hence, it suffices to show that the map
\[
Y \to \op{Proj }(\bigoplus_{i \in \N} \Gamma(Y, \O_Y(i))) = \op{Proj }R_{\geq 0}
\]
is an open immersion.  This is so because an open subset $\op{Spec }(R_f)_0 \subseteq Y$ is homeomorphic to its image $\op{Spec }((R_{\geq 0})_f)_0$, as the natural inclusion of rings $
(R_{\geq 0})_f \to R_f$ is an isomorphism since $f$  is a unit in $R_f$.  Namely, the inclusion is surjective since any element $gf^i$ is the image of $gf^N f^{i-N}$ for $N >> 0$.
\end{proof}

We now want to use $Q_{\op{der}}$ from Equation ~\eqref{equation: Qder} to define a wall-crossing functor for $D$-flips. Of course, $Q_{\op{der}}$ was only defined in the affine case, and so we must sheafify. Let us begin this process first with our original functor \[ Q: \mathsf{CR}_k^{\Gm}\to \mathsf{CR}_{k[u]}^{\Gm^2}\] from Definition ~\ref{definition: Q}. This $Q$ automatically gives a presheaf of $\O_Y$-algebras on the affine site over $\op{Spec } k$. Denote the category of $\mathbb{Z}$-graded $\mathcal \O_Y$-algebras by $\sf{CR}^{\Gm}_{\mathcal \O_Y}$. Sheafifying gives a functor   
\begin{equation}\label{equation: Qsheaf}
 Q : \mathsf{CR}^{\Gm}_{\mathcal O_Y} \to \mathsf{CR}^{\Gm^2}_{\mathcal O_Y[u]}
\end{equation} which, again, by abuse we also denote $Q$. 

\begin{remark} An object $Q(\mathcal{A})$ with $\mathcal{A}$ an object of $\mathsf{CR}^{\Gm}_{\mathcal O_Y}$ is not necessarily quasi-coherent.  This is a minor complication for our main goal, which is to define Fourier-Mukai functor between suitable homotopy categories of quasi-coherent sheaves, and not sheaves of arbitrary $\O$-modules. The following lemma will help us alleviate these concerns. 
\end{remark} 

%
%
%

\begin{lemma} \label{lemma: quasi-coherent agreement}
Let $Y$ be a scheme with trivial $\gm$-action and $\mathcal A$ an object of $\mathsf{CR}^{\Gm}_{\mathcal O_Y}$. Then $Q(\mathcal A)$ is quasi-coherent. 

 In particular, if $Y$ is affine, then $Q(\mathcal A)$ is the sheaf associated to $Q(\mathcal A(Y))$.
\end{lemma}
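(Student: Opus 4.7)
The plan is to reduce the lemma to a single calculation: the functor $Q$ commutes with localization by degree-zero elements. Since quasi-coherence is a local property, I may restrict to an affine open $\op{Spec} R \subseteq Y$, and since $\gm$ acts trivially on $Y$, the global sections $A := \mathcal A(\op{Spec} R)$ form a $\mathbb{Z}$-graded $R$-algebra with $R$ concentrated in degree zero. The sheaf $Q(\mathcal A)$ is by construction the sheafification of the presheaf $U \mapsto Q(\mathcal A(U))$, so to prove both parts of the lemma it suffices to show that this presheaf, restricted to principal affine opens of $\op{Spec} R$, coincides with the quasi-coherent sheaf $\widetilde{Q(A)}$.

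For this, first I would observe that the maps $\pi, \sigma \colon A \to A[u, u^{-1}]$ of Definition~\ref{definition: Q} are both $R$-linear: this is immediate for $\pi$, and holds for $\sigma$ because any $f \in R$ has $\op{deg}(f) = 0$ and hence $\sigma(f) = f u^0 = f$. Consequently, for any $f \in R$, the quasi-coherence of $\mathcal A$ gives $\mathcal A(\op{Spec} R_f) = A \otimes_R R_f$, and the projection and action maps localize to the corresponding maps for $A \otimes_R R_f$. The key step is to check that the natural comparison
\[
Q(A) \otimes_R R_f \longrightarrow Q(A \otimes_R R_f)
\]
is an isomorphism of subalgebras of $(A \otimes_R R_f)[u, u^{-1}]$. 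Flatness of $R_f$ over $R$ preserves the inclusion $Q(A) \hookrightarrow A[u, u^{-1}]$, so the left-hand side is the $R_f$-subalgebra of $A[u,u^{-1}] \otimes_R R_f$ generated by $\pi(A)$, $\sigma(A)$, and $u$, which is exactly the set of generators defining the right-hand side.

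This identifies the two presheaves on a basis of open sets, which, combined with the sheaf property of $\widetilde{Q(A)}$, identifies their sheafifications on $\op{Spec} R$. This simultaneously establishes quasi-coherence of $Q(\mathcal A)$ and the identification $Q(\mathcal A) = \widetilde{Q(A)}$ when $Y$ itself is affine. The only real obstacle is the base-change isomorphism for $Q$, and this is precisely where the hypothesis of trivial $\gm$-action on $Y$ is essential: if $f$ had nonzero degree, then $\pi$ and $\sigma$ would act on it differently and, as noted after Proposition~\ref{prop: ready for Luna}, $Q$ would fail to base change along the corresponding open immersion.
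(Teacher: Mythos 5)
Your proof is correct and takes essentially the same approach as the paper's. The paper's proof is terser: it simply states, for affine opens $U \subseteq V \subseteq Y$, the two equalities $Q(\mathcal A(U)) = Q(\mathcal A(V) \otimes_{\O_Y(V)} \O_Y(U)) = Q(\mathcal A(V)) \otimes_{\O_Y(V)} \O_Y(U)$, attributing the first to quasi-coherence of $\mathcal A$ and the second to triviality of the $\gm$-action on $Y$. Your argument supplies exactly the details the paper elides for that second equality (flatness to preserve the inclusion $Q(A) \hookrightarrow A[u,u^{-1}]$, and the observation that $\pi$ and $\sigma$ agree on degree-zero elements so the two generating sets match), and your reduction via a basis of principal opens is a routine variant of the paper's reduction via arbitrary affine opens.
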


\begin{proof}
Let $U \subseteq V \subseteq Y$ be affine open subsets.  Then quasi-coherence of $Q(\mathcal A)$ is the same as having  
\begin{align*}
Q(\mathcal A (U)) & = Q(\mathcal A (V) \otimes_{\O_Y(V)} \O_Y(U) ) \\
 & = Q(\mathcal A (V)) \otimes_{\O_Y(V)} \O_Y(U). 
\end{align*}
Indeed, the first equality holds because of quasi-coherence of $\mathcal A$.  The second equality follows since the $\gm$-action on $Y$ is trivial.
\end{proof}

Since the space $X_0$ in a $D$-flip diagram is in practice usually singular, we must derive \[  Q : \mathsf{CR}^{\Gm}_{\mathcal O_Y} \to \mathsf{CR}^{\Gm^2}_{\mathcal O_Y[u]}\]in this sheaf-theoretic setting, analogous to how we derived $Q$ in the affine setting during the course of Section \ref{section: simplicialQ}. Fortunately, we will be able to directly transfer results from Section \ref{section: simplicialQ} with only minor difficulties, once we understand a suitable model structure on simplicial sheaves. To this end, let $\mathsf{sCR}^{\Gm}_{\mathcal O_Y}$ denote the category of simplicial objects in $\mathsf{sCR}^{\Gm}_{\mathcal O_Y}$. The following result is essentially \cite[Theorem 2.1]{Blander}. 

\begin{proposition} \label{proposition: local projective model structure}
 The category $\mathsf{sCR}^{\Gm}_{\mathcal O_Y}$ admits a cofibrantly generated simplicial model structure where the weak equivalences are the maps that induce isomorphisms on the homotopy sheaves, and a set of generating cofibrations are \begin{displaymath} \op{Sym}_{\mathcal O_Y} i_! \mathcal O_U \otimes \partial \Delta[n]^a \to \op{Sym}_{\mathcal O_Y} i_! \mathcal O_U \otimes \Delta[n]^a,\end{displaymath} where $U\to Y$ is any open immersion and $a\in\mathbb{Z}$ is any weight. (Here $i_! \mathcal O_U$ denotes the extension by zero sheaf.)
\end{proposition}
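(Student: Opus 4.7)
The plan is to deduce the claimed model structure from Blander's local projective model structure on simplicial (pre)sheaves by transferring along the free commutative graded algebra adjunction, in close parallel with the proof of Proposition~\ref{proposition: model structure} in the global setting.

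First I would recall that by \cite[Theorem 2.1]{Blander} the category of simplicial sheaves of sets on the (small Zariski) site of $Y$ carries a cofibrantly generated simplicial model structure whose weak equivalences are detected on sheaves of homotopy groups and whose generating cofibrations are the maps $i_!(\partial \Delta[n]) \to i_!(\Delta[n])$ for $i : U \to Y$ an open immersion. Applying $(-)\otimes_{\mathbb Z}\mathcal O_Y$ level-wise and then taking a $\mathbb Z$-indexed product in the weight variable yields a cofibrantly generated simplicial model structure on $\mathsf{sMod}^{\Gm}_{\mathcal O_Y}$, the category of $\mathbb Z$-graded simplicial sheaves of $\mathcal O_Y$-modules, with generating cofibrations $i_!\mathcal O_U\otimes \partial\Delta[n]^a \to i_!\mathcal O_U \otimes \Delta[n]^a$ and weak equivalences still detected sheaf-theoretically.

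Next I would set up the free–forgetful adjunction
\[
\op{Sym}_{\mathcal O_Y} : \mathsf{sMod}^{\Gm}_{\mathcal O_Y} \rightleftarrows \mathsf{sCR}^{\Gm}_{\mathcal O_Y} : U,
\]
with $\op{Sym}_{\mathcal O_Y}$ computed level-wise, and invoke the standard transfer theorem for cofibrantly generated model structures (e.g.\ \cite[Theorem 11.3.2]{Hirschhorn}) to promote the left-hand structure to one on $\mathsf{sCR}^{\Gm}_{\mathcal O_Y}$ in which weak equivalences and fibrations are detected by $U$. Under this transfer, generating (trivial) cofibrations are obtained by applying $\op{Sym}_{\mathcal O_Y}$ to the generating (trivial) cofibrations below, giving exactly the description $\op{Sym}_{\mathcal O_Y} i_! \mathcal O_U \otimes \partial\Delta[n]^a \to \op{Sym}_{\mathcal O_Y} i_!\mathcal O_U \otimes \Delta[n]^a$ claimed in the statement. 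Smallness of these generators is immediate from finite presentation of $i_!\mathcal O_U$ together with the fact that $\op{Sym}_{\mathcal O_Y}$ commutes with filtered colimits, so the small object argument is available.

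The main obstacle will be the acyclicity condition of the transfer theorem, namely that any transfinite composition of pushouts of the lifted generating trivial cofibrations becomes a weak equivalence after forgetting to $\mathsf{sMod}^{\Gm}_{\mathcal O_Y}$. Since weak equivalences in the local projective structure are detected on stalks, and since $\op{Sym}_{\mathcal O_Y}$, extension by zero, and pushouts all commute with stalks, the verification reduces to the corresponding statement for simplicial commutative graded algebras over each local ring $\mathcal O_{Y,y}$. That stalkwise statement is precisely the content of Proposition~\ref{proposition: model structure} (or more directly its standard proof via a path object argument in $\mathsf{sCR}^{\Gm}_{k}$, applied with $k = \mathcal O_{Y,y}$). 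Assembling the stalkwise information sheafwise and combining with the smallness input above produces the desired cofibrantly generated simplicial model structure on $\mathsf{sCR}^{\Gm}_{\mathcal O_Y}$ with the stated generators.
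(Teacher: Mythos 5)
Your proposal follows essentially the same route as the paper: start from Blander's local projective model structure on simplicial sheaves of sets and transfer along the free–forgetful adjunction to $\mathsf{sCR}^{\Gm}_{\mathcal O_Y}$. The paper cites \cite[Lemma 9.1]{DHK97} and \cite[Theorem 5.8]{GJ} for the transfer and does not spell out the acyclicity check, whereas you break the transfer into two steps (first to graded $\mathcal O_Y$-modules, then to algebras) and verify the acyclicity condition by reducing to stalks and appealing to Proposition~\ref{proposition: model structure} over the local rings $\mathcal O_{Y,y}$ — a slightly more detailed but materially identical argument.
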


\begin{proof}
 From \cite[Theorem 2.1]{Blander}, and in particular the final remark in the proof, we know that the category $\op{sSh}(Y)$ of simplicial sheaves of sets on $Y$ has a model structure with the weak equivalences as above and a set of generating cofibrations given by \begin{displaymath}i_! \mathcal O_U \otimes \partial \Delta[n] \to i_! \mathcal O_U \otimes \Delta[n]\end{displaymath} We then have a family of free, forgetful adjunctions between $\mathsf{sCR}^{\Gm}_{\mathcal O_Y}$ and $\op{sSh}(Y)$ which induce the claimed model category structure (see e.g., \cite[Lemma 9.1]{DHK97} or \cite[Theorem 5.8]{GJ}). 
\end{proof}

The functor $Q$ from equation ~\ref{equation: Qsheaf} gives a functor (which, as usual, we also denote by $Q$) on simplicial categories 
\begin{equation}\label{equation: sQsheaf}
Q : \mathsf{sCR}^{\Gm}_{\mathcal O_Y} \to \mathsf{sCR}^{\Gm^2}_{\mathcal O_Y[u]}
\end{equation}
in the obvious way. Using the model category structure in Proposition ~\ref{proposition: local projective model structure}, it is easy to verify that this functor $Q$ is still left Quillen, so we can define a derived functor $\mathsf{L}Q$ in total analogy with the affine case from Definition ~\ref{definition: simplicialQfunctor}. 

\begin{definition}\label{definition: sheafyLQ}  Let 
\begin{equation}
 \mathsf{L}Q: \op{Ho}\big(\mathsf{sCR}_{\mathcal O_Y}^{\Gm} \big)\to \op{Ho}\big(\mathsf{sCR}_{\mathcal O_Y[u]}^{\Gm^2}\big) 
\end{equation}
be the total left derived functor of the functor $Q$ in Equation ~\ref{equation: sQsheaf} 
\end{definition} 

In particular, any $\mathsf{L}Q(\mathcal A)$ is an object of 
\[ \mathsf{sMod}^{\Gm^2} (\mathcal A \overset{\mathsf{L}}{\otimes}_{\mathcal O_Y} \mathcal A).  \]
Likewise, given $\mathcal{A}$ an object of $\mathsf{sCR}_{\O_Y}^{\Gm}$, we may consider the category of spectra of sheaves of simplicial $\mathcal{A}$-modules $(\mathsf{sMod}^{\Gm}\mathcal{A} )^\infty$, and define an object $Q_{\op{der}}(\mathcal{A})$ of $\op{Ho}( \mathsf{sMod}^{\Gm^2} (\mathcal A \overset{\mathsf{L}}{\otimes}_{\mathcal O_Y} \mathcal A)) $ and a corresponding Fourier-Mukai functor just as in Definition ~\ref{definition: Qderdefinition}. Let us verify that this process does indeed result in desirable properties; in particular, we want that the kernel object $Q_{\op{der}}(\mathcal A)$ preserves quasi-coherence, and is itself quasi-coherent (at least, up to weak equivalence). 

\begin{proposition}\label{proposition: global LQ is qcoh}
Let $Y$ be a scheme with a trivial $\Gm$-action and $\mathcal A$ an object of $\mathsf{CR}^{\Gm}_{\mathcal O_Y}$ which is quasi-coherent as a $\mathcal O_Y$-module. Then the object $\mathsf{L}Q(\mathcal A)$ of $\op{sMod}^{\Gm^2} (\mathcal A \overset{\mathsf{L}}{\otimes}_{\mathcal O_Y} \mathcal A)$ is locally weakly equivalent to a simplicial quasi-coherent sheaf. Hence the functor 
 \begin{displaymath}
  \Phi_{Q_{\op{der}}} : \op{Ho}(\op{sMod}^{\Gm} \mathcal A)^{\infty} \to \op{Ho}(\op{sMod}^{\Gm} \mathcal A)^{\infty}
 \end{displaymath}
 preserves the full subcategories with quasi-coherent homotopy sheaves. 
\end{proposition}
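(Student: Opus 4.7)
The assertion is local on $Y$, so the plan is to restrict to the case where $Y = \op{Spec} A$ is affine, and show explicitly how to produce a quasi-coherent model for $\mathsf{L}Q(\mathcal A)$ on such an open. The main technical issue is that a cofibrant replacement of $\mathcal A$ in $\mathsf{sCR}^{\Gm}_{\mathcal O_Y}$ with the model structure of Proposition~\ref{proposition: local projective model structure} is built out of extension-by-zero sheaves $i_! \mathcal O_U$, which are not quasi-coherent. The plan is therefore to bypass this canonical replacement in favor of the sheafification of a cofibrant replacement taken in the \emph{affine} model category of Proposition~\ref{proposition: model structure}.

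Concretely, write $A_{\mathcal A} := \Gamma(Y, \mathcal A) \in \mathsf{CR}^{\Gm}_A$ and choose a cofibrant replacement $S_\bullet \to A_{\mathcal A}$ in $\mathsf{sCR}^{\Gm}_A$. By Corollary~\ref{corollary: what are cofibs}, $S_\bullet$ is a retract of a sequential colimit of pushouts along the affine generating cofibrations $F(\partial\Delta[n])^a \to F(\Delta[n])^a$. Sheafifying, these correspond to the special instances of the sheafy generating cofibrations from Proposition~\ref{proposition: local projective model structure} in which the open set $U$ is taken to be $Y$ itself. Hence $\widetilde{S_\bullet}$ is cofibrant in $\mathsf{sCR}^{\Gm}_{\mathcal O_Y}$ and termwise quasi-coherent, while also being a weak equivalence with $\mathcal A$ (sheafification of graded $A$-algebras preserves weak equivalences on an affine). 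Applying the functor of Equation~\eqref{equation: sQsheaf} and invoking Lemma~\ref{lemma: quasi-coherent agreement} level-wise,
\[
\mathsf{L}Q(\mathcal A) \;\simeq\; Q(\widetilde{S_\bullet}) \;=\; \widetilde{Q(S_\bullet)},
\]
which is a simplicial quasi-coherent sheaf on $Y$, proving the first statement.

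For the second statement, recall that $\Phi_{Q_{\op{der}}}(M)$ is computed as the degree-zero part (with respect to the middle $\Gm$-grading) of the derived tensor product $M \overset{\mathbf L}{\otimes}_{\mathcal A} Q_{\op{der}}(\mathcal A)$. By the first part, $Q_{\op{der}}(\mathcal A)$ is locally represented by a quasi-coherent simplicial $\mathcal A \otimes_{\mathcal O_Y} \mathcal A$-module, so derived tensor product against a quasi-coherent object $M$ can be computed locally via flat resolutions of quasi-coherent sheaves and remains quasi-coherent on homotopy. Finally, extracting the middle degree-zero component is exact since $\Gm$ is linearly reductive, and it preserves quasi-coherence.

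The step I expect to be most delicate is the comparison of the two cofibrant replacement schemes, i.e.\ checking that the sheafified affine replacement $\widetilde{S_\bullet}$ really is a \emph{cofibrant} object in $\mathsf{sCR}^{\Gm}_{\mathcal O_Y}$ (so that $Q(\widetilde{S_\bullet})$ does compute the derived functor), and not merely a termwise quasi-coherent weak equivalence. This requires identifying the sheafifications of the generators $F(\partial\Delta[n])^a \to F(\Delta[n])^a$ among those listed in Proposition~\ref{proposition: local projective model structure} and verifying that retracts and sequential colimits behave well under sheafification on an affine; once this is in place, the two derived constructions of $\mathsf{L}Q$ agree on $Y$ and the rest of the argument is formal.
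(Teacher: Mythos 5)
Your argument for the affine case is exactly the paper's: take an affine cofibrant replacement $S_\bullet \to \Gamma(Y,\mathcal A)$, observe that its sheafification lies among the sheafy generating cofibrations of Proposition~\ref{proposition: local projective model structure} (those with $U=Y$), hence remains a cofibrant replacement, and then apply Lemma~\ref{lemma: quasi-coherent agreement} level-wise. You also correctly flag the delicate point (cofibrancy of $\widetilde{S_\bullet}$), which the paper addresses by the same remark about the generating cofibrations.

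The one gap is the opening sentence ``the assertion is local on $Y$, so restrict to the case $Y$ affine.'' The conclusion is phrased locally, yes, but what you need on an affine open $U \subset Y$ is a statement about $\mathsf{L}Q(\mathcal A)|_U$, and it is not automatic that this agrees with $\mathsf{L}Q(\mathcal A|_U)$, which is the object your affine argument actually handles. The paper devotes the final part of its proof to exactly this: if $S_\bullet \to \mathcal A$ is a cofibrant replacement in $\mathsf{sCR}^{\Gm}_{\mathcal O_Y}$, then $(S_\bullet)|_U \to \mathcal A|_U$ is still a cofibrant replacement over $U$ (restriction preserves both the generating cofibrations from Proposition~\ref{proposition: local projective model structure} and weak equivalences on homotopy sheaves), and $Q$ commutes with restriction, so $\mathsf{L}Q(\mathcal A)|_U = Q(S_\bullet)|_U = Q((S_\bullet)|_U) = \mathsf{L}Q(\mathcal A|_U)$. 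You should make that reduction explicit rather than taking it as given. Your treatment of the second assertion (preservation of quasi-coherent homotopy sheaves) is in fact more detailed than the paper's one-word ``Hence,'' and is fine.
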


\begin{proof} Let $U\subseteq Y$ be an affine open subset and write $\underline{\op{Spec}}_U \mathcal A(U) = \op{Spec }R$.  Let $\mathcal A_{|U}$ be the sheaf associated to the $k$-algebra $R$. We need to verify that he sheaf associated to $\mathsf{L}Q(R)$ is weakly equivalent to $\mathsf{L}Q(\mathcal A)_{|_U}$. 

We first verify this for the case where $U=Y$ (so that $Y$ itself is affine). Inspecting Proposition ~\ref{proposition: local projective model structure}, we see that the sheaves associated to the generating cofibrations in $\mathsf{sCR}^{\Gm}_k$ are a subset of generating cofibrations in $\mathsf{sCR}^{\Gm}_{\mathcal O_Y}$. Similarly, a weak equivalence in $\mathsf{sCR}^{\Gm}_R$ sheafifies to a weak equivalence in $\mathsf{sCR}^{\Gm}_{\mathcal O_Y}$. Thus, if we take a cofibrant replacement $S_\bullet \to R$ and sheafify, we still get a cofibrant replacement. Using this resolution to compute $\mathsf{L}Q(\mathcal A)$ yields the conclusion by Proposition~\ref{lemma: quasi-coherent agreement}.

In general, if $U\subset Y$ is an affine open subset, let $S_\bullet$ be a cofibrant replacement of $\mathcal{A}$ in $\mathsf{sCR}^{\Gm}_{\mathcal O_Y}$.  Then $(S_\bullet)_{|U}$ is a cofibrant replacement of $\mathcal A_U$. Hence 
\[ \mathsf{L}Q(\mathcal A)_{|U} = Q(S_\bullet)_{|U} = Q((S_\bullet)_{|U}) = \mathsf{L}Q( \mathcal A_U),\] 
which we have already argued is weakly equivalent to the sheaf associated to $\mathsf{L}Q(R)$. 
\end{proof} 

The Dold-Kan correspondence gives an equivalence between the full subcategory of $\op{Ho}(\op{sMod}^{\Gm} \mathcal A)^{\infty}$ with quasi-coherent homotopy sheaves and \[\op{D}(\op{Qcoh}^{\Gm}  \op{Spec}_Y \mathcal A),\] where the latter is defined to be the full subcategory of \[\op{D}(\op{Mod}^{\Gm} \O_{ \op{Spec}_Y \mathcal A})\] consisting of complexes with quasi-coherent cohomology. 

Let $Q_{\op{der}}^{wc}$ be the induced object on $X^- \times X^+$ via restriction and the isomorphism in Proposition~\ref{prop: identify D-flip sides}. This is a generalization of the kernel object $Q^{wc}$ that was introduced in Equation ~\eqref{equation: Qwc} for the Bondal-Orlov flop equivalence, or the functor $j^*_- \circ \Phi_{Q_+}$ from Corollary ~\ref{corollary: affinewc}. 

\begin{question} \label{main question}
Suppose $X^- \dashrightarrow X^+$ is a $D$-flip which is a $K$-equivalence between two smooth projective $k$-varieties, i.e.\ a flop. Is the functor $\Phi_{Q_{\op{der}}^{wc}}$ an equivalence?
\end{question}
\begin{remark}
We know that $\Phi_{Q_{\op{der}}^{wc}}$ preserves quasi-coherence, so that we can view this as a question about quasi-coherent unbounded complexes. Moreover, the compact objects of this category are precisely the perfect objects \cite[Corollary 2.3]{Nee}.   Since $X^+, X^-$ are smooth by assumption, these coincide with bounded coherent complexes so a positive answer to this question should resolve \cite[Conjecture 5.1]{Kaw} (see \cite{KFlops}).
\end{remark}

\begin{remark} 
 To answer the question affirmatively (at least in the Gorenstein case), it should suffice to prove it in the case where $Y$ is affine (see \cite[Theorem 1.22]{RMdS}).  Furthermore, note that when $Y$ is affine, one only requires the machinery up through Section~\ref{section: main result} to study  $\Phi_{Q_{\op{der}}^{wc}}$.
\end{remark}

\section{Appendix: Relations with Drinfeld's space}\label{section: drinfeld} 

We expand on Remark ~\ref{remark: drinfeld}. There we observed that $Q(R)$ agrees with the affine case of the main construction of \cite{Drinfeld}, although we will soon record one subtle distinction. 

 Let $k$ be a field and $Z$ a (not necessarily affine) $k$-scheme of finite type over a field $k$ and possessing an action by $\Gm$ and an open cover which is $\Gm$-stable, i.e. $\Gm$ acts locally linearly.  Drinfeld defines a fpqc sheaf on the category of $k$-schemes over $\A^1_k$ as follows: for an arbitrary scheme $T$ over $\A^1$ assign the set 
\begin{displaymath}
 \op{Hom}_{\A^1}^{\Gm}( \mathbb{X} \times_{\A^1} T, Z)  
\end{displaymath}
where $\mathbb{X}:=\A^1 \times \A^1$ is equipped with the product map $\mathbb{X} \to \A^1$ and $\mathbb{X}$  has the ``hyperbolic" $\Gm$-action $t\cdot (x,y) = (tx,t^{-1}y)$. Amongst other results, \cite[Section 2.4]{Drinfeld} proves that this functor is representable, and so there exists a scheme $\tilde{Z}$ over $\A^1_k$ such that 
\begin{equation}\label{equation: drinfeldfunctor}
\op{Hom}_{\A^1}(T,\tilde{Z}) = \op{Hom}_{\A^1}^{\Gm}( \mathbb{X} \times_{\A^1} T, Z). 
\end{equation}
Equivalently, this means that we have an adjunction
\[
\adjunction{F  }{\{ k\text{-schemes over }\mathbb{A}^1 \} }{ \{ \gm \text{-schemes}\}} {G}
\]where $F(T) := \mathbb{X} \times_{\A^1} T$,  $G(Z) := \tilde{Z}$ and the actions are locally linear on the category on the right. As in Remark ~\ref{remark: drinfeld}, if $Z= \op{Spec}R$ is affine then so is $\tilde{Z}$, and $\tilde{Z} = \op{Spec}Q(R)$.  Therefore both $F$ above and its adjoint $G$ preserve affine schemes.  Restricting the adjunction above to affine schemes and then taking opposite categories, we get an adjunction
\[
\adjunction{Q}{\mathsf{CR}^{\gm}_k}{ \mathsf{CR}_{k[u]}}{Q^{\op{ad}}_{\op{Drin}}}.
\] where $Q^{\op{ad}}_{\op{Drin}} (S) := k[x,y] \otimes_{k[u]}S$.

However, the reader should be careful here.  Recall that in Lemma~\ref{lemma: Q definition}, we defined $Q$ with the following target
\[
Q : {\mathsf{CR}^{\gm}_k} \to {\mathsf{CR}^{\gm^2}_{k[u]}}
\]
not with target $ \mathsf{CR}_{k[u]}$ as above. This is no inherent contradiction, as \cite[Section 2.1.17]{Drinfeld} constructs a $\Gm^2$-action on any $\tilde{Z}$. However, this does mean that $Q^{\op{ad}}_{\op{Drin}}$ as defined above is {\em not} the correct adjoint of our functor $Q$. We will prove below that $Q$ has the following adjoint.  Given a $\Z^2$-graded $k$-algebra $S$ over $k[u]$ where $\op{deg}u = (-1,1)$, let $Q^{\op{ad}}(S)$ denote the subalgebra of $S \otimes_{k[u]} k[x,y]/(xy)$ generated by 
\begin{align*}
 S_{(i,0)}x^i & \text{ for }i \geq 0\text{ and }\\ S_{(0,-i)}y^{-i} & \text{ for }i <0\end{align*}   Here, $u$ maps to $xy$ (so it is zero), $\op{deg}x = (0,1)$, and $\op{deg}y = (-1,0)$.
Since $Q^{\op{ad}}(S)$ has non-zero degree only in the $(i,i)$ summands, we may regard it as a $\Z$-graded $k$-algebra.

\begin{proposition}
There is an adjunction
\begin{equation} \label{equation: Q adjunction}
\adjunction{Q}{\mathsf{CR}^{\gm}_k}{ \mathsf{CR}^{\gm^2}_{k[u]}}{Q^{\op{ad}}}.
\end{equation}
\end{proposition}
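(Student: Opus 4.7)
The strategy is to exhibit a natural bijection
$$\op{Hom}_{\mathsf{CR}^{\gm^2}_{k[u]}}(Q(R), S) \;\xrightarrow{\;\sim\;}\; \op{Hom}_{\mathsf{CR}^{\gm}_k}(R, Q^{\op{ad}}(S))$$
functorial in $R$ and $S$. Morally, a morphism $\phi : Q(R) \to S$ packages two ring maps out of $R$, namely $\phi \circ p$ landing in bidegrees $(n, 0)$ and $\phi \circ s$ landing in bidegrees $(0, n)$ for $r \in R_n$, and $Q^{\op{ad}}(S)$ is engineered precisely to collect such pairs using the formal variables $x$ and $y$. Preliminarily I would verify that $Q^{\op{ad}}(S)$ indeed lies on the diagonal of the $\Z^2$-grading, so that it makes sense as an object of $\mathsf{CR}^{\gm}_k$: since $xy = 0$ in the ambient algebra $S \otimes_{k[u]} k[x,y]/(xy)$, every monomial in the stated generating set reduces to a pure $x$-monomial or a pure $y$-monomial, and then a direct degree count on the two generating families places them on the diagonal.

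For the forward map, given $\phi : Q(R) \to S$ and a homogeneous $r \in R_n$, I would set
$$\Phi(\phi)(r) \;:=\; \begin{cases} \phi(p(r)) \otimes x^n & \text{if } n \geq 0, \\ \phi(s(r)) \otimes y^{-n} & \text{if } n < 0, \end{cases}$$
and check that this lands in the prescribed generators of $Q^{\op{ad}}(S)$ by the bidegree computation recorded above, using that $\phi$ is $\Z^2$-graded. Multiplicativity of $\Phi(\phi)$ is immediate when both factors have degrees of the same sign, since $p$ and $s$ are each ring homomorphisms. When the signs are mixed, one uses the identity $s(r) = u^{\op{deg} r} p(r)$ inside $Q(R) \subset R[u, u^{-1}]$ together with the $k[u]$-linearity of $\phi$ and the vanishing $xy = 0$ in the target to show that the two possible expressions for $\Phi(\phi)(r_1 r_2)$ agree.

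For the inverse map $\Psi$, given $\psi : R \to Q^{\op{ad}}(S)$, I would specify $\Psi(\psi) : Q(R) \to S$ on the generating set $\{\,p(R), s(R), u\,\}$: for $r \in R_n$ with $n \geq 0$, $\psi(r)$ is a sum of terms in $S_{(n,0)} \otimes x^n$, and we extract $\Psi(\psi)(p(r)) \in S_{(n,0)}$ by stripping off $x^n$; symmetrically for $s(r)$ using $y^{-n}$ when $n \leq 0$; and $\Psi(\psi)(u)$ is dictated by the $k[u]$-algebra structure on $S$. The remaining values of $\Psi(\psi)$ on $s(r)$ with $r$ of positive degree and on $p(r)$ with $r$ of negative degree are forced by $s(r) = u^{\op{deg} r} p(r)$, and one verifies that the resulting assignment respects the relations defining $Q(R)$ and is mutually inverse to $\Phi$. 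Naturality in both variables is manifest from the formulas. The principal obstacle is the mixed-sign compatibility in the definition of $\Phi(\phi)$: it is precisely there that the relation $xy = 0$ in $Q^{\op{ad}}(S)$ (as opposed to Drinfeld's $xy = u$) plays its essential role, matching the way $p(r)$ and $s(r)$ differ by a power of $u$ inside $Q(R)$ and thus forcing any $\Z^2$-graded $k[u]$-algebra map out of $Q(R)$ to collapse its mixed-sign data in a coherent way.
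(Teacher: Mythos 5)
Your strategy is the same as the paper's: exhibit an explicit bijection $\op{Hom}(Q(R),S)\cong\op{Hom}(R,Q^{\op{ad}}(S))$ by sending $r$ of degree $n$ to $f(p(r))\otimes x^n$ or $f(s(r))\otimes y^{-n}$, and inverting by stripping off the powers of $x$, $y$. However, there is a genuine gap, which you have inherited from the text and which your commentary about $xy=0$ being ``essential'' actually makes worse. If one really works in $S\otimes_{k[u]}k[x,y]/(xy)\cong (S/u)[x,y]/(xy)$, then $f(p(r))\otimes x^n$ sees only the image of $f(p(r))\in S_{(n,0)}$ in $(S/u)_{(n,0)}$; the forward map $\Phi$ therefore factors through $S/u$ and is not injective, and your recipe for $\Psi$ (``strip off $x^n$ to recover an element of $S_{(n,0)}$'') does not actually produce an element of $S$ but only of $S/u$, so $\Psi$ is not well defined. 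A concrete failure: take $R=k[z]$ with $z$ of degree $0$ and $S=k[u,c]$ with $c$ of bidegree $(1,-1)$. Then $\op{Hom}(Q(R),S)=S_{(0,0)}=k[uc]$ is infinite dimensional, while with $xy=0$ one has $S/u=k[c]$ and $Q^{\op{ad}}(S)$ reduces to the constants $k$, so $\op{Hom}(R,Q^{\op{ad}}(S))=k$.

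The fix is to drop the relation $xy=0$ and work inside $S\otimes_{k[u]}k[x,y]$, so that $xy=u$ (this is Drinfeld's $\mathbb{X}=\operatorname{Spec}k[x,y]\to\A^1$, $u\mapsto xy$, which you correctly identify but then discard). Then $a\mapsto a\otimes x^n$ is injective on $S_{(n,0)}$ because $k[x,y]$ is free (hence faithfully flat) over $k[u]$, so $\Psi$ is well defined; and the mixed‑sign multiplicativity you flag as the ``principal obstacle'' becomes a clean identity rather than a two‑sides‑vanish argument. Indeed for $n_1>0$, $n_2<0$, $n_1+n_2\ge 0$, one computes
\begin{align*}
\Phi(\phi)(r_1)\Phi(\phi)(r_2)
&=\phi\bigl(p(r_1)s(r_2)\bigr)\otimes x^{n_1}y^{-n_2}
=\phi\bigl(p(r_1)s(r_2)\bigr)\otimes u^{-n_2}x^{n_1+n_2}\\
&=\phi\bigl(p(r_1)s(r_2)u^{-n_2}\bigr)\otimes x^{n_1+n_2}
=\phi\bigl(p(r_1r_2)\bigr)\otimes x^{n_1+n_2}
=\Phi(\phi)(r_1r_2),
\end{align*}
using $k[u]$-linearity of $\phi$, $xy=u$, and the identity $p(r_1)s(r_2)u^{-n_2}=p(r_1r_2)$ in $Q(R)$ (valid since $-n_2>0$); the case $n_1+n_2<0$ is symmetric via $s(r_1r_2)=p(r_1)s(r_2)u^{n_1}$. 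So the role of the $u$‑power twist is exactly the opposite of what you describe: it is $xy=u$, not $xy=0$, that matches the way $p$ and $s$ differ by a power of $u$ in $Q(R)$ and lets both $\Phi$ and $\Psi$ go through.
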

\begin{proof} Let $ S, T \in \mathsf{CR}^{\gm^2}_{k[u]}$ and let $R \in \mathsf{CR}^{\gm}_k$ with  maps
 \begin{align*}
  p : R & \to Q(R) \\
  s : R & \to Q(R).
 \end{align*}
Given any map $f : Q(R) \to S$ in $\mathsf{CR}^{\gm^2}_{k[u]}$ , define
\[ \phi_f: R  \to Q^{\op{ad}}(S) \subseteq S \otimes_{k[u]} k[x,y] /(xy) \] by

 \begin{align*}
  r & \mapsto \begin{cases} 
               f(p(r))\otimes x^{\op{deg}r} \;\;\text{  when } \op{deg}r \geq 0 \\
               f(s(r))\otimes y^{-\op{deg}r} \text{ when } \op{deg}r \leq 0.
              \end{cases}
 \end{align*} 
 Conversely, given a map $g : R \to Q^{\op{ad}}(S)$ in $\mathsf{CR}_k^{\Gm}$ define 
 \[
 \psi_g : Q(R)\to S
 \]
  by defining its values on the generating elements of $Q(R)$ as follows: \begin{align*}
r & \mapsto g(r)_x \text{ if } \op{deg}r \geq 0 \\
s(r) & \mapsto g(r)_y \text{ if } \op{deg}r < 0 \\
 \end{align*} 
 where $g(r)_x$ (resp. $g(r)_y$) is the $x$-component (resp. $y$-component) of $g(r)$ under the decomposition as abelian groups
 \[
 S \otimes_{k[u]} k[x,y] /(xy) \cong S/u[x] \oplus S/u[y]. 
 \] The maps $f\mapsto\phi_f$ and  $g\mapsto\psi_g$ are inverse isomorphisms which give the adjunction.
\end{proof}

Since any functor with a right adjoint preserves colimits, we have the following. 
\begin{corollary}\label{corollary: Qpreservescolimits} 
The functor $Q: \mathsf{CR}_k^{\Gm}\to\mathsf{CR}_{k[u]}^{\Gm^2}$ preserves colimits. 
\end{corollary}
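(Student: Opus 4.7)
The plan is to invoke the standard categorical fact that any functor which is a left adjoint preserves all colimits that exist in the source category. This is a direct consequence of the preceding proposition, which established the adjunction
\[
\adjunction{Q}{\mathsf{CR}^{\gm}_k}{\mathsf{CR}^{\gm^2}_{k[u]}}{Q^{\op{ad}}},
\]
exhibiting $Q$ as a left adjoint to $Q^{\op{ad}}$.

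More explicitly, given a diagram $F : \mathcal I \to \mathsf{CR}_k^{\Gm}$ admitting a colimit $\colim F$, I would verify the universal property directly using the adjunction: for any $S \in \mathsf{CR}^{\gm^2}_{k[u]}$, one has natural bijections
\begin{align*}
\op{Hom}_{\mathsf{CR}^{\gm^2}_{k[u]}}(Q(\colim F), S) &\cong \op{Hom}_{\mathsf{CR}^{\gm}_k}(\colim F, Q^{\op{ad}}(S)) \\
&\cong \lim_{i \in \mathcal I^{\op{op}}} \op{Hom}_{\mathsf{CR}^{\gm}_k}(F(i), Q^{\op{ad}}(S)) \\
&\cong \lim_{i \in \mathcal I^{\op{op}}} \op{Hom}_{\mathsf{CR}^{\gm^2}_{k[u]}}(Q(F(i)), S),
\end{align*}
where the first and third isomorphisms come from the adjunction and the middle one is the universal property of the colimit. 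By the Yoneda lemma this identifies $Q(\colim F)$ with $\colim (Q \circ F)$.

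There is essentially no obstacle here, as this is a routine application of a standard result. The only content lies in the preceding proposition, which constructed the right adjoint $Q^{\op{ad}}$ explicitly.
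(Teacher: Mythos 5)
Your proposal is correct and matches the paper's argument exactly: the paper also deduces the corollary directly from the adjunction in Equation~\eqref{equation: Q adjunction}, citing the standard fact that a left adjoint preserves colimits. Your explicit unwinding via Hom-sets and Yoneda is a spelled-out version of the same one-line argument.
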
 



\begin{thebibliography}{99}

\bibitem[Bal17]{Ball}
Ballard, Matthew Robert. {\em Wall crossing for derived categories of moduli spaces of sheaves on rational surfaces}. Algebr. Geom. 4 (2017), no. 3, 263--280. 

\bibitem[BFK12]{BFK}
Ballard, Matthew; Favero, David; Katzarkov, Ludmil. {\em Variation of geometric invariant theory quotients and derived categories} (2012). To appear in Crelle's Journal.

\bibitem[Bla01]{Blander}
Blander, Benjamin A. {\em Local projective model structures on simplicial presheaves}. $K$-Theory 24 (2001), no. 3, 283--301.

\bibitem[BO95]{BO}
Bondal, Alexey; Orlov, Dmitri. {\em Semiorthogonal decomposition for algebraic varieties}. \href{http:/ \! /arxiv.org/abs/alg-geom/9506012}{arXiv:alg-geom/9506012} (1995).

\bibitem[Bri02]{Bridge}
Bridgeland, Tom. {\em Flops and derived categories. } Invent. Math. 147 (2002), no. 3, 613--632. 

\bibitem[BKR01]{BKR}
Bridgeland, Tom; King, Alastair, Reid, Miles. {\em The McKay correspondence as an equivalence of derived categories}. J. Amer. Math. Soc. 14 (2001), 535--554.


\bibitem[CKL12]{CKL} 
Cautis, Sabin; Kamnitzer, Joel; Licata, Anthony. 
{\em Derived equivalences for cotangent bundles of Grassmannians via categorical sl2 actions. } J. Reine Angew. Math. 675 (2013), 53--99. 

\bibitem[CLS11]{CLS}
Cox, David A.; Little, John B.; Schenck, Henry K. {\em Toric varieties.}
Graduate Studies in Mathematics, 124. American Mathematical Society, Providence, RI  (2011), xxiv+841 pp.

\bibitem[DH98]{DH}
Dolgachev, Igor; Hu, Yi. {\em Variation of geometric invariant theory quotients. } With an appendix by Nicolas Ressayre. Inst. Hautes \'Etudes Sci. Publ. Math. No. 87 (1998), 5--56. 

\bibitem[Dre04]{drez} 
Dr\'ezet, Jean-Marc. 
{\em Luna's slice theorem and applications. } Algebraic group actions and quotients, € Hindawi Publ. Corp., Cairo (2004), 39--
89. 

\bibitem[Dri13]{Drinfeld}
Drinfeld, Vladimir.{ \em On algebraic spaces with an action of $\Gm$. } \href{http:/ \! /arxiv.org/abs/1308.2604}{arXiv:1308.2604} (2013).

\bibitem[Dug01]{Dugger} 
Dugger, Daniel. {\em Combinatorial model categories have presentations.} Advances in Mathematics 164.1 (2001), 177--201.

\bibitem[DHK97]{DHK97}
Dwyer, W., P. Hirschhorn, and D. Kan. \emph{Model categories and more general abstract homotopy theory}, Book in preparation.

\bibitem[GJ99]{GJ}
Goerss, Paul G.; Jardine, John F. {\em Simplicial homotopy theory}. Reprint of the 1999 edition. Modern Birkh\"auser Classics. Birkh\"auser Verlag, Basel, (2009), xvi+510 pp.

\bibitem[HR17]{HR17}
Hall, Jack, and David Rydh. \emph{The telescope conjecture for algebraic stacks.} Journal of Topology 10.3 (2017), 776--794.

\bibitem[HL15]{HL}
Halpern-Leistner, Daniel. 
{\em The derived category of a GIT quotient.} J. Amer. Math. Soc. 28 (2015), no. 3, 871--912. 

\bibitem[HS16]{HLSam}
Halpern-Leistner, Daniel; Sam, Steven. {\em Combinatorial constructions of derived equivalences}. \href{http:/ \! /arxiv.org/abs/1601.02030}{arXiv:1601.02030} (2016).

\bibitem[HHP09]{HHP}
Herbst, Manfred; Hori, Kentaro; Page, David.
{\em B-type D-branes in toric Calabi-Yau varieties. }Homological mirror symmetry, 
Lecture Notes in Phys., 757, Springer, Berlin, (2009), 27--44.

\bibitem[Hov01]{Hov01}
Hovey, Mark. {\em Model categories.}
Mathematical Surveys and Monographs, 63. American Mathematical Society, Providence, RI (1999), xii+209 pp.


\bibitem[Jar15]{Jardine}
Jardine, John F. Local homotopy theory. Springer Monographs in Mathematics. Springer, New York (2015), x+508 pp. 

\bibitem[Kaw04]{Kaw}
Kawamata, Yujiro. {\em D-equivalence and K-equivalence.} J . Differential Geom. 61 (2002), no. 1, 147--171. 

\bibitem[Kaw06]{KMukai}
Kawamata, Yujiro. {\em Derived equivalence for stratified Mukai flop on G(2,4)}. Mirror symmetry. V,  
AMS/IP Stud. Adv. Math., 38, Amer. Math. Soc., Providence, RI, (2006), 285--294.

\bibitem[Kaw08]{KFlops}
Kawamata, Yujiro. \emph{Flops Connect Minimal Models.} Publications of the Research Institute for Mathematical Sciences 44 (2008), 419--423.

\bibitem[Kra12]{Krause}
Krause, Henning. {\em Localization theory for triangulated categories.} Triangulated categories, 
London Math. Soc. Lecture Note Ser., 375, Cambridge Univ. Press, Cambridge (2010),  161--235.

\bibitem[Lun73]{Luna}
Luna, Domingo. {\em Slices \'etales.} Sur les groupes alg\'ebriques, Bull. Soc. Math. France, Paris, Mmoire 33 Soc. Math. France, Paris (1973),  81--105.

\bibitem[RMdS07]{RMdS}
Ruip\'erez, Daniel Hern\'andez, Ana Cristina L\'opez Martín, and Fernando Sancho de Salas. \emph{Fourier-Mukai transforms for Gorenstein schemes."} Advances in Mathematics 211.2 (2007), 594--620.

\bibitem[Nam03]{N1}
Namikawa, Yoshinori. 
{\em Mukai flops and derived categories.} J. Reine Angew. Math. 560 (2003), 65--76. 

\bibitem[Nam04]{N2}
Namikawa, Yoshinori. {\em Mukai flops and derived categories. II. } Algebraic structures and moduli spaces,  
CRM Proc. Lecture Notes, 38, Amer. Math. Soc., Providence, RI, (2004),  149--175.

\bibitem[Nee96]{Nee}{}
A. Neeman. \emph{The Grothendieck duality theorem via Bousfield'€™s techniques and Brown representability.} Journal of the American Mathematical Society 9.1 (1996), 205--236.

\bibitem[Orl92]{Orl92}
Orlov, Dmitry.  {\em Projective bundles, monoidal transformations, and derived categories of coherent sheaves.}
Izv. Ross. Akad. Nauk Ser. Mat. 56 (1992), no. 4, 852--862; translation in 
Russian Acad. Sci. Izv. Math. 41 (1993), no. 1, 133--141 

\bibitem[Orl09]{Orl09}{}
D. Orlov. \emph{Derived categories of coherent sheaves and triangulated categories of singularities}. Algebra, arithmetic, and geometry: in honor of Yu. I. Manin. Vol. II, Progr. Math., 270, Birkh\"{a}user Boston, Inc., Boston, MA (2009), 503--531.

\bibitem[Qui67]{Quillen}
Quillen, Daniel G. {\em Homotopical algebra.} Lecture Notes in Mathematics, No. 43 Springer-Verlag, Berlin-New York (1967), {\rm iv}+156 pp.

\bibitem[Sch97]{Schwede}
Schwede, Stefan. {\em Spectra in model categories and applications to the algebraic cotangent complex}. J. Pure Appl. Algebra 120 (1997), no. 1, 77--104. 

\bibitem[SS03]{SSmodules}
Schwede, Stefan; Shipley, Brooke.{\em Stable model categories are categories of modules.} Topology 42 (2003), no. 1, 103--153. 

\bibitem[Seg11]{Seg}
Segal, Ed. {\em Equivalence between GIT quotients of Landau-Ginzburg B-models. }
Comm. Math. Phys. 304 (2011), no. 2, 411--432.

\bibitem[SvdB17]{SvdBNon}
\v{S}penko, \v{S}pela; Van den Bergh, Michel. {\em Non-commutative resolutions of quotient singularities for reductive groups}. Invent. Math. 210 (2017), no. 1, 3--67.

\bibitem[SvdB16]{SvdB}
\v{S}penko, \v{S}pela; Van den Bergh, Michel. \emph{Semi-orthogonal decomposition of GIT quotient stacks.} \href{arXiv:1603.02858}{arXiv:1603.02858} (2016).


\bibitem[Tha96]{Thaddeus}
Thaddeus, Michael. {\em Geometric invariant theory and flips. } J. Amer. Math. Soc. 9 (1996), no. 3, 691--723. 

\bibitem[TV08]{TV}
To\"en, Bertrand;  Vezzosi, Gabrielle. {\em Brave new algebraic geometry and global derived moduli spaces of ring spectra.} Elliptic cohomology,  London Math. Soc. Lecture Note Ser., 342, Cambridge Univ. Press, Cambridge  (2007), 325--359. 

\end{thebibliography}
\end{document}